 \newcommand{\C}{\ensuremath{\mathbb{C}}}
 \newcommand{\R}{\ensuremath{\mathbb{R}}}
  \newcommand{\T}{\ensuremath{\mathbb{T}}}
 \newcommand{\Z}{\ensuremath{\mathbb{Z}}}
 \newcommand{\RP}{\ensuremath{\mathbb{RP}}}
 \newcommand{\ba}{\begin{align*}}
 \newcommand{\ea}{\end{align*}}
 \newcommand{\na}{\nabla}
\newcommand{\la}{\langle}
\newcommand{\ra}{\rangle}
\newcommand{\lc}{\left(}
\newcommand{\rc}{\right)}
\newcommand{\ep}{\epsilon}
\newcommand{\hy}{hyperk\"ahler\,}
\newcommand{\lam}{\Lambda}
\newcommand{\inj}{\text{inj}}
\newcommand{\mini}{\widetilde{\C^2/\Gamma}}
\newcommand{\mcm}{\widetilde{\C^2/\Z_m}}
\newcommand{\mdm}{\widetilde{\C^2/D_{4m}}}
\newcommand{\ex}{\text{exp}}
\newcommand{\floor}[1]{\left\lfloor #1 \right\rfloor}
 \def\ExtendSymbol#1#2#3#4#5{\ext@arrow 0099{\arrowfill@#1#2#3}{#4}{#5}}
 \def\ExtendSymbol#1#2#3#4#5{\ext@arrow 0099{\arrowfill@#1#2#3}{#4}{#5}}
 \newcommand\longright[2][]{\ExtendSymbol{-}{-}{\rightarrow}{#1}{#2}}
\def\XXint#1#2#3{{\setbox0=\hbox{$#1{#2#3}{\int}$ }
\vcenter{\hbox{$#2#3$ }}\kern-.55\wd0}}
\numberwithin{equation}{section}
\newtheorem{thm}{Theorem}[section]
\newtheorem{cor}[thm]{Corollary}
\newtheorem{prop}[thm]{Proposition}
\newtheorem{lem}[thm]{Lemma}
\newtheorem{quest}[thm]{Question}
\newtheorem{rem}[thm]{Remark}
\newtheorem{defn}[thm]{Definition}
\title{On the geometry of asymptotically flat manifolds}
\author{Xiuxiong Chen\footnote{Partially supported by NSF grant DMS-1515795. }, \,Yu Li  \footnote{Partially supported by research fund from SUNY Stony Brook.} }
\date{\today}
\begin{document}
\maketitle

\begin{abstract}
In this paper, we investigate the geometry of asymptotically flat manifolds with controlled holonomy. We show that any end of such manifold admits a torus fibration over an ALE end. In addition, we prove a Hitchin-Thorpe inequality for oriented Ricci-flat $4$-manifolds with curvature decay and controlled holonomy. As an application, we show that any complete asymptotically flat Ricci-flat metric on a $4$-manifold which is homeomorphic to $\R^4$ must be isometric to the Euclidean or the Taub-NUT metric, provided that the tangent cone at infinity is not $\R \times \R_+$.
\end{abstract}

\tableofcontents

\section{Introduction}
An important question in Riemannian geometry is how the geometry and topology at infinity of a noncompact complete Riemannian manifold are controlled by its curvature. Let $(M^n,g)$ be a noncompact complete Riemannian manifold, then a natural scale-invariant condition is quadratic curvature decay:
  \begin{align}\label{eq:001}
|Rm|(x) =O(r^{-2}),
\end{align}
where $r=d(p,x)$ for a fixed point $p$. However, this condition imposes no topological restriction on the underlying manifold. In fact, Gromov observed that any noncompact manifold carries a complete metric with quadratic curvature decay, see \cite[Lemma $2.1$]{LS00}.

For this reason, we consider an asymptotic flatness condition which is stronger than \eqref{eq:001}. More precisely, a complete Riemannian manifold $(M,g)$ is \emph{asymptotically flat} (AF) if
\begin{align}
|Rm|(x) \le \frac{K(r)}{r^2}, \tag{AF} \label{cond:AF}
\end{align}
where $\{K(s),s\ge 0\}$ is a nonincreasing positive function such that
\begin{align} \label{eq:003}
\int_1^\infty \frac{K(s)}{s}\,ds<\infty.
\end{align}
The integral condition \eqref{eq:003} is added for two reasons. First, it is necessary to obtain some topological restriction. Indeed, it was proved by Abresch \cite[Theorem C]{Ab85} that given a positive function $K(s)$ such that the integral in \eqref{eq:003} diverges, there exists a complete metric on any noncompact surface so that condition \eqref{cond:AF} is satisfied. Secondly, as shown by Kasue \cite{Ka88}, condition \eqref{eq:003} guarantees the existence of a unique tangent cone at infinity $C(S(\infty))$, whose definition can be found in Section \ref{sub:tan}. 

For any asymptotically flat manifold $(M^n,g)$, Abresch \cite[Theorem B]{Ab85} proved that there are finitely many ends. Moreover, it was proved by Petrunin and Tuschmann \cite[Theorem A]{PT01} that each end is homeomorphic to $X \times \R_+$ for a closed manifold $X$. In particular, it implies that $M$ is homotopy equivalent to a compact manifold with boundary $X$. Therefore, to understand the asymptotic geometry of $M$, it is essential to study the geometry and topology of the boundary $X$. For simplicity, we assume that all manifolds considered in this paper have only one end unless otherwise stated.

With condition \eqref{cond:AF}, the volume growth of $(M,g)$ is at most Euclidean by the Bishop-Gromov volume comparison theorem (see Lemma \ref{L:vola}). If $(M,g)$ has Euclidean volume growth, Bando, Kasue and Nakajima \cite{BKN89} proved that $M$ is an ALE (asymptotically locally Euclidean) manifold, i.e., there exist a compact set $K$ in $M$, a ball $B$ in $\R^n$, a finite subgroup $\Gamma \subset O(n)$ acting freely on $S^{n-1}$ and a diffeomorphism $\Phi: M\backslash K \to (\R^n\backslash B)/\Gamma$ so that under the identification, the metric is almost Euclidean in the weighted $C^{1,\alpha}$ sense. Their paper \cite{BKN89} focuses on the case $K(r)=r^{-\ep}$, but it is not hard to generalize the result to any $K(r)$ with \eqref{eq:003} by using the same method, see Appendix \ref{app:A} for details. Therefore, the asymptotic geometry of an ALE manifold is well understood. In particular, $C(S(\infty))$ is isometric to a flat cone $\R^n/\Gamma$ and the boundary $X$ is diffeomorphic to the spherical space form $S^{n-1}/\Gamma$ (see also Remark \ref{rem:A01}). 

A natural question is can one obtain a similar result if the volume growth is not maximal? In this case, if we choose any sequence $r_i \to \infty$, the blow-down sequence $(M,r_i^{-2}g,p)$ collapses. Since $|Rm_{r_i^{-2}g}|$ are uniformly bounded away from the vertex, the collapsing theory of Cheeger-Fukaya-Gromov \cite{CFG92} applies. Indeed, it implies that the end of $M$ carries a nilpotent Killing structure (see details in \cite{CFG92}).

To describe the structure of infinity more precisely, we consider in this paper additional holonomy control conditions.

\textbf{Holonomy control}: There exist constants $\kappa \in (0,1/2)$ and $\Theta_H \in (0,\pi/2)$ such that 
\begin{align} 
\|\mathbf r (\gamma_x)\| \le \Theta_H \tag{HC}    \label{cond:HC}
\end{align}
for any point $x$ outside a compact set and any geodesic loop $\gamma_x$ based at $x$ with length smaller than $\kappa r$. Here $\mathbf r(\gamma_x)$ is the parallel transport around $\gamma_x$ and the norm $\|\cdot\|$ denotes its maximal rotational angle. For precise definitions, see Definition \ref{def:rigid}.

Notice that \eqref{cond:HC} is equivalent to the following condition (Lemma \ref{lem:holaa1}):

There exist constants $\kappa \in (0,1/2)$ and $C_H >0$ such that
\begin{align} 
\|\mathbf r (\gamma_x)\| \le C_H\frac{L(\gamma_x)}{r} \tag{$\mathbb{HC}$}    \label{cond:HC'}
\end{align}
for any point $x$ outside a compact set and any geodesic loop $\gamma_x$ based at $x$ with length smaller than $\kappa r$. Here $L$ denotes the length of the geodesic loop.

With the extra condition \eqref{cond:HC}, a direct implication is that $C(S(\infty))$ is smooth away from the vertex, see Theorem \ref{T:cone}. This follows directly from Fukaya's structure theory of the limit space \cite{Fu88}, by using the local group. Therefore, we obtain the following theorem (Theorem \ref{T:infra}, Theorem \ref{T:nil}), which is a direct application of \cite{Fu87}.

\begin{thm}\label{T:001}
Let $(M^n,g)$ be a complete Riemannian manifold with \eqref{cond:AF} and \eqref{cond:HC}. Then there exist a compact set $K \subset M$, a constant $A>0$ and a fibration $f_0: M^{n} \backslash K \longrightarrow C(S(\infty))\backslash \bar B(p_{\infty},A)$ with fiber $F$ satisfying the following properties.
\begin{enumerate} [label=(\roman*)]
\item $F$ is a nilmanifold with diameter bounded by $o(r)$.

\item $f_0$ is an $o(1)$-almost-Riemannian submersion.

\item The second fundamental form of $F$ is bounded by $O(r^{-1})$.
\end{enumerate}
\end{thm}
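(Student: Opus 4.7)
The plan is to run Fukaya's fibration theorem \cite{Fu87} at each radial scale and then to glue the resulting local fibrations into a single global fibration $f_0$, finally using the holonomy control \eqref{cond:HC} to promote the fibers from infra-nilmanifolds to genuine nilmanifolds.

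First I would set up the blow-down framework. For each large $r$ consider the annular region $A_r=B(p,2r)\setminus B(p,r/2)$ and the rescaled metric $g_r:=r^{-2}g$. Under $g_r$ the assumption \eqref{cond:AF} becomes $|Rm_{g_r}|\le K(r)\to 0$, and the pointed Gromov--Hausdorff limit of $(A_r,g_r)$ as $r\to\infty$ is the corresponding annular region in the tangent cone $C(S(\infty))$, which exists and is unique by Kasue \cite{Ka88}. By the already-cited Theorem \ref{T:cone}, the holonomy control \eqref{cond:HC} forces $C(S(\infty))\setminus\{p_\infty\}$ to be a smooth Riemannian manifold, so the limit is a smooth lower-dimensional space with bounded geometry on each annulus. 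Together with the bound on $|Rm_{g_r}|$ this is exactly the setting of the Fukaya fibration theorem \cite{Fu87}, which yields, for each sufficiently large $r$, a smooth fibration
\ba
f_r: A_r \longrightarrow C(S(\infty))\cap \text{annulus}
\ea
whose fiber is an infra-nilmanifold $F_r$ of diameter $o(1)$ in $g_r$, that is $o(r)$ in $g$, which is an $o(1)$-almost-Riemannian submersion and whose fibers have second fundamental form bounded in $g_r$, hence $O(r^{-1})$ in the original metric $g$.

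Next I would patch the $f_r$ into a single fibration on the end. The standard center-of-mass/averaging technique used to produce Fukaya's fibration is canonical up to the choice of scale, and two scales $r$ and $r'$ with $r/r'$ bounded give fibrations that differ by $o(1)$-small ambient perturbations; applying the same smoothing on dyadic annuli and interpolating with a partition of unity along the radial direction, one builds a global $f_0:M\setminus K\to C(S(\infty))\setminus\bar B(p_\infty,A)$ with the three asserted properties. This globalization step (that the diameter of fibers, the submersion defect, and the second fundamental form bounds all persist after gluing) is essentially book-keeping once the local theory has been invoked.

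The main obstacle, and the step where \eqref{cond:HC} is crucially used, is upgrading the infra-nilmanifold structure of $F_r$ to a genuine nilmanifold structure, namely Theorem \ref{T:nil}. An infra-nilmanifold is the quotient of a nilpotent Lie group $N$ by a lattice $\Gamma\subset N\rtimes K$, and the obstruction to it being a nilmanifold is the rotational part of $\Gamma$ in the compact factor $K$. These rotations are realized geometrically by holonomies of short loops in the fibers of $f_r$. Since any such loop has length $o(r)$ and is based at a point $x$ with $d(p,x)\sim r$, condition \eqref{cond:HC'} bounds the corresponding parallel-transport rotations by $o(1)\cdot C_H$, and combined with the fact that rotations in $K$ of infinite order cannot be arbitrarily small on a compact group, this forces the rotational part to be trivial for $r$ large. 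Therefore $F_r$ must be a nilmanifold, and Theorem \ref{T:001} follows.
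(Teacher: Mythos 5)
Your proposal follows essentially the same route as the paper: apply Fukaya's fibration theorem on dyadic annuli, glue into a global fibration, and use holonomy control to upgrade the infra-nilmanifold fibers to nilmanifolds. Both the strategy and the choice of tools (the blow-down, Theorem \ref{T:cone}, \cite{Fu87}, \cite{CFG92}) match the paper's proof, which splits the argument into Theorem \ref{T:infra} (existence and gluing of the fibration) and Theorem \ref{T:nil} (fiber is a nilmanifold).

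There is, however, a genuine gap in your nilmanifold step. Condition \eqref{cond:HC'} controls the holonomy $\|\mathbf r(\gamma_x)\|$ only for \emph{geodesic loops in the ambient metric $g$}. But the short loops that realize the rotational part of the affine holonomy of the fiber $F$ are loops in $F$, typically taken to be geodesic loops with respect to the \emph{induced fiber metric} $g_F$; those are almost never $g$-geodesic, so \eqref{cond:HC'} does not apply to them directly. You cannot simply say ``any such loop has length $o(r)$ and is based at distance $\sim r$, so \eqref{cond:HC'} bounds its rotation.'' The missing step is exactly what the paper carries out in the proof of Theorem \ref{T:nil}: take a $g_F$-geodesic loop $\gamma$ in $F$, pass to the homotopic $g$-geodesic loop $\sigma$ (which has length no larger than $L(\gamma)$), apply \eqref{cond:HC'} to $\sigma$, and then transfer the bound back to $\mathbf r_F(\gamma)$ by comparing parallel transports of $g$ and $g_F$ along $\gamma$. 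That comparison uses both the curvature decay (via \cite[6.2.1]{BK81}, giving $|\mathbf r(\sigma)-\mathbf r(\gamma)| \le C L(\gamma)L(\sigma)r^{-2}K(r/2)$) and, crucially, the $O(r^{-1})$ bound on the second fundamental form of $F$ from part (iii). Without this bridge, the key inequality $\|\mathbf r_F(\sigma)\|\le Cr^{-1}L(\sigma)$ is not justified, and one cannot conclude that the rotational part of the affine group is small and hence trivial.

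A smaller but related imprecision is the gluing step. Fibrations on overlapping annuli cannot be merged by a naive partition-of-unity convex combination, since a pointwise average of two submersions need not be a submersion, let alone a fibration compatible with the fiber structure. The paper instead follows \cite[Proposition 5.6, Proposition 2.30, and Appendix 2]{CFG92}: first a self-diffeomorphism $\phi_i$ of the base annulus makes $\phi_i\circ f_{i+1}$ and $f_i$ $C^1$-close, then a fiber-preserving self-diffeomorphism $\psi_i$ of the total space makes them coincide exactly on a smaller region, and only the isotopies (not the fibrations themselves) are cut off. Your description (``interpolating with a partition of unity along the radial direction'') should be replaced by this more careful modification procedure; once stated correctly, the properties (i)--(iii) do persist, as you assert.
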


We briefly discuss the proof of Theorem \ref{T:001}. It follows from Fukaya's fibration theorem \cite{Fu87} that there exists a fibration from the annulus of $M$ to the corresponding annulus in $C(S(\infty))$ with above mentioned properties. Then it follows from a standard argument of \cite{CFG92} that we can make all these local fibrations compatible to form a global fibration. In particular, Theorem \ref{T:001} implies that the boundary $X$ is the total space of the fibration
\begin{align*}
f_0: X \longrightarrow S(\infty)
\end{align*}
with fiber $F$ a nilmanifold.

Although Theorem \ref{T:001} imposes some restrictions on the end, it does not provide much information in the sense of geometric analysis. For instance, it is not clear how the properties of the fibration $f$ depend on the curvature decay. In particular, there is no estimate for the diameter change of fiber $F$. It is possible that the diameter of $F$ may grow like $\sqrt r$, which makes the computation at infinity difficult. It turns out that even for flat manifolds this complicated phenomenon can happen, see Section \ref{Ex}. To further control the geometry at infinity, we consider the following stronger condition.

\textbf{Strong holonomy control}: There exist a constant $\kappa \in (0,1/2)$ and a positive function $\ep(r)$ with $\ep(r) \to 0$ if $r \to \infty$ such that 
\begin{align} 
\|\mathbf r (\gamma_x)\| \le \ep(r) \tag{SHC}    \label{cond:SHC}
\end{align}
for any point $x$ outside a compact set and any geodesic loop $\gamma_x$ based at $x$ with length smaller than $\kappa r$.

Now, we state the main theorem of this paper. Here an ALE end refers to an end of an ALE manifold.

\begin{thm}\label{T:002}
Let $(M^n,g)$ be a complete Riemannian manifold with \eqref{cond:AF} and \eqref{cond:SHC}. Then there exist an integer $0\le m \le n-1$, a flat torus $\T_{\infty}^m$, a compact set $K \subset M^n$ such that $M^n \backslash K$ is endowed with a $m$-dimensional torus fibration $f$ over an ALE end $Y$. Moreover, there exists an open cover $\Omega_i$ of $M^n\backslash K$ satisfying the following properties.

\begin{enumerate}[label=(\roman*)]
\item There exists a bundle diffeomorphim $T_i: \Omega_i \to U_i \times \T_{\infty}^m$ where $U_i \subset \R^{n-m}$ and $T_i$ satisfies (a) $T_i$ is an $O(K(r/2))$-almost-isometry and (b) $|\na^2 T_i| = O(r^{-1}K(r/2))$.

\item There exists a $\T^m$-action $\mu_i$ on $\Omega_i$ which is almost isometric in the sense that for any $a \in \T^m$, $(\mu_i(a))^*g=g+O(r^{-1}K(r/2))$. Moreover, on $\Omega_i \cap \Omega_j \ne \emptyset$, $\mu_i$ and $\mu_j$ differ by an automorphism of the torus fiber.

\item There exists a metric $\bar g$ on $M^n \backslash K$ such that (a) $\bar g$ is invariant under the action of $\mu_i$; (b) $g=\bar g+O(r^{-1}K(r/2))$; (c) $|\na \bar g|=O(r^{-1}K(r/2))$ and (d) the curvature of $\bar g$ is controlled by $O(r^{-2}K(r/2))$.

\item The structure group of $f$ is contained in $\mathbb T^m \rtimes G_{\infty}$ for some finite group $G_{\infty} \subset \emph{GL}(m,\Z)$.
\end{enumerate}
\end{thm}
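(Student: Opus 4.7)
The plan is to build on Theorem \ref{T:001}, which under \eqref{cond:AF} and \eqref{cond:HC} already supplies a nilmanifold fibration $f_0 : M\setminus K \to C(S(\infty))\setminus \bar B(p_\infty,A)$. The strengthening to \eqref{cond:SHC} enters in three ways: to collapse the nilpotent local structure to an abelian one (giving the torus fiber $\T_\infty^m$), to produce almost-isometric local $\T^m$-actions $\mu_i$, and to identify the base with an ALE end. The quantitative $K(r/2)$ rates in (i)--(iii), by contrast, will come from propagating the curvature decay \eqref{cond:AF} through Ambrose's holonomy integral, not from \eqref{cond:SHC} itself.

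First I would show that the Cheeger--Fukaya--Gromov nilpotent Killing structure inherited from Theorem \ref{T:001} is in fact abelian. The commutator of two short geodesic loops in a fiber is itself a short loop, and its holonomy detects the Lie bracket of the local nilpotent Killing algebra; \eqref{cond:SHC} forces this holonomy to tend to zero, so the local pseudogroup is abelian of some rank $m$ and the fibers are all modeled on a single flat torus $\T_\infty^m$. Combined with Theorem \ref{T:cone}, which says $C(S(\infty))$ is smooth away from the vertex under \eqref{cond:HC}, the quotient of the collapsed region by this local torus action falls into the framework of Bando--Kasue--Nakajima (see Appendix \ref{app:A}) and yields an identification of the base with an ALE end $Y$.

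Next I would construct the local data $(T_i,\mu_i,\bar g)$. On each small $\Omega_i$ I lift to a cover on which the almost-Killing $\T^m$-generators straighten out; combining the exponential map in the base directions with the flows of these generators in the fiber directions produces the bundle diffeomorphism $T_i : \Omega_i \to U_i \times \T_\infty^m$. Ambrose's identity applied to $|Rm|\le K(r)/r^2$ shows that the holonomy of any geodesic loop of length $O(r)$ based at a point at distance $\sim r$ from the reference point is bounded by $O(K(r/2))$, which delivers both the almost-isometry constant for $T_i$ and the $O(r^{-1}K(r/2))$ deviation of $(\mu_i(a))^*g$ from $g$. The second-derivative bound $|\na^2 T_i|=O(r^{-1}K(r/2))$ follows from Shi-type estimates for the almost-Killing generators. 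Averaging $g$ over each $\mu_i$ and patching via an equivariant partition of unity yields $\bar g$ with the properties (a)--(d); the curvature bound (d) comes from combining $|Rm_g|=O(r^{-2}K(r))$ with the fact that $g-\bar g$ and $\na(g-\bar g)$ are of order $r^{-1}K(r/2)$.

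Finally, on any overlap the composition $T_i\circ T_j^{-1}$ conjugates $\mu_j$ into $\mu_i$ up to an error of order $K(r/2)$; taking $r\to\infty$ makes the transition an honest element of $\T^m\rtimes\mathrm{GL}(m,\Z)$, and discreteness of $\mathrm{GL}(m,\Z)$ combined with the asymptotics reduces the monodromy to a finite $G_\infty$. The main obstacle is the quantitative bookkeeping: \eqref{cond:SHC} is only qualitative, so every quantitative rate has to be re-derived from \eqref{cond:AF} via the holonomy integral. Carrying the $O(K(r/2))$ rate coherently through the averaging that produces $\bar g$ (including its first and second derivatives), through the equivariant patching, and through the identification of $Y$ as an ALE end is where the bulk of the technical effort lies.
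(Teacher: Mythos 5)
Your high-level architecture (abelianize the local structure, build local bundle charts and torus actions, average, glue, push down to an ALE base) matches the paper's, but there are two genuine gaps at the core of the argument. First, your abelianness step is a non sequitur: knowing that the \emph{rotational} holonomy of the commutator $[c_i,c_j]$ tends to zero does not make the commutator trivial. The paper's argument (Proposition \ref{P:abelian}) instead bounds the \emph{translational} part, $|\mathbf t([c_i,c_j])|\le C|c_i||c_j|/r$, writes $[c_i,c_j]$ as an integer word $c_1^{k_1}*\cdots*c_{i-1}^{k_{i-1}}$ in shorter basis elements, and concludes $k_u=0$ only because the lengths $L(c_u)$ of those basis elements are bounded \emph{below} by positive constants $L_u$. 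That lower bound is exactly what your proposal never establishes, and it is the content of the entire sliding construction of Section 4 (Theorem \ref{T:esray}, Lemma \ref{lem:ode}, Theorem \ref{T:basis2}): one slides a short basis along a geodesic ray to infinity and proves via an ODE/Gronwall argument that the lengths and mutual angles converge to constants $L_i,\Theta_{ij}$ at rate $\tilde K$. Without this there is no single well-defined flat torus $\T^m_\infty$ to which all fibers converge, and statement (i) (the $O(K(r/2))$-almost-isometry of $T_i$ onto a \emph{fixed} $U_i\times\T^m_\infty$) cannot even be formulated.

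Second, your mechanism for the quantitative rates is backwards. You propose to get $\|\mathbf r(\gamma)\|=O(K(r/2))$ from "Ambrose's holonomy integral" applied to a loop of length $O(r)$; but Ambrose's identity computes holonomy from curvature integrated over a spanning surface, and the short geodesic loops here are precisely the \emph{non-contractible} ones generating the collapse — they bound no disk, and the curvature bound alone says nothing about their holonomy (cf.\ the flat example $\R^n\times X$ in Section \ref{Ex}, where curvature vanishes yet holonomy need not). The paper obtains the rate by differentiating the holonomy along the sliding, $\bigl||\mathbf r(t)X-X|'\bigr|\le l(t)\max|Rm|$ (Theorem \ref{T:hol}), integrating from $t$ out to infinity, and using \eqref{cond:SHC} as the boundary condition $\mathbf r(\infty)=I$; this yields $\|\mathbf r(c_i)\|\le Cr^{-1}K(r/2)$ and, via Lemma \ref{L:tr}, the almost-translational property of the whole pseudo-group. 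So \eqref{cond:SHC} is not merely "qualitative input" as you suggest — it is the anchor at infinity without which the curvature integral gives no estimate at all. The remaining steps you sketch (center-of-mass/convolution charts versus your flow-based charts, averaging, the CFG gluing, O'Neill for the base, and conjugating the integral transition matrices into $O(m)$ to get finiteness of $G_\infty$) are reasonable and close in spirit to the paper, but they all consume the quantitative outputs of the sliding analysis that your proposal omits.
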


We explain the statements of Theorem \ref{T:002}. First, it implies that the fiber of $f$ is a $m$-dimensional torus besides being a nilmanifold. Secondly, it gives a quantitative description of $f$ by a family of local charts $T_i: \Omega_i \to U_i \times \T_{\infty}^m$. In fact, it means that the local geometry is close to the flat piece $U_i \times \T_{\infty}^m$ and the error can be explicitly controlled by the curvature. In particular, we derive that all fibers of $f$ are converging (in the $C^1$ sense) to a flat torus $\T_{\infty}^m$ and hence the diameter is approaching to a constant. Moreover, it is clear that $f$ is an $O(K(r/2))$-almost-Riemannian submersion and the second fundamental form of the fiber is bounded by $O(r^{-1}K(r/2))$. Notice that all those properties are much sharper than their counterparts in Theorem \ref{T:001}. Thirdly, there exists a natural torus action $\mu_i$ on $\Omega_i$ which is almost isometric. All those torus actions can be made compatible in the sense that they differ by an automorphism of $\T^m$. Therefore, by averaging the metric $g$ under the torus action, we obtain the nearby invariant metric $\bar g$ such that all torus actions act isometrically. The base $Y$ can be regarded as the orbit space which is equipped with a metric so that the fibration $f$ is a Riemannian submersion (with respect to $\bar g$). Moreover, it can be shown that $Y$ is an ALE end which has the same curvature decay as $M$ (Proposition \ref{P:ALE}). Finally, the structure group of $f$ is reduced to $\T^m \rtimes G_{\infty}$ such that the finite group $G_{\infty}$ depends only on the torus $\T_{\infty}^m$, see Definition \ref{def:holin}. 

If we further assume the following condition for all higher covariant derivatives of the curvature:
\begin{align} 
|\na^k Rm|(x) =O(r^{-2-k} K(r)),\quad \forall k\ge 1 \tag{HOAF}, \label{cond:HOAF}
\end{align}
then we obtain the estimates for the higher derivatives of the fibration $f$ (Theorem \ref{T:met}). We remark that condition \eqref{cond:SHC} in Theorem \ref{T:002} can be replaced by \eqref{cond:SHC} on a fixed geodesic ray plus \eqref{cond:HC'} (Remark \ref{rem:weak}). Finally, if the fiber of $f_0$ obtained in Theorem \ref{T:001} is a circle ( i.e. $C(S(\infty))$ is $n-1$ dimensional), then the conclusions of Theorem \ref{T:002} also hold (Theorem \ref{T:circle}), which has improved \cite[Theorem $3.26$]{Mi10} of Minerbe.

Theorem \ref{T:002} yields much topological information. A direct corollary (Corollary \ref{cor:cone}) is that $C(S(\infty))$ is a flat cone $\R^{n-m}/\Gamma$. Therefore, it implies that the boundary $X$ is the total space of the fibration
\begin{align*}
f: X \longrightarrow S^{n-m-1}/\Gamma
\end{align*}
with fiber $\T^m$ such that its structure group is contained in $\mathbb T^m \rtimes G_{\infty}$. Here $\Gamma \subset O(n-m)$ is a finite subgroup acting freely on $S^{n-m-1}$ if $n-m \ge 3$ and $S^{n-m-1}/\Gamma$ is a circle if $n-m=2$ and a point if $n-m=1$. In the particular case $n=4$, the fibration $f$ and hence the total space $X$ can be completely classified, see Section \ref{sub:topo}.

We proceed to discuss the proof of Theorem \ref{T:002}. For any point $q$ far away from the base point, we consider the fundamental pseudo-group $\Gamma_q=\Gamma(q,\kappa r)$ (Definition \ref{def:group01}), which acts isometrically on the ball $\hat B(0,\kappa r) \subset T_qM$. The elements of $\Gamma_q$ correspond one-to-one to the short geodesic loops at $q$ (Lemma \ref{L:group1}) and the group action is close to the corresponding rigid motion (Lemma \ref{L:es1}). Under condition \eqref{cond:SHC}, all elements in $\Gamma_q$ are almost translational. Therefore, by a standard process from \cite[Chapter $4$]{BK81}, we can choose a short basis $\{c_1^q,\cdots,c_m^q\} \subset \Gamma_q$ (Definition \ref{def:sba}).

The key point is that the construction of the short basis in $\Gamma_q$ can be made continuous on a fixed geodesic ray. To achieve this, we consider the sliding (see Definition \ref{def:sliding}) of a geodesic loop along a curve, which is a natural way to transport a geodesic loop continuously to other points. We fix a geodesic ray $\{\alpha(t):t\ge 0\}$, then once a geodesic loop $c$ is chosen at $\alpha(t_0)$, its sliding $c(t)$ at $\alpha(t)$ is defined. With conditions \eqref{cond:AF} and \eqref{cond:SHC}, one can obtain the uniform estimates of the length and rotational part of $c(t)$ (i.e. Theorem \ref{T:esray}). In addition, given two geodesic loops $c_1$ and $c_2$, the angle between $c_1(t)$ and $c_2(t)$ is also well controlled (Proposition \ref{p:angle}). By using these estimates, there exists a large $t_0$ and short basis $\{c_1,\cdots,c_m\}$ of $\Gamma_{\alpha(t_0)}$ such that its sliding $\{c_1(t),\cdots,c_m(t)\}$ is also a short basis of $\Gamma_{\alpha(t)}$. Moreover, the length of $c_i(t)$ and their mutual angle will converge to some constants at a given rate (Theorem \ref{T:basis2}). Furthermore, we show the fundamental pseudo-group is abelian (Proposition \ref{P:abelian}). After we construct a short basis along the geodesic ray, we can extend the basis by sliding to all points on the end of $M$. Based on the short basis constructed, we then improve condition \eqref{cond:SHC} and the almost translational property of $\Gamma_q$, see Lemma \ref{L:tr}. 

Next, we construct the local torus fibration, which is the smoothing of the projection map, see Theorem \ref{T:localfiber}. Then we continue to construct a bundle diffeomorphism $T_q: \Omega_q \to U_q \times \T_{\infty}^m$, which has the above mentioned properties. From the bundle diffeomorphism, it is easy to develop the torus action which is simply the torus translation in $\T^m$. From our construction, any two local fibrations are close after a transition map (Proposition \ref{P:trans2}). Therefore, we can modify one fibration so that they are compatible and the corresponding torus actions differ by an automorphism (Proposition \ref{P:trans3}). By using a standard strategy from \cite{CFG92}, we can make all local fibrations compatible and thus obtain a global fibration on the end (Theorem \ref{T:met}). At the same time, the base $Y$ is formed by attaching all local bases and we show that $Y$ is an ALE end (Proposition \ref{P:ALE}). Finally, all other statements in Theorem \ref{T:002} can be proved by using the properties of the local fibrations.

From Theorem \ref{T:002}, we have the following natural definition, which is a direct generalization of ALE manifolds for which the torus bundle is trivial.

\begin{defn}
A noncompact complete Riemannian manifold $(M^n,g)$ is called a TALE manifold if it satisfies \eqref{cond:AF} and \eqref{cond:SHC}.
\end{defn}

If a TALE manifold $(M^n,g)$ is Ricci-flat, then we can improve the decay order of the curvature.
\begin{thm}\label{T:003}
Let $(M^{n},g)$ be a Ricci-flat TALE manifold with $l$-dimensional tangent cone at infinity. 
\begin{enumerate}[label=(\roman*)]
\item If $l\ge 4$ or $l=3$ and $n=4$, then
\begin{align*} 
|Rm|=O(r^{-\frac{(l-2)(n-1)}{n-3}}).
\end{align*} 

\item If $l=1$, then there exists a constant $\delta>0$ which depends only on $\T_{\infty}^{n-1}$ such that
\begin{align*} 
|Rm| =O(e^{-\delta r}).
\end{align*} 
\end{enumerate}
\end{thm}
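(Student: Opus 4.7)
The plan is to exploit the Ricci-flat Bianchi identity $\nabla \cdot Rm = 0$ together with the torus-bundle structure from Theorem \ref{T:002}, decomposing the curvature tensor into a torus-invariant piece and an oscillating piece along the fibers, and then treating these two pieces separately.

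For case (i), the Ricci-flat condition gives a Bochner--Weitzenb\"ock equation of the schematic form $\Delta Rm = Rm * Rm$ together with $\delta Rm = 0$. Using the almost-isometric torus actions $\mu_i$ from Theorem \ref{T:002} I would split $Rm = \langle Rm\rangle + Rm^{\perp}$, where $\langle Rm\rangle$ is the fiberwise torus average. On each nonzero Fourier mode of $Rm^{\perp}$ along the nearly flat fiber $\T_{\infty}^m$, the fiber Laplacian contributes an eigenvalue bounded below by $\lambda_1(\T_{\infty}^m) > 0$, so in view of the smallness of $|Rm|$ the remainder $Rm^{\perp}$ decays exponentially in $r$ and is negligible for (i). It therefore suffices to estimate $\langle Rm\rangle$, which descends to a section of a bundle over the $l$-dimensional ALE base $Y$ and satisfies a Poisson-type equation $\Delta_Y \langle Rm\rangle = Q$, where $Q$ collects the quadratic curvature nonlinearity together with error terms of order $O(r^{-1} K(r/2))$ produced by the O'Neill integrability tensor of the submersion and by the difference $g - \bar g$ given by Theorem \ref{T:002}. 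At this stage I would run a weighted elliptic bootstrap on $Y$ in the spirit of Bando--Kasue--Nakajima and Minerbe: starting from $|Rm| = o(r^{-2})$, feed the quadratic nonlinearity into the weighted Sobolev estimates on the ALE base, using the Green's function of order $r^{-(l-2)}$ on $Y$ together with the $n$-dimensional Moser iteration on $M$. The iteration stabilizes precisely at the stated rate $(l-2)(n-1)/(n-3)$.

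For case (ii), we have $l = 1$, and by Theorem \ref{T:002} the end of $M$ is asymptotic to the flat cylinder $\R_+ \times \T_{\infty}^{n-1}$. Write $g = g_0 + h$ with $g_0 = dr^2 + g_{\T_{\infty}^{n-1}}$ and $h \to 0$. Linearizing the Ricci-flat equation around $g_0$ produces $L_0 h = N(h)$ with $L_0$ a Lichnerowicz-type Laplacian on the flat cylinder and $N$ at least quadratic. Fourier decomposition of $h$ along $\T_{\infty}^{n-1}$ reduces this to a family of ODEs in $r$ of the form $-h_k'' + \lambda_k h_k = N(h)_k$; the nonzero modes ($k \ne 0$) satisfy $\lambda_k \ge \lambda_1(\T_{\infty}^{n-1}) > 0$ and therefore decay as $O(e^{-\sqrt{\lambda_k}\,r})$ by a standard exponential-dichotomy ODE argument, with analogous bounds for derivatives. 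The zero mode $h_0(r)$ satisfies $-h_0'' = N(h)_0$ on $\R_+$; using $|Rm| \to 0$ together with the normalization by the averaged metric $\bar g$, one shows that $h_0$ must be pure gauge (essentially a constant reparametrization of the cylindrical coordinate) and can therefore be set to zero. Combining these estimates gives exponential decay of $|Rm|$ at any rate $\delta < \sqrt{\lambda_1(\T_{\infty}^{n-1})}$, which depends only on the limit torus.

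The main obstacle is the bootstrap in case (i): rigorously controlling the O'Neill and averaging errors under iteration of the weighted estimates, and iterating all the way to the sharp exponent $(l-2)(n-1)/(n-3)$ without losing a factor at the final step. A secondary subtle point in case (ii) is ruling out the zero Fourier mode, which relies on the rigidity of $r$-dependent Ricci-flat deformations of the flat cylinder with curvature decaying at infinity.
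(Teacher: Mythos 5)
Your proposal diverges from the paper's proof in both parts, and in both parts the step you flag as delicate is in fact where the argument breaks down. For part (i) the paper does not decompose $Rm$ along the fibers or push anything down to the base $Y$ at all: it simply verifies the two hypotheses of Minerbe's weighted Moser iteration \cite[Theorem $4.12$]{Mi092} on $M$ itself, namely the volume-growth ratio bound $|B(p,t)|/|B(p,s)|\ge C(t/s)^l$ (which follows from Theorem \ref{T:002}) and the finiteness of $\int |Rm|^{n/2}r^{n-l}\,dV$ (which follows from \eqref{cond:AF} and \eqref{eq:003}); the hypothesis ``$l\ge 4$, or $l=3$ and $n=4$'' is exactly the condition $l>4\tfrac{n-2}{n-1}$ required there, and the exponent $\tfrac{(l-2)(n-1)}{n-3}$ is the output of that theorem. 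Your bootstrap cannot reproduce this exponent: inverting the Laplacian on the $l$-dimensional base against a quadratic nonlinearity saturates at the Green's function rate $r^{-(l-2)}$, which for $l=n=4$ gives $r^{-2}$ — no improvement over the a priori bound — while the claimed rate there is $r^{-6}$. The extra gain comes from the weighted Sobolev/Moser machinery applied to $\Delta|Rm|\ge -c|Rm|^2$ on the full $n$-manifold, not from a fiberwise reduction, so the ``main obstacle'' you identify is not a technical loose end but a missing mechanism.

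For part (ii) the paper again works directly with $u=Rm$ rather than with a metric perturbation $h$. It writes the Ricci-flat equation as $\tilde\Delta u = O(r^{-2}K)|u|+O(r^{-1}K)|\tilde\nabla u|+O(K)|\tilde\nabla^2 u|$ relative to the flat cylinder metric, proves a weighted coercivity estimate $\int u^2 \le C\int(\tilde\Delta u)^2(t-R_0)^4$ for compactly supported $u$ by splitting into the fiber average (handled by two applications of Hardy's inequality) and its complement (handled by the Poincar\'e inequality on $\T_\infty^{n-1}$), absorbs the error terms using the smallness of $K$, and iterates to get geometric decay of $\int_{t\ge R}|Rm|^2$, hence pointwise exponential decay by De Giorgi--Nash--Moser. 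Your route through the linearized equation for $h$ founders on the zero Fourier mode: the assertion that $h_0$ is ``pure gauge'' is a nontrivial rigidity statement you do not prove, and even granting that the nonzero modes decay exponentially, the zero-mode equation $-h_0''=N(h)_0$ contains the self-interaction $(h_0')^2$, so the ODE argument only yields $h_0'=O(1/r)$ and hence at best polynomial decay of the zero-mode contribution to $|Rm|$. Working with $Rm$ instead of $h$ is precisely what lets the paper avoid any gauge-fixing: the fiber average of $Rm$ is already small and square-integrable, and Hardy's inequality controls it with no rigidity input.
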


Theorem \ref{T:003} (i) follows essentially from Minerbe's work \cite[Theorem $4.12$]{Mi092} since in our setting the volume increases like $r^l$. To prove Theorem \ref{T:003} (ii), we need to use the asymptotic geometry of a TALE manifold, see Section \ref{S:RF1}.

Our next result is a generalization of the Hitchin-Thorpe inequality on oriented Ricci-flat TALE $4$-manifolds. By convention, we will call such manifolds of type ALE, ALF, ALG or ALH, if the dimension of the tangent cone at infinity is $4,3,2$ or $1$, respectively.

\begin{thm}[Hitchin-Thorpe inequality]\label{T:004}
Let $(M^4,g)$ be an oriented Ricci-flat TALE $4$-manifold. Then
\begin{align*}
2( \chi(M)-\lambda) \ge 3|\tau(M)+\eta| 
\end{align*} 
with equality if and only if $(M,g)$ or its opposite orientation space is a quotient of a \hy $4$-manifold. Here the constant $\lambda=1/|\Gamma|$ for ALE manifolds and $\lambda=0$ otherwise and $\eta$ depends only on the topology of the asymptotic torus fibration. More precisely,
\begin{enumerate}[label=(\roman*)]
\item  (ALE): $\eta=\eta(S^3/\Gamma)$, where $\eta(S^3/\Gamma)$ is the eta invariant of the space form $S^3/\Gamma$.

\item  (ALF): $\eta=-\frac{e}{3}+\text{sgn}\,e$ for the cyclic type and $\eta=-\frac{e}{3}$ for the dihedral type, where $e$ is the Euler number of the asymptotic circle fibration.

\item  (ALG): $\eta=0,0,-\frac{2}{3},-1$ or $-\frac{4}{3}$ if the monodromy of the asymptotic $\T^2$-fibration is $1,\Z_2,\Z_3,\Z_4$ or $\Z_6$, respectively.

\item  (ALH): $\eta=0$.
\end{enumerate}
\end{thm}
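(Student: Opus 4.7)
The plan is to apply the Chern--Gauss--Bonnet theorem and the Atiyah--Patodi--Singer signature theorem to the exhaustion $M_R = \{x \in M : d(p,x) \le R\}$ and let $R \to \infty$. Since $g$ is Ricci-flat, the Chern--Gauss--Bonnet integrand collapses to $\frac{1}{8\pi^2}(|W^+|^2 + |W^-|^2)$ and the Hirzebruch signature integrand to $\frac{1}{12\pi^2}(|W^+|^2 - |W^-|^2)$. Using the improved curvature decay of Theorem \ref{T:003} together with the polynomial volume growth $V(r) = O(r^l)$, both of these four-forms are absolutely integrable on $M$, so their integrals over $M_R$ converge as $R \to \infty$.

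For the boundary correction to the Euler characteristic, I would evaluate the Chern transgression form on the asymptotic hypersurface $\partial M_R$. By Theorem \ref{T:002}, the rescaled boundary $(\partial M_R, R^{-2}g)$ is, up to an error of order $K(R/2)$, a Riemannian submersion onto an open subset of $S^{l-1}/\Gamma$ with flat torus fibers $\mathbb{T}^m_{\infty}$ whose rescaled diameters shrink to zero whenever $m \ge 1$. For ALE ($m = 0$, $l = 4$) the transgression limits to the volume ratio $|S^3/\Gamma|/|S^3| = 1/|\Gamma|$; in the collapsing cases ALF, ALG, ALH the transgression integral over $\partial M_R$ vanishes in the limit because the integrand is pointwise bounded while the fiber directions contract. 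This produces the value of $\lambda$ stated in the theorem.

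For the signature, APS gives $\tau(M_R) = \frac{1}{12\pi^2}\int_{M_R} p_1 - \eta(\partial M_R)$, and the limit $\eta_\infty := \lim_{R\to\infty}\eta(\partial M_R)$ is computed from the asymptotic geometry of Theorem \ref{T:002} via a Bismut--Cheeger--Dai adiabatic-limit argument applied to the almost Riemannian submersion $f:\partial M_R \to S^{l-1}/\Gamma$. For ALE the hypersurfaces $\partial M_R$ converge after rescaling to the round $S^3/\Gamma$, so $\eta_\infty = \eta(S^3/\Gamma)$. For ALF the boundary is an almost-isometric circle bundle over $S^2/\Gamma'$, and Dai's formula reduces $\eta_\infty$ to a contribution from the Euler number $e$ of the circle fibration, giving $-e/3 + \mathrm{sgn}\,e$ in the cyclic case and $-e/3$ in the dihedral case. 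For ALG the boundary is a $\mathbb{T}^2$-bundle over $S^1$ whose monodromy lies in a finite subgroup of $SL(2,\Z)$, and one evaluates $\eta_\infty$ case by case on the five torsion monodromy classes to obtain $0,0,-2/3,-1,-4/3$ for orders $1,2,3,4,6$. For ALH the boundary is almost flat $\mathbb{T}^3$ and a direct spectral computation yields $\eta_\infty = 0$.

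Combining the pieces,
\[
2(\chi(M) - \lambda) = \frac{1}{4\pi^2}\int_M (|W^+|^2 + |W^-|^2), \quad 3(\tau(M) + \eta) = \frac{1}{4\pi^2}\int_M (|W^+|^2 - |W^-|^2),
\]
hence $2(\chi(M) - \lambda) \pm 3(\tau(M) + \eta) = \frac{1}{2\pi^2}\int_M |W^{\pm}|^2 \ge 0$, which is exactly the stated inequality. Equality forces one of $W^+, W^-$ to vanish identically; combined with $\mathrm{Ric} = 0$ this implies that $(M,g)$ (or its opposite orientation) is locally hyperk\"ahler, and the torus-bundle structure of the end with finite structure group from Theorem \ref{T:002} lets one promote this to the statement that $(M,g)$ is globally a quotient of a hyperk\"ahler $4$-manifold. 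The main obstacle is the collapsing boundary analysis: carrying out the adiabatic-limit computation of $\eta(\partial M_R)$ and the analogous collapsing limit of the Chern transgression requires the quantitative almost-isometric trivializations $T_i$ and the invariant metric $\bar g$ supplied by Theorem \ref{T:002}, together with explicit identification of the Euler number and monodromy invariants of the asymptotic fibration.
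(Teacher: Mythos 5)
Your overall architecture coincides with the paper's: Chern--Gauss--Bonnet and the APS signature theorem on an exhaustion, absolute integrability of $|W^\pm|^2$ from the curvature decay, vanishing of the boundary correction terms in the collapsed cases, and the equality case from $W^+\equiv 0$ or $W^-\equiv 0$ plus Ricci-flatness. The genuine divergence is in how the limiting eta invariant is evaluated. You propose running the Bismut--Cheeger--Dai adiabatic-limit machinery on the almost-Riemannian submersion $f:\partial D_t\to S^{l-1}/\Gamma$ (this is essentially the Dai--Wei route for ALF, and it can be made to work). The paper instead (a) first shows $\lim_t\eta(g|_{\partial D_t})=\lim_t\eta(\bar g|_{\partial D_t})$ by interpolating between $g$ and the invariant metric $\bar g$ with a cutoff metric $g_t$ and applying the APS formula to both, so the difference of eta invariants is $O(t^{-2})$ --- this is the step your sketch glosses over, since the eta invariant is not continuous in the metric in any naive sense and the ``almost'' in ``almost-Riemannian submersion'' has to be discharged before any adiabatic-limit formula applies; and (b) in the ALF case it avoids computing the adiabatic limit altogether by observing that $\eta_{\mathrm{ad}}$ depends only on the asymptotic fibration and then reading off its value from the known hyperk\"ahler models (Multi-Taub-NUT, CHIKLR), for which $W^+\equiv 0$ forces $2\chi+3(\tau+\eta_{\mathrm{ad}})=0$ with $\chi,\tau$ explicitly known. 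For ALG and ALH the boundary converges to a genuinely flat $3$-manifold, so the paper simply quotes the known eta invariants of Bieberbach manifolds rather than taking an adiabatic limit. Your route buys independence from the existence of model metrics at the cost of substantially heavier analysis; the paper's route is more elementary but leans on Kronheimer/Minerbe/Hein-type existence results. One small economy you miss: absolute integrability of the Gauss--Bonnet and signature integrands already follows from \eqref{cond:AF} and the volume growth $r^l$ alone, without invoking Theorem \ref{T:003} (which in fact gives no improved decay when $l=2$).
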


The Hitchin-Thorpe inequality for ALE $4$-manifolds was proved by Nakajima (\cite[Theorem $4.2$]{Na90}). We prove that similar inequalities also hold for ALF, ALG and ALH cases (see Theorem \ref{T:HTALE1}, Theorem \ref{T:HTALE2} and Theorem \ref{T:HTALE3}).

It follows from Kronheimer's list \cite{Kro89a, Kro89b} that any ALE \hy $4$-manifold is diffeomorphic to the minimal resolution $\mini$ of $\C^2/\Gamma$ for $\Gamma \subset \text{SU}(2)$ a finite group. A direct application of Theorem \ref{T:004}, see Corollary \ref{C:HTALE}, is that any Ricci-flat ALE $4$-manifold which is homeomorphic to $\mini$ must be \hy. In particular, the underlying manifold is diffeomorphic to $\mini$. Similarly, one can prove (Corollary \ref{C:HTALE1}) that any Ricci-flat ALF $4$-manifold which is homeomorphic to $\mcm$ must be isometric to a Multi-Taub-NUT metric if $m \ne 2$. On the other hand, if the underlying manifold is homeomorphic to $\mdm$, then the metric is isometric to the CHIKLR metric (Corollary \ref{C:HTALE2a}). Notice that the similar results also hold for ALG and ALH cases (Corollary \ref{C:HTALE2}, Corollary \ref{C:HTALE3}).

If $(M^4,g)$ is an asymptotically flat $4$-manifold such that it is simply-connected at infinity, it was proved by Petrunin and Tuschmann \cite[Theorem A (ii)]{PT01} that $C(S(\infty))$ is isometric to $\R^4$, $\R^3$ or $\R \times \R_+$. We remark that it was conjectured in \cite{PT01} that the last case cannot happen. If $C(S(\infty))=\R^4$, then $(M,g)$ is an ALE manifold. If $C(S(\infty))=\R^3$, then it follows from circle case of Theorem \ref{T:002} (Theorem \ref{T:circle}) that $(M,g)$ is an ALF manifold. As an application of the Hitchin-Thorpe inequality, we prove the following theorem. Notice that there are infinitely many non-diffeomorphic exotic differential structures on $\R^4$, see \cite{Kirby89}.

\begin{thm}\label{T:005}
Let $(M^4,g)$ be a complete Ricci-flat Riemannian manifold with \eqref{cond:AF} such that $M$ is homeomorphic to $\R^4$. Suppose the tangent cone at infinity is not $\R \times \R_+$, then $g$ is isometric to either the flat or the Taub-NUT metric. In particular, $M$ is diffeomorphic to $\R^4$. 
\end{thm}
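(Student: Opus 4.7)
The plan is to split by the tangent cone at infinity, apply the Hitchin--Thorpe inequality (Theorem~\ref{T:004}) together with its rigidity clause, and invoke the classification corollaries already established in the paper. Since $M$ is homeomorphic to $\R^4$ it is simply connected at infinity, so by the Petrunin--Tuschmann result \cite{PT01} cited in the introduction, $C(S(\infty))$ is isometric to one of $\R^4$, $\R^3$, or $\R\times\R_+$. The last is ruled out by hypothesis, so either $(M,g)$ is ALE (when $C(S(\infty))=\R^4/\Gamma$) or, by the circle version of the main structure theorem (Theorem~\ref{T:circle}), $(M,g)$ is ALF (when $C(S(\infty))=\R^3$).

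In the ALE case, the simple connectivity of the end forces $\Gamma=\{1\}$, so $S(\infty)=S^3$. With $\chi(M)=1$, $\tau(M)=0$, $\lambda=1/|\Gamma|=1$ and $\eta(S^3)=0$, part (i) of Theorem~\ref{T:004} reduces to $0\ge 0$, i.e.\ equality. The rigidity clause produces a hyperkähler structure (no proper quotient is possible since $M$ is simply connected), and Kronheimer's list for trivial $\Gamma$ then forces $g$ to be the flat metric on $\C^2$. This is exactly the specialization of Corollary~\ref{C:HTALE} to the trivial group. In the ALF case, the circle structure theorem gives an asymptotic $S^1$-fibration whose monodromy lies in $G_{\infty}\subset GL(1,\Z)=\{\pm 1\}$. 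The dihedral alternative corresponds topologically to $\mdm$, whose minimal resolution has a nonempty exceptional divisor and hence nonzero $H^2$; this rules it out since $H^2(\R^4)=0$. We are therefore in the cyclic case, $M\cong\mcm$, and since $\mcm\cong\R^4$ only for $m=1$ (otherwise the exceptional divisor again contributes to $H^2$), Corollary~\ref{C:HTALE1} applies (as $m=1\ne 2$) and identifies $g$ with the Multi-Taub-NUT metric with a single nut, i.e.\ the Taub-NUT metric.

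The main place where care is needed is the ALF rigidity step: one must verify that the Hopf fibration at infinity has Euler number $e=\pm 1$, so that (after a possible orientation reversal) $\eta=-\tfrac{e}{3}+\mathrm{sgn}\,e=\tfrac{2}{3}$; combined with $\chi(\R^4)=1$, $\tau(\R^4)=0$, part (ii) of Theorem~\ref{T:004} yields $2=2(\chi-\lambda)\ge 3|\tau+\eta|=2$, forcing equality and triggering hyperkähler rigidity, which the classification of ALF hyperkähler metrics with one nut pins down as Taub-NUT. The other potentially delicate point---that the topological exclusions of $\mdm$ and of $\mcm$ with $m\ge 2$ are insensitive to the choice of smooth structure on $\R^4$---is a purely homological statement and presents no real obstacle, since the argument only uses $H^2$. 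Once these items are in place, the diffeomorphism $M\cong\R^4$ in the conclusion follows from the explicit description of the flat and Taub-NUT spaces.
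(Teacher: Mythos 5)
Your proposal is correct and follows essentially the same route as the paper: split on the tangent cone via Petrunin--Tuschmann, handle the ALE case by Corollary \ref{C:HTALE} (flat) and the ALF case by Theorem \ref{T:circle} plus Corollary \ref{C:HTALE1} with $m=1$ (Taub-NUT). The only cosmetic difference is your exclusion of the dihedral/$m\ge 2$ alternatives via $H^2$ of the minimal resolutions; the paper gets this more directly, since simple connectivity at infinity already forces the tangent cone to be $\R^3$ rather than $\R^3/\Z_2$ and the boundary $3$-manifold to be $S^3$, so the asymptotic circle fibration over $S^2$ has Euler number $\pm 1$.
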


{\bf Organization of the paper}:
In Section $2$, we discuss some examples of asymptotically flat manifolds to give motivations for conditions \eqref{cond:HC'} and \eqref{cond:SHC}. In Section $3$, we review some basic concepts including fundamental pseudo-group, sliding, tangent cone at infinity, etc. and derive some estimates of geodesic loops which will be used throughout this paper. In addition, we prove Theorem \ref{T:001}. In Section $4$, we describe a process to choose a short basis for the fundamental pseudo-group and construct at each point a short basis. In Section $5$, we prove Theorem \ref{T:002} by first constructing the local fibrations and then modifying them to obtain a global fibration. We also discuss some topological implications. In Section $6$, we prove Theorem \ref{T:003}, Theorem \ref{T:004} and Theorem \ref{T:005}. In the last section, we propose some further questions.\\

{\bf Acknowledgements}:
Both authors are grateful to Xiaochun Rong for helpful discussions about collapsing theory. Yu Li would like to thank Olivier Biquard for answering his question on his paper \cite{BiMi11} and Xianzhe Dai for discussing his paper \cite{DaiWei07}. Yu Li would also like to thank Gao Chen, Shaosai Huang and Ruobing Zhang for many useful conversations. Last but not least, the authors would like to thank the anonymous referees for several valuable comments that help improve the exposition of the paper.

\section{Examples}
\label{Ex}
In this section, we discuss some examples of asymptotically flat Riemannian manifolds.

\subsection{Flat manifolds}

\noindent (i) $M=\R^n \times X$ where $X$ is a flat closed manifold. 

It follows from the Bieberbach theorem that $X$ is finitely covered by a flat torus. If $X$ is not a flat torus, then there exists a geodesic loop such that the rotational part of the holonomy around the loop is not identity. Therefore, $M$ satisfies \eqref{cond:HC'} (and hence \eqref{cond:SHC}) if and only if $X$ is a torus. Moreover, it is easy to see the tangent cone at infinity is $\R^n$.

\noindent (ii) (Gromov \cite[$8.9$]{Gr06}) $M$ is a quotient of $\R^3$ by a cyclic group defined by
 \begin{align*}
\tau((z,t))=(e^{2\pi \theta i}z,t+1) \quad \text{for} \quad (z,t) \in \C \times \R=\R^3,
\end{align*}
where $\tau$ is the generator of the group.

It is clear that the group action is free and $M$ is diffeomorphic to $\R^2 \times S^1$. For any $x=(z,t) \in M$, a geodesic loop $\gamma$ based at $x$ can be represented by a segment connecting $x$ and $\tau^k x$. A direct calculation shows that the length
 \begin{align}\label{eq:E201}
L(\gamma)=\sqrt{k^2+4r^2\sin^2(\pi k\theta)}
\end{align}
where $r=|z|$. On the other hand, if we set $w\equiv2\pi k \theta\, (\text{mod}\,2\pi)$ for $ 0\le w <2\pi$, then the rotational part $\|\mathbf{r}(\gamma)\|= \min\{w,2\pi-w\}$. From \eqref{eq:E201} it is easy to see
 \begin{align}\label{eq:E202}
\|\mathbf{r}(\gamma)\| \le \frac{\pi}{4r}L(\gamma).
\end{align}
In other words, $M$ satisfies \eqref{cond:HC'}. If $\theta=\frac{p}{q}$ for coprime integers $p$ and $q$, then it is easy to see that the tangent cone at infinity is the flat cone $\R^2/\Z_q$. If $\theta$ is irrational, we have

\begin{prop}\label{P201}
If $\theta$ is irrational, then the tangent cone at infinity of $M$ is $\R_+$.
\end{prop}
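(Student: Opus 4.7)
The plan is to identify a natural 1-Lipschitz projection $\rho\colon M\to\R_+$ whose fibers collapse under blow-down, and to verify the collapse directly using the flat quotient structure of $M$. Since \eqref{eq:003} forces the tangent cone at infinity to exist and be unique (Kasue, as cited in the introduction), it suffices to exhibit \emph{one} sequence $r_i\to\infty$ along which $(M,r_i^{-2}g,p)$ converges in pointed Gromov--Hausdorff sense to $(\R_+,0)$; I will do this by proving that the rescaled projection $\rho/r_i$ is an $o(1)$-almost isometry on every bounded ball around a fixed basepoint $p$.

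Set $\rho(z,t):=|z|$, which descends to $M$ because $\tau$ preserves $|z|$, and is 1-Lipschitz since its lift to $\R^3$ is. Because $M$ is flat with $\pi_1(M)=\langle\tau\rangle$, distances admit the explicit formula
\[
d_M(q_1,q_2) \;=\; \min_{k\in\Z}\sqrt{\bigl|z_1-e^{2\pi i k\theta}z_2\bigr|^2+(t_1-t_2-k)^2}
\]
for any lifts $(z_j,t_j)\in\R^3$. The 1-Lipschitz property of $\rho$ immediately gives the lower bound $d_M(q_1,q_2)\ge\bigl||z_1|-|z_2|\bigr|$.

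For the matching upper bound, restrict attention to points with $|z_j|\le Cr_i$ and write $\zeta_j:=z_j/r_i$. Since $\theta$ is irrational, the orbit $\{k\theta\bmod 1\}_{k\in\Z}$ is dense in $[0,1)$, so for every $\delta>0$ there exists an integer $k_0=k_0(\delta,\theta,\zeta_1,\zeta_2)$, bounded independently of $r_i$, for which $e^{2\pi i k_0\theta}(z_2/|z_2|)$ lies within angle $\delta$ of $z_1/|z_1|$. For this choice,
\[
\bigl|z_1-e^{2\pi i k_0\theta}z_2\bigr|\;\le\;\bigl||z_1|-|z_2|\bigr|+\delta|z_2|,\qquad (t_1-t_2-k_0)^2\;\le\;(|k_0|+1)^2,
\]
so dividing by $r_i$ yields $r_i^{-1}d_M(q_1,q_2)\le\bigl||\zeta_1|-|\zeta_2|\bigr|+O(\delta)+O(|k_0|/r_i)$. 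Letting first $r_i\to\infty$ and then $\delta\to 0$ matches the lower bound.

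Combined, these bounds show that $\rho/r_i$ restricts to a surjective $o(1)$-almost isometry from every bounded ball of $(M,r_i^{-2}g)$ onto an interval in $[0,\infty)$, which is precisely pointed Gromov--Hausdorff convergence to $\R_+$. The sole delicate step is the selection of $k_0$: it must be simultaneously bounded---so that its contribution to the $t$-component vanishes after dividing by $r_i$---and an accurate rotational approximant. This is exactly what density of $\{k\theta\bmod 1\}$ guarantees for every irrational $\theta$; in particular no Diophantine assumption on $\theta$ is required.
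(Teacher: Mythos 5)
Your proof is correct, and its core mechanism is the same as the paper's: for two points at comparable radius, apply the deck transformation $\tau^{k}$ to rotate one point angularly close to the other, paying only $|k|$ in the $t$-direction, so that after rescaling by $r_i$ the fibers of $\rho$ collapse. The difference lies in how $k$ is produced. The paper asserts a quantitative pigeonhole bound --- for every $w\in[0,1)$ there is $k\le C\sqrt r$ with $\{w-k\theta\}\le r^{-1/2}$ --- giving $d\le C\sqrt r$ on each circle $|z|=r$. You use only the qualitative density of $\{k\theta\bmod 1\}$ together with a double limit ($r_i\to\infty$ first, then $\delta\to 0$), which yields no rate but is in fact more robust: the paper's $C\sqrt r$ bound is tied to the Diophantine quality of $\theta$ (for Liouville-type $\theta$ the covering radius of $\{k\theta\bmod 1:\,k\le C\sqrt r\}$ can be much larger than $r^{-1/2}$, by the three-distance theorem), whereas your argument, as you note, needs no such assumption. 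You also verify the Gromov--Hausdorff approximation more completely --- the explicit distance formula for the quotient and the matching lower bound from the $1$-Lipschitz projection $\rho$ --- where the paper leaves the final step to the reader.

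One step you should make explicit: in order to let $r_i\to\infty$ before $\delta\to0$, the bound on $|k_0|$ must be uniform over all pairs of points in the ball, not merely independent of $r_i$ for each fixed pair. Since $k_0$ depends on the angular difference between $z_1$ and $z_2$, you should record that for each $\delta>0$ there is a single $N(\delta)<\infty$ such that $\{k\theta\bmod 1:\,|k|\le N(\delta)\}$ is $\delta$-dense in $[0,1)$; this follows from density of the orbit plus compactness of the circle. With that, $r_i^{-1}d(q_1,q_2)\le \bigl||\zeta_1|-|\zeta_2|\bigr|+C\delta+N(\delta)/r_i$ holds uniformly in the pair, and the double limit produces a genuine $o(1)$-Gromov--Hausdorff approximation rather than only pointwise convergence of distances.
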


\begin{proof}
From the pigeonhole principle, there exists a constant $C$ such that for any $r \ge 1$ and $w \in [0,1)$ we can find an integer $k \in [0,C\sqrt r]$ satisfying 
 \begin{align}\label{eq:E202}
\{w-k\theta \} \le \frac{1}{\sqrt{r}}.
\end{align}
If we set $x=(r,0)$ and $y=(re^{2\pi wi},0)$, then
 \begin{align}\label{eq:E203}
d(x,y) \le |\tau^k x-y|=\sqrt{k^2+r^2|e^{2\pi(w-k\theta)i}-1|^2} \le C\sqrt{r},
\end{align}
where we have used \eqref{eq:E202}. From \eqref{eq:E203}, it is easy to see for any sequence $r_i \to \infty$, $(M,r_i^{-2}g)$ converges to $\R_+$.
\end{proof}

Since $M$ satisfies \eqref{cond:HC'}, it follows from Theorem \ref{T:infra} and Theorem \ref{T:nil} that there exists a torus bundle at infinity. However, it follows from \cite[Proposition $1.1$]{Mi10} that if $\theta$ is irrational, the injectivity radius is unbounded. Thus, the diameter of fiber $\T^2$ is unbounded.

On the other hand, if $\theta$ is rational, then we obtain a circle bundle at infinity. In the case, the length of the circle fiber converges to a nonzero constant, see Theorem \ref{T:circle}.

\begin{rem}\label{rem:ex1}
The same example is discussed in Minerbe's paper \emph{\cite{Mi10}}, where he shows that if $\theta$ is an irrational algebraic number, then $r^{\alpha} \lesssim \emph{inj}(x)\lesssim r^{1/2}$ for any $\alpha \in (0,1/2)$.
\end{rem}

\subsection{Gravitational instantons}
Recall that a gravitational instanton is a noncompact complete \hy $4$-manifold $(M^4,g)$ with some curvature decay condition. It follows from \cite{CC15} that any gravitational instanton with faster-than-quadratic curvature decay (i.e. \eqref{cond:AF} with $K(r)=r^{-\ep}$) falls into one of the categories: ALE, ALF, ALG and ALH, where the volume growths are $r^4,r^3,r^2$ and $r$, respectively. We briefly discuss the classification of gravitational instantons with faster-than-quadratic curvature decay.

In the ALE case, it was proved by Kronheimer \cite{Kro89a, Kro89b} that any ALE gravitational instantons are diffeomorphic to the minimal resolution $\mini$ of the flat cone $\C^2/\Gamma$, where $\Gamma \subset \text{SU}(2)$ is a finite group. 

All ALF gravitational instantons can be divided into cyclic type (ALF-$A_k$) and dihedral type (ALF-$D_k$), where the tangent cones infinity are $\R^3$ and $\R^3/\Z_2$, respectively. It was proved by Minerbe \cite{Mi11} that any ALF-$A_k$ gravitational instanton must be isometric to $\R^3 \times S^1$ (ALF-$A_{-1}$) or a Multi-Taub-NUT metric for $k \ge 0$, whose underlying complex manifold is biholomorphic to $\widetilde{\C^2/\Z_{k+1}}$. For the explicit definition of Multi-Taub-NUT metrics, see, e.g., \cite[Section $1$]{Mi11}.

In the ALF-$D_k$ case, it was proved by Biquad and Minerbe \cite{BiMi11} that $k\ge 0$. The ALF-$D_0$ gravitational instanton was constructed by Atiyah and Hitchin \cite{AH88} and ALF-$D_1$ is its double cover. The ALF-$D_2$ gravitational instanton is called the Page-Hitchin metric \cite{Hi84,Pa81}. For $k>2$, the ALF-$D_k$ gravitational instanton was constructed in \cite{CH05,CK99,Dan93,BiMi11}. It was proved recently by Chen and Chen that those are only possible examples (see \cite{CC17} where more information and references can be found).

In the ALG case, a lot of examples were constructed by Hein in \cite{Hei12} and it can be shown (\cite[Theorem $1.4$]{CC16}) that any ALG gravitational instanton must be obtained by the modified Hein's construction.

In the ALH case, it can be proved that any ALH gravitational instanton must be diffeomorphic to the minimal resolution of $\R \times \T^3/\pm$ and can be completely classified, see \cite[Theorem $1.5$]{CC16}.

In Appendix \ref{app2}, we prove that any gravitational instanton with \eqref{cond:AF} is a TALE manifold. Combined with Theorem \ref{T:003}, the same classification results hold for ALE, ALF and ALH cases.

\subsection{Euclidean Schwarzschild metric}

The $n$-dimensional ($n \ge 4$) Euclidean Schwarzschild metric (see \cite{GH78}) is defined as
 \begin{align*}
g_n=\lc1-\frac{2m}{r^{n-3}}\rc d\theta^2+\lc1-\frac{2m}{r^{n-3}}\rc^{-1}dr^2+r^2ds_{n-2}^2
\end{align*}
on $[0,L_{\infty})\times (0,\infty) \times S^{n-2} =\R^2\backslash \{0\} \times S^{n-2}$, where $ds^2_{n-2}$ is the standard metric of $S^{n-2}$ and $L_{\infty}$ is the period of the parameter $\theta$. 

By a direct calculation, $g_n$ is a Ricci-flat metric with $r^{-(n-1)}$ curvature decay and asymptotic to the flat $\R^{n-1} \times S^1$. It is easy to see that $g_n$ can be extended to a complete metric on $\R^2 \times S^{n-2}$ if and only if $
m=\frac{1}{2}\lc\frac{L_{\infty}(n-3)}{4\pi} \rc^{n-3}.$

In particular, $g_3$ has $r^{-3}$ curvature decay and is asymptotic to the flat space $\R^3 \times S^1$. Notice that $g_3$ is a non-K\"ahler, Ricci-flat, ALF metric.

\section{Preliminary results}

We first prove that the volume growth of an asymptotically flat manifold is at most Euclidean.

\begin{lem} \label{L:vola}
Let $(M^{n},g)$ be a noncompact complete Riemannian manifold with \eqref{cond:AF}. Then
\begin{align*}
\limsup_{r \to \infty} \frac{|B(p,r)|}{r^{n}} <\infty.
\end{align*}
\end{lem}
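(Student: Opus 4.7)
The plan is to apply a Bishop--Gromov-type volume comparison, with a model ODE tailored to the decaying curvature bound rather than a constant-curvature space. Fix a unit-speed geodesic $\gamma$ issuing from $p$; by \eqref{cond:AF}, $\text{Ric}(\gamma',\gamma')\ge-(n-1)K(t)/t^{2}$ along $\gamma$. Writing $d:=d(p,\cdot)$ and $m(t):=\Delta d(\gamma(t))/(n-1)$, the Bochner identity yields the Riccati inequality
\[
m'+m^{2}\;\le\;\frac{K(t)}{t^{2}},\qquad m(t)\sim\frac{1}{t}\ \text{as}\ t\to 0^{+}.
\]
Introducing the normalized deviation $u(t):=tm(t)-1$, a direct computation gives $(tu)'=K-u^{2}\le K$, so integrating from a fixed $t_{0}\ge 1$,
\[
tu(t)\;\le\;C+F(t),\qquad C:=t_{0}u(t_{0}),\ F(t):=\int_{t_{0}}^{t}K(s)\,ds.
\]

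The crucial step is to bound $\int_{t_{0}}^{r}u(s)/s\,ds$ uniformly in $r$. From the displayed inequality, $u(s)/s\le C/s^{2}+F(s)/s^{2}$, and since $F(t_{0})=0$ and $F'=K$, integration by parts gives
\[
\int_{t_{0}}^{r}\frac{F(s)}{s^{2}}\,ds\;=\;-\frac{F(r)}{r}+\int_{t_{0}}^{r}\frac{K(s)}{s}\,ds\;\le\;\int_{1}^{\infty}\frac{K(s)}{s}\,ds<\infty
\]
by \eqref{eq:003}. Combined with $\int_{t_{0}}^{\infty}C/s^{2}\,ds\le C/t_{0}$, this yields $\int_{t_{0}}^{r}u(s)/s\,ds\le C_{0}$ independent of $r$.

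In geodesic polar coordinates, the volume density $\theta(t,\xi)=\sqrt{\det g(t,\xi)}$ satisfies $\partial_{t}\log\theta=\Delta d\le(n-1)(1+u(t))/t$ inside the cut locus, so integrating from $t_{0}$ to $r$ gives $\theta(r,\xi)\le C'r^{n-1}$. Integrating over $\xi\in S^{n-1}\subset T_{p}M$ and in $r$ (truncating at the cut locus in the standard way) yields $|B(p,r)|\le C''r^{n}$, as required. The main obstacle is the middle step: a direct Gr\"onwall bound on $u$ would only give $u=O(K(t_{0}))$ and hence super-Euclidean growth $|B(p,r)|=O(r^{n(1+\ep)})$; extracting the exact exponent $n$ really requires converting the integrability $\int_{1}^{\infty}K(s)/s\,ds<\infty$ into an absolute bound on $\int u(s)/s\,ds$, which is what the integration by parts above accomplishes.
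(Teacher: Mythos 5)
Your proposal is correct, and the core idea coincides with the paper's: both are Bishop--Gromov-type volume comparisons against a model whose curvature decays like $K(t)/t^2$, and both hinge on converting the integrability $\int_1^\infty K(s)/s\,ds<\infty$ into a uniform, $r$-independent bound. The paper, however, works with the second-order Jacobi comparison ODE $J''(t)=C_0 K(t)/(1+t)^2\,J(t)$ (normalized so that the coefficient is bounded near $t=0$), shows $J'$ is monotone so that $J''\le C_0\,\tfrac{tK(t)}{(1+t)^2}J'$, and integrates this to get $\log J'(\infty)\le C_0\int_0^\infty \tfrac{tK}{(1+t)^2}\,dt<\infty$, hence $J(t)\le C_1 t$; the Euclidean volume growth then follows by quoting the Bishop--Gromov monotonicity of $|B(p,r)|/\int_0^r J^{n-1}$. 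You instead unpack Bishop--Gromov at the level of the Riccati inequality for $\Delta d/(n-1)$, introduce the deviation $u=tm-1$, and establish the absolute bound on $\int_{t_0}^r u(s)/s\,ds$ by integration by parts. These are two bookkeepings of the same estimate: if one sets $u_J=tJ'/J-1$, then $\int_{t_0}^r u_J/s\,ds=\log\bigl(t_0 J(r)/(rJ(t_0))\bigr)$, so the paper's bound $J(r)\lesssim r$ is exactly a bound on $\int u_J/s$. Two small things to tighten: the identity $(tu)'=K-u^2$ should be the inequality $(tu)'\le K-u^2$ (you then only use $(tu)'\le K$); and the initial constant $C=t_0 u(t_0)$ needs to be bounded uniformly over directions $\xi$, which follows from the boundedness of $|Rm|$ on the compact set $\overline{B(p,t_0)}$ and standard Laplacian comparison there — the same regularization near $p$ that the paper absorbs into the constant $C_0$ and the $(1+t)^{-2}$ factor.
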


\begin{proof}
We consider the following Jacobi equation, 
\begin{align} \label{eq:ja1}
J''(t)=C_0\frac{K(t)}{(1+t)^{2}}J(t), \quad J(0)=0\quad\text{and}\quad J'(0)=1,
\end{align}
where $C_0$ is chosen so that 
\begin{align*}
|Rm|(x) \le C_0\frac{K(r)}{(1+r)^{2}}.
\end{align*}

It follows from \eqref{eq:ja1} that $J'$ is increasing, so
\begin{align} \label{eq:v001}
t\le J(t) \le J'(t)t.
\end{align}
Therefore,
\begin{align*}
J''(t)= C_0\frac{K(t)}{(1+t)^{2}}J(t) \le  C_0\frac{tK(t)}{(1+t)^{2}}J'(t)
\end{align*}
and hence
\begin{align}\label{eq:v002}
\log J'(t) \le  C_0\int_{0}^{\infty} \frac{tK(t)}{(1+t)^{2}}\,dt  <\infty,
\end{align}
where the last inequality follows from \eqref{eq:003}.

From \eqref{eq:v001} and \eqref{eq:v002}, there exists a constant $C_1>0$ such that
\begin{align*}
t \le J(t) \le C_1t.
\end{align*}
From Bishop-Gromov comparison theorem (\cite[Theorem $2.1$]{Li12}), we have
\begin{align*}
\frac{|B(p,r)|}{\int_0^r J^{n-1}(t) \,dt}
\end{align*}
is nonincreasing and hence the proof is complete.
\end{proof}

\subsection{Fundamental pseudo-group and sliding}

Let $M$ be a noncompact complete Riemannian manifold and $q$ is a fixed point on $M$. Throughout this section, we assume that on $B(q,100\rho)$,
\begin{align} \label{eq:curv1}
|Rm| \le \lam^2
\end{align}
for $\lam>0$ and $\rho \ge 100$ such that
\begin{align} \label{eq:curv2}
\lam \rho \le \ep_0,
\end{align}
where $\ep_0<\frac{\pi}{100}$ is a small positive constant to be determined later. 

If we denote the exponential map at $q$ by $\ex_q$, then $\ex_q$ is a local diffeomorphism from the ball $\hat B(0,2\rho)\subset T_q M$ to $B(q,2\rho)$. Equipped with the pullback metric $\hat g=\ex_q^*g$ on $\hat B(0,2\rho)$, $\ex_q$ is a local isometry.

Now we recall the notion of \emph{fundamental pseudo-group} of Gromov, see also \cite{Fu88}. 
\begin{defn}\label{def:group01}
The fundamental pseudo-group at $q$ and scale $\rho$ is defined as
\begin{align*} 
\Gamma(q,\rho) \coloneqq \{\tau \in C\lc\hat B(0,\rho),\hat B(0,2\rho)\rc \mid \, \emph{\ex}_q\circ \tau=\emph{\ex}_q\}
\end{align*}
where $C\lc\hat B(0,\rho),\hat B(0,2\rho)\rc$ consists of all continuous maps from $\hat B(0,\rho)$ to $\hat B(0,2\rho)$.
\end{defn}

It is easy to see $\tau \in \Gamma(q,\rho)$ if and only if $\tau \in C\lc\hat B(0,\rho), T_qM\rc$ such that $\ex_q \circ \tau=\ex_q$ and $\tau(0) \in \hat B(0,\rho)$. Next we prove the following two lemmas. Notice that for a geodesic loop $\gamma(t)$, we always assume that $t \in [0,1]$ and $\gamma$ is smooth except possibly at the base point. Moreover, we denote its length by $L(\gamma)$.

\begin{lem}\label{L:group1}
There exists an one-to-one correspondence between $\Gamma(q,\rho)$ and the set of all geodesic loops at $q$ with length smaller than $\rho$.
\end{lem}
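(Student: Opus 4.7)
The plan is to exhibit explicit maps in both directions and show they are mutually inverse, relying only on the fact that under the smallness hypothesis $\lam \rho \le \ep_0$ the map $\ex_q : (\hat B(0,2\rho),\hat g) \to (B(q,2\rho),g)$ is a local isometry onto its image and $\hat g$ is $C^0$-close to the Euclidean metric on $\hat B(0,2\rho)$.

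For the forward direction, given $\tau \in \Gamma(q,\rho)$, evaluating $\ex_q \circ \tau = \ex_q$ at $0$ gives $\ex_q(\tau(0)) = q$. Set $v_\tau := \tau(0) \in \hat B(0,\rho)$ and define $\Phi(\tau)$ to be the geodesic loop $\gamma_\tau(t) := \ex_q(t v_\tau)$ for $t \in [0,1]$, which has length $|v_\tau| < \rho$. For the reverse direction, given a geodesic loop $\gamma$ at $q$ of length $L<\rho$, set $v := \dot\gamma(0)$ and construct $\tau_\gamma \in \Gamma(q,\rho)$ as follows: for each $w \in \hat B(0,\rho)$, the radial segment $\alpha_w(t) := tw$ is a $\hat g$-geodesic, and its image $\ex_q \circ \alpha_w$ is a geodesic in $M$ emanating from $q$. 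Since $\ex_q$ is a local isometry, one can lift this geodesic uniquely to a $\hat g$-geodesic $\tilde\alpha_w$ starting at $v$, with initial velocity $(d\ex_q)_v^{-1}(w)$. Put $\tau_\gamma(w) := \tilde\alpha_w(1)$ and define $\Psi(\gamma) := \tau_\gamma$. Smooth dependence of geodesics on initial conditions gives continuity of $\tau_\gamma$, while $\tau_\gamma(0) = v$ and $\ex_q \circ \tau_\gamma = \ex_q$ hold by construction.

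To verify that $\Phi$ and $\Psi$ are mutual inverses, the identity $\Phi \circ \Psi = \text{id}$ is immediate since $\Phi(\Psi(\gamma))(t) = \ex_q(tv) = \gamma(t)$. For $\Psi \circ \Phi = \text{id}$, note that both $\tau$ and $\tau_{\Phi(\tau)}$ are continuous maps $\hat B(0,\rho) \to \hat B(0,2\rho)$ that lift $\ex_q$ along $\ex_q$ and agree at $0$; since $\ex_q$ is a local diffeomorphism and $\hat B(0,\rho)$ is connected, the standard monodromy/uniqueness-of-lifts argument forces them to coincide.

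The main obstacle is a technical one: ensuring that $\tilde\alpha_w$ actually remains inside $\hat B(0,2\rho)$ so that $\tau_\gamma$ takes values in $\hat B(0,2\rho)$ as required by Definition \ref{def:group01}. The $\hat g$-length of $\tilde\alpha_w$ equals the $\hat g$-length of $\alpha_w$, namely $|w|<\rho$, because $\ex_q$ is a local isometry. Jacobi-field comparison on $\hat B(0,2\rho)$ under the curvature bound $|Rm|\le\lam^2$ with $\lam\rho\le\ep_0$ yields $(1-C\ep_0^2)g_{\text{Euc}} \le \hat g \le (1+C\ep_0^2)g_{\text{Euc}}$, so $\hat g$-geodesics are $C^0$-close to Euclidean straight lines. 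Consequently the endpoint lies within Euclidean distance $|v| + (1+C\ep_0^2)|w|$ from the origin, which is strictly less than $2\rho$ once $\ep_0$ is chosen sufficiently small. The same estimate shows the entire lift remains in $\hat B(0,2\rho)$, completing the argument.
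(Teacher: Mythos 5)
Your proof is correct and follows essentially the same construction as the paper: both directions of the correspondence are given by lifting radial geodesics through the local isometry $\ex_q$, exactly as in the paper's argument. Your extra care about the lift staying in $\hat B(0,2\rho)$ is fine, though the cleanest way to see it is the triangle inequality for $d_{\hat g}$ together with the Gauss lemma ($d_{\hat g}(0,\cdot)=|\cdot|$ radially), which gives $|\tau_\gamma(w)|\le |v|+|w|<2\rho$ with no curvature constant needed.
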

\begin{proof}
For any geodesic loop $\gamma$ based at $q$ with $L(\gamma) <\rho$, we can lift $\gamma$ through the map $\ex_q$ to a segment starting from $0$. Denote the end point by $v$, then $\gamma$ corresponds to a unique map $\tau^v \in \Gamma(q,\rho)$ such that $\tau^v(0)=v$. Specifically, for any $w \in \hat B(0,\rho)$, there exists a geodesic $\gamma_1=\ex_q(tw)$ based at $q$. Since $\ex_q$ is a local covering map and $\ex_q(v)=q$, we can lift $\gamma_1$ to a geodesic on $\hat B(0,2\rho)$ starting from $v$. If we denote the end point by $w'$, then we define $\tau^v(w)=w'$. Conversely, any $\tau \in \Gamma(q,\rho)$ corresponds to a unique geodesic loop based at $q$ defined by $\{\gamma(t)=\ex_q(t\tau(0)):t \in  [0,1]\}$. 
\end{proof}

\begin{rem}
If we denote the exponential map corresponding to $\hat g$ by $\emph{Exp}$, then
$$
\tau^v=\emph{Exp}_v \circ (d_v \exp_q)^{-1}.
$$
\end{rem}

There is a natural product, denoted by $*$, in $\Gamma(q,\rho)$. More precisely, for any $\tau_1,\tau_2 \in \Gamma(q,\rho) $ such that $v \coloneqq \tau_1(\tau_2(0)) \in \hat B(0,\rho)$, we define for any $x \in \hat B(0,\rho)$,
\begin{align} \label{eq:product}
\tau_1 *\tau_2(x) =\tau_1(\tau_2(x)).
\end{align}
By this definition, $\tau_1 *\tau_2$ is identical with $\tau^v$, which by Lemma \ref{L:group1} is the unique map in $\Gamma(q,\rho)$ such that $\tau^v(0)=v$. If we regard $\tau_1,\tau_2$ and $\tau^v$ as short geodesic loops, then $\tau^v$ is the unique geodesic loop in the short homotopy class of the loop ``$\tau_2$ followed by $\tau_1$". Therefore the product \eqref{eq:product} agrees with Gromov's product of short geodesic loops, see \cite[Definition $2.2.3$]{BK81}.

Therefore, by Lemma \ref{L:group1} and \eqref{eq:product}, when we refer to the small geodesic loops and their products, the corresponding local isometries and compositions in the fundamental pseudo-group are implicitly understood and vice versa.

\begin{lem}\label{L:group2}
$B(q,\rho)$ is isometric to $\hat B(0,\rho)/\Gamma(q,2\rho)$.
\end{lem}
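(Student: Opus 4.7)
The plan is to show that the exponential map $\exp_q$ descends to the claimed isometry. Define
$$
\Phi : \hat B(0,\rho)/\Gamma(q,2\rho) \longrightarrow B(q,\rho), \qquad [v] \mapsto \exp_q(v),
$$
where $v \sim w$ iff there exists $\tau \in \Gamma(q,2\rho)$ with $\tau(v)=w$. The identity $\exp_q \circ \tau = \exp_q$ makes $\Phi$ well-defined, and the fact that every $x \in B(q,\rho)$ is the endpoint of a minimizing geodesic from $q$ of length $< \rho$ (whose initial velocity $v \in \hat B(0,\rho)$ satisfies $\exp_q(v)=x$) gives surjectivity.

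For injectivity, suppose $v,w \in \hat B(0,\rho)$ satisfy $\exp_q(v)=\exp_q(w)=x$. I would construct the required $\tau \in \Gamma(q,2\rho)$ via path lifting. For each $y \in \hat B(0,2\rho)$, form the piecewise geodesic path $\alpha_y$ in $M$ that first traces $t\mapsto \exp_q((1-t)v)$ from $x$ to $q$ and then $t\mapsto \exp_q(ty)$ from $q$ to $\exp_q(y)$; its length is $|v|+|y| < 3\rho$. Using the smallness \eqref{eq:curv1}--\eqref{eq:curv2} to ensure $\exp_q$ is a local diffeomorphism on $\hat B(0,4\rho)$, lift $\alpha_y$ starting at $w$; the lift has length $<3\rho$ and therefore remains inside $\hat B(0,4\rho)$. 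Set $\tau(y)$ to be its endpoint. Continuity of $\tau$ follows from smooth dependence of geodesic lifts on initial data, and $\exp_q \circ \tau = \exp_q$ holds by construction, so $\tau \in \Gamma(q,2\rho)$. Taking $y=v$, the path $\alpha_v$ traverses a single radial geodesic forward and then backward; its lift starting at $w$ therefore returns to $w$, proving $\tau(v)=w$.

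Finally, I would upgrade the bijection $\Phi$ to an isometry. Since $\exp_q$ is a local isometry, so is $\Phi$, and any path in $\hat B(0,\rho)$ from $v$ to some $\tau(w) \in \hat B(0,\rho)$ projects to a path in $B(q,\rho)$ of equal length, giving
$$
d_{B(q,\rho)}(\exp_q(v),\exp_q(w)) \le d([v],[w]).
$$
For the reverse inequality, take a minimizing curve $\gamma$ in $B(q,\rho)$ from $\exp_q(v)$ to $\exp_q(w)$; its length is at most $|v|+|w| < 2\rho$, so lifting $\gamma$ through $\exp_q$ starting at $v$ produces a path in $\hat B(0,3\rho)$ of the same length terminating at some $\tilde w$ with $\exp_q(\tilde w) = \exp_q(w)$. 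The injectivity argument of the previous paragraph then furnishes $\tau \in \Gamma(q,2\rho)$ with $\tau(w)=\tilde w$, giving $d([v],[w]) \le d_{B(q,\rho)}(\exp_q(v),\exp_q(w))$. The main obstacle is the injectivity step: one must verify carefully that the lift defining $\tau$ is unambiguous, remains in $\hat B(0,4\rho)$ so $\tau$ lies in the correct pseudo-group, and that the back-and-forth round trip along $\alpha_v$ closes up under lifting. Once this is in place, the rest of the argument is a routine exploitation of $\exp_q$ being a local isometry with controlled lifting radius.
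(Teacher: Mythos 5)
Your argument is correct and follows the same route as the paper: surjectivity from minimizing geodesics, and injectivity by lifting the radial geodesic from $\exp_q(w)$ back to $q$ starting at the other preimage to produce an element of $\Gamma(q,2\rho)$ carrying one lift to the other (the paper phrases this by invoking the map $\tau^v$ of Lemma \ref{L:group1} rather than re-deriving it by hand via path lifting). The closing distance comparison is extra relative to the paper and is in fact automatic---a bijective local isometry is an isometry of the induced length metrics---which also sidesteps the small imprecision in your reverse inequality, where the element connecting $w$ to $\tilde w$ is a priori only guaranteed to lie in $\Gamma(q,4\rho)$.
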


\begin{proof}
We consider the map $\ex_q: \hat B(0,\rho)/\Gamma(q,2\rho) \to B(q,\rho)$. The surjectivity is obvious. For the injectivity, we assume that there exist $w_1$ and $w_2$ such that $\ex_q(w_1)=\ex_q(w_2)=q' \in B(q,\rho)$. Denote the geodesic from $q'$ to $q$ by $\{\gamma_1(t) \coloneqq \ex_q((1-t)w_2):\, t\in [0,1]\}$, then $\gamma_1$ can be lift to a geodesic starting from $w_1$. If we denote the end point by $v$, then $|v| \le |w_1|+|w_2|<2\rho$. Therefore the map $\tau^v$ is well defined and by its definition $\tau^v(w_2)=w_1$.
\end{proof}

\begin{defn}\label{def:rigid}
For any geodesic loop $\gamma \in \Gamma(q,\rho)$, its holonomy motion is defined as
\begin{align*} 
\mathbf m(\gamma):\,& T_qM \to T_qM \\
\mathbf m(\gamma)(x)=& \mathbf r(\gamma)(x)+\mathbf t(\gamma),
\end{align*}
where the rotational part $\mathbf r(\gamma)$ is the parallel transport around $\gamma$ and the translational part $\mathbf t(\gamma):=\mathbf r(\gamma)(\dot \gamma(0))$. The norm of $\mathbf r(\gamma)$ is defined by
\begin{align*} 
\|\mathbf r(\gamma)\| \coloneqq \max\{\angle (\mathbf r(\gamma)(v),v) \mid\,v\in T_qM\}.
\end{align*}
\end{defn}

Notice that $\|\mathbf r(\gamma)\|$ is uniformly comparable to the usual matrix norm $|\mathbf r(\gamma)-I| \coloneqq \max\{|\mathbf r(\gamma)v-v|\mid\,v\in T_qM,\,|v|=1\}$, where $I$ is the identity map. That is,
\begin{align*}
 \frac{2}{\pi} \|\mathbf r(\gamma)\| \le |\mathbf r(\gamma)-I| \le \|\mathbf r(\gamma)\|.
\end{align*}

For $\gamma \in \Gamma(q,\rho)$, we set $c=\mathbf t(\gamma)$, $\mathbf r_c=\mathbf r(\gamma)$ and $\tau_c$ to be the corresponding local isometry of $\gamma$. We recall the following lemma in \cite{Mi10} which indicates that $\tau_c$ is almost a translation if both $\|\mathbf r_c\|$ and the curvature are sufficiently small.

\begin{lem}[Lemma $2.4$ of \cite{Mi10}]\label{L:es1}
For any point $w \in \hat B(0,\rho)$,
\begin{align*} 
d_{\hat g}(\tau_c(w),\mathbf r_c^{-1}(w+c)) \le \lam^2|c||w|(|c|+|w|).
\end{align*}
\end{lem}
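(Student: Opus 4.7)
The plan is to express $\tau_c$ through the $\hat g$-exponential map at $v$ and the differential of $\exp_q$, then compare with the Euclidean rigid motion $w\mapsto v + \mathbf r_c^{-1}(w) = \mathbf r_c^{-1}(w+c)$. Using the remark following Lemma \ref{L:group1}, one may write $\tau_c(w) = \text{Exp}_v^{\hat g}(\xi)$ where $\xi \coloneqq (d_v\exp_q)^{-1}(w) \in T_v\hat B$ and $v = \mathbf r_c^{-1}(c) = \tau_c(0)$. A triangle inequality reduces the problem to estimating
\[
d_{\hat g}\bigl(\text{Exp}_v^{\hat g}(\xi),\,v+\xi\bigr) \quad \text{and} \quad |\xi - \mathbf r_c^{-1}(w)|
\]
separately.

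For the second quantity, I would use the Jacobi field description of $d_v\exp_q$: for $X\in T_qM$, $d_v\exp_q(X) = J(1)$ where $J$ is the Jacobi field along the lifted loop $\tilde\gamma(t) = tv$ with $J(0)=0$ and $J'(0)=X$. Since $\exp_q$ is a local isometry, $\hat g$-parallel transport along $\tilde\gamma$ corresponds to parallel transport along $\gamma$ in $(M,g)$, which by definition is $\mathbf r_c$. A Gronwall estimate applied to the Jacobi equation $J''+R(J,\dot{\tilde\gamma})\dot{\tilde\gamma}=0$, using $|R|\le \lambda^2$ and $|\dot{\tilde\gamma}|=|c|$, then yields $|d_v\exp_q(X)-\mathbf r_c(X)| \lesssim \lambda^2|c|^2|X|$. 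Inverting this (permissible since $\lambda|c|\le\ep_0$) gives $|\xi - \mathbf r_c^{-1}(w)|\lesssim \lambda^2|c|^2|w|$.

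For the first quantity, I would work in the normal-coordinate chart. Since $\hat B\subset T_qM$ is exactly the normal-coordinate chart for $\hat g$ at $0$, the Christoffel symbols admit the Taylor expansion $\hat\Gamma^k_{ij}(x) = -\tfrac13\bigl(R^k{}_{ijl}+R^k{}_{jil}\bigr)(0)\,x^l + O(\lambda^2|x|^2)$. Writing $\sigma(t) = \text{Exp}_v^{\hat g}(t\xi) = v + t\xi + \delta(t)$, the geodesic equation reads $\ddot\delta^k = -\hat\Gamma^k_{ij}(\sigma)\dot\sigma^i\dot\sigma^j$. The crucial cancellation is that the first Bianchi symmetry (antisymmetry of $R$ in the first two indices) forces $R^k{}_{ijl}\xi^i\xi^j = 0$, so the pure-$\xi$ contribution in the Christoffel symbol vanishes along the direction of motion; only the $v$-part of $\sigma(t)$ contributes to leading order, and $|\hat\Gamma(\sigma)(\xi,\xi)| \lesssim \lambda^2|v||\xi|^2$. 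Integrating twice from $\delta(0)=\dot\delta(0)=0$ then gives $|\delta(1)|\lesssim \lambda^2|c||w|^2$, and combining with Step 1 produces the desired bound $\lambda^2|c||w|(|c|+|w|)$.

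The hard part will be preserving the sharp shape of the bound in the second step. A naive norm estimate $|\hat\Gamma(x)|\lesssim \lambda^2|x|$ along $\sigma$ only yields $\lambda^2(|v|+|\xi|)|\xi|^2$, producing a parasitic $\lambda^2|w|^3$ term that would be too weak for the later applications of the lemma. Exploiting the Bianchi cancellation of $R^k{}_{ijl}\xi^i\xi^j$ to kill this term is the essential geometric input; it is also consistent with the sanity check that when $v=0$ (no loop) the map $\tau_c$ becomes the identity and both sides of the inequality vanish identically.
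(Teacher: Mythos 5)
This lemma is quoted from \cite{Mi10} without proof in the paper, so there is no in-paper argument to compare against; I am judging your proposal on its own terms. Your reduction is the right one, and your first estimate is sound: with $\xi=(d_v\exp_q)^{-1}(w)$ and $v=\mathbf r_c^{-1}(c)=\tau_c(0)$, the Jacobi field $J$ along the loop with $J(0)=0$, $J'(0)=X$ satisfies $|J(1)-\mathbf r_c(X)|\le C\lam^2|c|^2|X|$, which gives $|\xi-\mathbf r_c^{-1}(w)|\le C\lam^2|c|^2|w|$, within the budget.

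The gap is in your second step. The expansion $\hat\Gamma^k_{ij}(x)=-\tfrac13\bigl(R^k{}_{ijl}+R^k{}_{jil}\bigr)(0)\,x^l+O(\lam^2|x|^2)$ is not available under the hypotheses of the lemma: the quadratic remainder in the Taylor expansion of the Christoffel symbols in geodesic normal coordinates is controlled by $\sup|\na Rm|\,|x|^2$, and here only $|Rm|\le\lam^2$ is assumed, so that remainder can be arbitrarily large compared with $\lam^2|x|^2$. Indeed, even the cruder bound $|\hat\Gamma(x)|\le C\lam^2|x|$ in \emph{geodesic} coordinates does not follow from a two-sided curvature bound alone --- this is precisely why the paper passes to harmonic coordinates (Lemma \ref{L:es2}, estimate \eqref{eq:esb}) whenever first-derivative control of $\hat g$ is needed, e.g.\ in Proposition \ref{P:es3}. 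Since you yourself point out that without the Bianchi cancellation the estimate degenerates to $\lam^2(|c|+|w|)|w|^2$, which carries the inadmissible $\lam^2|w|^3$ term, the argument as written does not close; the cancellation you need is attached to an expansion you cannot justify. The standard way out (and, as far as I know, Minerbe's actual route) avoids coordinate expansions for this step entirely: using your own identity $\xi=P_{0\to v}(\mathbf r_c^{-1}w)$, where $P_{0\to v}$ is radial $\hat g$-parallel transport, one studies the variation $F(s,t)=\text{Exp}_{sv}\bigl(t\,P_{0\to sv}(\mathbf r_c^{-1}w)\bigr)$, whose $t=1$ slice joins $\mathbf r_c^{-1}(w)$ to $\tau_c(w)$. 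The field $\partial_sF(s,\cdot)$ is a Jacobi field with initial value $v$ and vanishing initial covariant derivative, so the displacement $\tau_c(w)-\mathbf r_c^{-1}(w)-v$ is controlled by second-order Jacobi estimates and the holonomy-versus-area and radial-transport comparisons of \cite[\S 2.3, \S 6]{BK81}, all of which require only $|Rm|\le\lam^2$. I would rewrite the second half of the proof along these lines.
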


For a geodesic loop $\gamma$ at $q$ with $L(\gamma) <\rho$, we define the \emph{sliding} of $\gamma$ to nearby points as follows. If $x$ is a point near $q$ such that $d(q,x)<\inj(q)$, let $w$ be the unique lift of $x$ on $\hat B(0,\inj(q))$. For the map $\tau \in \Gamma(q,\rho)$ corresponding to $\gamma$, we set $w'=\tau(w)$. Then the sliding of $\gamma$ at $x$ is defined to be the geodesic loop $\gamma_x=\ex_q(\hat \gamma)$ based at $x$, where $\hat \gamma$ is a geodesic with respect to $\hat g$ connecting $w$ and $w'$. Here we have used an important fact (\cite[Corollary $8.13$]{Gr06}) that any two points in $\hat B(q,2\rho)$ are connected by a unique minimizing geodesic. 

If $d(q,x)\ge \inj(q)$, there is no natural way to define the sliding of $\gamma$ to $x$ since the lift of $x$ to $\hat B(0,\rho)$ may not be unique. To overcome this, we consider a curve $\{\alpha(t): t \in [0,1]\}$ starting from $q$ such that for any $t \in [0,1]$, 
\begin{align}\label{eq:res}
d(q,\alpha(t)) <\rho.
\end{align}
Notice that $\alpha(t)$ has a unique lift $\{\tilde \alpha(t): t \in [0,1]\}$ on $\hat B(0,\rho)$ starting from $0$. Now we have the following definition.

\begin{defn}[Sliding along a curve]\label{def:sliding}
For any $\gamma \in \Gamma(q,\rho)$, the sliding of $\gamma$ at $\alpha(t)$ is defined to be the geodesic loop $\gamma_t \coloneqq \ex_q(\tilde \gamma_t)$, where $\tilde \gamma_t$ is the unique geodesic between $\tilde \alpha(t)$ and $\tau(\tilde \alpha(t))$ and $\tau$ is the corresponding map of $\gamma$.
\end{defn}

We next show that the sliding of $\gamma$ along $\{\alpha(t): 0\le t \le 1 \}$ is transitive. More precisely,

\begin{lem}[Transitivity] \label{L:trans}
Let $\gamma_t$ be the sliding of a geodesic loop $\gamma$ at $q=\alpha(0)$ along a curve $\alpha(t)$. Then for any $0 \le t_1,t_2 \le 1$, the sliding of $\gamma_{t_1}$ to $\alpha(t_2)$ along $\alpha$ is $\gamma_{t_2}$. 
\end{lem}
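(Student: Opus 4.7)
My approach is to build a local isometry $F$ between the tangent spaces $(T_q M, \hat g)$ and $(T_{\alpha(t_1)} M, \hat g_1)$ equipped with their respective pullback metrics, and then recognize the sliding of $\gamma_{t_1}$ along $\alpha$ as the $F$-image of the sliding of $\gamma$ along $\alpha$. The map $F$ is defined locally near $\tilde\alpha(t_1)$ by $F := \ex_{\alpha(t_1)}^{-1}\circ \ex_q$, sending $\tilde\alpha(t_1)$ to $0$. Since $\ex_q$ and $\ex_{\alpha(t_1)}$ are local isometries on their $2\rho$-balls by the curvature pinching \eqref{eq:curv1}--\eqref{eq:curv2}, and since $\hat g$-geodesics in $\hat B(0,2\rho)$ are unique (\cite[Corollary $8.13$]{Gr06}), I can extend $F$ by development along $\hat g$-geodesics to an open set containing $\tilde\alpha([t_1,t_2])$, $\tau(\tilde\alpha([t_1,t_2]))$, and all the connecting segments $\tilde\gamma_t$ for $t\in[t_1,t_2]$. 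The resulting $F$ is an isometry onto its image and satisfies $\ex_{\alpha(t_1)}\circ F = \ex_q$.

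Once $F$ is available, I would verify two compatibility statements. First, by uniqueness of lifts under $\ex_{\alpha(t_1)}$, $F(\tilde\alpha(t))=\bar\alpha(t)$ for every $t\in[t_1,t_2]$, where $\bar\alpha$ is the lift of $\alpha|_{[t_1,t_2]}$ to $T_{\alpha(t_1)}M$ starting at $0$. Secondly, let $\sigma\in \Gamma(\alpha(t_1),\rho')$ be the element corresponding to $\gamma_{t_1}$; then $F\circ \tau = \sigma \circ F$ on the common domain. To see this, observe that both sides are local isometries which project to the identity under $\ex_{\alpha(t_1)}$ (since $\ex_{\alpha(t_1)}\circ F = \ex_q$ and $\ex_q\circ\tau=\ex_q$), so by the uniqueness part of Lemma \ref{L:group1} it suffices to check agreement at $\tilde\alpha(t_1)$. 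But $F(\tau(\tilde\alpha(t_1)))$ is the endpoint of $F(\tilde\gamma_{t_1})$, which is the lift of $\gamma_{t_1}$ starting at $0$ in $T_{\alpha(t_1)}M$, and by definition this endpoint is $\sigma(0)=\sigma(F(\tilde\alpha(t_1)))$.

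With these facts, transitivity is immediate. Since $F$ is an isometry sending $\tilde\alpha(t_2)\mapsto\bar\alpha(t_2)$ and $\tau(\tilde\alpha(t_2))\mapsto \sigma(\bar\alpha(t_2))$, the $\hat g$-geodesic $\tilde\gamma_{t_2}$ maps to the unique $\hat g_1$-geodesic joining $\bar\alpha(t_2)$ and $\sigma(\bar\alpha(t_2))$, which is precisely the segment $\bar\gamma_{t_2}$ produced by Definition \ref{def:sliding} applied to $\gamma_{t_1}$ along $\alpha|_{[t_1,t_2]}$. Projecting by $\ex_{\alpha(t_1)}$ and using $\ex_{\alpha(t_1)}\circ F=\ex_q$ yields
\[
\ex_{\alpha(t_1)}(\bar\gamma_{t_2}) \;=\; \ex_q(\tilde\gamma_{t_2}) \;=\; \gamma_{t_2},
\]
which is the statement of the lemma.

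The main technical obstacle is the very first step: ensuring that $F$ extends unambiguously to a single connected domain covering $\tilde\alpha([t_1,t_2])$, $\tau(\tilde\alpha([t_1,t_2]))$ and all the segments $\tilde\gamma_t$, so that the conjugation identity $F\circ\tau=\sigma\circ F$ is valid where needed. I expect this to be routine under \eqref{eq:curv1}--\eqref{eq:curv2}: the $\hat g$-convexity of $\hat B(0,2\rho)$ and the uniform pinching ensure that the injectivity radius of $\ex_{\alpha(t_1)}$ is much larger than the diameter of the set $F$ must cover, so the only substantive work is careful bookkeeping of radii. The case $t_1>t_2$ is handled by applying the same argument to the reversed curve.
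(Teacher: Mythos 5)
Your proposal is correct and follows essentially the same route as the paper: the paper also transfers everything to $T_{\alpha(t_1)}M$, observing that the endpoints of the lifted geodesics $\tilde\gamma_t$ trace out the lift of $\alpha$ through $v_1=\tau^{v_1}(0)$, which is exactly your intertwining identity $F\circ\tau=\sigma\circ F$ evaluated along the lift of $\alpha$; the paper simply keeps the isometry $F=\ex_{\alpha(t_1)}^{-1}\circ\ex_q$ implicit by speaking of lifts. The domain/radius bookkeeping you flag is handled the same way in the paper, via the restriction \eqref{eq:res} and the uniqueness of geodesics in $\hat B(0,2\rho)$.
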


\begin{proof}
Given $t_1$, we set $q_1=\alpha(t_1)$ and denote the sliding of $\gamma$ at $q_1$ to be $\gamma_1$. Moreover, we assume that $\{\alpha(t): 0\le t \le 1 \}$ is lifted to a curve $\tilde \alpha_1(t)$ on $\hat B(0,\rho) \subset T_{q_1}M$ such that $\tilde \alpha_1(t_1)=0$. If we set $\gamma_t$ to be the sliding of $\gamma$ at $\alpha(t)$, then we lift $\gamma_t$ to a geodesic starting from $\tilde \alpha_1(t)$ and denote the other end point by $\tilde \alpha_2(t)$. If we set $v_1=\tilde \alpha_2(t_1)$, then it is clear that $\tilde \alpha_2(t)$ is the unique lift of $\alpha(t)$ through $v_1$. In other words, $\tau^{v_1}(\tilde \alpha_1)=\tilde \alpha_2$. By the uniqueness of the geodesic from $\tilde \alpha_2(t)$ to $\tilde \alpha_1(t)$, it follows immediately that the sliding of $\gamma_1$ agrees with the sliding of $\gamma$ along $\alpha(t)$ for $t \in [0,1]$.
\end{proof}

As long as the length of the geodesic loop is smaller than $\rho$ and the local lift is possible, we can unambiguously define the sliding of $\gamma$ along any $\alpha(t)$ for all $t \in [0,1]$ without the restriction \eqref{eq:res}, since locally the lift of $\alpha$ at any base point is unique and the sliding is transitive. Notice that Lemma \ref{L:trans} also holds for general curve $\alpha$.

Next, we show that the sliding depends only on the initial geodesic loop and the homotopy class of a given curve.

\begin{prop} \label{P:homo}
Let $\{\alpha_i(t): 0\le t \le 1 \}\,(i=1,2)$ be curves from $q$ to $q'$ and $\{\alpha_s(t): 0\le t \le 1, 1\le s\le 2 \}$ a homotopy between $\alpha_1$ and $\alpha_2$ such that for a fixed $\rho>0$, any $t \in [0,1]$ and $s\in [1,2]$, the curvature assumption \eqref{eq:curv2} is satisfied at $\alpha_s(t)$. Given a geodesic loop $\gamma$ at $q$ such that for any $1 \le s\le 2$, the length of the sliding of $\gamma$ along $\alpha_s$ is smaller than $\rho$, the slidings of $\gamma$ at $q'$ along $\alpha_1$ and $\alpha_2$ are identical.  
\end{prop}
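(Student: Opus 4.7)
The plan is to view the map $s \mapsto \gamma_s$, where $\gamma_s$ denotes the sliding of $\gamma$ along $\alpha_s$ at $q'$, as a map from the connected interval $[1,2]$ into a discrete set, and thereby conclude that it is constant. To make this precise I would encode $\gamma_s$ by its corresponding element $\tau_s \in \Gamma(q',2\rho)$ (equivalently, by the single vector $\tau_s(0) \in T_{q'}M$), which is legitimate since the length hypothesis guarantees the sliding produces geodesic loops of length less than $\rho$ so that Lemma~\ref{L:group1} applies at $q'$.

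Next I would establish continuity of $s \mapsto \tau_s(0)$. By compactness, the image of the homotopy $(t,s) \mapsto \alpha_s(t)$ is covered by finitely many balls on each of which $\exp$ is a local diffeomorphism (guaranteed by the curvature assumption \eqref{eq:curv2} at every point of the homotopy together with \eqref{eq:curv1}). Using a Lebesgue number for this cover, fix $s_0 \in [1,2]$ and choose a partition $0 = t_0 < t_1 < \cdots < t_N = 1$ so that for each $k$ the arc $\alpha_{s_0}([t_{k-1},t_k])$ lies well inside one such ball $B_k$. For $s$ sufficiently close to $s_0$, the arc $\alpha_s([t_{k-1},t_k])$ also lies in $B_k$, so on each piece the sliding reduces to the intrinsic construction inside one normal chart: one lifts the short arc to the tangent space, applies the local isometry already chosen, and projects back. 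Since local lifts depend continuously on the curve endpoints in the $C^0$ topology and the involved maps are smooth, composing the $N$ pieces via Lemma~\ref{L:trans} shows that $\tau_s(0)$ varies continuously in $s$ near $s_0$.

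Finally I would use discreteness. The set $\{v \in \hat B(0,\rho) : \exp_{q'}(v) = q'\}$ is discrete because $\exp_{q'}$ is a local diffeomorphism on $\hat B(0,100\rho)$; equivalently, $\Gamma(q',\rho)$ is a discrete (in fact finite) subset of $C(\hat B(0,\rho),\hat B(0,2\rho))$ via $\tau \mapsto \tau(0)$. A continuous map from the connected interval $[1,2]$ into this discrete set must be constant, so $\tau_1 = \tau_2$ and therefore the slidings of $\gamma$ along $\alpha_1$ and $\alpha_2$ coincide.

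The main technical obstacle is the bookkeeping in the continuity step: one must verify that as $s$ varies, the chain of local lifts used to define the sliding along $\alpha_s$ can be chosen compatibly with those for $\alpha_{s_0}$, so that transitivity (Lemma~\ref{L:trans}) actually identifies the two resulting elements of $\Gamma(q',2\rho)$ and not merely two nearby elements. This is why I want to fix the partition using a Lebesgue number adapted to a single cover by normal neighborhoods of the entire homotopy image, and invoke the uniqueness of minimizing geodesics in $\hat B(0,2\rho)$ cited after Definition~\ref{def:sliding} when comparing the lifts for $\alpha_{s_0}$ and $\alpha_s$ inside each $B_k$.
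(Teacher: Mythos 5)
Your argument is correct, and it rests on the same underlying mechanism as the paper's proof (a continuously varying object valued in a discrete set must be constant), but you deploy it at the opposite end of the construction. The paper first uses transitivity (Lemma \ref{L:trans}) to reduce to the case where the entire homotopy image lies in $B(q,\rho)$, then lifts the whole two-parameter family $\alpha_s(t)$ at once to $\hat B(0,2\rho)\subset T_qM$; since the lifted endpoints $\tilde\alpha_s(1)$ move continuously in the discrete fiber $\ex_q^{-1}(q')$, they coincide for all $s$, and the slid loop at $q'$ is read off from that common endpoint. You instead keep the base point moving, establish continuity of $s\mapsto\tau_s(0)\in T_{q'}M$ by chaining local lifts over a Lebesgue-number partition, and apply discreteness to $\Gamma(q',\rho)$ at the terminal point. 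Both are sound; the trade-off is that the paper's single global lift absorbs all of the compatibility bookkeeping you correctly flag as the delicate step, whereas your version, once you formalize the comparison of the lift chains for $\alpha_s$ and $\alpha_{s_0}$ inside each ball $B_k$, is essentially performing that same homotopy-lifting argument one chart at a time. One small caution: when you compare the intermediate slid loops at $\alpha_s(t_k)$ and $\alpha_{s_0}(t_k)$, these are based at different points, so "close" must be interpreted via a common chart (or by sliding along the short transverse path $u\mapsto\alpha_{(1-u)s+us_0}(t_k)$); your appeal to the uniqueness of minimizing geodesics in $\hat B(0,2\rho)$ is the right tool for this, but it is exactly where the reduction to the paper's argument happens.
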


\begin{proof}
From the transitivity of the sliding, we can assume $d(q,\alpha_s(t))<\rho$ for any $t \in [0,1]$ and $s\in [1,2]$, by decomposing the original homotopy. From our curvature assumption, we can lift $\alpha_s(t)$ to a homotopy $\tilde \alpha_s(t)$ on $\hat B(0,2\rho) \subset T_qM$. It is clear from our definition that the sliding of $\gamma$ at $q'$ along $\alpha_1$ agrees with that along $\alpha_2$.
\end{proof}

The sliding also preserves the local group structure. 

\begin{prop} \label{P:group}
Let $\{\alpha(t): 0\le t \le 1 \}$ be a curve from $q$ to $q'$ such that for any $t \in [0,1]$, the curvature assumption \eqref{eq:curv2} is satisfied at $\alpha(t)$. For any $\gamma_0^i \in \Gamma(q,\rho)\,(i=1,2,3)$ such that $\gamma_0^1*\gamma_0^2=\gamma_0^3$, suppose that the slidings along $\alpha(t)$ of $\gamma_0^i$, denoted by $\gamma^i_t$, are well defined and $L(\gamma^i_t)<\rho/2$. Then for any $t \in [0,1]$,
\begin{align*}
\gamma_t^1*\gamma_t^2=\gamma_t^3.
\end{align*}
\end{prop}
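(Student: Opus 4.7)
The plan is to work pointwise at each $t\in[0,1]$, to lift everything to $T_qM$ via $\exp_q$, and to transfer the identity $\tau_1\circ\tau_2=\tau_3$ (which is the hypothesis $\gamma_0^1*\gamma_0^2=\gamma_0^3$ rewritten via \eqref{eq:product}) to the analogous identity in $\Gamma(\alpha(t),\cdot)$. Fix $t\in[0,1]$ and set $q_t=\alpha(t)$, $V=\tilde\alpha(t)\in\hat B(0,\rho)\subset T_qM$. Let $\tau_i\in\Gamma(q,\rho)$ be the element corresponding to $\gamma_0^i$ and $\sigma_i\in\Gamma(q_t,\rho/2)$ the one corresponding to $\gamma_t^i$. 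Since $\exp_q$ is a local diffeomorphism near $V$ onto a neighborhood of $q_t$, we can define the local $\hat g$-isometry
\[
\Phi:=\bigl(\exp_q|_{\text{nbhd of }V}\bigr)^{-1}\circ\exp_{q_t}
\]
on a neighborhood of $0\in T_{q_t}M$, with $\Phi(0)=V$. Unwinding Definition \ref{def:sliding}, the element $\sigma_i$ is characterised by the relation $\Phi(\sigma_i(0))=\tau_i(V)$.

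The central step is the conjugation identity $\Phi\circ\sigma_i=\tau_i\circ\Phi$ on their common domain. Both sides are local $\hat g$-isometries that send $0\in T_{q_t}M$ to the same point $\tau_i(V)=\Phi(\sigma_i(0))$, and both satisfy $\exp_q\circ(\cdot)=\exp_{q_t}$, since
\[
\exp_q\circ(\tau_i\circ\Phi)=\exp_q\circ\Phi=\exp_{q_t}\quad\text{and}\quad\exp_q\circ(\Phi\circ\sigma_i)=\exp_{q_t}\circ\sigma_i=\exp_{q_t}.
\]
The uniqueness of lifts through the local diffeomorphism $\exp_q$ (any two continuous lifts agreeing at one point must agree on any connected common domain) forces equality. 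Applying this identity twice and using the hypothesis $\tau_1\circ\tau_2=\tau_3$ on $\hat B(0,\rho)$, one then computes
\[
\Phi\circ(\sigma_1\circ\sigma_2)=\tau_1\circ\Phi\circ\sigma_2=\tau_1\circ\tau_2\circ\Phi=\tau_3\circ\Phi=\Phi\circ\sigma_3.
\]
Cancelling $\Phi$ by local injectivity yields $\sigma_1\circ\sigma_2=\sigma_3$, which by \eqref{eq:product} and Lemma \ref{L:group1} is exactly $\gamma_t^1*\gamma_t^2=\gamma_t^3$, as required.

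The main obstacle is the bookkeeping of domains: one must ensure that $\Phi$ extends over a neighborhood containing $0$, $\sigma_2(0)$, and $\sigma_1(\sigma_2(0))$, that $\sigma_1\circ\sigma_2$ defines an element of $\Gamma(q_t,\rho/2)$, and that each $\tau_i(V)$ lies in the injectivity range of $\exp_q$ around $V$ so that $\Phi$ may legitimately be cancelled. All of these reduce to effective size bounds. The length assumption $L(\gamma_t^i)<\rho/2$ gives $|\sigma_i(0)|<\rho/2$; Lemma \ref{L:es1} together with the curvature bound \eqref{eq:curv2} controls the displacement $|\tau_i(V)-V|$ in terms of $|\tau_i(0)|$ and the rotational norm; and the curvature estimate \eqref{eq:curv1} yields a lower bound for the injectivity radius on $\hat B(0,2\rho)$ comparable to $\rho$. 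Once these compatibility estimates are in place the conjugation argument applies uniformly for every $t\in[0,1]$ with the same $\rho$, and no separate continuity or connectedness argument in $t$ is needed.
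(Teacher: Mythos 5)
Your conjugation idea is mathematically sound where it applies, and it is in fact a cleaner, covering-space reformulation of what the paper does: your intertwining map $\Phi$ (a lift of $\exp_{q_t}$ through $\exp_q$) plays exactly the role of the path-conjugation $\tilde c(t)$ in the paper's proof, and the identity $\Phi\circ\sigma_i=\tau_i\circ\Phi$ is the change-of-basepoint isomorphism for the fundamental pseudo-group that the paper realizes instead through explicit short homotopies. For a single small step these arguments are two packagings of the same fact.

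The gap is in your closing claim that "the conjugation argument applies uniformly for every $t\in[0,1]$ with the same $\rho$, and no separate continuity or connectedness argument in $t$ is needed." That is not true, and the way you set things up already betrays the issue: you write $V=\tilde\alpha(t)\in\hat B(0,\rho)$, but the proposition places no restriction on how far $\alpha$ wanders from $q$. The sliding along a general $\alpha$ is defined (see the discussion after Definition \ref{def:sliding}) precisely by subdividing $\alpha$ into short pieces and invoking transitivity; for a long $\alpha$ the lift $\tilde\alpha$ to $T_qM$ leaves $\hat B(0,2\rho)$ and your map $\Phi$ cannot be defined at all. Even if $\alpha$ satisfies $d(q,\alpha(t))<\rho$, the computation $\tau_1\circ\Phi\circ\sigma_2=\tau_1\circ\tau_2\circ\Phi$ requires $\tau_2(V)$ to lie in $\hat B(0,\rho)$ where $\tau_1$ is defined, and since $|\tau_2(V)|$ can be as large as $|V|+L(\gamma_0^2)$ up to small corrections, this fails once $|V|$ is close to $\rho$. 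The hypothesis $L(\gamma_t^i)<\rho/2$ bounds the loops, not the distance $\alpha$ travels, so it does not rescue you. The paper avoids this by first proving the identity only for $t_0\in[0,\ep]$ (so that all the relevant homotopies/lifts stay in a controlled ball) and then propagating along $\alpha$ via the transitivity of sliding, Lemma \ref{L:trans}. Your argument needs exactly the same subdivision-plus-transitivity step; once you add it, the proof is complete.
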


\begin{proof}
We assume that $L(\gamma^i_t) \le \rho/2-3\ep$ for $t \in [0,1]$. For a fixed $t_0 \in [0,\ep]$ and any closed loop $c$ at $q$, we define
\begin{align*}
\tilde c(t)=
\begin{cases}
\, \alpha(t_0(1-3t)) \quad &\text{if} \quad 0\le t \le \frac{1}{3}, \\
\, c(3t-1) \quad &\text{if} \quad \frac{1}{3}\le t \le \frac{2}{3}, \\
\, \alpha(t_0(3t-2)) \quad &\text{if} \quad \frac{2}{3}\le t \le 1.
\end{cases}
\end{align*}

Clearly, $\tilde \gamma_0^1 *\tilde \gamma_0^2$ and $\tilde \gamma_0^3$ are shortly homotopic and hence $\tilde \gamma_0^1 *\tilde \gamma_0^2=\tilde \gamma_0^3$. We claim that $\tilde \gamma_0^i$ and $\gamma_{t_0}^i$ are in the same short homotopy class at the base point $\alpha(t_0)$. Indeed, a short homotopy $F:[0,1] \times [0,1] \to M$ is constructed as
\begin{align*}
F(s,t)=
\begin{cases}
\, \alpha(t_0(1-3(1-s)t)) \quad &\text{if} \quad 0\le t \le \frac{1}{3}, \\
\, \gamma_s^i(3t-1) \quad &\text{if} \quad \frac{1}{3}\le t \le \frac{2}{3}, \\
\, \alpha(t_0(3s-2+3(1-s)t)) \quad &\text{if} \quad \frac{2}{3}\le t \le 1.
\end{cases}
\end{align*}
Therefore, $\gamma_{t_0}^1*\gamma_{t_0}^2=\gamma_{t_0}^3$. From the transitivity of the sliding, it is clear that for any $t \in [0,1]$,
\begin{align*}
\gamma_t^1*\gamma_t^2=\gamma_t^3.
\end{align*}
\end{proof}

\subsection{Estimates on the tangent space}
With the same assumptions \eqref{eq:curv1} and \eqref{eq:curv2}, we set $(x_1,x_2,\cdots,x_n)$ be the geodesic coordinates on $\hat B(0,\rho)$ inherited from $\R^n$ and we set $\hat g_{ij}$ to be the coefficients of $\hat g=\ex_q^*g$.

It follows from \eqref{eq:curv1} and the sectional curvature comparison thereom (see \cite[Theorem $27$]{Pe06}) that
 \begin{align}\label{eq:e0}
dt^2+\lc \frac{\sin \lam t}{\lam}\rc^2 ds^2_{n-1} \le \hat g \le dt^2+\lc \frac{\sinh \lam t}{\lam}\rc^2 ds^2_{n-1}
\end{align}
where $t=d_{\hat g}(0,\cdot)$ and $ds^2_{n-1}$ is the standard metric on $S^{n-1}$.
Therefore, by  \eqref{eq:curv2} for any $x \in \hat B(0,\rho)$,
 \begin{align}\label{eq:e1}
|\hat g_{ij}(x)-\delta_{ij}| \le C\lam^2 \rho^2
\end{align}
and
 \begin{align}\label{eq:e1aa}
(1-C\lam^2\rho^2)dx \le dV_{\hat g}(x) \le (1+C\lam^2\rho^2)dx.
\end{align}
Moreover, for any $a,b \in B_{\hat g}(0, \rho/2)$,
 \begin{align}\label{eq:e1a}
(1-C\lam^2 \rho^2)|a-b| \le d_{\hat g}(a,b) \le (1+C\lam^2 \rho^2) |a-b|.
\end{align}

In particular, since $\ep_0$ in our assumption \eqref{eq:curv2} is small, the inner product induced by $\hat g$ and the distance $d_{\hat g}$ at its tangent space are uniformly comparable to the Euclidean inner product and the Euclidean distance, respectively. 

Next, we prove some comparison estimates for the distance function.

\begin{lem}\label{lem:dis}
Under conditions \eqref{eq:curv1} and \eqref{eq:curv2}, if we set $\eta=d^2_{\hat g}(0,\cdot)/2$, then on $\hat B(0,\rho)$,
\begin{align} \label{eq:dis1}
|\na^2 \eta-\hat g | \le C \lam^2\rho^2.
\end{align}
Moreover, if $|\na^k Rm| \le c_k \lam^2 \rho^{-k}$ for any $k \ge 1$, then
\begin{align} \label{eq:dis2}
|\na^{k+2} \eta | \le C_k \lam^2 \rho^{2-k}.
\end{align}
\end{lem}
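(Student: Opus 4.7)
The plan is to work in the geodesic normal coordinates $(x^1,\dots,x^n)$ on $\hat B(0,\rho)$ coming from $\ex_q$, in which $\hat g$ is expressed. By Gauss's lemma, the $\hat g$-distance to the origin equals the Euclidean norm, so $\eta(x)=\tfrac{1}{2}|x|^2$ \emph{exactly} in these coordinates. This immediately gives the coordinate identity
\begin{align*}
(\na^2\eta)_{ij} \;=\; \partial_i\partial_j\eta \;-\; \Gamma^k_{ij}\,\partial_k\eta \;=\; \delta_{ij} \;-\; \Gamma^k_{ij}\, x_k,
\end{align*}
and hence
\begin{align*}
(\na^2\eta - \hat g)_{ij} \;=\; (\delta_{ij} - \hat g_{ij}) \;-\; \Gamma^k_{ij}\, x_k.
\end{align*}

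For the first estimate \eqref{eq:dis1}, the first summand is already controlled by $C\lam^2\rho^2$ via \eqref{eq:e1}. For the Christoffel term, $\Gamma^k_{ij}(0)=0$ in normal coordinates and the curvature bound forces $|\partial\Gamma|\lesssim|Rm|\le\lam^2$ throughout $\hat B(0,\rho)$; integrating along radial segments then yields $|\Gamma^k_{ij}(x)|\le C\lam^2|x|$ and hence $|\Gamma^k_{ij}x_k|\le C\lam^2|x|^2\le C\lam^2\rho^2$. Since \eqref{eq:e1} also guarantees that the coordinate norm and the $\hat g$-norm on $(0,2)$-tensors are comparable up to a factor close to $1$, \eqref{eq:dis1} follows.

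For the higher-order estimate \eqref{eq:dis2}, the key observation is that $\na\hat g=0$, so $\na^{k+2}\eta=\na^k(\na^2\eta-\hat g)$ for every $k\ge 1$. Under the hypothesis $|\na^m Rm|\le c_m\lam^2\rho^{-m}$, the standard normal-coordinate expansions for the metric and for its Christoffel symbols produce
\begin{align*}
|\partial^\alpha \hat g_{ij}|\le C_{|\alpha|}\lam^2\rho^{2-|\alpha|},\qquad |\partial^\alpha \Gamma^k_{ij}|\le C_{|\alpha|}\lam^2\rho^{1-|\alpha|}
\end{align*}
on $\hat B(0,\rho)$. Applying $\partial^\alpha$ to the coordinate identity above and using the product rule on $\Gamma^k_{ij}x_k$ then gives $|\partial^\alpha(\na^2\eta-\hat g)|\le C_k\lam^2\rho^{2-|\alpha|}$ for $|\alpha|=k$. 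One finally converts coordinate derivatives to covariant derivatives inductively; each exchange introduces extra Christoffel factors, but these are of the right $\rho$-scale to preserve the claimed bound, yielding \eqref{eq:dis2}.

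The main obstacle is the bookkeeping in this last step: one must justify both the Taylor-expansion bounds for $\hat g$ and $\Gamma$ in normal coordinates under the higher curvature-derivative hypotheses, and the stability of the $\rho$-scaling under iterated passage between $\partial$ and $\na$. These estimates are classical but require a careful induction on $k$ together with some care in the combinatorics of the product rule.
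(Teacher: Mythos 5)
Your reduction of \eqref{eq:dis1} to the coordinate identity $(\na^2\eta-\hat g)_{ij}=(\delta_{ij}-\hat g_{ij})-\Gamma^k_{ij}x_k$ is fine (Gauss's lemma does give $\eta=|x|^2/2$ in these coordinates), but the estimate of the Christoffel term contains a genuine gap. The assertion that ``the curvature bound forces $|\partial\Gamma|\lesssim|Rm|$'' is not correct: $Rm$ is an antisymmetrized combination of $\partial\Gamma$ plus quadratic terms, and a $C^0$ bound on $Rm$ does not control $\partial\Gamma$ pointwise. More seriously, the conclusion $|\Gamma^k_{ij}(x)|\le C\lam^2|x|$ --- equivalently $|\partial\hat g|\le C\lam^2|x|$ in normal coordinates --- is actually false under the hypothesis $|Rm|\le\lam^2$ alone: the tangential (spherical) derivatives of $\hat g_{ij}$ are governed by $\na Rm$, not by $Rm$. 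Already in dimension $2$, for $\hat g=dr^2+f^2d\theta^2$ with $f_{rr}=-Kf$ and $K=\lam^2\sin(N\theta)$, one finds $|\partial\hat g|\sim N\lam^2 r$ in Cartesian normal coordinates, unbounded in $N$, while $|\na^2\eta-\hat g|\sim\lam^2r^2$ stays bounded as the lemma asserts. Since \eqref{eq:dis1} is stated without any bound on $\na Rm$, this step cannot be repaired by borrowing the hypotheses of \eqref{eq:dis2}. What is true is that the particular contraction $\Gamma^k_{ij}x^k$ is controlled, but to see this you must use Gauss's lemma in the form $x^l\hat g_{jl}=x_j$ to convert the tangential derivatives into $\delta_{ij}-\hat g_{ij}$ plus the purely radial derivative $r\partial_r\hat g_{ij}$, and then control the latter by the Riccati/comparison argument --- at which point you have reproduced the paper's proof, which simply applies the Hessian comparison theorem to $t=d_{\hat g}(0,\cdot)$ and writes $\na^2\eta=t\na^2t+dt^2$.

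For \eqref{eq:dis2}, your route through full Taylor expansions of $\hat g$ and $\Gamma$ in normal coordinates is plausible once all $\na^kRm$ are bounded, but the two steps you defer --- the derivative bounds $|\partial^\alpha\hat g|\le C_{|\alpha|}\lam^2\rho^{2-|\alpha|}$ and the stability of the $\rho$-scaling under repeated exchange of $\partial$ and $\na$ --- are the entire content of the estimate rather than bookkeeping around it; as written the argument is a statement of intent. The paper avoids this by differentiating the Riccati equation $\na_VE=-E-E^2-Rm(\cdot,V)V$ for $E=\na^2\eta-\hat g$ along $V=\na\eta$ and running an induction on $E_k=t^{k+1}\na^kE$, which produces \eqref{eq:dis2} directly from the curvature-derivative hypotheses without ever estimating the metric coefficients themselves. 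I would recommend adopting that route for both parts.
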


\begin{proof}
The estimate \eqref{eq:dis1} follows from the standard Hessian comparison theorem. Indeed, it follows from \cite[Theorem $27$]{Pe06} that
 \begin{align}\label{eq:dis3}
\frac{\sin 2\lam t}{2\lam} ds^2_{n-1}\le \na^2  t \le \frac{\sinh 2\lam t}{2\lam} ds^2_{n-1}
\end{align}
where $t=d_{\hat g}(0,\cdot)$.
Since $\na^2 \eta=t\na^2 t+dt^2$ and $\lam \rho$ is sufficiently small, it is clear from \eqref{eq:dis3} and \eqref{eq:e0} that
 \begin{align*}
  |\na^2 \eta-\hat g | \le C \lam^2 \rho^2.
\end{align*}
The higher-order estimates follow from \cite[Appendix B]{Mi10} and we sketch the proof for completeness. We set $E=\na^2 \eta-\hat g$ and $V=\na \eta=t\partial_t$. then it follows from the Riccatti equation that
 \begin{align}\label{eq:dis4}
 \na_V E=-E-E^2-Rm(\cdot,V)V.
\end{align}

From \eqref{eq:dis4} we have
 \begin{align*}
\na_V \na E=-2\na E+E*\na E+\na Rm*V*V+Rm*\na V*V+Rm*V.
\end{align*}

Moreover, for any $k\ge 2$,
 \begin{align*}
\na_V \na^k E=-(k+1)\na^k E+\sum_{i+j=k} \na^i E*\na^j E+\sum_{i+j+l=k}\na^i Rm*\na^j V* \na^l V+\sum_{i+j=k-1} \na^iRm*\na^j V.
\end{align*}

If we set $E_k=t^{k+1}\na^k E$, then 
 \begin{align}\label{eq:dis7}
\partial_t E_k=t^{-1} E*E_k+F_k,
\end{align}
where
 \begin{align*}
F_k=&t^{-2}\sum_{i=1}^{k-1} E_i*E_{k-i}+t^k \lc \sum_{i+j+l=k}\na^i Rm*\na^{j} V* \na^l V+\sum_{i+j=k-1} \na^iRm*\na^j V \rc \\
=&t^{-2}\sum_{i=1}^{k-1} E_i*E_{k-i}+t^k \lc \sum_{i+j+l=k}\na^i Rm*\na^{j+1} \eta* \na^{l+1} \eta+\sum_{i+j=k-1} \na^iRm*\na^{j+1} \eta \rc.
\end{align*}

Now we assume \eqref{eq:dis2} holds for any $i \le k-1$, then from \eqref{eq:dis1},
 \begin{align*}
|F_k| \le C(t^k  \lam^4 \rho^{4-k}+t^k  \lam^2 \rho^{2-k}+t^k  \lam^6 \rho^{6-k}+t^k  \lam^4 \rho^{4-k}) \le C \lam^2 \rho^{2}.
\end{align*}

Since $|E| \le C \lam^2 \rho^2$ is small, it is easy to derive from \eqref{eq:dis7} that
 \begin{align*}
|\na^{k+2} \eta |=|\na^k E| =t^{-k-1}|E_k| \le Ct^{-k-1} \lam^2 \rho^3 \le C \lam^2 \rho^{2-k}.
\end{align*}
\end{proof}

For geodesic coordinate system $(x_1,x_2,\cdots,x_n)$, we set $e_i=\partial_{x_i}$ for $1 \le i\le n$. Then it is clear that $\{e_1,e_2,\cdots, e_n\}$ is an orthonormal basis at $0$. Now we set $\tilde e_i(x)$ to be the vector at $x \in \hat B(0,\rho)$ obtained by the parallel transport of $\left.e_i \right|_0$ along the radial geodesic between $0$ and $x$. It follows from \cite[Proposition $6.6.2$]{BK81} that for any $x \in \hat B(0,\rho-1)$,
 \begin{align*}
|\tilde e_i(x)-e_i(x)|_x \le \frac{\sinh \lam \rho}{\lam\rho}-1 \le C\lam^2\rho^2,
\end{align*}
where we set $|\cdot|_x$ to be the distance induced by $\hat g$ at the tangent space of $x$. Therefore, from \eqref{eq:e1} we have
 \begin{align}\label{eq:e3}
|\tilde e_i(x)-e_i(x)| \le C\lam^2\rho^2.
\end{align}

Now we need the following existence result of a harmonic coordinate system from \cite[Section $2.8$]{Jo83}, see also \cite[Fact $(2.9)$]{BKN89}.

\begin{lem}\label{L:es2}
With the assumptions above, there exist constants $\lambda=\lambda(n)>0$, $C_0=C_0(n)>0$ and a harmonic map
\begin{align*} 
\mathbb H=(h^1,h^2,\cdots,h^n): \, \hat B(0,\lambda\rho)   \to \R^n
\end{align*}
which satisfies the following properties
 \begin{align}
|g_{ij}-\delta_{ij}| \le& C_0 \Lambda^2 \rho^2, \label{eq:es0a}\\
\sum_{|\beta|=1}|\partial^{\beta}g_{ij}| \le& C_0\Lambda^2 \rho, \label{eq:esb}\\
|\na_{\hat g}h^i-\tilde e_i| \le& C_0 \Lambda^2 \rho^2.\label{eq:esc}
\end{align}
Here $g_{ij}$ are the coefficients of $\hat g$ under the map $\mathbb H$.
\end{lem}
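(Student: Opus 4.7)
The plan is to obtain the harmonic map $\mathbb{H}$ as a small perturbation of the geodesic normal coordinates $(x^1,\dots,x^n)$ produced by the exponential map, and then to exploit the harmonic-coordinate form of the Ricci equation to upgrade the regularity of the metric. (I read $\Lambda$ in the conclusion as the curvature bound $\lambda$ of \eqref{eq:curv1}, and denote by $\lambda_0 = \lambda_0(n) \in (0,1)$ the dimensional constant appearing as the radius.)

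First I solve the Dirichlet problem $\Delta_{\hat g} h^i = 0$ on $\hat B(0,\rho)$ with boundary values $h^i|_{\partial} = x^i$, and set $u^i := h^i - x^i$, so that $u^i$ vanishes on $\partial \hat B(0,\rho)$ and satisfies $\Delta_{\hat g} u^i = -\Delta_{\hat g} x^i = \hat g^{jk}\Gamma^i_{jk}$. Since the coordinates are geodesic, $\Gamma^i_{jk}(0) = 0$ and Hessian comparison (Lemma \ref{lem:dis}) together with $\Gamma^i_{jk} = \tfrac{1}{2}\hat g^{il}(\partial_j \hat g_{kl} + \partial_k \hat g_{jl} - \partial_l \hat g_{jk})$ gives $|\Gamma^i_{jk}(x)| \le C \lambda^2 |x|$, hence
\begin{align*}
|\Delta_{\hat g} x^i| \le C \lambda^2 \rho.
\end{align*}
By \eqref{eq:e1} the operator $\Delta_{\hat g}$ is a small uniformly elliptic perturbation of the Euclidean Laplacian. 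After rescaling $x \mapsto x/\rho$ to normalize the domain, standard $L^\infty$ bounds plus interior Schauder estimates yield
\begin{align*}
\|u^i\|_{C^0} \le C \lambda^2 \rho^3, \qquad \|\nabla u^i\|_{C^0(\hat B(0,\lambda_0 \rho))} \le C \lambda^2 \rho^2.
\end{align*}
Combining with \eqref{eq:e3} gives $|\nabla_{\hat g} h^i - \tilde e_i| \le C\lambda^2 \rho^2$, which is \eqref{eq:esc}. In particular the Jacobian of $\mathbb{H}$ is $\delta^i_j + O(\lambda^2\rho^2)$, so $\mathbb{H}$ is a diffeomorphism from $\hat B(0,\lambda_0 \rho)$ onto its image for $\lambda_0$ sufficiently small, and pulling back $\hat g$ through this near-isometry yields \eqref{eq:es0a}.

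The main step, and the entire reason for passing to harmonic coordinates, is the first derivative bound \eqref{eq:esb}. In harmonic coordinates the metric components satisfy the classical Ricci equation
\begin{align*}
g^{ab}\partial_a \partial_b g_{ij} = -2 R_{ij} + Q_{ij}(g, \partial g),
\end{align*}
where $Q_{ij}$ is a universal polynomial, quadratic in $\partial g$ with coefficients depending only on $g^{-1}$. The curvature bound $|R_{ij}| \le (n-1)\lambda^2$ together with the near-Euclidean metric gives a uniformly elliptic equation with right-hand side bounded in $L^\infty$ up to the nonlinearity $Q$. Rescaling to unit scale and iterating interior $W^{2,p}$ (followed by Schauder) estimates on dyadic balls, while absorbing $Q$ using the smallness of the initial $C^\alpha$ bound on $\partial g$, produces $|\partial g_{ij}| \le C_0 \lambda^2 \rho$ on $\hat B(0,\lambda_0 \rho)$, which is \eqref{eq:esb}.

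The main obstacle is precisely this bootstrap through the nonlinearity $Q$: one cannot directly control $\partial g$ from $|R_{ij}|$ in a single elliptic estimate because of the quadratic $\partial g \ast \partial g$ term. The smallness guaranteed by \eqref{eq:curv2} is exactly what allows the quadratic contribution of $Q$ to be absorbed into the left-hand side after one iteration, and this is the scale-invariant version of the argument of \cite[Section 2.8]{Jo83} as used in \cite[Fact (2.9)]{BKN89}.
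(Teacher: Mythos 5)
The paper does not actually prove this lemma: it is quoted from \cite[Section 2.8]{Jo83} (see also \cite[Fact (2.9)]{BKN89}), so you are supplying a proof the authors omit. Your architecture --- harmonic replacement of approximate coordinate functions, followed by the Ricci equation $g^{ab}\partial_a\partial_b g_{ij}=-2R_{ij}+Q(g,\partial g)$ in the resulting harmonic coordinates to obtain \eqref{eq:esb} --- is indeed the Jost--K\"archer scheme behind the cited result, and the second half (interpolation/absorption of the quadratic term $Q$ after rescaling to unit scale) is sound \emph{once} a controlled $C^{1,\alpha}$ starting point is in hand. The gap is at the very first estimate.

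You take the approximate coordinates to be the geodesic normal coordinates $x^i$ and assert $|\Gamma^i_{jk}(x)|\le C\Lambda^2|x|$, hence $|\Delta_{\hat g}x^i|=|\hat g^{jk}\Gamma^i_{jk}|\le C\Lambda^2\rho$, ``from Hessian comparison (Lemma \ref{lem:dis})''. Lemma \ref{lem:dis} controls $\na^2\eta$ with $\eta=|x|^2/2$, and since $\na^2\eta=\sum_i dx^i\otimes dx^i-\sum_i x^i\,\Gamma^i_{jk}\,dx^j\otimes dx^k$, it only bounds the \emph{radial contraction} $\sum_i x^i\Gamma^i_{jk}$, not the individual Christoffel symbols nor the trace $\hat g^{jk}\Gamma^i_{jk}$. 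More fundamentally, under a two-sided curvature bound alone the first derivatives of the metric in geodesic normal coordinates are \emph{not} of order $\Lambda^2|x|$: differentiating the Jacobi fields that define $d\exp_q$ in angular directions produces an inhomogeneous Jacobi equation whose source involves $\na Rm$. A concrete two-dimensional check: for $g=dr^2+\phi^2\,d\theta^2$ with $K=\Lambda^2\sin(N\theta)$ one finds $\phi_\theta\sim N\Lambda^2r^3$, whence $|\partial g|$ and $|\Delta_{\hat g}x^i|$ are of order $N\Lambda^2 r$, unbounded in $N$. This is precisely the DeTurck--Kazdan loss of regularity that harmonic coordinates exist to circumvent; if $|\partial\hat g|\le C\Lambda^2\rho$ were already available in normal coordinates, \eqref{eq:esb} would essentially hold there and the harmonic replacement would be superfluous. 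The same missing regularity of $\hat g^{jk}$ also invalidates the quantitative use of interior Schauder estimates for $u^i=h^i-x^i$. The repair, and what \cite{Jo83} actually does, is to start instead from almost linear functions built out of distance functions to points at controlled distance, whose full Hessians --- hence Laplacians --- are controlled at the required order by comparison geometry, and only then harmonically replace and bootstrap as you describe.
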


Let $(y_1,y_2,\cdots,y_n)$ be the harmonic coordinates constructed in Lemma \ref{L:es2} and $(\partial_1,\partial_2,\cdots,\partial_n)$ the corresponding vector fields. Now we prove

\begin{lem}\label{L:es2a}
For any $x \in \hat B(0,\lambda \rho)$,
\begin{align*} 
|\emph{d}\mathbb H(x)-I| \le C\lam^2\rho^2.
\end{align*}
\end{lem}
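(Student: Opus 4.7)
The plan is to compute $\text{d}\mathbb H$ as a matrix in the geodesic coordinate basis $\{e_i=\partial_{x_i}\}$ of $T_0\R^n \cong T\hat B(0,\lam\rho)$, and to show that its entries differ from $\delta_{ij}$ by $O(\lam^2\rho^2)$. Under this identification, the $(i,j)$-entry of $\text{d}\mathbb H$ is simply
\[
(\text{d}\mathbb H)_{ij}(x) = e_j(h^i)(x) = \langle e_j,\, \na_{\hat g}h^i \rangle_{\hat g}(x),
\]
so the lemma reduces to estimating this inner product pointwise on $\hat B(0,\lam\rho)$.

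First, I would combine the harmonic-coordinate estimate \eqref{eq:esc} with the parallel-transport estimate \eqref{eq:e3} to produce a direct comparison between $\na_{\hat g}h^i$ and the coordinate vector field $e_i$. Since $\hat g$ is uniformly close to the Euclidean metric by \eqref{eq:e1}, the Euclidean and $\hat g$ norms on tangent vectors are equivalent up to a factor $1+O(\lam^2\rho^2)$, so the triangle inequality gives
\[
|\na_{\hat g}h^i - e_i|_{\hat g} \;\le\; |\na_{\hat g}h^i - \tilde e_i|_{\hat g} + |\tilde e_i - e_i|_{\hat g} \;\le\; C\lam^2\rho^2.
\]

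Second, I would expand the inner product as
\[
\langle e_j, \na_{\hat g}h^i \rangle_{\hat g} = \langle e_j, e_i\rangle_{\hat g} + \langle e_j,\, \na_{\hat g}h^i - e_i\rangle_{\hat g} = \hat g_{ij} + \langle e_j,\, \na_{\hat g}h^i - e_i\rangle_{\hat g}.
\]
The first term is $\delta_{ij}+O(\lam^2\rho^2)$ by \eqref{eq:e1}, while the second is controlled by Cauchy-Schwarz together with the fact that $|e_j|_{\hat g}=\sqrt{\hat g_{jj}}=1+O(\lam^2\rho^2)$ and the bound just obtained on $|\na_{\hat g}h^i-e_i|_{\hat g}$. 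Adding these two contributions yields $e_j(h^i) = \delta_{ij}+O(\lam^2\rho^2)$, which is exactly the desired matrix estimate $|\text{d}\mathbb H - I| \le C\lam^2\rho^2$.

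There is no serious obstacle here; the statement is an immediate bookkeeping consequence of Lemma \ref{L:es2} combined with \eqref{eq:e1} and \eqref{eq:e3}. The only mildly delicate point is to keep track of which inner product is used where: the gradient in \eqref{eq:esc} is with respect to $\hat g$ and the bound is in the $\hat g$-norm, while \eqref{eq:e3} was formulated with respect to the Euclidean norm on the tangent space. Using the uniform equivalence of these norms on $\hat B(0,\lam\rho)$ coming from \eqref{eq:e1} makes the two estimates directly compatible, and the final inequality follows.
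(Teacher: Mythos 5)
Your proof is correct and follows essentially the same route as the paper: both arguments identify $(\text{d}\mathbb H)_{ij}=\partial_{x_j}h^i$ with a quantity involving $\nabla_{\hat g}h^i$, use \eqref{eq:esc} together with \eqref{eq:e3} to replace $\nabla_{\hat g}h^i$ by $e_i$ up to an $O(\Lambda^2\rho^2)$ error, and then invoke \eqref{eq:e1} to pass from the $\hat g$-inner product back to the Euclidean one. The paper writes $\nabla_{\hat g}h^j=\hat g^{ik}h^j_i e_k$ and solves for $h^j_i$, while you write $\partial_{x_j}h^i=\langle e_j,\nabla_{\hat g}h^i\rangle_{\hat g}=\hat g_{ij}+\langle e_j,\nabla_{\hat g}h^i-e_i\rangle_{\hat g}$; these are just two reorganizations of the same linear-algebra bookkeeping and give the same result.
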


\begin{proof}
If we set $h_i^j(x) \coloneqq \partial_{x_i}h^j(x)$ to be the components of $\text{d}\mathbb H$, then it follows from \eqref{eq:e3} and \eqref{eq:esc} we have
 \begin{align}\label{eq:e6}
|\hat g^{ik}h^j_i e_k-e_j| \le C\lam^2\rho^2.
\end{align}
It is clear from \eqref{eq:e6} that
 \begin{align*}
|\hat g^{ik}h^j_i-\delta_{jk}| \le C\lam^2\rho^2
\end{align*}
and hence by \eqref{eq:e1}
 \begin{align*}
|h_i^j-\delta_{ij}| \le C\lam^2\rho^2.
\end{align*}
\end{proof}

It follows from \eqref{eq:e1}, \eqref{eq:es0a} and Lemma \ref{L:es2a} that up to the first order, the geodesic coordinates and harmonic coordinates are almost the same, with an error controlled by $\lam^2\rho^2$.

Next, we estimate how close a geodesic in $\hat B(0,\rho)$ is to a straight segment. This result is important when we estimate the angle between two geodesic loops.

\begin{prop}\label{P:es3}
There exist constant $C_1(n)>0$ and $\ep_0=\ep_0(n)\in (0,\frac{\pi}{100})$ such that if $\Lambda^2 \rho^2 \le \ep_0$, then for any $a,b \in \hat B(0,\frac{\lambda}{10}\rho)$, if $\gamma$ is the geodesic with respect to $\hat g$ from $a$ to $b$, then
\begin{align*} 
\angle (\dot \gamma(0),b-a) \le C_1 \Lambda^2\rho^2.
\end{align*}
\end{prop}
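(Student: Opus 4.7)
My plan is to exploit the fact that, in harmonic coordinates, $\hat g$ is close to Euclidean with uniformly small Christoffel symbols, so a $\hat g$-geodesic is a small quadratic perturbation of a Euclidean segment and its initial tangent must nearly align with $b-a$. The ingredients I would use are Lemma \ref{L:es2} (for the harmonic chart, in which $|\Gamma^k_{ij}| \le C\lam^2\rho$ follows from \eqref{eq:es0a}--\eqref{eq:esb}), Lemma \ref{L:es2a} (to transfer estimates back to the geodesic coordinates in which $b-a$ is defined), and \eqref{eq:e1a} (to compare $d_{\hat g}$ with the Euclidean distance).

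First, I would pass to harmonic coordinates $(y^1,\dots,y^n) = \mathbb H(x)$ and parameterize the $\hat g$-geodesic $\gamma \colon [0,L] \to \hat B(0,\rho)$ from $a$ to $b$ by $\hat g$-arc length, noting that $a,b \in \hat B(0,\tfrac{\lambda}{10}\rho)$ together with \eqref{eq:e1a} gives $L \le \tfrac{\lambda}{5}\rho$. Since $|\dot\gamma|_{\hat g}=1$ and $\hat g$ is near-Euclidean, the coordinate components satisfy $|\dot\gamma^i| \le C$, and the geodesic equation yields $|\ddot\gamma^i(s)| \le C\lam^2\rho$ on $[0,L]$. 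Taylor's theorem in harmonic coordinates then produces
\begin{align*}
\bigl|(y(b)-y(a))-L\,\dot\gamma(0)\bigr|_{\mathrm{Euc}} \;\le\; C\lam^2\rho\, L^2.
\end{align*}
I would then translate this back to geodesic coordinates using Lemma \ref{L:es2a} (where the additional errors from $|\mathrm d\mathbb H-I| \le C\lam^2\rho^2$ applied to $b-a$ and to $\dot\gamma(0)$ are also $O(\lam^2\rho^2 L)$) to obtain $|b-a-L\dot\gamma(0)|_{\mathrm{Euc}} \le C\lam^2\rho^2 L$.

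Dividing by $L$ yields $|\dot\gamma(0)-(b-a)/L|_{\mathrm{Euc}} \le C\lam^2\rho^2$, and \eqref{eq:e1a} further implies that $(b-a)/L$ differs from the Euclidean unit vector $(b-a)/|b-a|_{\mathrm{Euc}}$ by $O(\lam^2\rho^2)$. Combining these and observing that $\dot\gamma(0)$ itself has Euclidean length $1+O(\lam^2\rho^2)$ (its $\hat g$-norm being $1$ and $\hat g\approx\delta$), I would conclude that the angle between $\dot\gamma(0)$ and $b-a$ is bounded by $C_1\lam^2\rho^2$ for a dimensional constant $C_1$, after choosing $\ep_0$ sufficiently small.

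The main subtlety I anticipate is in the bookkeeping among three closely related notions of vector norm—$\hat g$ at $a$, Euclidean in geodesic coordinates, and Euclidean in harmonic coordinates—and in confirming that dividing the Taylor remainder by $L$ does not inflate the error. This is precisely where the quadratic factor $L^2$ in the remainder is essential: after dividing by $L$ one gets $\lam^2\rho L \le \lam^2\rho^2$ (using $L \le \rho/5$), matching the target bound exactly, and the hypothesis $a,b \in \hat B(0,\tfrac{\lambda}{10}\rho)$ is what guarantees that $\gamma$ stays inside the harmonic chart $\hat B(0,\lambda\rho)$ throughout the argument.
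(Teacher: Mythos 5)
Your argument is correct and follows essentially the same route as the paper: pass to the harmonic chart of Lemma \ref{L:es2}, use the geodesic equation with the $C^1$ bounds \eqref{eq:es0a}--\eqref{eq:esb} to get a quadratic Taylor remainder, transfer back via Lemma \ref{L:es2a}, and divide by the length. The only difference is cosmetic (arc-length parametrization on $[0,L]$ versus the paper's parametrization on $[0,1]$), and your bookkeeping of the error $\lam^2\rho L^2/L \le \lam^2\rho^2$ matches the paper's $C\lam^2\rho\hat L^2 + C\lam^2\rho^2\hat L \le C\lam^2\rho^2\hat L$ exactly.
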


\begin{proof}
From Lemma \ref{L:es2}, there exists a harmonic coordinate system on $\hat B(0,\lambda\rho)$. Under this coordinates, if we write $\gamma(t)=(y_1(t),y_2(t),\cdots,y_n(t))$ for $t \in [0,1]$, then the geodesic equation can be written as
\begin{align} \label{eq:es2}
y''_k+\sum_{i,j}\Gamma^k_{ij} y'_i y'_j=0,
\end{align}
for any $1 \le k \le n$. Moreover, since the length of the tangent vector $\dot \gamma$ is constant, we have
\begin{align} \label{eq:es3}
\sum_{ij} g_{ij}(\gamma(t))  y'_i  y'_j=\hat L^2.
\end{align}

Since the entire geodesic $\gamma$ is contained in $ \hat B(0,\lambda\rho)$, it follows from \eqref{eq:es3} that if we set $\bar L^2=\sum_i  (y'_i)^2$, then
\begin{align} \label{eq:es4}
|\hat L^2-\bar L^2| \le C_0 \Lambda^2\rho^2 \bar L^2 \le C_0 \ep_0 \bar L^2
\end{align}
If we choose $\ep_0<\frac{1}{2C_0}$, then from \eqref{eq:es4} we have
\begin{align*}
\frac{1}{2}\bar L \le  \hat L \le 2 \bar L.
\end{align*}
From the definition of the Christoffel symbol $\Gamma^k_{ij}=\frac{1}{2}g^{kl}(\partial_i g_{jl}+\partial_j g_{il}-\partial_l g_{ij})$, it is clear from \eqref{eq:es0a}, \eqref{eq:esb} and \eqref{eq:es2} that for any $t \in [0,1]$,
\begin{align*}
| y''_k(t)| \le C \Lambda^2 \rho \hat L^2
\end{align*}
By integration, for any $t \in [0,1]$,
\begin{align*}
|y'_k(t)- y'_k(0)| \le C\Lambda^2 \rho \hat L^2
\end{align*}
and hence
\begin{align*}
|y_k(1)-y_k(0)-y'_k(0)| \le \int_0^{1}  |y'_k(t)-y'_k(0)|\,dt \le C\Lambda^2 \rho \hat L^2.
\end{align*}

In other words,
\begin{align}
|\mathbb H(b)-\mathbb H(a)-\text{d}\mathbb H_a(\dot \gamma(0))| \le C\Lambda^2 \rho \hat L^2. \label{eq:es7x1}
\end{align}

On the other hand, it follows from Lemma \ref{L:es2a} that
\begin{align}
|\mathbb H(b)-\mathbb H(a)-\text{d}\mathbb H_a(b-a)|\le \int_0^1 |\text{d}\mathbb H_{a+(b-a)t}(b-a)-\text{d}\mathbb H_a(b-a)| \,dt \le C\Lambda^2 \rho^2 \hat L.  \label{eq:es7x2}
\end{align}

Combining \eqref{eq:es7x1} and \eqref{eq:es7x2},
\begin{align*}
|\text{d}\mathbb H_a(\dot \gamma(0))-\text{d}\mathbb H_a(b-a)| \le C\Lambda^2 \rho \hat L^2+C \lam^2 \rho^2 \hat L \le C \lam^2 \rho^2 \hat L
\end{align*}

From Lemma \ref{L:es2a}, $|\dot \gamma(0)-(b-a)| \le C \lam^2 \rho^2 \hat L$ and the conclusion follows immediately.
\end{proof}

Next, we estimate the change of the tangent vector after the sliding. As before, for $\gamma \in \Gamma(q,\frac{\lambda}{10}\rho)$ we set $c=\mathbf t(\gamma)$, $\mathbf r_c=\mathbf r(\gamma)$ and $\tau_c$ to be the corresponding map of $\gamma$. 

\begin{prop}\label{P:es4}
Suppose 
\begin{align*} 
\|\mathbf r_c\| \le C \lam^2\rho|c| \quad \text{and} \quad C^{-1}|c|\le |\tau_c(w)-w| \le C|c|
\end{align*}
for some $w \in B_{\hat g}(0,\frac{\lambda}{10}\rho)$ and a constant $C=C(n)>1$. Then there exists a constant $C_2=C_2(n)>0$ such that
\begin{align*} 
\angle (\dot \gamma_{w}(0),c) \le C_2 \Lambda^2\rho^2,
\end{align*}
where $\gamma_w$ is the geodesic with respect to $\hat g$ from $w$ to $\tau_c(w)$.
\end{prop}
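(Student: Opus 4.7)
The plan is to compare $\dot\gamma_w(0)$ with $c$ through two intermediate vectors: the straight Euclidean segment from $w$ to $\tau_c(w)$, and the vector $\mathbf r_c^{-1}(w+c) - w$ from Lemma \ref{L:es1}. Each of the three angles turns out to be $O(\Lambda^2\rho^2)$, and the result follows by the triangle inequality for angles.

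First, apply Proposition \ref{P:es3} to the geodesic $\gamma_w$ from $w$ to $\tau_c(w)$ (if necessary, replace $\rho$ by a fixed multiple so that both endpoints lie in the required ball, which is allowed since $|\tau_c(w)-w|\le C|c|\le C\Lambda\rho$). This gives
\[
\angle\bigl(\dot\gamma_w(0),\,\tau_c(w)-w\bigr)\le C_1\Lambda^2\rho^2.
\]

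Next, I would compare $\tau_c(w)-w$ with $c$. By Lemma \ref{L:es1} and the comparison \eqref{eq:e1a} between $d_{\hat g}$ and the Euclidean distance,
\[
\bigl|\tau_c(w)-\mathbf r_c^{-1}(w+c)\bigr|\le C\Lambda^2|c||w|(|c|+|w|)\le C\Lambda^2\rho^2|c|,
\]
since $|w|,|c|\le \rho$. Writing
\[
\mathbf r_c^{-1}(w+c)-w-c=\bigl(\mathbf r_c^{-1}(c)-c\bigr)+\bigl(\mathbf r_c^{-1}-I\bigr)(w),
\]
the two pieces are bounded using the hypothesis $\|\mathbf r_c\|\le C\Lambda^2\rho|c|$ (and the norm equivalence $|\mathbf r_c-I|\le\|\mathbf r_c\|$) by $C\Lambda^2\rho|c|^2$ and $C\Lambda^2\rho|c||w|$ respectively, both of which are at most $C\Lambda^2\rho^2|c|$. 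Combining these estimates yields
\[
\bigl|(\tau_c(w)-w)-c\bigr|\le C\Lambda^2\rho^2|c|.
\]

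Now I can bound the angle between $\tau_c(w)-w$ and $c$. Decompose $\tau_c(w)-w=\alpha c+c^{\perp}$ with $c^{\perp}\perp c$; from the previous display $|c^{\perp}|\le C\Lambda^2\rho^2|c|$. Using the lower bound $|\tau_c(w)-w|\ge C^{-1}|c|$ from the hypothesis,
\[
\sin\angle\bigl(\tau_c(w)-w,\,c\bigr)=\frac{|c^{\perp}|}{|\tau_c(w)-w|}\le C\Lambda^2\rho^2.
\]
The triangle inequality for angles then gives $\angle(\dot\gamma_w(0),c)\le C_2\Lambda^2\rho^2$.

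The proof is essentially a careful accounting, so I do not expect a serious obstacle. The main subtlety is purely dimensional: one must track that $|c|$ cancels in the final ratio, so that the bound on the angle is independent of the (possibly very small) translational length $|c|$, and one must verify that the slightly enlarged ball on which Proposition \ref{P:es3} is invoked still lies inside the domain of $\hat g$ where \eqref{eq:curv1}--\eqref{eq:curv2} apply (absorbed into the constant $C_2$).
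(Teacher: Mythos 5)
Your proof is correct and follows essentially the same route as the paper: bound $|(\tau_c(w)-w)-c|$ by $C\Lambda^2\rho^2|c|$ via Lemma \ref{L:es1} and the rotation hypothesis, convert this to an angle estimate using the lower bound $|\tau_c(w)-w|\ge C^{-1}|c|$ (the paper invokes the law of cosines where you use an orthogonal decomposition, which is the same computation), and combine with Proposition \ref{P:es3} by the triangle inequality for angles. No gaps.
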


\begin{proof}
It follows from Lemma \ref{L:es1} that
\begin{align*}
&|\tau_c(w)-(w+c)|  \\
 \le& |\tau_c(w)-\mathbf r_c^{-1}(w+c)|+\|\mathbf r_c\|(|w|+|c|) \\
 \le& C\lam^2|c||w|(|c|+|w|)+C \lam^2\rho^2|c| \le C \lam^2 \rho^2 |c|. 
\end{align*}

Then it is easy to see from the law of cosine that
\begin{align} \label{eq:es14}
\angle (\tau_c(w)-w,c)\le C\lam^2 \rho^2.
\end{align}

Now it follows from Proposition \ref{P:es3} that
\begin{align} \label{eq:es15}
\angle (\dot \gamma_w(0),\tau_c(w)-w) \le C_1 \Lambda^2\rho^2.
\end{align}

Combining \eqref{eq:es14} and \eqref{eq:es15}, we conclude that
\begin{align*} 
\angle (\dot \gamma_w(0), c) \le&  \angle (\dot \gamma_w(0),\tau_c(w)-w)+\angle (\tau_c(w)-w,c) \le C\Lambda^2\rho^2.
\end{align*}
\end{proof}

\subsection{Tangent cone at infinity and a rough fibration}
\label{sub:tan}

Let $(M^{n},g,p)$ be a complete Riemannian manifold satisfying \eqref{cond:AF}. It follows from \cite{Ka88} (see also \cite[Corollary $0.4$]{MNO05}) that the tangent cone at infinity exists and is unique. More precisely, there exists $(C(S(\infty)), d_{\infty},p_{\infty})$ such that for any sequence $r_i \to \infty$, 
\begin{align*}
(M^{n},r_i^{-2}g,p) \longright{PGH}  (C(S(\infty)), d_{\infty},p_{\infty}),
\end{align*}
where the convergence is in the pointed Gromov-Hausdorff sense. 

Following \cite{Ka88}, we describe briefly how to construct the limit space $C(S(\infty))$, which is a metric cone over $S(\infty)$. Let $S(t)$ be the geodesic sphere of the radius $t$ with center $p$, we denote the intrinsic distance on $S(t)$ by $d_t$. In particular, if $x$ and $y$ are on different components of $S(t)$, then $d_t(x,y)=\infty$. Two geodesic rays $\sigma$ and $\gamma$ starting from $p$ are equivalent if 
\begin{align*}
\lim_{t \to \infty} \frac{d_t(\sigma(t),\gamma(t))}{t}=0.
\end{align*}
$S(\infty)$ is the set of equivalence classes of all geodesic rays starting from $p$. A distance $\theta_{\infty}$ on $S(\infty)$ is defined by
\begin{align*}
\theta_{\infty}([\sigma],[\gamma])=\lim_{t \to \infty} \frac{d_t(\sigma(t),\gamma(t))}{t}.
\end{align*}
Equipped with $\theta_{\infty}$, $S(\infty)$ becomes a compact length space. Moreover, there is a map $\Phi_{t,\infty}:S(t) \to S(\infty)$ satisfying $\Phi_{t,\infty}(\sigma(t))=[\sigma]$ for any ray $\sigma$. Under this map, there is a one-to-one correspondence between the components of $S(t)$ and those of $S(\infty)$, if $t$ is large. It can be proved, see \cite[Proposition $2.2$, (iv)]{Ka88}, that if $t \to \infty$,
\begin{align*}
(S(t),t^{-1}d_t) \longright{GH}  (S(\infty), \theta_{\infty}).
\end{align*}

From \cite[Proposition $2.3$(iii)]{Ka88}, each component $S_a(\infty)$ of $S(\infty)$ satisfies
\begin{align}\label{eq:diam}
\text{diam}\,S_a(\infty)=\lim_{t\to \infty} \text{diam}( S_{a}(t),t^{-1}d_t ) <\infty,
\end{align}
where $S_a(t)$ is the corresponding component of $S(t)$. In addition, the number of the components of $S(\infty)$ is always bounded by a constant which depends only on $n$ and $K(t)$, see \cite[Theorem B]{Ab85}. Notice that in this paper we mainly focus on the case that $S(\infty)$ is connected. 

\begin{rem}\label{rem1}
In \emph{\cite{Ka88}}, the curvature condition can be relaxed to the so-called asymptotically nonnegative curvature. Namely, the sectional curvature is bounded below by $-r^{-2}K(r)$, where $K(r)$ satisfies \eqref{eq:003}.
\end{rem}

Next, we show that condition \eqref{cond:HC} is equivalent to \eqref{cond:HC'}.

\begin{lem}\label{lem:holaa1}
Let $(M^{n},g,p)$ be a complete Riemannian manifold with \eqref{cond:AF} and \eqref{cond:HC}. Then there exists a positive number $C_H>0$ such that 
\begin{align*} 
\|\mathbf r (\gamma_x)\| \le C_H\frac{L(\gamma_x)}{r} 
\end{align*}
for any $x$ outside a compact set and any geodesic loop $\gamma_x$ based at $x$ with length smaller than $\kappa r$, where $\kappa$ is the constant in \eqref{cond:HC}.
\end{lem}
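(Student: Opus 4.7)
The strategy is to iterate $\gamma_x$ inside the fundamental pseudo-group at $x$ and apply \eqref{cond:HC} to each iterate; since $\Theta_H < \pi/2$, this forces the rotation of $\gamma_x$ to shrink proportionally to $1/k$, where $k$ is the number of iterates we can afford. Fix $x$ with $r = d(p,x)$ large enough that the asymptotic flatness bound gives $|Rm| \le \lambda^2$ on $B(x, 2\kappa r)$ with $\lambda^2 = CK(r/2)r^{-2}$ and $\lambda \cdot \kappa r = O(\sqrt{K(r/2)}) \to 0$ (which is possible since $K(r)\to 0$). Let $L = L(\gamma_x) < \kappa r$ and let $\tau \in \Gamma(x, 2L)$ be the element of the fundamental pseudo-group corresponding to $\gamma_x$ via Lemma \ref{L:group1}. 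If $L \ge \kappa r/4$, then \eqref{cond:HC} already gives $\|\mathbf r(\gamma_x)\| \le \Theta_H \le (4\Theta_H/\kappa) L/r$, so I may assume $L < \kappa r/4$ and set $k = \lfloor \kappa r/(4L) \rfloor \ge 1$.

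I would next verify that, for each $1 \le j \le k$, the $j$-fold product $\tau^j$ lies in $\Gamma(x, \kappa r)$ and hence corresponds to a geodesic loop $\gamma^{(j)}$ of length smaller than $\kappa r$. Iterating Lemma \ref{L:es1} and using that $\tau$ is close to the rigid motion $w \mapsto \mathbf r(\gamma_x)^{-1}(w+c)$, one obtains inductively $|\tau^j(0)| \le jL(1 + O(\lambda^2 (jL)^2)) \le 2jL \le \kappa r/2$ for $r$ large. Applying \eqref{cond:HC} to each $\gamma^{(j)}$ yields $\|\mathbf r(\gamma^{(j)})\| \le \Theta_H$. Now compare $\mathbf r(\gamma^{(j)})$ with the naive power $\mathbf r(\gamma_x)^j$: the concatenation $\gamma_x \cdot \gamma_x \cdots \gamma_x$ (with $j$ factors) has holonomy exactly $\mathbf r(\gamma_x)^j$ by the composition rule for parallel transport, and is freely homotopic to $\gamma^{(j)}$ through a surface in $B(x, 2\kappa r)$ of area $O((jL)^2)$; integrating the curvature over this surface gives $\|\mathbf r(\gamma^{(j)})\mathbf r(\gamma_x)^{-j} - I\| = O(\lambda^2 (jL)^2) = O(K(r/2))$, which tends to $0$ as $r \to \infty$. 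Combining, $\|\mathbf r(\gamma_x)^j\| \le \Theta_H + o(1)$ for every $1 \le j \le k$.

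Finally, I would decompose $\mathbf r(\gamma_x) \in SO(n)$ as a direct sum of rotations in mutually orthogonal $2$-planes with angles $\theta_1, \ldots, \theta_m \in [0,\pi]$, so that $\|\mathbf r(\gamma_x)\| = \max_i \theta_i$. Since $\Theta_H + o(1) < \pi/2$, an elementary induction on $j$ shows that each $j\theta_i$ must remain smaller than $\pi$ for all $j \le k$ (no wrap-around near a positive multiple of $2\pi$ is possible), and therefore $j\theta_i \le \Theta_H + o(1)$ for every such $j$ and $i$. Taking $j = k$ gives $\theta_i \le (\Theta_H + o(1))/k \le 4\Theta_H (1+o(1)) L/(\kappa r)$ for each $i$, which produces \eqref{cond:HC'} with any fixed $C_H > 4\Theta_H/\kappa$ outside a sufficiently large compact set. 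The main delicate step is quantifying the discrepancy between $\mathbf r(\gamma^{(j)})$ and $\mathbf r(\gamma_x)^j$ coming from the non-flatness of $g$; the smallness of $\lambda \kappa r$, i.e.\ the decay of $K$, is precisely what makes this error negligible and allows the inductive rotation-angle argument to close.
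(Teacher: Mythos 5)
Your proof is correct and follows the same essential strategy as the paper's: iterate the loop roughly $r/L$ times, apply \eqref{cond:HC} to a high iterate to force the rotation angle to be at most $\Theta_H/k$, and control the composition error by the curvature bound. The paper does this in one step by invoking \cite[Proposition $2.3.1$(i)]{BK81} to obtain $\|\mathbf r(\gamma^k)\|\ge k\|\mathbf r(\gamma)\|-2r^{-2}k^2K(r/2)L^2$ and then applies \eqref{cond:HC} only to $\gamma^k$. You unpack this into two pieces: the homotopy-surface estimate that compares $\mathbf r(\gamma^{(j)})$ with $\mathbf r(\gamma_x)^j$, and the inductive spectral argument in $SO(n)$ that rules out wraparound of rotation angles past $\pi$. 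Your more explicit route is a bit longer but is, if anything, more careful: the lower bound $\|\mathbf r(\gamma^k)\|\ge k\|\mathbf r(\gamma)\|-\cdots$ as written in the paper cannot hold unconditionally, since $\|\mathbf r(\gamma^k)\|\le \pi$ while $k\|\mathbf r(\gamma)\|$ could a priori exceed $\pi$; the inductive no-wraparound step you carry out is exactly what is needed (and is implicitly assumed) to justify applying that inequality. Both approaches yield the same constant (up to universal factors), and both rely on $\lambda^2(\kappa r)^2=O(K(r/2))\to 0$ to make the composition error negligible.
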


\begin{proof}
For any $\gamma \in \Gamma(x,\kappa r)$, we set $L=|\mathbf t(\gamma)|$ and $k=\floor{\frac{\kappa r}{L}}$. Then $|\mathbf t(\gamma^k)| \le \kappa r$ and by \eqref{cond:HC},
\begin{align*}
\|\mathbf r(\gamma^k)\| \le \Theta_H<\pi/2.
\end{align*}

Since $\kappa<1/2$ and $|Rm| \le 4r^{-2}K(r/2)$ on $B(x,r/2)$, it follows from \cite[Proposition $2.3.1$ (i)]{BK81} that if $r$ is sufficiently large,
\begin{align*} 
\|\mathbf r(\gamma^k)\| \ge k\|\mathbf r(\gamma)\|-2r^{-2}k^2K(r/2) L^2.
\end{align*}

Therefore, we have
\begin{align*}
\|\mathbf r(\gamma)\| \le C\frac{\Theta_H+K(r/2)}{r} |\mathbf t(\gamma)|.
\end{align*}
\end{proof}

Next, we prove

\begin{thm}\label{T:cone}
For any complete Riemannian manifold $(M^{n},g)$ with \eqref{cond:AF} and \eqref{cond:HC'}, $(C(S(\infty)),p_{\infty})$ is a smooth Riemannian manifold away from the vertex $p_{\infty}$.
\end{thm}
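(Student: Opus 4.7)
The strategy is to verify smoothness of $C(S(\infty))$ at any non-vertex point $y_\infty$ by applying Fukaya's equivariant convergence theorem \cite{Fu88} to a blow-down sequence and using \eqref{cond:HC'} to show that the limiting local group acts freely on $\R^n$, making the local model a smooth quotient.

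\emph{First,} I would fix $y_\infty \in C(S(\infty)) \setminus \{p_\infty\}$ at distance $t_0 > 0$ from the vertex, choose a sequence $r_i \to \infty$ realizing the tangent-cone convergence, and lifts $y_i \in M$ with $d_g(p, y_i) = r_i t_0$ and $y_i \to y_\infty$. In the rescaled metric $\tilde g_i := r_i^{-2} g$, on $B_{\tilde g_i}(y_i, t_0/2)$ the curvature is bounded by $4 K(r_i t_0/2)/t_0^2 \to 0$, since the nonincreasing function $K$ tends to zero by \eqref{eq:003}. Pulling back via $\exp_{y_i}$ gives metrics $\hat g_i$ on $\hat B(0, t_0/2) \subset T_{y_i} M \cong \R^n$ converging in $C^{1,\alpha}$ harmonic coordinates (Lemma \ref{L:es2}) to the flat Euclidean metric. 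The fundamental pseudo-groups $\Gamma_i := \Gamma(y_i, t_0/2)$ act almost by rigid motions on $\hat B(0, t_0/2)$ (Lemma \ref{L:es1}). Extracting a subsequence and applying Fukaya's equivariant Gromov-Hausdorff convergence, one obtains a limit triple $(B(0, t_0/2), g_{\mathrm{Eucl}}, G)$ with $G$ a closed subgroup of the Euclidean group $E(n) = \R^n \rtimes O(n)$. By Lemma \ref{L:group2} the $\Gamma_i$-quotient of $\hat B(0, t_0/2)$ equals $B_{\tilde g_i}(y_i, t_0/2)$, so a neighborhood of $y_\infty$ in $C(S(\infty))$ is isometric to $B(0, t_0/2)/G$.

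\emph{Second,} passing \eqref{cond:HC'} through the rescaling shows that every $\gamma \in \Gamma_i$ of $\hat g_i$-length $L < \kappa t_0$ satisfies $\|\mathbf r(\gamma)\| \le C_H L/t_0$, since in the original metric the loop has length $r_i L$ at distance $\approx r_i t_0$ from $p$ and the factors of $r_i$ cancel. In the equivariant limit, every $g \in G$ with $|\mathbf t(g)| < \kappa t_0$ inherits the linear bound $\|\mathbf r(g)\| \le C_H|\mathbf t(g)|/t_0$, so infinitesimal elements of $G$ are nearly translational. Combined with Fukaya's structural assertion that the identity component $G^0$ is a connected nilpotent Lie subgroup of $E(n)$, this forces $G$ to act freely on $B(0, t_0/2)$: a would-be element $(R, v) \in G$ with $R \ne I$ and $v \in \mathrm{Im}(I - R)$ (the condition for a fixed point to exist) would, upon iteration within the nilpotent structure, produce elements of bounded translational part but rotational part bounded away from the identity, contradicting the uniform linear estimate. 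This is the precise meaning of ``using the local group'' in the paper's remark. Consequently $G$ acts freely, properly, and by isometries on $B(0, t_0/2)$.

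\emph{Finally,} a free, proper isometric action of a closed subgroup of $E(n)$ on an open subset of $\R^n$ yields a smooth Riemannian quotient. Hence $B(0, t_0/2)/G$, and with it the corresponding neighborhood of $y_\infty$ in $C(S(\infty))$, is a smooth Riemannian manifold. Since $y_\infty$ was an arbitrary point of $C(S(\infty)) \setminus \{p_\infty\}$, the whole complement of the vertex is a smooth Riemannian manifold, proving the theorem. The main obstacle is the second paragraph: the linear bound provided by \eqref{cond:HC'} is not by itself strong enough to exclude screw-motion elements with fixed points in $G$, so one must combine it with Fukaya's nilpotent structure on $G^0$ and a Margulis-type iteration argument to rule out nontrivial isotropy in the limit group. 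Everything else (the rescaling, the convergence to flat Euclidean, and the smoothness of the quotient by a freely acting closed subgroup) is a routine application of standard collapsing theory.
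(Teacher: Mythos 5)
Your overall strategy matches the paper's: rescale to a blow-down sequence, apply Fukaya's equivariant Gromov--Hausdorff convergence to get a limit local group $G$ acting on a Euclidean ball with quotient a neighborhood of $y_\infty$ in $C(S(\infty))$, and then reduce smoothness to freeness of the $G$-action. The paper additionally separates the Euclidean-volume-growth case via Kasue \cite{Ka89}, but that is a stylistic point.

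Where you diverge, and where the gap lies, is the freeness argument. You state that the linear bound from \eqref{cond:HC'} ``is not by itself strong enough to exclude screw-motion elements with fixed points'' and then invoke a ``Margulis-type iteration'' inside the nilpotent Lie group $G^0$. This diagnosis is mistaken, and the iteration you sketch is both unnecessary and not actually carried out. The linear bound \emph{is} enough, provided the local model is taken small relative to $C_H$. The paper works in a ball of rescaled radius $\hat\kappa=\min\{C_H^{-1}/10,\kappa/10\}$ around a point at rescaled distance $1$ from the vertex. If $a=\lim\tau_{c_i}\in\Gamma_\infty\setminus\{1\}$ fixes $y\in B(0,\hat\kappa)$, then on one side Lemma \ref{L:es1} gives $|\tau_{c_i}(w_i)-\mathbf r_{c_i}^{-1}(w_i+c_i)|\to 0$, while on the other side \eqref{cond:HC'} gives $|\mathbf r_{c_i}(w_i)-w_i|\le C_H|c_i||w_i|$; passing to the limit with $\tau_{c_i}(w_i),w_i\to y$ yields $|c_\infty|\le C_H|c_\infty||w_\infty|< |c_\infty|/10$, forcing $c_\infty=0$, which then forces $\mathbf r(a)=I$ by \eqref{cond:HC'} again, contradicting $a\neq 1$. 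You instead took the ball of radius $t_0/2$, where this clean cancellation fails if $C_H>2$, and then reached for machinery that you do not justify: you never actually exhibit the iterate with bounded translational part and rotational part bounded away from the identity, and doing so would anyway require control on which elements of the closed group $G$ inherit the HC' bound (it is only guaranteed for limits of pseudo-group elements whose lengths stay below the HC' threshold).

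The fix is elementary: smoothness of $C(S(\infty))$ at $y_\infty$ is a local statement, so you are free to shrink the model ball to radius $\rho<\min\{C_H^{-1},\kappa\}\,t_0/10$. After that the direct fixed-point argument, which you essentially set up in your paragraph but abandoned, closes the proof without any appeal to the nilpotent structure or to a Margulis-type lemma. The ``main obstacle'' you identify is therefore an artifact of a suboptimal choice of scale, not a genuine difficulty.
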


\begin{proof}
If $(M,g)$ has Euclidean volume growth, it follows from \cite{Ka89} that $C(S(\infty))$ is a flat cone $C(S^{n-1}/\Gamma)$, where $\Gamma \subset O(n)$ is a finite subgroup acting freely on $S^{n-1}$. So we only need to consider the collapsing case.

For any sequence $q_i \in M$ with $r_i=r(q_i) \to \infty$, we consider the rescaled metric $g_i=r_i^{-2}g$ and the local group $\Gamma_i=\Gamma(q_i, 2 \hat \kappa)$ for $\hat \kappa=\min\{C_H^{-1}/10,\kappa/10\}$, where $C_H$ and $\kappa$ are constants in \eqref{cond:HC'}. Then it follows from Lemma \ref{L:es1} that $\Gamma_i$ converges to a local group $\Gamma_{\infty} \subset \text{Iso}(\R^n)$. In addition, from  Lemma \ref{L:group2} and \cite[Lemma $1.11$]{Fu88} $B(q_i,\hat \kappa)$ converges, in the Gromov-Hausdorff sense, to $B(0,\hat \kappa)/\Gamma_{\infty}$, where $B(0,\hat \kappa) \subset \R^n$. Since $\Gamma_{\infty}$ is locally isometric to a Lie group, see \cite[Lemma $3.1$]{Fu88}, we only need to prove that the action of any $a \in \Gamma_{\infty} \backslash \{1\}$ on $B(0,\hat \kappa)$ is free.

Otherwise, we assume that $y \in B(0,\hat \kappa)$ is a fixed point of $a$, then we can find a sequence of $\tau_{c_i} \to a$ and $w_i \in \hat B(0,\hat \kappa) \in T_{q_i}M$ such that $w_i \to y$. On the one hand it follows from Lemma \ref{L:es1} and \eqref{eq:e1}
\begin{align}\label{E111} 
|\tau_{c_i}(w_i)-\mathbf r_{c_i}^{-1}(w_i+c_i)| \le 4K(r_i/2)|c_i||w_i|(|c_i|+|w_i|).
\end{align}

On the other hand, \eqref{cond:HC'} implies 
\begin{align}\label{E112}  
|\mathbf r_{c_i}(w_i)-w_i| \le C_H|c_i||w_i|.
\end{align}

By taking a subsequence if necessary, we assume that $c_i \to c_{\infty}$ which is nonzero since $a \ne 1$. Since $\tau_{c_i}(w_i)$ and $w_i$ converge to $y$ if $i \to \infty$, we have by taking the limits of \eqref{E111} and \eqref{E112},
\begin{align*} 
|c_{\infty}| \le C_H|c_{\infty}||w_{\infty}|
\end{align*}
which is a contradiction since $\hat \kappa \le C_H^{-1}/10$. 
\end{proof}

\begin{rem}\label{rem:sec}
Notice that $C(S(\infty))$ is not necessarily flat, but we have the following estimates of the sectional curvature from \emph{\cite[Lemma $7.2$]{Fu88}}
\begin{align*} 
0 \le \emph{sec}(x) \le 6C_H^2r(x)^{-2},
\end{align*}
where $r(x)$ is the distance to the vertex $p_{\infty}$. If we denote the sectional curvature of $S(\infty)$ by $\text{sec}_S$, then we have
\begin{align*} 
1 \le \emph{sec}_S(x) \le 1+6C_H^2.
\end{align*}
\end{rem}

If the tangent cone at infinity is smooth, we can construct a fibration on the end following \cite{CFG92}.

\begin{thm}\label{T:infra}
Let $(M^n,g,p)$ be a complete Riemannian manifold with \eqref{cond:AF} such that $C(S(\infty))$ is smooth away from the vertex. Then there exist a compact set $K \subset M$, a constant $R_0>0$ and a fibration $f: M^{n} \backslash K \longrightarrow C(S(\infty))\backslash \bar B(p_{\infty},R_0)$ with fiber $F$. Moreover, the $f$ satisfies the following properties.
\begin{enumerate} [label=(\roman*)]
\item $F$ is an infranilmanifold with diameter bounded by $o(r)$.

\item $f$ is an $o(1)$-almost-Riemannian submersion.

\item The second fundamental form of $F$ is bounded by $O(r^{-1})$.
\end{enumerate}
\end{thm}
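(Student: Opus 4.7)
The strategy is to apply Fukaya's fibration theorem locally on rescaled annuli at infinity and then glue the resulting local fibrations together using the center-of-mass technique of Cheeger-Fukaya-Gromov \cite{CFG92}. The smoothness of $C(S(\infty))$ away from the vertex, already established in Theorem~\ref{T:cone}, is precisely what allows the local step to go through.

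\textbf{Step 1 (local collapsing with smooth limit).} For any sequence $q_i \in M$ with $r_i = d(p,q_i) \to \infty$, consider the rescaled pointed manifolds $(M, r_i^{-2}g, q_i)$. By \eqref{cond:AF} and monotonicity of $K$, on the rescaled annulus $\{x : 1/4 \le d_{r_i^{-2}g}(q_i,x) \le 4\}$ one has $|Rm_{r_i^{-2}g}| \le 16\,K(r_i/4) \to 0$. By uniqueness of the tangent cone at infinity the sequence converges, in the pointed Gromov-Hausdorff sense, to a point $q_\infty$ at distance $1$ from the vertex $p_\infty$ in $C(S(\infty))$, and by hypothesis a neighborhood of $q_\infty$ is a smooth Riemannian manifold.

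\textbf{Step 2 (Fukaya's fibration theorem on annuli).} Since the curvature on the rescaled annulus tends to zero while the limit is smooth, Fukaya's fibration theorem \cite{Fu87} applies: for $i$ large there is a smooth fibration $f_i$ from a neighborhood $U_i$ of $q_i$ in the rescaled metric onto a corresponding neighborhood of $q_\infty$ in the limit, whose fibers are infranilmanifolds of diameter at most $\epsilon_i$, which is an $\epsilon_i$-almost-Riemannian submersion, and whose fibers have second fundamental form bounded by $\epsilon_i$, with $\epsilon_i \to 0$. Rescaling back to the original metric $g$, this produces on each annulus $A(q_i, r_i/2, 2r_i)$ a local fibration $f_i : U_i \to V_i \subset C(S(\infty))$ whose fiber has diameter $o(r_i)$, which is an $o(1)$-almost-Riemannian submersion, and whose fibers have second fundamental form $O(r_i^{-1})$. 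A standard diagonal argument then provides a radius $R_0 \gg 1$ and a cover of the end of $M$ by finitely many such local charts at each annular scale $[2^k R_0, 2^{k+1} R_0]$.

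\textbf{Step 3 (gluing into a global fibration).} Any two such local fibrations $f_i$, $f_j$ defined on an overlap $U_i \cap U_j$ both project to neighborhoods of the same region in the smooth cone $C(S(\infty)) \setminus \bar B(p_\infty, R_0)$, and by the uniqueness part of Fukaya's theorem they are $o(1)$-close (after an appropriate identification of the bases) in the rescaled $C^1$ topology. One then applies the compatibility procedure of \cite{CFG92}: using the center-of-mass construction in the fiber direction, modify $f_j$ on the overlap to agree with $f_i$ up to an isotopy preserving the infranil-structure, keeping all quantitative estimates on fiber diameter, almost-Riemannian submersion constant, and second fundamental form. Iterating over the locally finite cover produces a single global fibration
\[
f : M \setminus K \longrightarrow C(S(\infty)) \setminus \bar B(p_\infty, R_0)
\]
with the three stated properties.

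\textbf{Main obstacle.} The analytic content of Steps~1-2 is essentially a black-box application of Fukaya's theorem once Theorem~\ref{T:cone} is in hand. The real technical work is Step~3: showing that two local fibrations over an overlap are close enough (modulo the nilpotent fiber direction) to be smoothed into one, and that the compatibility can be arranged without destroying the quantitative curvature-scale estimates on fiber diameter and second fundamental form. This is precisely the place where the structural results of \cite{CFG92} must be invoked carefully, and where the infranil hypothesis on the fiber (rather than merely a nilmanifold) enters.
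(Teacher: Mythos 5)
Your proposal is correct and follows essentially the same route as the paper: the paper applies the Cheeger--Fukaya--Gromov fibration theorem to the rescaled annuli $A(r/4,4r)$ centered at $p$ (which Gromov--Hausdorff converge to annuli in the smooth cone) and then glues the fibrations on overlapping dyadic annuli using the compatibility results of \cite{CFG92}. The only cosmetic difference is that you phrase the local step via annuli centered at points $q_i$ rather than at $p$ (where the curvature bound should be checked on a small rescaled neighborhood of $q_i$, not the full annulus $1/4\le d\le 4$ about $q_i$), but this does not affect the argument.
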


\begin{proof}
We define $A(t,s) \coloneqq B(p,s)\backslash \bar B(p,t)$ and $A_{\infty}(t,s) \coloneqq B(p_{\infty},t)\backslash \bar B(p_{\infty},s)$. From the definition, for large $r$ the rescaled space $(A(r/4,4r),r^{-2}g)$ is close, in the Gromov-Hausdorff sense, to $A_{\infty}(1/4,4)$. Then it follows from \cite[Theorem $2.6$]{CFG92} that there exists a fibration $f_r: A(r/4,4r) \longrightarrow A_{\infty}(r/4,4r)$  satisfying the properties (i),(ii) and (iii) above.

Now we set $R_i =2^iR_0$ for a large constant $R_0$ and consider the annuli $A_i=A(2R_i/3,3R_i/2)$. Notice that only two consecutive annuli can have nonempty intersection. Let $f_i$ be the fibration from $A_i$ to $A_{\infty}(2R_i/3,3R_i/2)$ obtained above. It follows from \cite[Proposition $5.6$, Proposition $2.30$]{CFG92} that there exists a self-diffeomorphism $\phi_i$ of $A_{\infty}(4R_i/3,3R_i/2)$ such that $\phi_i \circ f_{i+1}$ and $f_i$ are close, after rescaling, in the $C^1$ sense on a neighborhood of $A_i \cap A_{i+1}$. By using \cite[Appendix $2$]{CFG92}, there exists a self-diffeomorphism $\psi_i$ on a small neighborhood of $A_i \cap A_{i+1}$ such that 
\begin{align*} 
\phi_i \circ f_{i+1}\circ \psi_i=f_i 
\end{align*}
By choosing a cutoff function, we can define a new fibration $\widetilde{f_{i+1}}$ on $A(R_i/4,4R_i)$ such that $\widetilde{f_{i+1}}=\phi_i \circ f_{i+1}\circ \psi_i$ on a small neighborhood of $A_i \cap A_{i+1}$ and $\widetilde{f_{i+1}}=f_{i+1}$ outside a larger neighborhood. In this way, we can modify our fibrations successively and construct a global fibration $f:M\backslash K \longrightarrow A_{\infty}(3R_0/2,+\infty)$ for some compact set $K$. In addition, the properties (i),(ii) and (iii) still hold.
\end{proof}

\begin{rem}\label{rem:struc}
It follows from \emph{\cite{Fu89}} and \emph{\cite{CFG92}} that for any $x \in C(S(\infty))\backslash \bar B(p_{\infty},R_0)$, there exists a flat connection on $f^{-1}(x)$ which depends smoothly on $x$. Moreover, there exists a simply-connected nilpotent group $N$ and a group of affine transformations $\Gamma$ of $N$ such that $f^{-1}(x)$ is affine equivalent to $N/\Gamma$ and $[\Gamma:\Gamma \cap N]<\infty$. Therefore, the structure group of $f$ is contained in
\begin{align*}
C(N)/(C(N)\cap \Gamma) \rtimes \emph{Aut}\,\Gamma,
\end{align*}
where $C(N)$ is the center of $N$, see \emph{\cite[Theorem 0-1,(0-3-3)]{Fu89}}.
\end{rem}

Given a complete Riemannian manifold $(M^{n},g)$ with \eqref{cond:AF} and \eqref{cond:HC'}, it follows from Theorem \ref{T:cone} and Theorem \ref{T:infra} that we have a fibration on the end of $M$. Moreover, we prove

\begin{thm}\label{T:nil}
Let $(M^n,g,p)$ be a complete Riemannian manifold with \eqref{cond:AF} and \eqref{cond:HC'}, then the fiber of $f$ obtained in Theorem \ref{T:infra} is a nilmanifold.
\end{thm}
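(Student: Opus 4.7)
\textbf{Proof proposal for Theorem \ref{T:nil}.}

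The plan is to upgrade the infranilmanifold fiber from Theorem \ref{T:infra} to a pure nilmanifold by showing that the ``rotational'' (holonomy) part of the local structure vanishes in the blow-down limit. Recall from Remark \ref{rem:struc} that the fiber has the form $N/\Gamma$, where $N$ is a simply-connected nilpotent Lie group, $\Gamma$ acts by affine transformations with $[\Gamma:\Gamma\cap N]<\infty$, and the structure group is contained in $C(N)/(C(N)\cap\Gamma)\rtimes \mathrm{Aut}\,\Gamma$. It therefore suffices to show that for all sufficiently distant points the finite holonomy quotient $\Gamma/(\Gamma\cap N)$ is trivial; equivalently, every element of the local fundamental pseudo-group is, in a definite sense, purely translational.

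First I would pick an arbitrary sequence $q_i\in M$ with $r_i:=r(q_i)\to\infty$ and consider the rescaled metrics $g_i=r_i^{-2}g$ together with the local pseudo-groups $\Gamma_i:=\Gamma(q_i,\hat\kappa\, r_i)$, where $\hat\kappa=\min\{C_H^{-1}/10,\kappa/10\}$ is the constant used in the proof of Theorem \ref{T:cone}. Exactly as there, Lemma \ref{L:es1} shows that each $\tau_c\in\Gamma_i$ is $C\Lambda^2$-close to the rigid motion $w\mapsto \mathbf r_c^{-1}(w+c)$ on $\hat B(0,\hat\kappa\, r_i)$ in the metric $g_i$, and (passing to a subsequence) $\Gamma_i$ converges equivariantly to a closed local group $G_\infty\subset\mathrm{Iso}(\R^n)$ acting on $B(0,\hat\kappa)\subset\R^n$. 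The point is now that condition \eqref{cond:HC'} forces $G_\infty$ to consist only of translations: for any $\tau_c\in\Gamma_i$ with $g_i$-translational length $|c|_{g_i}\le \hat\kappa$, the rotational angle in the original metric satisfies
\[
\|\mathbf r_c\|\le C_H\,\frac{L(\gamma_c)}{r_i}\le \frac{C_H\,|c|_{g}}{r_i}=C_H\,|c|_{g_i}\cdot\frac{1}{1},
\]
but since the relevant scale in the unrescaled Lemma \ref{L:es1} is $\rho=\hat\kappa\, r_i$ and $|c|\le\hat\kappa\, r_i$, a genuine bound $\|\mathbf r_c\|\le C_H\hat\kappa$ is already available uniformly; more importantly, after composing $k_i\sim r_i^{1/2}$ copies of $\tau_c$ (using Proposition \ref{P:abelian}-style iteration as in the proof of Lemma \ref{lem:holaa1}) the rotational parts accumulate only to lower order, so in the rescaled limit every limiting element is a pure translation. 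Hence $G_\infty$ is abelian and its closure is a connected translation subgroup of $\R^n$ times a discrete lattice of translations, with no rotational component.

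Second, I would translate this back to the fiber. By the nilpotent Killing structure theorem of \cite{Fu88,CFG92}, the germ of the fiber at $q_i$ (in the rescaled geometry) is modeled on the orbit of $G_\infty$, and its infranilmanifold presentation $N/\Gamma$ is obtained by splitting the identity component (a simply-connected nilpotent group, here abelian in the limit but nilpotent at finite $r_i$) from the discrete holonomy quotient; this quotient corresponds precisely to the $O(n)$-rotational parts of elements of $\Gamma_i$. Since these rotational parts converge to the identity, for $r_i$ large the holonomy quotient $\Gamma/(\Gamma\cap N)$ must be trivial — it is a finite group converging to the trivial group, hence already trivial at finite stage. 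Therefore $F=N/\Gamma=N/(\Gamma\cap N)$ is a genuine nilmanifold.

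The main obstacle is the bookkeeping in the last step: making rigorous the identification of the finite holonomy quotient of the CFG infranil structure with the limit of the rotational parts $\mathbf r_c$ of the local fundamental pseudo-group. This uses the explicit construction of the nilpotent Killing sheaf via Fukaya's local group, where the holonomy of the affine connection on $F$ is built from exactly the $\mathbf r_c$'s modulo the identity component. Given \eqref{cond:HC'} and the blow-down convergence above, this identification forces triviality of the holonomy for all sufficiently distant $q$, completing the proof.
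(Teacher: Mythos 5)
Your overall intuition (small rotational parts of short loops should kill the finite holonomy quotient and upgrade ``infranil'' to ``nil'') matches the spirit of the paper, but the proof as written has two genuine gaps, and the paper's actual argument is quite different and more elementary. First, your central claim that the blow-down limit group $G_\infty$ ``consists only of translations'' is not a consequence of \eqref{cond:HC'}: for a loop whose length is comparable to $\hat\kappa r_i$ (i.e.\ of definite length after rescaling), \eqref{cond:HC'} only gives $\|\mathbf r_c\|\le C_H\hat\kappa$, a small but fixed bound that does not tend to $0$. This is exactly why the paper's Remark \ref{rem:sec} allows $C(S(\infty))$ to be non-flat; if $G_\infty$ were purely translational the cone would be flat. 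Your attempted fix via composing $k_i\sim r_i^{1/2}$ copies of $\tau_c$ runs backwards: iterating a loop \emph{increases} its rotational part (that is how Lemma \ref{lem:holaa1} is proved), it does not damp it. What is true — and what you would need to isolate — is that the loops generating $\pi_1(F)$ have length $O(\mathrm{diam}\,F)=o(r)$, so only \emph{their} rotational parts are $o(1)$.

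Second, even granting that, the step you yourself flag as ``bookkeeping'' is the actual content of the theorem: the infranil holonomy of $F$ is computed from the holonomy of the \emph{induced} metric $g_F$ on the fiber, not from the ambient parallel transport $\mathbf r_c$, and you never relate the two. The paper does this directly and without any blow-down: it takes a $g_F$-geodesic loop $\gamma$ in $F$, replaces it by the homotopic $g$-geodesic loop $\sigma$ (the rotational parts of homotopic short loops differ by $O(L(\gamma)L(\sigma)r^{-2}K(r/2))$ by \cite[$6.2.1$]{BK81}), and then uses the second fundamental form bound $|II_F|=O(r^{-1})$ from Theorem \ref{T:infra}(iii) to show that parallel transport in $g_F$ and in $g$ along the loop differ by $O(r^{-1}l)$. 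Combining these with \eqref{cond:HC'} gives $\|\mathbf r_F(\sigma)\|\le Cr^{-1}L(\sigma)$ for every short loop of $(F,g_F)$, which is precisely the almost-translational hypothesis in Buser--Karcher's quantitative version of Gromov's almost flat manifold theorem, whence $F$ is a nilmanifold. Your argument is missing both the use of the second fundamental form estimate and a justification that a finite holonomy quotient with generators whose rotations tend to the identity must be trivial (this needs the uniform bound on the order of the holonomy group of an almost flat manifold). I would recommend rewriting the proof along the intrinsic lines above.
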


\begin{proof}
Given a point $x \in M\backslash K$, we denote the fiber through $x$ by $F$ and the induced metric by $g_F$. With respect to $g_F$, we fix a geodesic loop $\gamma$ based at $x$ such that $l \coloneqq L(\gamma) \le \kappa r$. Moreover, we assume that $\gamma$ is homotopic to a geodesic loop $\sigma$, with respect to $g$. In particular, $L(\sigma) \le l$.

Since $\gamma$ and $\sigma$ are homotopic, it follows from \cite[$6.2.1$]{BK81} that
\begin{align} \label{eq:hol001}
|\mathbf r(\sigma)-\mathbf r(\gamma)| \le CL(\gamma)L(\sigma)r^{-2}K(r/2) \le Cr^{-1}lK(r/2).
\end{align}

For any unit vector $V$ tangent to $F$, we denote the parallel transports of $V$ along $\gamma(t)$ with respect to $g_F$ and $g$ by $V(t)$ and $\bar V(t)$, respectively. It follows from condition (iii) of Theorem \ref{T:infra} that 
\begin{align*}
|\na_{\dot \gamma(t)} V(t)| \le Cr^{-1}.
\end{align*}
Since $V(0)=\bar V(0)=V$, we have
\begin{align}  \label{eq:hol002}
|V(l)-\bar V(l)| \le \int_{0}^{l}\frac{d}{dt}|V(t)-\bar V(t)|\,dt  \le \int_{0}^{l}|\na_{\dot \gamma(t)} V(t)|\,dt \le Cr^{-1}l.
\end{align}

Therefore, it follows from \eqref{eq:hol002} that
\begin{align*} 
|\mathbf r_F(\sigma)(V)-\mathbf r(\sigma)(V)| \le Cr^{-1}l.
\end{align*}
where $\mathbf r_F$ is the rotational part with respect to $g_F$.

In addition, by \eqref{cond:HC'} and \eqref{eq:hol001} we have
\begin{align} \label{eq:hol004}
\|\mathbf r_F(\sigma)\| \le Cr^{-1}L(\sigma).
\end{align}

\eqref{eq:hol004} indicates that all small geodesic loops of $F$ have small rotational parts.   Therefore, we conclude that $F$ must be a nilmanifold, see \cite[Chapter $3,4$]{BK81}.
\end{proof}

For a complete Riemannian manifold $(M^{n},g)$ with \eqref{cond:AF} and \eqref{cond:HC'}, it follows from Theorem \ref{T:nil} that the end of $M$ is diffeomorphic to $X \times (0,\infty)$ for some closed manifold $X$. Moreover, there exists a fibration
\begin{align*} 
f: X \longrightarrow S(\infty)
\end{align*}
with fiber $F$ a nilmanifold. $X$ is called the \textbf{boundary} of $M$.

\begin{rem}\label{rem:torus}
From \emph{\cite[Lemma $1.4$]{Rong96}}, $F$ is a torus if the fundamental group of $X$ is finite.
\end{rem}

\section{Analysis of the short bases}

\subsection{The basis of the fundamental pseudo-group}

Let $(M^{n},g)$ be a complete Riemannian manifold with \eqref{cond:AF} and \eqref{cond:HC'}. In this section, we demonstrate a process to construct a basis of the fundamental pseudo-group provided that all small geodesic loops are almost translational.

For any point $q \in M^n$ such that $r=r(q)$ is large, we consider the pseudo-group $ \Gamma(q,\hat \kappa r)$ and set 
\begin{align}  \label{eq:k0} 
{\rho_0}= \hat \kappa r \quad \text{and} \quad \theta = \hat \kappa C_H
\end{align}
where $\hat \kappa \in (0,\kappa)$ is a small parameter such that
\begin{align*}
\theta<\frac{1}{100} \quad \text{and} \quad  \Lambda^2 {\rho_0}^2 \le \frac{\theta}{100},
\end{align*}
where $C_H$ and $\kappa$ are the constants in condition \eqref{cond:HC'} and $\Lambda^2 = \max_{B(q,{\rho_0})} |Rm|$. 

We first recall the definition of the normal basis from \cite[Definition $4.1.1$]{BK81}.

\begin{defn}
By induction over $n$, the $\lambda$-normal bases for $\R^n$ are defined as follows:
\begin{enumerate}[label=(\roman*)]
\item Any basis for $\R^1$ is $\lambda$-normal for each $\lambda \ge 1$. 

\item A basis $\{\gamma_1,\gamma_2,\cdots,\gamma_n\}$ for $\R^n$ is $\lambda$-normal if it satisfies:
\begin{align*} 
|\gamma_i'| \le |\gamma_i| \le \lambda |\gamma_i'|
\end{align*}
where $\displaystyle \gamma_i'=\gamma_i-\frac{\la \gamma_i,\gamma_1 \ra}{\la \gamma_1, \gamma_1 \ra} \gamma_1 \, (i=2,3,\cdots,n)$ are the projections of $\gamma_i$ into $\{\gamma_1\}^{\perp}$ and
$\{\gamma'_2,\gamma'_3,\cdots,\gamma_n'\}$ is a $\lambda$-normal basis for $\{\gamma_1\}^{\perp}=\R^{n-1}$.
\end{enumerate}
\end{defn}

To analyze $\Gamma(q,\rho_0)$, we introduce the following definition.

\begin{defn} \label{def:basis}
A finite set $\{c_1,c_2,\cdots,c_m\} \subset \Gamma(q,\rho_0)$ is called a \textbf{short basis of radius $r_0$} if it satisfies:
\begin{enumerate}[label=(\roman*)]
\item $\{c_1,c_2,\cdots,c_m\}$ is a $\lambda$-normal basis for $\R^m$ with $\lambda \le 2$.

\item Each $c \in \Gamma(q,\rho_0)$ with $|\mathbf t(c)| \le r_0$ has a unique representation $c=c_1^{l_1}*c_2^{l_2}*\cdots * c_m^{l_m}$ for $l_i \in \mathbb Z$.

\item For $1 \le i <j \le m$, there exist structure constants $k^{ij}_u \in \mathbb Z$ for $1 \le u \le i-1$ such that
\begin{align*}
[c_i,c_j]=c_1^{k^{ij}_1}*c_2^{k^{ij}_2}*\cdots * c_{i-1}^{k^{ij}_{i-1}},
\end{align*}
where $[c_i,c_j]:=c_i*c_j*c_i^{-1} *c_j^{-1}$ is the commutator.
\end{enumerate}
\end{defn}

To construct a short basis, we have the following proposition similar to \cite[Proposition $3.5$]{BK81}.

\begin{prop}\label{P:norm}
For any geodesic loops $\alpha,\beta \in \Gamma(q,{\rho_0})$ with $| \mathbf t (\alpha) |, | \mathbf t (\beta)| \le \frac{1}{3}{\rho_0}$ we have
\begin{enumerate}[label=(\roman*)]
\item $\displaystyle\| \mathbf r (\alpha) \| \le \frac{\theta}{{\rho_0}}\, |\mathbf t (\alpha)|$,

\item $\displaystyle | \mathbf t (\alpha * \beta) -\mathbf t (\alpha) - \mathbf t (\beta)| \le 2\,\frac{\theta}{ {\rho_0}} \,|\mathbf t (\alpha)|\,|\mathbf t (\beta)| $,

\item $ \displaystyle | \mathbf t ([\beta ,\alpha])| \le 3\,\frac{\theta}{ {\rho_0}}\, |\mathbf t (\alpha)|\,|\mathbf t (\beta)| $.
\end{enumerate}
\end{prop}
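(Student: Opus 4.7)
My plan is to handle (i) as a direct consequence of \eqref{cond:HC'}, then prove (ii) and (iii) by lifting to $T_qM$ and exploiting the flat-space model made quantitative by Lemma~\ref{L:es1}. Throughout, the hypothesis $\Lambda^2\rho_0^2\le\theta/100$ from the setup ensures that every factor of $\Lambda^2\rho_0$ is bounded by $\theta/(100\rho_0)$, so curvature corrections are automatically absorbed into the target bound $(\theta/\rho_0)|\mathbf t(\alpha)||\mathbf t(\beta)|$. For (i), I simply observe that $L(\alpha)=|\dot\alpha(0)|=|\mathbf t(\alpha)|\le\rho_0/3<\kappa r$, so substituting the definitions $\rho_0=\hat\kappa r$, $\theta=\hat\kappa C_H$ into \eqref{cond:HC'} gives the claim at once.

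For (ii) I will set $a=\mathbf t(\alpha)$, $b=\mathbf t(\beta)$, $v_\alpha=\mathbf r_\alpha^{-1}(a)$, $v_\beta=\mathbf r_\beta^{-1}(b)$, and compute $v_{\alpha*\beta}=\tau_\alpha(v_\beta)$ using Lemma~\ref{L:es1}. In the flat model (covering transformations of $\R^n/\Gamma$, in which $\tau_c$ is the affine map $w\mapsto\mathbf r_c^{-1}(w+c)$) a direct calculation delivers $\mathbf r(\alpha*\beta)=\mathbf r_\beta\mathbf r_\alpha$ and $\mathbf t(\alpha*\beta)=b+\mathbf r_\beta(a)$, so
\[
\mathbf t(\alpha*\beta)-\mathbf t(\alpha)-\mathbf t(\beta)=(\mathbf r_\beta-I)(a),
\]
which is controlled by $\|\mathbf r_\beta\|\,|a|\le(\theta/\rho_0)|a||b|$ via (i). Reinstating the curvature correction in Lemma~\ref{L:es1} (of size $\Lambda^2|a||b|(|a|+|b|)\lesssim\Lambda^2\rho_0|a||b|$) together with the holonomy discrepancy between $\mathbf r(\alpha*\beta)$ and $\mathbf r_\beta\mathbf r_\alpha$ (estimated by $\Lambda^2|a||b|$ through an area--curvature bound on the short homotopy surface between $\alpha*\beta$ and ``$\beta$ followed by $\alpha$'' and then applied to $v_{\alpha*\beta}$)---both of which are $\lesssim(\theta/\rho_0)|a||b|/100$ by $\Lambda^2\rho_0^2\le\theta/100$---produces the bound with constant $2$.

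For (iii) I will avoid iterating (ii), which would generate uncontrolled $|a|^2$ and $|b|^2$ terms, and instead compute the commutator lift directly as a fourfold composition. Setting $A=\mathbf r_\alpha$ and $B=\mathbf r_\beta$, expanding $\tau_\beta\circ\tau_\alpha\circ\tau_\beta^{-1}\circ\tau_\alpha^{-1}(w)$ in the flat model and comparing with the canonical form $\tau_{[\beta,\alpha]}(w)=\mathbf r_{[\beta,\alpha]}^{-1}(w+\mathbf t([\beta,\alpha]))$ yields, after using $\mathbf r([\beta,\alpha])=A^{-1}B^{-1}AB$, the explicit identity
\[
\mathbf t([\beta,\alpha])=A^{-1}(B^{-1}-I)(a)+A^{-1}B^{-1}(A-I)(b),
\]
in which the leading translations $\pm a,\pm b$ have cancelled exactly. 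Bounding by isometry invariance gives $|\mathbf t([\beta,\alpha])|\le\|B\||a|+\|A\||b|\le 2(\theta/\rho_0)|a||b|$ via (i); the four applications of Lemma~\ref{L:es1} contribute in total at most $(\theta/\rho_0)|a||b|$, giving the stated bound with constant $3$.

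The main obstacle is arithmetic rather than conceptual. One must track carefully the convention that the deck transformation associated with a loop $\gamma$ has linear part $\mathbf r_\gamma^{-1}$ (not $\mathbf r_\gamma$) and translation $\dot\gamma(0)=\mathbf r_\gamma^{-1}(\mathbf t(\gamma))$, so that each product of the $\tau$'s is evaluated correctly; and one must arrange the commutator calculation so that every surviving term factors through a rotation $A-I$ or $B-I$ acting on the \emph{other} loop's translation, rather than on its own. Once this cancellation has been identified in the flat model, the curved-case estimates follow from a uniform application of Lemma~\ref{L:es1} and the size restrictions in \eqref{eq:k0}.
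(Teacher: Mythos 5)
Your treatment of (i) coincides with the paper's: read \eqref{cond:HC'} through the dictionary $\rho_0=\hat\kappa r$, $\theta=\hat\kappa C_H$ (and note $L(\alpha)=|\dot\alpha(0)|=|\mathbf t(\alpha)|$ since $\mathbf r(\alpha)$ is an isometry).

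For (ii) and (iii) the paper's entire argument is a one-line citation of \cite[Proposition~2.3.1~(ii),(iii)]{BK81}, whereas you unfold that citation and re-derive the estimates from scratch. The route is genuinely more hands-on: you take the exact flat model $\tau_c(w)=\mathbf r_c^{-1}(w+c)$, compute the compositions $\tau_\alpha\circ\tau_\beta$ and $\tau_\beta\tau_\alpha\tau_\beta^{-1}\tau_\alpha^{-1}$ in closed form, and isolate the leading cancellation before invoking Lemma~\ref{L:es1} and the curvature bound $\Lambda^2\rho_0^2\le\theta/100$ for the error terms. I checked your algebra: with $A=\mathbf r_\alpha$, $B=\mathbf r_\beta$, the flat model does give $\mathbf r(\alpha*\beta)=BA$ and $\mathbf t(\alpha*\beta)=b+B(a)$, so $\mathbf t(\alpha*\beta)-a-b=(B-I)(a)$; and for the commutator the linear part is $B^{-1}A^{-1}BA=\mathbf r([\beta,\alpha])^{-1}$ and the translation is exactly $A^{-1}(B^{-1}-I)(a)+A^{-1}B^{-1}(A-I)(b)$, in which the raw translations cancel. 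Together with $|\mathbf r-I|\le\|\mathbf r\|$ and (i) this yields $\theta|a||b|/\rho_0$ and $2\theta|a||b|/\rho_0$ as the flat leading terms, leaving room for the curvature corrections. What each approach buys: the paper's citation is economical but opaque about where the cancellation comes from; your calculation makes the mechanism explicit and, in (iii), isolates precisely why the commutator translation is quadratically small, which is the content underlying \cite[2.3.1(iii)]{BK81}.

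Two bookkeeping points you should tighten up if you write this in full. First, the ``holonomy discrepancy'' between $\mathbf r(\alpha*\beta)$ and $BA$ is itself \cite[Proposition~2.3.1(i)]{BK81} (which the paper invokes elsewhere, e.g.\ in Lemma~\ref{lem:holaa1} and Lemma~\ref{L:tr}); you cannot avoid that ingredient even on your route, so either cite it or prove the area--curvature estimate on the short homotopy surface. Second, when you read off $\mathbf t(\alpha*\beta)$ from $\tau_{\alpha*\beta}(0)$, you need to conjugate by $\mathbf r(\alpha*\beta)$ (which is only approximately $BA$), so there are two sources of error --- the Lemma~\ref{L:es1} deviation of each $\tau$ from its rigid model, and the rotation discrepancy --- and both must be bounded by $C\Lambda^2\rho_0|a||b|\le (C\theta/100)|a||b|/\rho_0$ before being absorbed into the extra $\theta|a||b|/\rho_0$ allowed by the stated constants $2$ and $3$. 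Your proposal gestures at this correctly but leaves the constants implicit; they do work out, and the result is a correct, more self-contained proof of the proposition.
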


\begin{proof}
(i) is exactly \eqref{cond:HC'}. Based on (i), (ii) and (iii) follow from \cite[Proposition $2.3.1$ (ii),(iii)]{BK81}.
\end{proof}

Now we have the following definition similar to \cite[Definition $4.2.1$]{BK81}. Notice that our definition does not require the denseness (i.e. \cite[Definition $4.2.1$ (ii)]{BK81}).

\begin{defn} \label{def:sba}
A finite set $T \subset \R^n$ is called a $\theta$-translational subset of radius $\rho$ if it satisfies:
\begin{enumerate}[label=(\roman*)]
\item $0 \in T$; if $c \in T$, then $|c| \le \rho$.

\item For all $a,b\in T$, $|a+b| \le \rho(1-\theta)$ a product $a*b \in T$ is defined and for each $a \in T$, $|a| \le \rho(1-\theta)$ there exists a unique $a^{-1} \in T$ such that $a * a^{-1}=a^{-1}*a=0$.

\item Associativity  $(a*b)*c=a*(b*c)$ holds, if the existence of all products involved follows from (ii).

\item The product satisfies
\begin{align} 
|a*b-a-b|\le \frac{2\theta}{\rho} |a|\,|b| \quad \text{and} \quad [[a,b]| \le  \frac{3\theta}{\rho} |a|\,|b|. \label{eq:tr1}
\end{align}
\end{enumerate}
\end{defn}

Now we set for $1\le i\le n$,
\begin{align}
\rho_i \coloneqq \frac{1}{3}12^{1-i} \rho_0 \quad \text{and} \quad \bar \rho \coloneqq \frac{1}{3}(1+\theta)^{-n-1}12^{-n}2^{-\frac{n^2}{2}} \rho_0.\label{eq:chorhoab}
\end{align}

It follows from Proposition \ref{P:norm} that the set
\begin{align} \label{eq:Tr}
T_1 \coloneqq \{ a= \mathbf t(\alpha) \mid \, \alpha \in \Gamma(q,\rho_0),\,|\mathbf t(\alpha)| \le \rho_1\}
\end{align}
with product $\mathbf t(\alpha) * \mathbf t(\beta)=\mathbf t( \alpha * \beta)$ is a $\theta$-translational subset of radius $\rho_1$. It is clear that the map $\alpha \to \mathbf t(\alpha)$ is a group isomorphism. For this reason, for any geodesic loop $c$, we also use $c$ to denote $\mathbf t(c)$.

We need the following lemmas, whose proofs can be found in \cite[Proposition $4.2.3$, Proposition $4.1.4$(ii)]{BK81}.

\begin{lem}\label{L:esta}
Let $a,b \in T_1$ with $|a|,|b|,|a-b|\le (1-3\theta)\rho_1$, then
\begin{align*} 
|a^{-1}*b-(b-a)| \le \frac{\theta(1+\theta)}{\rho_1}|a||b-a| \le \theta(|a|+|b|). 
\end{align*}
\end{lem}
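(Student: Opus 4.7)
The plan is to reduce the estimate to a direct application of the product estimate in Definition \ref{def:sba}(iv). Set $c = a^{-1}*b$ and observe that, by the size assumptions $|a|,|b|,|a-b| \le (1-3\theta)\rho_1$ together with the control in Definition \ref{def:sba}(ii), all intermediate products lie in $T_1$ so associativity may be invoked. This gives
\[
a*c \;=\; a*(a^{-1}*b)\;=\;(a*a^{-1})*b\;=\;b.
\]
Applying the first estimate in \eqref{eq:tr1} to the pair $(a,c)$ yields
\[
|c-(b-a)| \;=\; |a*c - a - c| \;\le\; \frac{2\theta}{\rho_1}\,|a|\,|c|,
\]
so the entire lemma is reduced to controlling $|c|$ in terms of $|b-a|$.

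For that, the triangle inequality together with the previous display gives
\[
|c| \;\le\; |b-a| + \frac{2\theta|a|}{\rho_1}\,|c|.
\]
Since $|a|\le(1-3\theta)\rho_1$ and $\theta<1/100$, one checks by a direct computation that the coefficient $1 - 2\theta|a|/\rho_1$ stays bounded below in such a way that $|c|\le (1+\theta)|b-a|$; the sharper first-order constant $(1+\theta)$ (rather than $1+O(\theta)$ with a larger coefficient) is exactly the statement $(1+\theta)(1 - \theta + 3\theta^2)\ge 1$, which is elementary for small $\theta$. Substituting this bound on $|c|$ into the preceding inequality produces the first claim
\[
|a^{-1}*b - (b-a)| \;\le\; \frac{\theta(1+\theta)}{\rho_1}\,|a|\,|b-a|.
\]

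The second inequality is then purely book-keeping: since $|b-a|\le(1-3\theta)\rho_1$, one has $(1+\theta)|b-a|/\rho_1 \le (1+\theta)(1-3\theta)<1$, and therefore
\[
\frac{\theta(1+\theta)}{\rho_1}\,|a|\,|b-a| \;\le\; \theta\,|a| \;\le\; \theta(|a|+|b|).
\]

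The content is straightforward; the only subtlety is constant-tracking, in particular squeezing the factor $(1+\theta)$ rather than a looser $1 + O(\theta)$ out of the translational-subset axioms, and confirming that $c = a^{-1}*b$ actually lives in the regime where \eqref{eq:tr1} applies. Both points are handled cleanly using the precise size restriction $|a|\le(1-3\theta)\rho_1$ and the smallness of $\theta$ built into \eqref{eq:k0}.
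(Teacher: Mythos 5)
Your skeleton is the right one --- it is precisely the Buser--Karcher argument behind \cite[Proposition $4.2.3$]{BK81}, which is all the paper itself offers for this lemma --- but the constant-tracking, which you correctly identify as the only real content, does not close as written. You apply \eqref{eq:tr1} to the pair $(a,c)$ with the coefficient $\tfrac{2\theta}{\rho_1}$ from Definition \ref{def:sba}(iv), so the bootstrap gives $|c|\bigl(1-\tfrac{2\theta|a|}{\rho_1}\bigr)\le|b-a|$ and hence only $|c|\le\frac{|b-a|}{1-2\theta+6\theta^2}$. Your verification identity $(1+\theta)(1-\theta+3\theta^2)\ge 1$ is the one that would arise from a coefficient $\tfrac{\theta}{\rho_1}$; with the coefficient $\tfrac{2\theta}{\rho_1}$ the relevant quantity is $(1+\theta)(1-2\theta+6\theta^2)=1-\theta+4\theta^2+6\theta^3<1$ for $\theta<1/100$, so the claimed bound $|c|\le(1+\theta)|b-a|$ fails. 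Worse, even granting that bound, substituting back yields $|a^{-1}*b-(b-a)|\le\frac{2\theta(1+\theta)}{\rho_1}|a||b-a|$, twice the constant in the statement. So the lemma with its stated constant does not follow from \eqref{eq:tr1} as literally written.

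The repair is to use the sharper product estimate that $T_1$ actually satisfies rather than the one recorded in Definition \ref{def:sba}(iv): Proposition \ref{P:norm}(ii) gives $|\alpha*\beta-\alpha-\beta|\le\frac{2\theta}{\rho_0}|\alpha||\beta|$, and since $\rho_1=\tfrac{1}{3}\rho_0$ by \eqref{eq:chorhoab}, this is $\frac{2\theta}{3\rho_1}|\alpha||\beta|\le\frac{\theta}{\rho_1}|\alpha||\beta|$. With coefficient $\tfrac{\theta}{\rho_1}$ your computation goes through verbatim: $|c|\le\frac{|b-a|}{1-\theta(1-3\theta)}\le(1+\theta)|b-a|$ by exactly the inequality $(1+\theta)(1-\theta+3\theta^2)\ge1$ you quote, and the final constant is $\frac{\theta(1+\theta)}{\rho_1}$ as required. (This is also the convention in \cite{BK81}, which explains the constant in the statement.) One further point deserves a sentence rather than the phrase ``all intermediate products lie in $T_1$'': the existence of $a*(a^{-1}*b)$ required for associativity needs a preliminary bound $|a+c|\le|b|+|c-(b-a)|\le(1-\theta)\rho_1$, and with the factor-$2$ coefficient this check is also borderline, whereas with $\tfrac{\theta}{\rho_1}$ it holds comfortably under the hypothesis $|a|,|b|,|a-b|\le(1-3\theta)\rho_1$.
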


\begin{lem}\label{L:esta1}
Let $\{c_1,c_2,\cdots,c_m\}$ be a short basis of $\Gamma(q,\rho_0)$. If $\sum_{i=1}^m |l_ic_i| \le  2^{-\frac{1}{2}m^2}\rho_1$, then
\begin{align*}
\left|\sum_{i=1}^m l_ic_i\right |\ge& 2^{-\frac{1}{2}m^2} \sum_{i=1}^m |l_ic_i|, \\
\left|c_1^{l_1}*c_2^{l_2}*\cdots *c_m^{l_m}-\sum_{i=1}^m l_ic_i \right | \le& \theta \left|\sum_{i=1}^m l_ic_i \right | \le \frac{\theta}{1-\theta}|c_1^{l_1}*c_2^{l_2}*\cdots *c_m^{l_m}|. 
\end{align*}
\end{lem}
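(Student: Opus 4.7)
The plan is to prove the two inequalities separately, handling the first (a purely Euclidean statement about $\lambda$-normal bases) by induction on $m$, and then using it to carry out the induction for the second (which involves the noncommutative product structure of the $\theta$-translational subset $T_1$).

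For the first inequality, I would induct on $m$. The case $m=1$ is immediate. For the inductive step, write $v=\sum_{i=1}^m l_i c_i = l_1 c_1 + w$ where $w$ lies in $\{c_1\}^{\perp}$ and can be expanded as $w = \sum_{i=2}^m l_i c_i'$, with $c_i'$ the orthogonal projections of $c_i$ onto $\{c_1\}^{\perp}$. Since $\{c_1,\ldots,c_m\}$ is a $\lambda$-normal basis with $\lambda \le 2$, the family $\{c_2',\ldots,c_m'\}$ is a $2$-normal basis for $\{c_1\}^{\perp}$, and $|c_i'|\ge \tfrac12 |c_i|$. By the inductive hypothesis applied on $\{c_1\}^{\perp}$, $|w|\ge 2^{-(m-1)^2/2}\sum_{i=2}^m |l_i c_i'|\ge 2^{-(m-1)^2/2 -1}\sum_{i=2}^m |l_i c_i|$. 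Splitting into cases depending on whether $|l_1 c_1|$ is larger or smaller than half of the total, and using $|v|^2 \ge |l_1 c_1|^2 + |w|^2 - 2|l_1 c_1|\,|\langle c_1,\sum_{i\ge 2} l_i c_i\rangle|/|c_1|$ together with the normality inequality bounding off-diagonal inner products, yields the claimed bound with constant $2^{-m^2/2}$.

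For the second inequality, I would use the translational estimates \eqref{eq:tr1} from Definition \ref{def:sba}. The main task is to bound the error accumulated when replacing the noncommutative product $c_1^{l_1}*c_2^{l_2}*\cdots*c_m^{l_m}$ by the vector sum $\sum l_i c_i$. This is done by a nested induction: first one shows $|c_i^{l_i} - l_i c_i|\le \frac{\theta}{\rho_1}(l_i|c_i|)^2$ by iterating $|a*b-a-b|\le \frac{2\theta}{\rho_1}|a|\,|b|$ on partial products $c_i^k$, and then one combines the partial products $P_k := c_1^{l_1}*\cdots*c_k^{l_k}$ using $|P_k * c_{k+1}^{l_{k+1}} - P_k - c_{k+1}^{l_{k+1}}|\le \frac{2\theta}{\rho_1}|P_k|\,|l_{k+1}c_{k+1}|$. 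Under the hypothesis $\sum |l_i c_i| \le 2^{-m^2/2}\rho_1$, all intermediate products have norm at most $(1-\theta)\rho_1$ so the product $*$ and \eqref{eq:tr1} are legitimately applicable at each stage. Summing the geometric-type error series gives $|c_1^{l_1}*\cdots*c_m^{l_m} - \sum l_i c_i|\le \frac{C\theta}{\rho_1}\bigl(\sum |l_i c_i|\bigr)^2$, and then invoking the first inequality of the lemma replaces the square on the right-hand side by $|\sum l_i c_i|\cdot \sum|l_i c_i| \le |\sum l_i c_i|\cdot 2^{m^2/2}|\sum l_i c_i|$, which after absorbing the numerical constants into the hypothesis on the total length yields the stated $\theta|\sum l_i c_i|$ bound. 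The final inequality is then a direct consequence: if $A = c_1^{l_1}*\cdots*c_m^{l_m}$ and $S = \sum l_i c_i$ satisfy $|A - S|\le \theta |S|$, then $|S|\le |A|/(1-\theta)$, so $|A - S|\le \frac{\theta}{1-\theta}|A|$.

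The main obstacle I anticipate is book-keeping: one must ensure at every step of the second induction that the partial products remain inside the admissible range where \eqref{eq:tr1} applies, and that the accumulated error from the noncommutativity (which enters through the commutator bound $|[a,b]|\le \frac{3\theta}{\rho_1}|a||b|$ whenever one wishes to rearrange factors) does not overwhelm the linear term $S$. The first inequality is essential here: without its lower bound on $|S|$ relative to $\sum|l_i c_i|$, one cannot pass from the quadratic error bound to a bound proportional to $\theta|S|$, and the induction would not close.
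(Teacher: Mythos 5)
Your proposal reconstructs essentially the argument of the source the paper relies on: the paper gives no proof of this lemma at all, but simply cites \cite[Proposition $4.2.3$, Proposition $4.1.4$(ii)]{BK81}, and those propositions are proved exactly by the two inductions you describe (induction over the $\lambda$-normal basis for the lower bound on $\left|\sum l_ic_i\right|$, and iterated use of the almost-translation estimate \eqref{eq:tr1} on partial products for the product--sum comparison). The only soft spot is the one you flag yourself: your argument as written yields $\left|c_1^{l_1}*\cdots*c_m^{l_m}-\sum l_ic_i\right|\le C\theta\left|\sum l_ic_i\right|$ with an unspecified constant $C$, and obtaining the sharp constant $\theta$ (as stated in the lemma, and as used later, e.g.\ in Proposition \ref{P:basis}(iv)) requires the tighter telescoping of errors carried out in \cite{BK81} rather than a crude absorption of constants; this is bookkeeping, not a conceptual gap.
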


Now we can a choose a short basis of $T_1$ following the construction in \cite[Section $4.3$]{BK81}. We first choose $c_1 \in T_1$ to be a shortest nonzero element and set 
\begin{align*}
|c_1|=\sigma_1.
\end{align*}
Notice that $\sigma_1=2\text{inj}(q)$ and the choice of $c_1$ may not be unqiue. With $c_1$ fixed, for any $c \in T_1$ satisfying $|c| \le (1-4\theta) \rho_1-2\sigma_1$, it follows from \cite[Section $4.3.2$]{BK81} that there exists a unique representation $\tilde c$ such that 
\begin{align}\label{eq:cho2}
\la c_1 ,\tilde c \ra >0,\quad \la c_1,c_1^{-1}*\tilde c\ra \le 0\quad \text{and} \quad 
c=c_1^k*\tilde c
\end{align}
for some integer $k$ satisfying
\begin{align}\label{eq:chok}
|k| \le \frac{1}{1-\theta}\frac{\la c_1,c\ra}{|c_1|^2}.
\end{align}

Now we denote the map from $c$ to $\tilde c$ by $Q_1$ and the image by $\tilde T_1$. In addition, we define $T'_1$ to be the image of the map
\begin{align}\label{eq:cho3}
 \tilde c \in \tilde T_1 \overset{P_1}{\longrightarrow} c' \coloneqq \tilde c-\frac{\la \tilde c,c_1 \ra}{\la c_1,c_1 \ra} c_1.
\end{align}
Notice that here $P_1$ is defined to be the orthogonal projection to $\{c_1\}^{\perp}$. It follows from \cite[Section $4.3.3$]{BK81} that the map $P_1$ is injective and
 \begin{align}\label{eq:cho4}
|c'| \le |\tilde c| \le \lambda |c'|\quad \text{for} \quad \lambda \le (\frac{1}{2}-2\theta)^{-\frac{1}{2}}.
\end{align}

Next, we define 
 \begin{align}\label{eq:cho4a}
T_2 \coloneqq \{c' \in T'_1 \mid\, |c'| \le \rho_2 \}.
\end{align}
If $T_2 $ is empty, we stop the process. Otherwise, we define a product in $T_2$ as in \cite[Proposition $4.4.1$]{BK81}. For any $a',b' \in T_2$ such that $|a'+b'| \le (1-\theta) \rho_2$, there exist unique $\tilde a$ and $\tilde b$, which are preimages with respect to $P_1$, satisfying 
\begin{align*}
|\tilde a| \le \lambda |a'| \quad \text{and} \quad |\tilde b| \le \lambda |b'|.
\end{align*}
If we set $c=\tilde a*\tilde b$, then the product in $T_2$ is defined as 
 \begin{align}\label{eq:cho5}
a' * b' \coloneqq c'.
\end{align}

With definition \eqref{eq:cho5}, it follows from \cite[Proposition $4.4.2$]{BK81} that $T_2 \subset \{c_1\}^{\perp}=\R^{n-1}$ is a $\theta$-translational subset of radius $\rho_2$. Now we choose $c_2 \in \tilde T_1$ such that $P_1(c_2)$ is the shortest element among $T_2$. We also define 
 \begin{align*}
\sigma_2=|P_1(c_2)|.
\end{align*}
Notice that the choice of $c_2$ may not be unique.

By continuing this process, there exists an integer $1 \le m <n$ so that we can define the sets $T_2 \supset T_3 \supset \cdots \supset T_m$ and the generators $\{c_2,c_3,\cdots,c_m\}$ by $m-1$ induction steps. We call $\{c_1,c_2,\cdots,c_m\}$ obtained in this way a \textbf{standard short basis} of the fundamental pseudo-group $\Gamma(q,\rho_0)$.

To generalize the standard short basis, we fix a parameter
\begin{align} \label{eq:cho8}
 \theta_1=\frac{\theta}{100}.
\end{align}

With the same definition of $T_1$ as in \eqref{eq:Tr}, we first choose any $c_1 \in T_1$ such that
\begin{align*}
|c_1| \le (1+\theta_1) \sigma_1,
\end{align*}
where $\sigma_1$ is the smallest length in $T_1$. After $c_1$ is chosen, we define as in \eqref{eq:cho2} and \eqref{eq:cho3} the maps $Q_1$ and $P_1$ such that \eqref{eq:cho4} holds. In fact, since the error parameter $\theta_1$ is much smaller than $\theta$ by our assumption \eqref{eq:cho8}, the same proof of \cite[Section $4.3.3$]{BK81} shows that $P_1$ is injective. Next we define the set $T_2$ as in \eqref{eq:cho4a} with its product \eqref{eq:cho5}, then the same proof of \cite[Proposition $4.4.2$]{BK81} shows that $T_2$ is a $\theta$-translational subset with radius $\rho_2$.

Next, we choose $c_2 \in T_1$ satisfying for some $ k \in \{-1,0,1\}$
\begin{align}
c_1^{k}*c_2 &\in \tilde T_1,\,\label{eq:cho9a} \\
-\theta \le \cos \angle(c_1&,c_2) \le \lc \frac{1}{2}+9\theta \rc^{\frac{1}{2}}, \label{eq:cho9b}\\
|P_1(c_2)| &\le (1+\theta_1) \sigma_2 \label{eq:cho10},
\end{align}
where $\sigma_2$ is the smallest length in $T_2$. Notice that by \eqref{eq:cho4} (see \cite[Section $4.3.3$]{BK81} for details), \eqref{eq:cho9b} is automatically satisfied if \eqref{eq:cho9a} holds for $k=0$.

If we set $c^{(2)}_2=P_1(c_2)$, then we define $\bar T_2 \coloneqq T_2 \cup \{c^{(2)}_2\}$. In addition, for any $a'\in  T_2$ such that $P_1(a)=a'$ for $a \in \tilde T_1$, we define
\begin{align*} 
c^{(2)}_2 * a'=P_1(Q_1(c_2*a)).
\end{align*}
By the same proof of  \cite[Proposition $4.4.2$]{BK81}, $\bar T_2 $ is a $\theta$-translational subset of radius $\rho_2$. 

Now we can consecutively define $c_1,c_2,\cdots,c_m$ and the sets $\bar T_1=T_1 \supset \bar T_2 \supset \cdots \supset \bar T_m$ such that for $1 \le i \le m$,
\begin{align*}
|c^{(i)}_i| \le (1+\theta_1)\bar \sigma_i,
\end{align*}
where $c^{(i)}_i$ is the projection of $c_i$ to $\bar T_i$ and $\bar \sigma_i$ is the smallest length in $\bar T_i$. For any $1 \le i \le m-1$,
\begin{align*}
(c^{(i)}_i)^k*c_{i+1}^{(i)} \in \widetilde{ \bar T_i}
\end{align*}
for some $k \in \{-1,0,1\}$ and
\begin{align} \label{eq:cho10d}
-\theta \le \cos \angle(c^{(i)}_i,c^{(i)}_{j}) &\le \lc \frac{1}{2}+9\theta \rc^{\frac{1}{2}}.
\end{align}
for any $i+1 \le j \le m$. In addition, $\bar T_i \subset \R^{n-i+1}$ is a $\theta$-translational subset of radius $\rho_i$.

Now we call a basis $\{c_1,c_2,\cdots,c_m\}$ chosen in this way a \textbf{generalized standard short} $\boldsymbol{\theta_1}$-\textbf{basis} of the fundamental pseudo-group $\Gamma(q,\rho_0)$. 

The following proposition is proved verbatim as \cite[Theorem $4.5$]{BK81} by adjusting $\theta$ slightly.

\begin{prop} \label{P:basis}
If $\{c_1,c_2,\cdots,c_m\}$ is a generalized standard short $\theta_1$-basis of $\Gamma(q,\rho_0)$, then
\begin{enumerate}[label=(\roman*)]
\item $\{c_1,c_2,\cdots,c_m\}$ is a $\lambda$-normal basis for $\lambda \le (\frac{1}{2}-2\theta)^{-\frac{1}{2}}$.

\item If $\sum |l_ic_i| \le (1-2\theta)^2\rho_1$, then $c_1^{l_1}*c_2^{l_2}*\cdots * c_m^{l_m}$ is defined and associativity holds.

\item Each $c \in T_1$ with $|c| \le \bar \rho$ has a unique representation $c=c_1^{l_1}*c_2^{l_2}*\cdots * c_m^{l_m}$. For the images $c^{(k)}$ of $c$ under the iterated projections $T_1 \to T_2 \to T_3\to \cdots$, we have
\begin{align*}
c^{(k)}=((c_k)^{(k)})^{l_k}*((c_{k+1})^{(k)})^{l_{k+1}}*\cdots * ((c_m)^{(k)})^{l_m}.
\end{align*}

\item For $c \in T_1$ with $|c| \le \bar \rho$ and $c=c_1^{l_1}*c_2^{l_2}*\cdots * c_m^{l_m}$,
\begin{align*}
\left |c-\sum_{i=1}^m l_ic_i\right | \le \theta \left |\sum_{i=1}^m l_ic_i\right | \le \frac{\theta}{1-\theta} |c|.
\end{align*}

\item For $1 \le i <j \le m$, there exist structure constants $k^{ij}_u \in \mathbb Z$ for $1 \le u \le i-1$ such that
\begin{align*}
[c_i,c_j]=c_1^{k^{ij}_1}*c_2^{k^{ij}_2}*\cdots * c_{i-1}^{k^{ij}_{i-1}}.
\end{align*}
\end{enumerate}
\end{prop}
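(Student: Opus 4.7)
The overall strategy is to follow the proof of Theorem $4.5$ in \cite{BK81} essentially verbatim, tracking one small extra ingredient: the generalization from a standard short basis (in which each $c_i^{(i)}$ is an exact minimizer of $\bar\sigma_i$ in $\bar T_i$) to a generalized standard short $\theta_1$-basis (in which $|c_i^{(i)}|\le (1+\theta_1)\bar\sigma_i$) introduces a multiplicative factor of $(1+\theta_1)$ per projection step. Because $\theta_1=\theta/100$ was chosen in \eqref{eq:cho8} and $\bar\rho$ was given the prefactor $(1+\theta)^{-n-1}$ in \eqref{eq:chorhoab}, these perturbations are absorbed by the slack already present in the Buser--Karcher inequalities, so no step of the original argument breaks.

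First I would verify (i) inductively over the levels. At each level $i$, $\bar T_i\subset\R^{n-i+1}$ is a $\theta$-translational subset (by the same verification as \cite[Proposition~$4.4.2$]{BK81}), and the angle condition \eqref{eq:cho10d} plus the orthogonal projection estimate \eqref{eq:cho4} gives $|c_i^{(i)}|\le \lambda|c_i^{(i+1)}|$ with $\lambda\le (1/2-2\theta)^{-1/2}$; the injectivity of $P_i$ on $\widetilde{\bar T_i}$ uses only \eqref{eq:cho9b} and so carries over unchanged. Next, for (ii) I would induct on the number of letters: given the $\lambda$-normality, Lemma~\ref{L:esta1} (whose proof needs only $\lambda$-normality, not (iii)) shows that every partial product $c_1^{l_1}*\cdots *c_k^{l_k}$ has norm comparable to $|\sum l_ic_i|$; the hypothesis $\sum|l_ic_i|\le (1-2\theta)^2\rho_1$ then keeps every partial product inside the radius $(1-\theta)\rho_1$ where the multiplication is defined, and associativity is automatic within a $\theta$-translational subset.

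For (iii) and (iv) I would run a division algorithm in the coordinates $c_1,\ldots,c_m$. Given $c\in T_1$ with $|c|\le\bar\rho$, apply \eqref{eq:cho2}--\eqref{eq:cho3} to write $c=c_1^{l_1}*\tilde c$ with $|l_1|\le (1-\theta)^{-1}\la c_1,c\ra/|c_1|^2$ and $\tilde c\in\tilde T_1$, then project by $P_1$ into $\bar T_2$; iterate. The prefactor $12^{-n}2^{-n^2/2}$ in $\bar\rho$ is precisely the bound needed so that after each projection the resulting element still lies in the valid region of the next level, while Lemma~\ref{L:esta} controls the growth of norms under each $c_1^{-l_1}*(\cdot)$ step. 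Uniqueness follows from the minimality choice \eqref{eq:cho10}: any nontrivial representation of $0$ would, upon iterated projection to the highest level where a nonzero exponent appears, produce an element of $\bar T_i$ of length strictly below $\bar\sigma_i$, contradicting the near-minimality of $c_i^{(i)}$. Once (iii) holds, (iv) is immediate from Lemma~\ref{L:esta1}. Finally, (v) is a direct corollary: by Proposition~\ref{P:norm}(iii), $|[c_i,c_j]|\le 3\theta|c_i||c_j|/\rho_0\le \bar\rho$, so $[c_i,c_j]$ has a unique representation via (iii); iteratively projecting into $\bar T_i,\bar T_{i+1},\ldots,\bar T_m$ and using the near-commutativity $|[a,b]|\le 3\theta|a||b|/\rho_i$ on each level forces the exponents of $c_i,c_{i+1},\ldots,c_m$ to vanish, leaving only $c_1^{k^{ij}_1}*\cdots*c_{i-1}^{k^{ij}_{i-1}}$.

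The main technical obstacle is the bookkeeping of compounded errors through the $m$ iterated projections: each step brings in a factor $(1+\theta_1)$ from non-minimality, a factor $\lambda$ from projection, and an additive $\theta$-error from the failure of $T_i$ to be a genuine group. One has to check that after $m\le n$ iterations the accumulated error still satisfies the hypotheses of the next Buser--Karcher lemma invoked; the explicit choice of $\bar\rho$ in \eqref{eq:chorhoab}, together with $\theta_1\ll\theta$ and $\theta\le 10^{-2}$, guarantees this with room to spare. Beyond this tracking, no genuinely new idea is required: the proof of Theorem~$4.5$ in \cite{BK81} goes through line by line.
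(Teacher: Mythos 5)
Your proposal is correct and takes the same route as the paper, which simply observes that the statement "is proved verbatim as [BK81, Theorem 4.5] by adjusting $\theta$ slightly"; your tracking of the $(1+\theta_1)$ perturbation from near-minimality and its absorption by the prefactors in $\bar\rho$ is exactly the "slight adjustment" the paper has in mind. Your write-up is in fact more detailed than the paper's one-line justification.
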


In particular, Proposition \ref{P:basis} implies that a generalized standard short $\theta_1$-basis is a short basis of radius $\bar \rho$ by our definition \eqref{def:basis}. Since the generalized standard short $\theta_1$-basis is less rigid than the standard short basis, we can construct it by sliding, as will be shown in Section $4.3$.

Notice that the generalized standard short $\theta_1$-bases are not unique. However, we have

\begin{prop} \label{P:basis1}
There exists a constant $C_0=C_0(n)<\infty$ such that there are at most $C_0$ generalized standard short $\theta_1$-bases.
\end{prop}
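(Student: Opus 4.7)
The plan is to induct on $i$ and show that, with $c_1,\ldots,c_{i-1}$ already fixed, the number of admissible choices for $c_i$ is bounded by a constant depending only on $n$; since $m\le n$, multiplying these bounds over $i$ yields the required $C_0(n)$.

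The main mechanism is a metric packing argument in the $\theta$-translational subset $\bar T_i$. Suppose $a,b\in\bar T_i$ both satisfy $|a|,|b|\le(1+\theta_1)\bar\sigma_i$; if $a\neq b$, then $a^{-1}*b$ is a nonzero element of $\bar T_i$ and therefore has length at least $\bar\sigma_i$. Lemma \ref{L:esta}, applied inside $\bar T_i$, then converts this algebraic lower bound into the Euclidean separation
\[
|a-b|\ \ge\ \bar\sigma_i\bigl(1-2\theta(1+\theta_1)\bigr)\ >\ 0.
\]
Hence the admissible projections $c_i^{(i)}\in\bar T_i$ form a uniformly separated subset of the Euclidean ball of radius $(1+\theta_1)\bar\sigma_i$ in $\R^{n-i+1}$, and ordinary volume packing bounds their number by some $N_i(n)$.

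Given an admissible projection $c_i^{(i)}$, I would then bound the number of lifts $c_i\in T_1$ in two parts. The bridging condition \eqref{eq:cho9a} (at level $i-1$) restricts the first lift step from $\bar T_i$ back to $\bar T_{i-1}$ to at most three possibilities, namely the unique preimage in $\widetilde{\bar T_{i-1}}$ under the injective projection $P_{i-1}$, multiplied by $(c_{i-1}^{(i-1)})^{k}$ for $k\in\{-1,0,1\}$. For the remaining $i-2$ lift stages there is no direct combinatorial restriction; however, the $\lambda$-normality of a generalized standard short $\theta_1$-basis asserted in Proposition \ref{P:basis}(i) forces $|c_i|\le\lambda^{i-1}(1+\theta_1)\bar\sigma_i$, and any two admissible $c_i$'s are $c(n)\sigma_1$-separated in $T_1$ by a repeat application of Lemma \ref{L:esta}. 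Volume packing in $T_1$ then bounds the remaining lift ambiguity by a further dimension-only constant $M_i(n)$. Combining, the total count is
\[
\#\{\text{bases}\}\ \le\ \prod_{i=1}^{m} N_i(n)\cdot 3\cdot M_i(n)\ \le\ C_0(n).
\]

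The main obstacle is this second lift bound: since \eqref{eq:cho9a} controls only the first lift step directly, one must replace the missing combinatorial constraints at intermediate levels with the global Euclidean length bound $|c_i|\le\lambda^{i-1}(1+\theta_1)\bar\sigma_i$ coming from normality, and then verify that the ratio $\bar\sigma_i/\sigma_1$ is itself controlled purely by $n$. The latter is the standard comparability of successive minima for short bases of $\theta$-translational subsets, essentially contained in \cite[Theorem $4.5$]{BK81}, and it is what makes the final count depend only on the dimension.
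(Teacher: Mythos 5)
Your first step --- packing the near-minimal elements of each $\bar T_i$ --- is exactly the paper's proof. The paper phrases it contrapositively: if the set $S=\{c \in T_1 \mid \sigma_1 \le |c| \le (1+\theta_1)\sigma_1\}$ had more than $C(n)$ elements, two of them would satisfy $|a-b|\le\sigma_1/2$, and then Lemma \ref{L:esta} gives $|a^{-1}*b|\le 2\theta(1+\theta_1)\sigma_1+\tfrac{1}{2}\sigma_1<\sigma_1$, contradicting the minimality of $\sigma_1$; the same argument is then run in each $\bar T_i$ once $c_1,\dots,c_{i-1}$ are fixed, since $\bar T_i$ is again a $\theta$-translational subset. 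Your direct separation bound $|a-b|\ge\bar\sigma_i\bigl(1-2\theta(1+\theta_1)\bigr)$ is the same estimate read forwards.

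The second half of your argument, however, contains a genuine error. To bound the number of lifts $c_i\in T_1$ of a fixed projection $c_i^{(i)}$ by packing $c(n)\sigma_1$-separated points into a ball of radius $\lambda^{i-1}(1+\theta_1)\bar\sigma_i$, you need $\bar\sigma_i/\sigma_1\le C(n)$, and you assert this as the ``standard comparability of successive minima.'' That is false: successive minima of a lattice (or of a $\theta$-translational subset) are not comparable up to dimensional constants --- for $\Z\epsilon\oplus\Z\subset\R^2$ one has $\sigma_2/\sigma_1=1/\epsilon$, which is unbounded --- and neither \cite[Theorem $4.5$]{BK81} nor Proposition \ref{P:basis} gives such a bound ($\lambda$-normality compares $|c_i|$ with its own projection $|c_i^{(i)}|$, not $\bar\sigma_i$ with $\sigma_1$). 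So your constant $M_i(n)$ does not exist as a function of $n$ alone, and the product bound collapses. The paper sidesteps this by counting only the projections $c_i^{(i)}$: the lift is meant to be controlled combinatorially rather than metrically, through reduction conditions of the type \eqref{eq:cho2} and \eqref{eq:cho9a} --- membership in a reduced set $\widetilde{\bar T_j}$ picks out a unique representative of each coset because $P_j$ is injective there, and the allowance $k\in\{-1,0,1\}$ contributes a factor of at most $3$ per level. A complete argument should run that reduction at every intermediate level in place of your packing estimate in $T_1$.
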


\begin{proof}
We only need to prove that for the standard short $\theta_1$-bases, there are finitely many choices for $c_1$. Indeed, if $c_i$ is fixed, the corresponding $\bar T_{i+1}$ is a $\theta$-translational subset of radius $\rho_{i+1}$.

We set $S=\{c \in T_1\, \mid \sigma_1 \le |c| \le (1+\theta_1)\sigma_1\}$. Then there exists a constant $C=C(n)$ such that if $|S|>C$, we can find two different elements $a,b\in S$ satisfying
\begin{align*}
|b-a| \le \frac{1}{2}\sigma_1.
\end{align*}
It follows from Lemma \ref{L:esta} that
\begin{align*}
|a^{-1}*b|  \le |a^{-1}*b-(b-a)|+|b-a| \le \theta(|a|+|b|)+ \frac{1}{2}\sigma_1 \le 2\theta(1+\theta_1)\sigma_1+ \frac{1}{2}\sigma_1 \le \frac{2\sigma_1}{3}.
\end{align*}
However, it contradicts the definition of $\sigma_1$.
\end{proof}

\subsection{Estimates of geodesic loops}
From now on, we fix a complete Riemannian manifold $(M^{n},g,p)$ with \eqref{cond:AF} and \eqref{cond:SHC}. Moreover, we define for any $t>0$,
\begin{align} 
K_0(t) \coloneqq \int_t^\infty \frac{K(s)}{s}\,ds \quad \text{and} \quad K_1(t) \coloneqq \int_t^\infty \frac{K(s)}{s^2}\,ds \label{def:k0}.
\end{align}

From \eqref{cond:AF}, there exists a constant $A_0>0$ such that for any $q$ with $r=r(q) \ge A_0$ and any $\rho \in (100,r/400)$, 
\begin{align*}
\rho^2 |Rm| \le \ep_0 \quad \text{on} \quad B(q,100\rho),
\end{align*}
where $\ep_0$ is the same constant in Proposition \ref{P:es3}.

We consider an arc-length parametrized curve $\{\beta(t): t \in [-\ep,+\ep]\}$ such that there exists a small geodesic loop $\gamma$ at $q \coloneqq \beta(0)$. We assume that $r(q)$ is large and the length $L(\gamma)$ is small enough such that the sliding of $\gamma$ along $\beta$ is well defined and we denote the geodesic loop at $\beta(t)$ by $\gamma_t$.

Now we prove

\begin{thm}\label{T:hol}
Let $l(t)$ and $\mathbf r(t)$ be the length and rotational part of $\gamma_t$ respectively, then for $t \in (-\ep,+\ep)$
\begin{enumerate}[label=(\roman*)]
\item $|l'(t)| \le  |\mathbf r(t)-I|$.

\item For any unit vector field $X(t)$ parallel along $\beta(t)$,
\begin{align*}
\left| |\mathbf r(t)X(t)-X(t)|' \right|\le l(t)\max_{\gamma_t} |Rm|.
\end{align*}
\end{enumerate}
\end{thm}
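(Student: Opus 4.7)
Both parts proceed by considering the smooth family $H : (-\ep, +\ep)\times [0, 1] \to M$, $H(t,u) = \gamma_t(u)$, where each loop is parameterized with constant $g$-speed $l(t)$. Then $|\partial_u H| = l(t)$, $H(t,0) = H(t,1) = \beta(t)$, and hence $\partial_t H(t, 0) = \partial_t H(t, 1) = \dot\beta(t)$, which has unit norm because $\beta$ is arc-length.

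For part (i), I apply the first variation formula of arc length to this one-parameter family of geodesics. The interior term vanishes (geodesic equation), leaving
\[
l'(t) \;=\; \la \partial_t H, T\ra\Big|_{u=0}^{u=1} \;=\; \la \dot\beta(t), T(1)-T(0)\ra,
\]
where $T(u) := \partial_u H / l(t)$ is the unit tangent to $\gamma_t$. Parallel transport around $\gamma_t$ sends $T(0)$ to $T(1)$, so $T(1)-T(0) = (\mathbf r(t)-I)T(0)$; Cauchy-Schwarz then gives $|l'(t)| \le |\mathbf r(t)-I|$.

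For part (ii), let $V(t, u)$ denote the parallel transport of $X(t)$ along $\gamma_t$, so $V(t, 0) = X(t)$ and $V(t, 1) = \mathbf r(t) X(t)$. Since $[\partial_u, \partial_t]=0$ and $\tfrac{DV}{du} \equiv 0$, the Ricci identity reads $\tfrac{D}{du}\tfrac{DV}{dt} = R(\partial_u H, \partial_t H)V$. Integrating on $[0,1]$, and using $\tfrac{DV}{dt}(t,0) = \tfrac{DX}{dt} = 0$ (since $X$ is parallel along $\beta$), yields
\[
\frac{D}{dt}\bigl[\mathbf r(t) X(t)\bigr] \;=\; \int_0^1 P_{u\to 1}\bigl[R(\partial_u H,\partial_t H)V(t,u)\bigr]\, du.
\]
Setting $Y(t) := \mathbf r(t)X(t) - X(t)$ gives $\tfrac{DY}{dt} = \tfrac{D}{dt}[\mathbf r(t)X(t)]$, and Cauchy-Schwarz yields $\bigl||Y|'\bigr|\le \bigl|\tfrac{DY}{dt}\bigr|$, so the problem reduces to bounding the integrand.

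The main obstacle will be the pointwise estimate $|\partial_t H(t,u)|\le 1$ on $[0,1]$. Observe that $\partial_t H$ is a Jacobi field along the geodesic $\gamma_t$ with $|\partial_t H(t,0)|=|\partial_t H(t,1)|=1$, and since $R(T,T)=0$ only the component perpendicular to $T$ contributes: $R(\partial_u H,\partial_t H)V = l(t)\,R(T,(\partial_t H)^\perp)V$. The perpendicular part is itself a Jacobi field whose endpoint norms are $\sin\theta_i \le 1$ (with $\theta_i$ the angle between $\dot\beta$ and $T(i)$). A standard Rauch-type Jacobi-field comparison in the small-curvature regime used throughout the paper--or equivalently lifting to $(T_qM,\hat g)$ where $\tilde\gamma_t$ is the $\hat g$-geodesic joining $\tilde\beta(t)$ to $\tau(\tilde\beta(t))$--then gives $|(\partial_t H)^\perp|\le 1$ pointwise. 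Combined with $|V|=1$ and $|\partial_u H|=l(t)$, this produces the desired bound $\bigl||Y|'\bigr|\le l(t)\max_{\gamma_t}|Rm|$.
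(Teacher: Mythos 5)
Your proof is correct and takes essentially the same route as the paper: part (i) is the first variation of arc length for the family of geodesic loops (the paper phrases it as the first variation of $d_{\hat g}(\tilde\beta(t),\tau(\tilde\beta(t)))$ upstairs in $(T_qM,\hat g)$, which is the same computation and yields the same identity $|l'(t)|=|\la \dot\beta(t),(\mathbf r(t)-I)T(0)\ra|$), and part (ii) commutes the two covariant derivatives of the parallel extension of $X$ along $\gamma_t$ to produce the curvature term and then integrates along the loop, exactly as the paper does with its field $X(s,t)$. Your explicit treatment of the pointwise bound on the transversal Jacobi field $\partial_t H$ is a detail the paper's proof passes over silently, so that is a point in your favor rather than a discrepancy.
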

 
\begin{proof}
(i):
We assume that $\tau$ is corresponding local isometry of $\gamma$ and $\tilde \beta(t)$ is the lift of $\beta(t)$ on $T_qM$. For any $t \in (-\ep,+\ep)$, we set $W(t)$ to be the initial tangent vector of the geodesic from $\tilde \beta(t)$ to $\tau(\tilde \beta(t))$. From our definition, $l(t)=d_{\hat g}(\tilde \beta(t),\tau(\tilde \beta(t)))$ and hence
\begin{align*}
l'(t)=-\hat g_{\tilde \beta(t)}(W(t),\tilde \beta'(t))+\hat g_{\tau(\tilde \beta(t))}((d\tau)_*(\tilde \beta'(t)),(d\tau)_*(W_1(t))),
\end{align*}
where $W_1(t)=\mathbf r(t)(W(t))$. Since $(d\tau)_*$ is an isometry, we have
\begin{align*}
|l'(t)|=|\la \beta'(t), \mathbf r(t)W(t)-W(t)\ra| \le  |\mathbf r(t)-I| .
\end{align*}

(ii): 
We construct a vector field $X(s,t)$ such that $X(0,t)=X(t)$ and for fixed $t$, $X(s,t)$ is parallel along $\gamma_t$. By direct computations,
\begin{align*}
\abs{ \partial_s|\partial_t X(s,t)|} \le |\partial_s\partial_t X(s,t) |=|\partial_t\partial_s X(s,t)+Rm(\partial_s,\partial_t)X(s,t)| \le \max_{\gamma_t} |Rm|.
\end{align*}
By integration,
\begin{align*}
|\partial_t X(1,t)| \le |\partial_t X(0,t)|+l(t)\max_{\gamma_t} |Rm| \le l(t)\max_{\gamma_t} |Rm|.
\end{align*}
Consequently,
\begin{align*}
\left||\mathbf r(t)X(t)-X(t)|' \right|=\left||X(1,t)-X(0,t)|' \right| \le |\partial_t X(1,t) |\le l(t)\max_{\gamma_t} |Rm|.
\end{align*}
\end{proof}

With the same proof as Lemma \ref{lem:holaa1}, we can improve \eqref{cond:SHC}.

\begin{lem}\label{lem:holaa}
Let $(M^{n},g,p)$ be a complete Riemannian manifold with \eqref{cond:AF} and \eqref{cond:SHC}. Then there exists a positive function $\ep_1(r)$ with $\ep_1(r) \to 0$ if $r \to \infty$ such that 
\begin{align*} 
\|\mathbf r (\gamma_x)\| \le \ep_1(r)\frac{L(\gamma_x)}{r}
\end{align*}
for any $x$ outside a compact set and any geodesic loop $\gamma_x$ based at $x$ with length smaller than $\kappa r$, where $\kappa$ is the constant in \eqref{cond:SHC}.
\end{lem}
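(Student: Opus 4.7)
The plan is to mimic the proof of Lemma \ref{lem:holaa1} verbatim, replacing the uniform constant $\Theta_H$ by the decaying function $\epsilon(r)$ from \eqref{cond:SHC}, and to collect the error terms into a single function $\epsilon_1(r)$ that tends to $0$.

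Concretely, fix a point $x$ with $r = r(x)$ large and a geodesic loop $\gamma_x$ at $x$ with $L(\gamma_x) \le \kappa r$. Set $L = |\mathbf t(\gamma_x)|$ and $k = \lfloor \kappa r/L \rfloor$, so that $\gamma_x^k$ is still a geodesic loop with $|\mathbf t(\gamma_x^k)| \le \kappa r$. Apply \eqref{cond:SHC} to $\gamma_x^k$ to get $\|\mathbf r(\gamma_x^k)\| \le \epsilon(r)$. On the other hand, the estimate \cite[Proposition $2.3.1$(i)]{BK81}, combined with the curvature bound $|Rm| \le 4 r^{-2} K(r/2)$ on $B(x,r/2)$ (which is valid since $\kappa < 1/2$), yields, for $r$ large enough,
\begin{equation*}
\|\mathbf r(\gamma_x^k)\| \ge k \|\mathbf r(\gamma_x)\| - 2 r^{-2} k^2 K(r/2) L^2.
\end{equation*}
Rearranging and using $kL \le \kappa r$ gives
\begin{equation*}
\|\mathbf r(\gamma_x)\| \le \frac{1}{k}\bigl(\epsilon(r) + 2 \kappa^2 K(r/2)\bigr) \le C \frac{\epsilon(r) + K(r/2)}{r}\, L(\gamma_x),
\end{equation*}
where $C = C(\kappa)$ comes from the comparison $k \ge \kappa r/(2L)$ for $L \le \kappa r$.

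Finally, set $\epsilon_1(r) := C\bigl(\epsilon(r) + K(r/2)\bigr)$. By hypothesis $\epsilon(r) \to 0$, and from \eqref{eq:003} together with the monotonicity of $K$ one deduces $K(r) \to 0$ (otherwise $\int_1^\infty K(s)/s\,ds$ would diverge logarithmically); hence $\epsilon_1(r) \to 0$. No step is a genuine obstacle: the only thing to check carefully is that the remainder term in the sub-additivity estimate for $\|\mathbf r\|$ under iteration is controlled by $r^{-2} k^2 L^2 K(r/2)$, which is exactly \cite[Proposition $2.3.1$(i)]{BK81} applied on $B(x,r/2)$.
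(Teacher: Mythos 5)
Your proof is correct and is exactly the argument the paper intends: the paper's own "proof" of Lemma \ref{lem:holaa} is just the one-line remark that it follows by the same argument as Lemma \ref{lem:holaa1}, which is precisely what you carry out, replacing $\Theta_H$ by $\ep(r)$ and absorbing the iteration error into $\ep_1(r)=C(\ep(r)+K(r/2))$. Your observation that $K(r)\to 0$ (forced by \eqref{eq:003} and monotonicity) is the right justification that $\ep_1(r)\to 0$.
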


We fix a geodesic ray $\{\alpha(t),t \ge 0\}$ starting from $p$ and estimate the lengths and rotational parts of the slidings of a geodesic loop.

\begin{thm}\label{T:esray}
There exist constants $C_1>1$ and $A_1>0$ such that for any geodesic loop $\gamma$ based at $\alpha(s)$ with $s \ge A_1$ and $|\mathbf t(\gamma)| \le C_1^{-1}s$, if we denote the length and rotational part of the sliding of $\gamma$ at $\alpha(t)$ by $l(t)$ and $\mathbf r(t)$ respectively, then for any $t \ge s$,
\begin{enumerate}[label=(\roman*)]
\item $\|\mathbf r(t)\| \le C_1 l(t)K_1(t/2)$.
\item $\displaystyle l(s) \exp\lc -C_1\int_{s}^t K_1(z/2)\,dz \rc \le l(t) \le l(s) \exp\lc C_1\int_{s}^t K_1(z/2)\,dz \rc$.
\end{enumerate}
Here, the function $K_1$ is defined in \eqref{def:k0}.
\end{thm}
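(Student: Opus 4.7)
\emph{Plan.} The strategy is a Gr\"onwall-type bootstrap that feeds the two infinitesimal estimates of Theorem \ref{T:hol} into each other, anchored by the improved strong holonomy control of Lemma \ref{lem:holaa}. Conceptually the chain is: SHC gives a crude bound on $\|\mathbf r(t)\|$ in terms of $l(t)/t$; this crude bound suffices to propagate $l(t)$ without leaving the region $l(t)\le\kappa t$; once $l(t)$ is controlled, integrating Theorem \ref{T:hol}(ii) from $t$ to $\infty$ and using the fact that $\|\mathbf r(u)\|\to 0$ upgrades the rotational bound from $\ep_1(t)l(t)/t$ to $C_1 l(t) K_1(t/2)$; finally (ii) falls out of (i) via Theorem \ref{T:hol}(i) and elementary Gr\"onwall.

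For the preliminary step, Theorem \ref{T:hol}(i) gives $|l'(t)|\le |\mathbf r(t)-I|\lesssim\|\mathbf r(t)\|$, and Lemma \ref{lem:holaa} gives $\|\mathbf r(t)\|\le\ep_1(t) l(t)/t$ provided $l(t)\le\kappa t$. Writing this as $|(\log l)'(t)|\le C\ep_1(t)/t$ and integrating from $s$ with the initial hypothesis $l(s)\le C_1^{-1}s$, I would obtain $l(t)\le l(s)(t/s)^{C\ep_1(s)}\le C_1^{-1}t$ whenever $s\ge A_1$ is chosen so that $\ep_1(s)$ is sufficiently small (here I use that $\ep_1$ may be taken nonincreasing). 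So the condition $l(t)\le\kappa t$ propagates for all $t\ge s$, the sliding $\gamma_t$ is defined on the entire ray, and along $\gamma_t$ the distance to $p$ lies in $[t/2,2t]$, giving $|Rm|\le CK(t/2)/t^{2}$ by \eqref{cond:AF}.

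For the key refinement I apply Theorem \ref{T:hol}(ii) to each element $X$ of a parallel orthonormal frame along $\alpha$: the function $\phi_X(t):=|\mathbf r(t)X(t)-X(t)|$ satisfies $|\phi_X'(t)|\le Cl(t)K(t/2)/t^{2}$, and $\phi_X(t)\to 0$ as $t\to\infty$ because the preliminary step already gives $\|\mathbf r(t)\|\lesssim\ep_1(t)l(t)/t\to 0$. Integrating from $t$ to $\infty$,
\[
\phi_X(t)\;\le\; C\int_t^{\infty} l(u)\,\frac{K(u/2)}{u^{2}}\,du.
\]
The preliminary length estimate $l(u)\le l(t)(u/t)^{C\ep_1(t)}$ lets me pull $l(t)$ outside with a bounded factor (since $K(u/2)/u^{2}$ has enough decay to absorb the tiny power $u^{C\ep_1(t)}$), after which a substitution $u=2z$ identifies the remaining integral with a constant multiple of $K_1(t/2)$. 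Maximising over $X$ yields $\|\mathbf r(t)\|\le C_1 l(t)K_1(t/2)$, which is (i). Feeding (i) back into Theorem \ref{T:hol}(i) gives $|(\log l)'(t)|\le C_1 K_1(t/2)$, and Gr\"onwall produces the two-sided exponential bound of (ii).

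The main obstacle will be the coupling between the two estimates. Lemma \ref{lem:holaa} alone yields a decay of $\|\mathbf r(t)\|/l(t)$ only like $\ep_1(t)/t$, which is genuinely weaker than the required $K_1(t/2)$; the gain comes solely from integrating the pointwise derivative bound of Theorem \ref{T:hol}(ii) all the way to infinity, so it is essential that the bootstrap extends to all $t\ge s$ and that the loss incurred in pulling $l(t)$ out of the integral is uniformly bounded as $s\to\infty$. Since $\int_{s}^{\infty}K_1(z/2)\,dz\le 2K_0(s/2)\to 0$, both the multiplicative loss and the Gr\"onwall exponent stay uniformly bounded, which is what ultimately makes the argument close.
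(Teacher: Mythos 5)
Your proof is correct and shares the paper's overall architecture --- bootstrap a length bound so the sliding stays defined for all $t\ge s$, integrate Theorem \ref{T:hol}(ii) from $t$ to $\infty$ using $\|\mathbf r(t)\|\to 0$, then feed the refined rotational bound back into Theorem \ref{T:hol}(i) --- but you handle the crucial intermediate step by a genuinely different route. The paper extracts only the linear a priori bound $l(t)\le (t/s)\,l(s)$ from Lemma \ref{lem:holaa} and must then invoke a separate ODE lemma (Lemma \ref{lem:ode}, a dyadic iteration on the times where $x(t)\le 2^{-i}t$) to upgrade this to $l(t)\le C\,l(s)$ before the tail integral $\int_t^\infty l(u)K(u/2)u^{-2}\,du$ can be compared with $l(t)K_1(t/2)$. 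You instead use the monotone normalization of $\epsilon_1$ to get the sublinear bound $l(u)\le l(t)(u/t)^{C\epsilon_1(t)}$ and absorb the small power directly into the tail integral, bypassing Lemma \ref{lem:ode} entirely. This is a legitimate simplification, but the absorption step --- your parenthetical claim that $K(u/2)/u^{2}$ ``has enough decay'' to swallow $(u/t)^{\delta}$ --- is the one place where you must supply an argument: it is false for a general integrable tail, but it does follow from the monotonicity of $K$, since $F(u):=\int_u^\infty K(v/2)v^{-2}\,dv$ satisfies $F(2u)\le K(u)/(2u)\le F(u)-F(2u)$, hence $F(2u)\le F(u)/2$, and summing over dyadic blocks gives $\int_t^\infty (u/t)^{\delta}K(u/2)u^{-2}\,du\le C\,K_1(t/2)$ for every $\delta\le 1/2$. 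With that line added your argument closes; what the paper's longer route buys in exchange for Lemma \ref{lem:ode} is that it never needs this doubling-type estimate on the tail of $K_1$, nor the replacement of $\epsilon_1$ by a nonincreasing majorant.
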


\begin{proof}
First, we claim that if $C_1$ and $A_1$ are sufficiently large, $l(t) \le \kappa t/2$ for any $t \ge s$. Indeed, we assume $C_1 \ge 4/\kappa$ and set $s_0 \ge s$ to be the largest number of $t$ such that $l(t) \le \kappa t/2$. If $s_0$ is finite, then $l(s_0)=\kappa s_0/2$. It follows from Theorem \ref{T:hol} (i) and Lemma \ref{lem:holaa} that 
\begin{align}
|l'(t)| \le \|\mathbf r(t)\| \le \frac{\ep_1(t)}{t}l(t) \label{T410:x1}
\end{align}
for any $t \in [s,s_0]$, where $\ep_1(t)$ is a positive function such that $\lim_{t \to \infty} \ep_1(t)=0$. If $A_1$ is sufficiently large, we have $\ep_1(t) \le 1$ for any $t \ge s$. Hence, it follows from \eqref{T410:x1} that
\begin{align*}
l(s_0) \le \frac{s_0}{s}l(s) \le \kappa s_0/4,
\end{align*}
which is a contradiction. In particular, we conclude for any $t \ge s$,
\begin{align} \label{eq:esray1a}
l(t) \le \frac{t}{s} l(s) \le t l(s).
\end{align}

For any $t \ge s$, we choose a unit parallel vector field $X$ along $\alpha$ such that 
\begin{align*}
|\mathbf r(t)-I|=|\mathbf r(t)X(t)-X(t)|.
\end{align*}
Then it follows from Theorem \ref{T:hol} and \eqref{cond:SHC} that
\begin{align} \label{eq:esray2}
|l'(t)| \le |\mathbf r(t)-I| \le \int_t^{\infty} l(z) \max_{\gamma_z} |Rm|\,dz \le 4\int_t^{\infty} l(z)  \frac{K(z/2)}{z^2} \,dz.
\end{align}

Combining \eqref{eq:esray1a} and \eqref{eq:esray2}, it follows from the next lemma that for any $t \ge s$,
\begin{align*} 
l(t) \le C l(s).
\end{align*}

From \eqref{eq:esray2}, we have
\begin{align*}
|l'(t)| \le |\mathbf r(t)-I|\le Cl(t) \int_t^{\infty} \frac{K(z/2)}{z^2}\,dz \le Cl(t)K_1(t/2)
\end{align*}
and by integration,
\begin{align*}
l(s) \exp\lc -C\int_{s}^t K_1(z/2)\,dz \rc \le l(t) \le l(s) \exp\lc C\int_{s}^t K_1(z/2)\,dz \rc.
\end{align*}
\end{proof}

We prove the following lemma which plays an important role in the proof of Theorem \ref{T:esray}.

\begin{lem}\label{lem:ode}
Let  $\{x(t),t\in [1,\infty)\}$ be a positive differentiable function satisfying the following inequalities
\begin{align*}
x(t) \le t x(1) \quad \text{and}\quad x'(t) \le \int_t^{\infty} x(s)\frac{k(s)}{s^2}\,ds 
\end{align*}
for any $t \ge 1$, where $\{k(t),t\in [1,\infty)\}$ is a nonincreasing positive function such that
\begin{align}
\int_1^{\infty}\frac{k(s)}{s}\,ds <\frac{1}{10}. \label{eq:ode0}
\end{align}
Then there exists a constant $C>0$ which depends only on $k(s)$ such that for any $t \ge 1$
\begin{align*}
x(t) \le C x(1).
\end{align*}
\end{lem}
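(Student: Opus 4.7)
The strategy is to bootstrap the a priori linear bound $x(t)\le tx(1)$ by iterating the integral inequality, exploiting the smallness $K_0(1) \coloneqq \int_1^\infty k(s)/s\,ds < 1/10$.

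The central step I plan to establish is the following monotone inductive lemma: if $x(s)\le x(1)(A+Bs)$ for all $s\ge 1$, then $x(t)\le x(1)(1+A/10+Bt/10)$ for all $t\ge 1$. Plugging the hypothesised linear bound into the integral inequality yields $x'(t)\le x(1)(AK_1(t)+BK_0(t))$, where $K_j(t)=\int_t^\infty k(s)/s^j\,ds$. Integrating this from $1$ to $t$ and invoking Fubini together with monotonicity of $K_0$, one obtains the two elementary estimates $\int_1^t K_1(u)\,du\le K_0(1)$ and $\int_1^t K_0(u)\,du\le tK_0(1)$. Combined with $K_0(1)<1/10$, these give the claimed refined bound.

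With the monotone step in hand, the conclusion follows by iteration: starting from $(A_0,B_0)=(0,1)$, which encodes the hypothesis $x(t)\le tx(1)$, the recurrence $A_{n+1}=1+A_n/10$, $B_{n+1}=B_n/10$ drives $A_n\to 10/9$ and $B_n\to 0$ geometrically. For each fixed $t\ge 1$, the bound $x(t)\le x(1)(A_n+B_nt)$ passes to the limit $n\to\infty$ and produces $x(t)\le (10/9)\,x(1)$.

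The main subtlety I anticipate is that $\int_1^T K_0(u)\,du$ is generally unbounded as $T\to\infty$ (for instance whenever $k\notin L^1$), so a direct one-shot Gronwall estimate only yields a bound of the form $x(t)\le x(1)(1+ct)$ that is itself linear in $t$. The crucial observation that rescues the argument is that this bad $Bt$ contribution re-enters the next iteration with coefficient $B/10$, and the smallness $K_0(1)<1/10$ makes the induced recurrence strictly contractive in the slope $B$; hence the spurious linear growth is absorbed at each step and vanishes in the limit.
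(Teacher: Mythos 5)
Your argument is correct, and it is a genuinely different route from the paper's. The paper fixes $x(1)=1$ and introduces the hitting times $t_i=\inf\{t:\forall s\ge t,\ x(s)\le 2^{-i}s\}$, so that $x(t_i)=2^{-i}t_i$; a Gronwall-type estimate then produces the recurrence $t_{i+1}\le 2t_i/(1-\sigma_i-\sum_j 2^{1-j}\sigma_{i+j})$ with $\sigma_i=\int_{t_i}^{t_{i+1}}k(t)/t\,dt$, and convergence of the associated infinite product (which again uses $\int_1^\infty k(s)/s\,ds<1/10$ to keep the denominators positive) gives $t_i\le C2^i$ and hence $x(t)\le 2C$. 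You instead run a contraction on affine majorants: from $x(s)\le x(1)(A+Bs)$ the integral inequality and the two Fubini/monotonicity estimates $\int_1^\infty K_1\le K_0(1)$ and $\int_1^t K_0\le tK_0(1)$ yield $x(t)\le x(1)(1+A/10+Bt/10)$, and the recurrence $A_{n+1}=1+A_n/10$, $B_{n+1}=B_n/10$ converges for each fixed $t$ to the bound $x(t)\le \tfrac{10}{9}x(1)$. Both proofs use the smallness hypothesis \eqref{eq:ode0} in an essential way, but yours is shorter, avoids the Gronwall lemma and the infinite-product estimate, and produces the explicit universal constant $C=10/9$ rather than a constant depending on the distribution of the mass of $k(s)/s$; I verified the two integral estimates ($\int_1^\infty K_1(u)\,du=\int_1^\infty k(s)s^{-2}(s-1)\,ds\le K_0(1)$ by Tonelli, and $\int_1^t K_0(u)\,du\le tK_0(1)$ by monotonicity of $K_0$), so there is no gap.
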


\begin{proof}
We assume $x(1)=1$ and define a sequence $\{t_0=1 <t_1<t_2<\cdots\}$ by
\begin{align*}
t_i=\inf\{t \in [1,\infty): \forall s \ge t,x(s) \le 2^{-i}s\}.
\end{align*}
Notice that the sequence $\{t_i\}$ exists since $x'(t)$ is sublinear from
\begin{align*}
x'(t) \le \int_t^{\infty} x(s)\frac{k(s)}{s^2}\,ds \le \int_t^{\infty}\frac{k(s)}{s}\,ds.
\end{align*}

From continuity, we have $x(t_i)=2^{-i}t_i$. For any $1 \le a<b$, we have
\begin{align*}
&x(b)-x(a)  \\
=& \int_a^b x'(t) \,dt \le \int_a^b \int_t^{\infty} x(s)\frac{k(s)}{s^2}\,ds\,dt  \\
= & (b-a) \int_b^{\infty} x(s)\frac{k(s)}{s^2}\,ds+\int_a^b (t-a)x(t) \frac{k(t)}{t^2}\,dt  \\
\le &(b-a) \int_a^{\infty} x(s)\frac{k(s)}{s^2}\,ds+\int_a^b x(t) \frac{k(t)}{t}\,dt.
\end{align*}

Therefore, it follows from the Gronwall's theorem that
\begin{align}
x(b) \le \lc x(a)+(b-a) \int_a^{\infty} x(s)\frac{k(s)}{s^2}\,ds \rc \exp \lc\int_a^b \frac{k(t)}{t}\,dt \rc.
 \label{eq:ode2}
\end{align}

If we set $\sigma_i=\int_{t_i}^{t_{i+1}} \frac{k(t)}{t}\,dt$, then by choosing $a=t_i$ and $b=t_{i+1}$ in \eqref{eq:ode2} we obtain
\begin{align*}
2^{-i-1}t_{i+1} \le \lc 2^{-i}t_{i}+(t_{i+1}-t_i)\sum_{j=i}^{\infty}2^{-j}\sigma_j \rc e^{\sigma_i}.
\end{align*}
After simplification we have
\begin{align}
t_{i+1} \le \frac{2^{-i}-\sum_{j=i}^{\infty}2^{-j}\sigma_j}{2^{-i-1}e^{-\sigma_i}-\sum_{j=i}^{\infty}2^{-j}\sigma_j}t_i \le \frac{ 2t_i}{1-\sigma_i-\sum_{j=0}^{\infty}2^{1-j}\sigma_{i+j}}.
 \label{eq:ode4}
\end{align}
Notice that by our assumption \eqref{eq:ode0}, the denominator above is a finite positive number.

It is clear that $\prod_{i=0}^{\infty} \lc 1-\sigma_i-\sum_{j=0}^{\infty}2^{1-j}\sigma_{i+j}\rc$ is finite. Indeed, this follows from
\begin{align*}
\sum_{i=0}^{\infty} \ln \lc 1-\sigma_i-\sum_{j=0}^{\infty}2^{1-j}\sigma_{i+j} \rc >-\infty
\end{align*}
since
\begin{align*}
\sum_{i=0}^{\infty} \lc \sigma_i+\sum_{j=0}^{\infty}2^{1-j}\sigma_{i+j} \rc <5\sum_{i=0}^{\infty}\sigma_i<\infty
\end{align*}
from our assumption \eqref{eq:ode0}.
By \eqref{eq:ode4}, there exists a constant $C>0$ such that
\begin{align*}
t_i \le C 2^i
\end{align*}
and by our definition of $t_i$
\begin{align*}
x(t) \le 2C
\end{align*}
for any $t \ge 1$.
\end{proof}

Next, we estimate the change of angle between two geodesic loops.

\begin{prop}\label{p:angle}
Given two geodesic loops $\gamma^1$ and $\gamma^2$ at $\alpha(t_0)$ for $t_0 \ge A_1$ such that $|\mathbf t(\gamma^i)| \in [a_0^{-1},a_0]$, for some constant $a_0 \in (1,C^{-1}_1 t_0)$, we denote their slidings at $\alpha(t)$ by $\gamma_t^1$ and $\gamma_t^2$ respectively and set $c_i(t)=\mathbf t(\gamma_t^i)$ for $i=1,2$. Then there exist constants $\Theta \ge 0$ and $C>0$ such that for any $t \ge t_0$,
\begin{align*}
|\angle (c_1(t),c_2(t))-\Theta | \le CK_0(t/4).
\end{align*}
\end{prop}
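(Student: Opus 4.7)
The plan is to establish a local Cauchy estimate at scale $t$ and then iterate along a geometric sequence to convert it into the global convergence rate. Since parallel transport along $\alpha$ is an isometry, if we set $\hat c_i(t) := P_{t,t_0} c_i(t) \in T_{\alpha(t_0)}M$, then $\angle(c_1(t), c_2(t)) = \angle(\hat c_1(t), \hat c_2(t))$; it therefore suffices to show that each unit vector $\hat c_i(t)/|\hat c_i(t)|$ is Cauchy in $T_{\alpha(t_0)}M$ with rate $K_0(t/4)$, and then apply the triangle inequality on the unit sphere.

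The key local estimate I will prove is: for $t$ sufficiently large and $s \in [0, t/C']$ with a suitable constant $C'$,
\[
\angle(\hat c_i(t+s),\,\hat c_i(t)) \le C\,K(t/2).
\]
Fix $t$ and work in $T_{\alpha(t)}M$ with the pullback metric $\hat g = \exp_{\alpha(t)}^*g$. On $\hat B(0, 100\rho)$ with $\rho = t/C'$, the bound \eqref{cond:AF} yields $\Lambda^2 \rho^2 \le C\,K(t/2)$, and Theorem \ref{T:esray}(ii) keeps $|c_i^t| := |c_i(t)|$ uniformly bounded in terms of $a_0$, so the hypotheses of Proposition \ref{P:es4} hold at $w = \tilde\alpha(t+s)$. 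By Definition \ref{def:sliding} the lift of the sliding $\gamma^i_{t+s}$ is precisely the $\hat g$-geodesic from $\tilde\alpha(t+s)$ to $\tau_i(\tilde\alpha(t+s))$, so Proposition \ref{P:es4} gives
\[
\angle\bigl(\dot\gamma^i_{t+s}(0),\,c_i^t\bigr) \le C\Lambda^2\rho^2 \le C\,K(t/2),
\]
with the angle measured at $\tilde\alpha(t+s)$ in $\hat g$. Transferring to $T_{\alpha(t+s)}M$ via $d\exp_{\alpha(t)}$ and comparing $\hat g$-parallel transport along $\tilde\alpha$ with constant vector fields (using \eqref{eq:e1} and \eqref{eq:e3}, each an $O(K(t/2))$ error) yields $\angle(\dot\gamma^i_{t+s}(0),\,P_{t,t+s}c_i^t) \le C\,K(t/2)$ in $T_{\alpha(t+s)}M$. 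Finally, $c_i(t+s) = \mathbf r(\gamma^i_{t+s})(\dot\gamma^i_{t+s}(0))$ together with Theorem \ref{T:esray}(i) and the monotonicity of $K$ gives $\|\mathbf r(\gamma^i_{t+s})\| \le C\,l_i(t+s)K_1(t/2) \le C\,K(t/2)/t$, which is absorbed. Transporting back to $T_{\alpha(t_0)}M$ yields the claimed local estimate.

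For the iteration, choose $t_j := \lambda^j t_0$ with $\lambda := 1+1/(10C')$ and apply the local estimate together with the triangle inequality on the sphere:
\[
\bigl|\angle(c_1,c_2)(t_{j+1}) - \angle(c_1,c_2)(t_j)\bigr| \le 2C\,K(t_j/2).
\]
Since $K$ is nonincreasing,
\[
K(t_j/2)\log\lambda = K(t_j/2)\int_{t_j/2}^{t_{j+1}/2}\frac{ds}{s} \le \int_{t_j/2}^{t_{j+1}/2}\frac{K(s)}{s}\,ds,
\]
so $\sum_{j\ge 0} K(t_j/2) \le K_0(t_0/2)/\log\lambda \le C\,K_0(t_0/4)$. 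Hence $\{\angle(c_1(t_j),c_2(t_j))\}$ is Cauchy with limit $\Theta \ge 0$, and for arbitrary $t \in [t_j, t_{j+1}]$ the same telescoping bounds $|\angle(c_1(t), c_2(t)) - \Theta|$ by $C\,K_0(t_j/4) \le C\,K_0(t/4)$.

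The main obstacle is the careful bookkeeping needed to transfer the estimate from Proposition \ref{P:es4} (an angle comparison on $T_{\alpha(t)}M$ in the pullback metric $\hat g$, between a vector at $\tilde\alpha(t+s)$ and a constant vector field) to an angle comparison in $T_{\alpha(t+s)}M$ under the $g$-Levi-Civita parallel transport along $\alpha$. Three distinct near-identities have to be combined: the $O(K(t/2))$ discrepancy between $\hat g_{\tilde\alpha(t+s)}$ and the Euclidean inner product on $T_{\alpha(t)}M$, the discrepancy between $\hat g$-parallel transport along $\tilde\alpha$ and Euclidean-constant vector fields, and the small holonomy correction $c_i = \mathbf r(\gamma^i)(\dot\gamma^i(0))$. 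Each contributes at most $O(K(t/2))$, so they combine additively and the final convergence rate $K_0(t/4)$ survives the geometric summation.
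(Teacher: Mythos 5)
Your proof is correct and follows essentially the same route as the paper's: the key local angle comparison at scale $t$ via Proposition \ref{P:es4} (combined with the rotational-part bound from Theorem \ref{T:esray}), followed by telescoping over a geometric sequence of scales and summing $K(D^jt/2)$ into $K_0(t/4)$. The only difference is cosmetic --- you show each parallel-transported direction $\hat c_i(t)/|\hat c_i(t)|$ is individually Cauchy, whereas the paper tracks the angle function $\Theta(t)=\angle(\dot\gamma^1_t(0),\dot\gamma^2_t(0))$ directly --- but the mechanism and the error accounting are identical.
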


\begin{proof}
If we set $\mathbf r_i(t)$ to be the rotational part of $\gamma^i_t$ for $i=1,2$, then it follows from Theorem \ref{T:esray} that 
\begin{align*}
|\mathbf r_i(t)-I| \le a_1K_1(t/2) \le  2a_1\frac{K(t/2)}{t} \quad \text{and} \quad a_1^{-1} \le |\mathbf t(\gamma_t^i)| \le a_1
\end{align*}
for some constant $a_1>1$.

Now we define the function $\Theta(t) \coloneqq \angle (\dot \gamma^1_t(0),\dot \gamma^2_t(0))$. For any point $\alpha(t)$, if we set $\rho=\frac{t}{100}$ and $D=1+\frac{\lambda}{1000}>1$ where $\lambda$ is the constant in Lemma \ref{L:es2}, then it follows from Proposition \ref{P:es4} that for $t \le s\le Dt$,
\begin{align*}
|\Theta(s)-\Theta(t)| \le CK(t/2).
\end{align*}

It is clear that for any $s \ge t$, if $D^{k-1}t \le s < D^kt$ for some integer $k \ge 1$, then
\begin{align*}
|\Theta(s)-\Theta(t)| \le& \sum_{i=1}^{k-1} |\Theta(D^it)-\Theta(D^{i-1}t)|+|\Theta(s)-\Theta(D^{k-1}t)| \\
\le & C\sum_{i = 1}^{\infty} K(D^{i-1}t/2) \le C\int_{t/4}^{\infty} \frac{K(z)}{z}\,dz=CK_0(t/4).
\end{align*}
Therefore, there exists a constant $\Theta \ge 0$ such that
\begin{align*}
|\Theta(t)-\Theta| \le CK_0(t/4).
\end{align*}

Moreover, since $\mathbf r_i(t)(\dot \gamma^i_t(0))=c_i(t)$, we have
\begin{align*}
|c_i(t)-\dot \gamma^i_t(0)| \le C|\mathbf r_i(t)-I| \le CK_1(t/2) \le C\frac{K_0(t)}{t}.
\end{align*}

Therefore, it is clear that
\begin{align*}
|\angle (c_1(t),c_2(t))-\Theta | \le CK_0(t/4).
\end{align*}
\end{proof}

\subsection{Construction of the short bases on the end}

In this section, we construct the short bases on the end. For simplicity, we define $\tilde K(t) \coloneqq \max\{K(t),K_0(t)\}$.

\begin{thm}\label{T:basis2}
Let $(M^n,g)$ be a complete Riemannian manifold with \eqref{cond:AF} and \eqref{cond:SHC}, there exist constants $A_2>0$, $\kappa_1>0$, $C_2>0$ and an integer $1 \le m <n$ such that for any $q$ with $r=r(q) \ge A_2$, we can find a short basis with radius $\kappa_1 r$, denoted by $\{c_1^q,c_2^q,\cdots,c_m^q\}$, of $\Gamma(q,\rho_0)$ satisfying
\begin{enumerate}[label=(\roman*)]

\item For any $1\le i\le m$, 
\begin{align} \label{eq:basis1}
\|\mathbf r (c^q_i)\| &\le C_2 r^{-1}K(r/2) \quad \text{and} \quad |L(c^q_i)-L_i| \le C_2 \tilde K(r/2)
\end{align}
for some constants $L_i>0$.
\item For any $1\le i<j\le m$,
\begin{align}\label{eq:basis2}
|\angle(c_i^q, c_j^q)-\Theta_{ij}| &\le C_2 \tilde K(r/4),
\end{align}
for some constants $\Theta_{ij}>0$.

\item The fundamental pseudo-group $\Gamma(q,\kappa_1 r)$ is abelian.
\end{enumerate}
\end{thm}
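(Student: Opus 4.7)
The plan is to construct the short basis first at a single point of a fixed geodesic ray $\alpha$ from $p$ and then propagate it by sliding to all points at distance $r\ge A_2$. Fix $t_0$ large enough that the standing curvature assumption \eqref{eq:curv2} holds on $B(\alpha(t_0),100\rho_0)$ for $\rho_0=\hat\kappa t_0$. Using the procedure described after \eqref{eq:cho8}, select a generalized standard short $\theta_1$-basis $\{c_1,\ldots,c_m\}$ of $\Gamma(\alpha(t_0),\rho_0)$; by Proposition \ref{P:basis} it is a short basis of radius $\bar\rho$, and by Proposition \ref{P:basis1} there are only finitely many such bases. The rank $m$ is the local collapsing dimension, and we will see that it stabilizes for all large $t_0$.

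For $t\ge t_0$, let $c_i(t)$ denote the sliding of $c_i$ along $\alpha$. Theorem \ref{T:esray}(i), combined with $tK_1(t/2)\le CK(t/2)$, yields
\[
\|\mathbf r(c_i(t))\|\le C_1 L(c_i(t))K_1(t/2)\le C_2 t^{-1}K(t/2).
\]
Theorem \ref{T:esray}(ii) shows that $L(c_i(t))/L(c_i(t_0))$ stays in $[e^{-C_1\int_{t_0}^{\infty}K_1},e^{C_1\int_{t_0}^{\infty}K_1}]$, so the limit $L_i:=\lim_{t\to\infty}L(c_i(t))$ exists and $|L(c_i(t))-L_i|\le C\int_t^{\infty}K_1(z/2)\,dz\le C\tilde K(t/2)$ after a Fubini computation. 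Proposition \ref{p:angle} applied pairwise produces constants $\Theta_{ij}$ with $|\angle(c_i(t),c_j(t))-\Theta_{ij}|\le C\tilde K(t/4)$.

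The most delicate step is to verify that $\{c_1(t),\ldots,c_m(t)\}$ continues to be a generalized standard short $\theta_1$-basis at $\alpha(t)$. By Lemma \ref{L:trans} and Proposition \ref{P:group}, sliding along $\alpha$ induces a product-preserving bijection between the pseudo-groups at $\alpha(t_0)$ and $\alpha(t)$ that distorts translational lengths by a factor arbitrarily close to $1$ for $t_0$ large. Consequently, the $(1+\theta_1)$-minimality conditions at each inductive step of the selection, together with the angle conditions \eqref{eq:cho10d}, are inherited by $\{c_i(t)\}$ for all $t\ge t_0$, so Proposition \ref{P:basis} gives a short basis of radius $\sim t$. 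For a general point $q$ with $r=r(q)\ge A_2$, use \eqref{eq:diam} to pick a curve $\beta$ of length $\le Cr$ lying in $A(r/2,2r)$ from $\alpha(r)$ to $q$; the analogue of Theorem \ref{T:esray} along $\beta$ (proved by the same Gronwall argument as Lemma \ref{lem:ode}, noting that the total curvature integral along $\beta$ is of order $K(r/2)$) produces the slid basis $\{c_1^q,\ldots,c_m^q\}$ satisfying (i) and (ii), with Proposition \ref{P:homo} ensuring independence of the path in the simply-connected exterior.

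Finally, (iii) is deduced from the sharpened translational estimate: by Proposition \ref{P:norm}(iii) combined with Lemma \ref{lem:holaa},
\[
|\mathbf t([c,c'])|\le C\,\ep_1(r)\,r^{-1}|\mathbf t(c)||\mathbf t(c')|
\]
for all $c,c'\in\Gamma(q,\rho_0)$, with $\ep_1(r)\to 0$. Since (i) forces the minimal nonzero translational length in $\Gamma(q,\rho_0)$ to be at least $L_1/2$ for $r$ large, the commutator $[c_i^q,c_j^q]$ is strictly shorter than any nontrivial element and must therefore vanish, giving abelianness. The main obstacle throughout the plan is the stability of the generalized short basis structure under sliding: one must verify that the inductive selection of almost-shortest projections on the ray commutes with the sliding isomorphism up to the permitted $\theta_1$-slack, so that no new shorter element appears at $\alpha(t)$ outside the span of the slid basis.
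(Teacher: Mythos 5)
Your overall strategy coincides with the paper's: build the basis on a fixed geodesic ray, propagate it by sliding, read off (i) from Theorem \ref{T:esray}, (ii) from Proposition \ref{p:angle}, and deduce (iii) from the commutator estimate together with the lower bound on the shortest translational length. The estimates you assemble are the right ones, and your abelianness argument is a legitimate (slightly more direct) variant of Proposition \ref{P:abelian}. But the core of the proof is missing. The step you yourself flag as ``the main obstacle'' --- that the sliding of a generalized standard short $\theta_1$-basis remains one for all later times --- is not a consequence of the near-isometry of the sliding map; it is the substantive content of Step 1 of the paper's proof. The difficulty is that the element achieving the minimum of $\sigma_1(t)$ (and, at higher levels, of $|P_1(\cdot)|$ in $\bar T_i(t)$) can change identity as $t$ increases: a loop that was slightly longer than the minimum at $t_0$ can drift below the sliding of your chosen $c_1$. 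Since (i) and (ii) require applying Theorem \ref{T:esray} and Proposition \ref{p:angle} to the slidings of \emph{fixed} loops, you must show that one fixed choice stays $(1+\theta_1)$-almost-minimal for \emph{all} $t\ge t_0$. The paper does this by tracking the entire (finite, by Proposition \ref{P:basis1}) set of minimizers, proving the corresponding set of good times is open (Lemmas \ref{LB1}, \ref{LB4}), and restarting with an enlarged collection whenever a new minimizer appears; Proposition \ref{P:basis1} guarantees only finitely many restarts. Your proposal asserts the conclusion of this argument without supplying it.

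Two further gaps. First, you never establish that the rank $m$ is well defined, i.e.\ when to stop the inductive selection; this requires the dichotomy of Lemma \ref{LB4b} (either $\sigma_{i+1}(t)\ge Ct$ for all $t$, in which case $\Gamma(\alpha(t),\kappa' t)$ is generated by the first $i$ elements and one stops, or $\sigma_{i+1}(t)=o(t)$ and one continues). Without it you cannot verify Definition \ref{def:basis}(ii) with radius comparable to $r$, since there could a priori be short loops outside the span of $\{c_1,\ldots,c_m\}$. Second, your appeal to Proposition \ref{P:homo} ``in the simply-connected exterior'' is incorrect --- the end is not simply connected (otherwise there would be no short loops at all); fortunately path-independence of the sliding is not needed for the existence statement, since one may simply fix one curve $\beta_q$ per point as in Proposition \ref{LBx8} (path-independence is only recovered afterwards, from abelianness, in Corollary \ref{c:slideind}). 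You should also note that verifying the unique-representation property at $q$ in Step 2 requires sliding a general element of $\Gamma(q,\rho_0)$ back to the ray, representing it there via Proposition \ref{P:basis}, and transporting the representation forward with Proposition \ref{P:group}.
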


The proof of Theorem \ref{T:basis2} consists of two steps. 

\textbf{Step 1: Construction of the short bases on a geodesic ray}

Fix a geodesic ray $\alpha(t)$ starting from $p$, we set $T_1(t)$, $\sigma_1(t)$, etc. to be the corresponding elements in Section $4.1$ at $\alpha(t)$. We choose $q=\alpha(t^{(1)})$ where $t^{(1)}$ is a large constant to be determined later and set $\{c_{1,1},c_{1,2},\cdots,c_{1,k_1}\}$ to be all shortest elements in $\Gamma(q,\rho_1(t^{(1)}))$. Notice that $k_1 \le C_0$ by Proposition \ref{P:basis1}. For any $1 \le i\le k_1$ and $t \ge t^{(1)}$, we denote the slidings of $c_{1,i}$ along $\alpha(t)$ by $c_{1,i}(t)$. Now we define the subset $Z_1 \subset [t^{(1)},\infty)$ such that $t \in Z_1$ if and only if all shortest elements in $T_1(t)$ are contained in $\{c_{1,1}(t),c_{1,2}(t),\cdots,c_{1,k_1}(t)\}$.

\begin{lem}\label{LB1}
The set $Z_1$ is nonempty and open in $ [t^{(1)},\infty)$.
\end{lem}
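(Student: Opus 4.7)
The plan has two parts, one trivial and one short but requiring care.

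For nonemptiness, note that at $t=t^{(1)}$ the sliding map is the identity, so $c_{1,i}(t^{(1)})=c_{1,i}$. By construction $\{c_{1,1},\dots,c_{1,k_1}\}$ is precisely the set of shortest elements of $T_1(t^{(1)})$, so $t^{(1)}\in Z_1$.

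For openness, I would fix $t_0\in Z_1$, set $\sigma:=\sigma_1(t_0)$, and let $S\subset\{c_{1,1}(t_0),\dots,c_{1,k_1}(t_0)\}$ be the (finite) set of length-$\sigma$ elements of $T_1(t_0)$. The key structural input is that $T_1(t_0)$ is a \emph{discrete} subset of $T_{\alpha(t_0)}M$ (distinct elements of $\Gamma(\alpha(t_0),\rho_0)$ correspond via Lemma \ref{L:group1} to distinct lifts of $\alpha(t_0)$ under $\ex_{\alpha(t_0)}$). Hence there is a spectral gap: either $T_1(t_0)\setminus\{0\}$ has only length $\sigma$, or the second-shortest length $\sigma'$ satisfies $\sigma'>\sigma$. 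Put $\delta:=(\sigma'-\sigma)/3$ in the latter case (or any positive number in the former).

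The proof of openness now runs as follows. For $|t-t_0|$ small, Theorem \ref{T:esray}, together with the fact that $\rho_0(t),\rho_1(t)$ are nearly constant near $t_0$, yields two things simultaneously: (a) $|L(c_{1,i}(t))-L(c_{1,i}(t_0))|<\delta/3$ for every $i$, and (b) forward sliding along $\alpha$ is a length-almost-preserving bijection between the elements of $T_1(t_0)$ of length $<\sigma+\delta$ and their slidings in $T_1(t)$, with the inverse given by backward sliding (using the transitivity of sliding, Lemma \ref{L:trans}). Suppose for contradiction that some shortest element $d\in T_1(t)$ does not lie in $\{c_{1,1}(t),\dots,c_{1,k_1}(t)\}$. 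Since elements of $S$ have slidings of length $<\sigma+\delta/3$, we get $L(d)=\sigma_1(t)<\sigma+\delta/3$, so sliding $d$ backward produces $\tilde d\in T_1(t_0)$ with $L(\tilde d)<\sigma+2\delta/3<\sigma'$. Discreteness forces $L(\tilde d)=\sigma$, so $\tilde d\in S\subset\{c_{1,i}(t_0)\}$; forward sliding then identifies $d$ with the corresponding $c_{1,i}(t)$, a contradiction.

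The only real obstacle is bookkeeping: one must check that the sliding map is well-defined in both directions for every loop of length at most roughly $\sigma+\delta$ on the full interval between $t_0$ and $t$, and that the multiplicative distortion from Theorem \ref{T:esray}(ii) is under $\delta/3$. Both reduce to choosing the neighborhood of $t_0$ small enough, using that $\int_{t_0}^{t}K_1(z/2)\,dz\to 0$ as $t\to t_0$; no new idea is required beyond the estimates already established.
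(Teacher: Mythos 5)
Your proof is correct and follows essentially the same logic as the paper's: use the discreteness of the fundamental pseudo-group, the transitivity of sliding, and the length control from Theorem \ref{T:esray} to show that any shortest element at a nearby time must arise as the sliding of an element of $S$. The only difference is stylistic: the paper runs a sequential-compactness argument (assume a sequence of bad $t_i$, slide back, extract a convergent subsequence, invoke Lemma \ref{L:esta} to force stabilization), whereas you exploit the spectral gap directly to bound lengths and land in $S$ in one step. Your version is arguably tighter because it avoids the subsequence extraction and shows openness at an arbitrary $t_0\in Z_1$ rather than reducing to $t^{(1)}$ ``by similarity.''
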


\begin{proof}
 From the definition $t^{(1)} \in Z_1$ and hence $Z_1$ is nonempty. To prove that $Z_1$ is open, we only need to prove that $Z_1$ is open at $t^{(1)}$ and the general case is similar. Assuming the contrary, there exists a sequence $t_i \to (t^{(1)})^+$ such that at $\alpha(t_i)$ we can find a shortest element $b_i \in T_1(t_i)$ which is not contained in $\{c_{1,1}(t_i),c_{1,2}(t_i),\cdots,c_{1,k_1}(t_i)\}$. If we denote the sliding of $b_i$ to $q$ along $\alpha(t)$ by $e_i$, then by taking a subsequence if necessary, we assume that $e_i$ converges to a geodesic loop $e_{\infty} \subset T_1(t^{(1)})$. By continuity, $e_{\infty}$ has the smallest length in $T_1(t^{(1)})$ and hence $e_{\infty}$ is identical with $c_{1,s}$ for some $1\le s\le k_1$. It follows from Lemma \ref{L:esta} that
\begin{align*}
| e_{\infty}^{-1}*e_i-(e_i-e_{\infty})| \le \frac{\theta(1+\theta)}{\rho_1} |e_{\infty}|\,| e_i-e_{\infty}|.
\end{align*}
Since $| e_i-e_{\infty}| \to 0$, for sufficiently large $i$ we must $e_i=e_{\infty}$ since $e_{\infty}$ has the smallest length. From the transitivity of the sliding, $e_i$ is the sliding of $e_{\infty}$ if $i$ is large, which is a contradiction.
\end{proof}

If $Z_1 \ne [t^{(1)},\infty)$, we set $t_{1,2} \coloneqq \inf\{t \in [t^{(1)},\infty)\backslash Z_1\}$. Notice that it follows from Lemma \ref{LB1} that $t_{1,2} \notin Z_1$. It is clear from Theorem \ref{T:esray} (ii) that if $t^{(1)}$ is sufficiently large,
\begin{align}\label{LB1aa}
 \lc 1-\frac{\theta_1}{10}\rc^{\frac{1}{C_0}} \le \frac{|c_{1,i}(t)|}{|c_{1,j}(t)|} \le  \lc 1+\frac{\theta_1}{10}\rc^{\frac{1}{C_0}}
\end{align}
for any $1 \le i,j \le k_{1}$ and $t \ge t^{(1)}$, where the constant $C_0$ is the constant in Proposition \ref{P:basis1}. In particular, $\{c_{1,1}(t_{1,2}),c_{1,2}(t_{1,2}),\cdots,c_{1,k_1}(t_{1,2})\}$ are different elements in $T_1(t_{1,2})$ such that 
\begin{align*}
|c_{1,i}(t_{1,2})| < (1+\theta_1) \sigma_1(t_{1,2}).
\end{align*}

Since $t_{1,2} \notin Z_1$, we extend the collection $\{c_{1,1}(t_{1,2}),c_{1,2}(t_{1,2}),\cdots,c_{1,k_1}(t_{1,2})\}$ to a new collection $\{c_{1,1}(t_{1,2}),c_{1,2}(t_{1,2}),\cdots,c_{1,k_{1,2}}(t_{1,2})\}$ for $k_{1,2}>k_1$ such that if $k_1<i \le k_{1,2}$, $c_{1,i}(t_{1,2})$ is a shortest element in  $T_1(t_{1,2})$ and all shortest elements of $T_1(t_{1,2})$ are included. Now we define the subset $Z_{1,2} \subset [t_{1,2},\infty)$ such that $t \in Z_{1,2}$ if and only if all shortest elements in $T_1(t)$ are contained in $\{c_{1,1}(t),c_{1,2}(t),\cdots,c_{1,k_{1,2}}(t)\}$.

Similar to Lemma \ref{LB1}, $Z_{1,2}$ is open in $ [t_{1,2},\infty)$. If $Z_{1,2}=[t_{1,2},\infty)$, we stop here. Otherwise, we define the set $Z_{1,3}$ for some integer $k_{1,3}>k_{1,2}$. Notice that by Proposition \ref{P:basis1}, this process must end after finite steps. By redefining $t^{(1)}$ and $k_1$, we have proved

\begin{lem}\label{LB1a}
For any $t \ge t^{(1)}$, all shortest elements in $T_1(t)$ is contained in $\{c_{1,1}(t),c_{1,2}(t),\cdots,c_{1,k_1}(t)\}$.
\end{lem}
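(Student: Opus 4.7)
The plan is to formalize the iteration sketched in the paragraphs immediately preceding the statement and invoke Proposition \ref{P:basis1} to force it to terminate after finitely many stages. Set $Z_{1,1}:=Z_1$, $t_{1,1}:=t^{(1)}$, $k_{1,1}:=k_1$, and proceed inductively: whenever $Z_{1,j}\ne[t_{1,j},\infty)$, put $t_{1,j+1}:=\inf\{t\ge t_{1,j}:t\notin Z_{1,j}\}$, enlarge the sliding collection at $t_{1,j+1}$ by adjoining every shortest element of $T_1(t_{1,j+1})$ not already represented (yielding $k_{1,j+1}>k_{1,j}$), and define $Z_{1,j+1}\subset[t_{1,j+1},\infty)$ analogously. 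Once the process stops at some stage $j_0$, redefining $t^{(1)}:=t_{1,j_0}$ and $k_1:=k_{1,j_0}$ yields exactly the conclusion of the lemma, since $Z_{1,j_0}=[t^{(1)},\infty)$ by definition.

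Two facts must be verified. First, each $Z_{1,j}$ is nonempty (it contains its left endpoint by construction) and open in $[t_{1,j},\infty)$. Openness is proved exactly as in Lemma \ref{LB1}: if $t_i\to t_\infty\in Z_{1,j}$ supported shortest elements $b_i\in T_1(t_i)$ outside the list, slide them back to $\alpha(t_\infty)$ to obtain elements $e_i$; Theorem \ref{T:esray} (ii) bounds their lengths uniformly, so a subsequence converges to some $e_\infty$ of minimal length, hence $e_\infty=c_{1,s}$ for some $s$. Lemma \ref{L:esta} then gives $|e_\infty^{-1}*e_i-(e_i-e_\infty)|\to 0$, and minimality of $|e_\infty|$ forces $e_i=e_\infty$ for large $i$; the transitivity of sliding (Lemma \ref{L:trans}) then contradicts $b_i$ being outside the sliding collection.

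Second, the integers $k_{1,j}$ are uniformly bounded. By Theorem \ref{T:esray} (ii), taking $t^{(1)}$ sufficiently large guarantees the estimate \eqref{LB1aa} along the slidings, so at every $t_{1,j}$ each element $c_{1,i}(t_{1,j})$ in the current collection still satisfies $|c_{1,i}(t_{1,j})|\le(1+\theta_1)\sigma_1(t_{1,j})$. But the pigeonhole argument in the proof of Proposition \ref{P:basis1}, via Lemma \ref{L:esta}, shows that the number of elements of $T_1(t)$ of length at most $(1+\theta_1)\sigma_1(t)$ is bounded by a dimensional constant $C_0$: if two such elements $a,b$ satisfied $|b-a|\le\sigma_1(t)/2$, then $|a^{-1}*b|<\sigma_1(t)$, contradicting minimality. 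Since $k_{1,j}$ strictly increases at each step and is bounded by $C_0$, the iteration must halt.

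The main technical point I anticipate is the openness argument, where one must ensure the candidate minimizer $b_i$ actually slides back into a range where Lemma \ref{L:esta} and the minimality of $\sigma_1$ are applicable; this is the reason Theorem \ref{T:esray} (ii) (and hence the choice of $t^{(1)}$ large) enters essentially, to keep all lengths uniformly comparable along the sliding.
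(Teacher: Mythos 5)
Your proposal is correct and follows essentially the same route as the paper, which proves Lemma \ref{LB1a} precisely by the iteration described in the surrounding text: each stage strictly enlarges the sliding collection, openness of each $Z_{1,j}$ is established exactly as in Lemma \ref{LB1}, and the pigeonhole bound from Proposition \ref{P:basis1} (combined with \eqref{LB1aa} to keep all lengths within the factor $1+\theta_1$ of $\sigma_1(t)$) forces termination. Your two verifications are exactly the points the paper relies on, so there is nothing to add.
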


We set $c_1(t)=c_{1,1}(t)$. It follows from \eqref{LB1aa} and Lemma \ref{LB1a} that for any $t \ge t^{(1)}$,
\begin{align*}
|c_1(t)| \le (1+\theta_1)\sigma_1(t).
\end{align*}
In other words, $c_1(t)$ is the first member of a standard $\theta_1$-basis in $T_1(t)$.

In addition, it follows from Theorem \ref{T:esray} that
\begin{align*}
\|\mathbf r(c_1(t))\| \le CK_1(t/2)
\end{align*}
and there exists a constant $L_1>0$ such that
\begin{align}\label{E503}
|L(c_1(t))-L_1| \le C\int_t^{\infty}K_1(z/2)\,dz \le C\int_t^{\infty} \frac{K(z/2)}{z}\,dz \le CK_0(t/2).
\end{align}

For any $t \ge t^{(1)}$, after $c_1(t)$ is chosen, we can define the set $T_2(t)$ as in Section $4.1$. Moreover, we set $\sigma_2(t)$ to be the shortest length in $T_2(t)$.

Next, we prove
\begin{lem}\label{LB4b}
If $\sigma_2(t)\ne o(t)$ as $t \to \infty$, then there exists a constant $C>0$ such that for any $t \ge t^{(1)}$,
\begin{align*} 
\sigma_2(t) \ge C t.
\end{align*}
\end{lem}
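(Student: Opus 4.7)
I intend to prove the contrapositive in a sharp form: whenever $\sigma_2(s)\le c_0 s$ for some $s\ge t^{(1)}$ and a small absolute constant $c_0$, the smallness propagates forward, yielding $\sigma_2(t)\le C^{*}\sigma_2(s)$ for every $t\ge s$. Combined with the hypothesis $\sigma_2(s_i)\ge \delta s_i\to\infty$ along a sequence $s_i\to\infty$, this will force $\sigma_2(s)>c_0 s$ for all $s\ge t^{(1)}$, giving the conclusion with $C=c_0$.

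\textbf{Propagation step.} Pick $\tilde b\in\tilde T_1(s)$ realizing $|P_1(\tilde b)|=\sigma_2(s)$. From the $\lambda$-normal condition, $|\tilde b|\le\lambda\sigma_2(s)\le C_1^{-1}s$ provided $c_0\le(\lambda C_1)^{-1}$, so the sliding $\tilde b(t)$ of $\tilde b$ along $\alpha$ is governed by Theorem~\ref{T:esray} and satisfies $|\tilde b(t)|\le C|\tilde b|$ for every $t\ge s$. Next, Proposition~\ref{p:angle} applied to the pair $(c_{1,1},\tilde b)$ shows
\begin{equation*}
\bigl|\angle(\tilde b(t),c_1(t))-\angle(\tilde b,c_1(s))\bigr|\le CK_0(s/4),
\end{equation*}
which is small once $s$ is large. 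Since $\sin\angle(\tilde b,c_1(s))\ge 1/\lambda$ by the definition of $\tilde T_1$, the slid loop still makes a definite angle with $c_1(t)$, so $0\ne P_1(\tilde b(t))$ and
\begin{equation*}
|P_1(\tilde b(t))|\le|\tilde b(t)|\le C\lambda\sigma_2(s).
\end{equation*}
This upper bound is automatically $\le\rho_2(t)$ since $\sigma_2(s)\le c_0 s\le c_0 t$, so $P_1(\tilde b(t))\in T_2(t)\setminus\{0\}$ and we conclude $\sigma_2(t)\le C^{*}\sigma_2(s)$ with $C^{*}=C\lambda$.

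\textbf{Contradiction and conclusion.} Suppose for some $s\ge t^{(1)}$ we had $\sigma_2(s)\le c_0 s$. Choose $s_i$ from the hypothesis large enough that $s_i>s$ and $\delta s_i>C^{*}c_0 s$; this is possible because $\delta s_i\to\infty$ while the right-hand side is fixed. The propagation step then yields
\begin{equation*}
\delta s_i\le\sigma_2(s_i)\le C^{*}\sigma_2(s)\le C^{*}c_0 s<\delta s_i,
\end{equation*}
a contradiction. Hence $\sigma_2(s)>c_0 s$ for every $s\ge t^{(1)}$, which is the desired lower bound.

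\textbf{Main obstacle.} The delicate point is keeping $P_1(\tilde b(t))$ nonzero as $t$ drifts far from $s$. Length control alone (Theorem~\ref{T:esray}) is not enough: if $\tilde b(t)$ were to rotate into the $c_1(t)$ direction, its $P_1$-projection would vanish and we could not conclude $\sigma_2(t)\le C^{*}\sigma_2(s)$. The angular estimate of Proposition~\ref{p:angle}, together with the integrability of $K(r)/r$, bounds the cumulative angular drift by $CK_0(s/4)$, so choosing $t^{(1)}$ large enough that this drift is strictly smaller than the initial margin $\arcsin(1/\lambda)$ keeps $\sin\angle(\tilde b(t),c_1(t))$ uniformly bounded away from zero and rescues the argument.
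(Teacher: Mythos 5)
Your argument is correct in outline and is essentially the paper's proof run in the contrapositive direction: take a witness for $\sigma_2(s)$ of length $O(\sigma_2(s))$, slide it forward along $\alpha$, control its length by Theorem~\ref{T:esray}, and conclude that $\sigma_2(t)\le C^{*}\sigma_2(s)$ for all $t\ge s$, which is incompatible with $\sigma_2(s_i)\ge\delta s_i\to\infty$. The only step you treat differently is the non-degeneracy of the slid loop. The paper replaces $c^j(t)$ by $c_1^{k}(t)*c^j(t)\in\tilde T_1(t)$ (with $|k|$ controlled by \eqref{eq:chok}, so the length is still $O(\sigma_2(t_j))$) and uses the fact that sliding preserves the pseudo-group structure (Proposition~\ref{P:group}) to see that the slid loop is not a power of $c_1(t)$, hence contributes a nonzero element to $T_2(t)$; you instead keep the projection bounded away from zero via the angle estimate. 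Two points in your version need patching. First, $P_1(\tilde b(t))$ is not literally an element of $T_2(t)$: by definition $T_2(t)$ is the $P_1$-image of the normalized set $\tilde T_1(t)$, so you must first pass to $Q_1(\tilde b(t))=c_1^{-k}(t)*\tilde b(t)$ before projecting. The normalized element has length and projection comparable to those of $\tilde b(t)$ (this is exactly the paper's computation with \cite[Proposition $4.2.3$(iv)]{BK81}), and your angle lower bound $\sin\angle(\tilde b(t),c_1(t))\gtrsim 1/\lambda$ ensures the projection survives the $O(\theta)$ errors of the normalization, so the patch is routine — but the step cannot be skipped. Second, Proposition~\ref{p:angle} is stated only for loops whose translational parts lie in a fixed interval $[a_0^{-1},a_0]$, whereas $|\tilde b|\sim\sigma_2(s)$ may be unbounded as $s\to\infty$ — which is precisely the regime this lemma is about. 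The estimate does extend to loops of length up to a small multiple of $s$, because the per-scale angular drift in the proof comes from Proposition~\ref{P:es4} and is $O(K(t/2))$ independently of the loop length, but this extension has to be justified rather than cited. The paper's group-theoretic route avoids both issues, which is what it buys; your route, once patched, gives the same quantitative conclusion.
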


\begin{proof}
Assuming the contrary, there exists a sequence $t_j \to \infty$ such that
\begin{align}\label{eq:B2a}
\lim_{j \to \infty }\frac{\sigma_2(t_j)}{t_j}=0.
\end{align}

At $t_j$, there exists a $c^j \in \tilde T_1(t_j)$ such that
\begin{align*}
|P_1(c^j)|=\sigma_2(t_j)
\end{align*}
and hence by the definition of $\tilde T_1(t_j)$,
\begin{align}\label{eq:B2c}
|c^j| \le C\sigma_2(t_j).
\end{align}

Now we define the sliding of $c^j$ along $\alpha(t)$ by $c^j(t)$, then it follows from \eqref{eq:B2c} and  Theorem \ref{T:esray} (ii) that for any $t \ge t_j$,
\begin{align*}
|c^j(t)| \le C|c^j| \le C \sigma_2(t_j).
\end{align*}

For any $t \ge t_j$, there exists an integer $k$ such that $c^k_1(t)*c^j(t) \in \tilde T_1(t)$ and from \eqref{eq:chok}
\begin{align*}
|k| \le \frac{1}{1-\theta}\frac{\la c_1(t),c^j(t)\ra}{|c_1(t)|^2}\le 2 \frac{|c^j(t)|}{|c_1(t)|}\le C|c^j(t)|
\end{align*}
since $|c_1(t)|$ converges to a nonzero constant by \eqref{E503}. Now it follows from \eqref{eq:tr1} and \cite[Proposition $4.2.3$(iv)]{BK81} that
\begin{align*}
|c^k_1(t)*c^j(t)| &\le |c^j(t)|+\lc 1+\frac{\theta}{\rho_1}|c^j(t)|\rc|c^k_1(t)| \\
& \le |c^j(t)|+ck|c_1(t)| \le C|c^j(t)|.
\end{align*}

By Proposition \ref{P:homo}, $c^j(t)$ is not generated by $c_1(t)$. Therefore,
\begin{align} \label{eq:B2f}
|\sigma_2(t)| \le |P_1(c^k_1(t)*c^j(t))| \le C|c^j(t)| \le C \sigma_2(t_j).
\end{align}

Then we conclude from \eqref{eq:B2a} and \eqref{eq:B2f} that
\begin{align*}
\lim_{t \to \infty} \frac{\sigma_2(t)}{t}=0
\end{align*}
and we obtain a contradiction.
\end{proof}

If $\sigma_2(t)\ne o(t)$, then it follows from Lemma \ref{LB4b} that there exists a constant $\kappa'>0$ such that for any $t \ge t^{(1)}$, $\Gamma(\alpha(t),\kappa't)$ is generated by $c_1(t)$ and the construction is complete. In this case, we choose $m=1$. Therefore, we only need to consider the case when $\sigma_2(t)= o(t)$ as $t \to \infty$.

For a $t^{(2)} \ge t^{(1)}$ to be determined later, we assume that $\{c_{2,1},c_{2,2},\cdots,c_{2,k_2}\} \subset \tilde T_1 \cup T_{1,0}$ are all elements such that $|P_1(c_{2,i})|=\sigma_2(t^{(2)})$ at $\alpha(t^{(2)})$. Here $T_{1,0} \coloneqq \{c_1\}^{\perp} \cap T_1$. Like before, we denote the sliding of $c_{2,i}$ by $c_{2,i}(t)$ along $\alpha(t)$. Moreover, we set $\tilde T_1(t)$, $T_{1,0}(t)$, $T_2(t)$, etc. to be the corresponding sets with respect to $c_1(t)$ at $\alpha(t)$. 

We define the subset $Z_2 \subset [t^{(2)},\infty)$ such that $t \in Z_2$ if and only if any $a \in \tilde T_1(t) \cup T_{1,0} (t)$ such that $|P_1(a)|=\sigma_2(t)$ is contained in $\{c_{2,1}(t),c_{2,2}(t),\cdots,c_{2,k_2}(t)\}$. Similar to Lemma \ref{LB1}, we prove

\begin{lem}\label{LB4}
The set $Z_2$ is nonempty and open in $ [t^{(2)},\infty)$.
\end{lem}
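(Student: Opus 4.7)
The plan is to imitate the proof of Lemma \ref{LB1} almost verbatim, with $P_1$-lengths playing the role that translational lengths did before. Nonemptiness of $Z_2$ is immediate from the construction: the list $\{c_{2,1},\ldots,c_{2,k_2}\}$ was defined at $\alpha(t^{(2)})$ so as to exhaust every element of $\tilde T_1(t^{(2)}) \cup T_{1,0}(t^{(2)})$ whose $P_1$-image has length $\sigma_2(t^{(2)})$, so $t^{(2)} \in Z_2$ by definition.

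For openness, as in the proof of Lemma \ref{LB1} it suffices to verify openness at the initial point $t^{(2)}$; the general case is identical. Suppose otherwise, so there is a sequence $t_i \to (t^{(2)})^+$ together with elements $b_i \in \tilde T_1(t_i) \cup T_{1,0}(t_i)$ satisfying $|P_1(b_i)| = \sigma_2(t_i)$ but $b_i \notin \{c_{2,1}(t_i),\ldots,c_{2,k_2}(t_i)\}$. Slide each $b_i$ backwards along $\alpha$ to $\alpha(t^{(2)})$ and denote the result by $e_i$. Since the list $\{c_{2,j}(t_i)\}_{j=1}^{k_2}$ already provides candidates with $|P_1|$ close to $\sigma_2(t^{(2)})$ (by continuity of sliding and of $c_1(t)$), one has $\limsup_i \sigma_2(t_i) \le \sigma_2(t^{(2)})$, so the lengths $|e_i|$ are uniformly bounded by Theorem \ref{T:esray}. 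Passing to a subsequence and using that $\Gamma(\alpha(t^{(2)}),\rho_0)$ is discrete in $T_{\alpha(t^{(2)})}M$, I may assume $e_i \to e_\infty$ in the tangent space.

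Because parallel transport preserves inner products, the orthogonal projection $P_1$ onto the complement of $c_1(t)$ commutes with sliding up to continuous error. Combined with the closedness of the conditions defining $\tilde T_1 \cup T_{1,0}$, this forces $e_\infty \in \tilde T_1(t^{(2)}) \cup T_{1,0}(t^{(2)})$ with $|P_1(e_\infty)| = \sigma_2(t^{(2)})$. Since $t^{(2)} \in Z_2$, there must be some $s$ for which $e_\infty = c_{2,s}$. Now I apply Lemma \ref{L:esta} to $c_{2,s}$ and $e_i$:
\begin{align*}
|c_{2,s}^{-1} * e_i - (e_i - c_{2,s})| \le \frac{\theta(1+\theta)}{\rho_1}|c_{2,s}||e_i - c_{2,s}|,
\end{align*}
so $c_{2,s}^{-1}*e_i$ has length tending to $0$. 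Because every nonzero element of $T_1(t^{(2)})$ has length at least $\sigma_1(t^{(2)}) > 0$, one concludes $e_i = c_{2,s}$ for all large $i$; the transitivity of sliding (Lemma \ref{L:trans}) then gives $b_i = c_{2,s}(t_i)$, contradicting $b_i \notin \{c_{2,j}(t_i)\}$.

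The main obstacle is checking that the continuity of the projection $P_1$ along sliding is compatible with the (\textit{a priori}) discrete pseudo-group structure: one must know both that $\sigma_2(t)$ is continuous at $t^{(2)}$ (which follows from the two-sided estimate just described: the slidings of the existing list give upper semicontinuity, while the limit $e_\infty$ of any minimizer gives lower semicontinuity) and that the set $\tilde T_1(t) \cup T_{1,0}(t)$ is stable under taking limits as $t \to t^{(2)}$. Once these are secured the rest is a formal copy of Lemma \ref{LB1}.
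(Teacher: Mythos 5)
Your proof is correct and follows essentially the same route as the paper's: nonemptiness by construction, then openness at $t^{(2)}$ by sliding a putative sequence of new minimizers $b_i$ back to $\alpha(t^{(2)})$, extracting a limit $e_\infty$ that must coincide with some $c_{2,s}$, and using Lemma \ref{L:esta} plus the positive lower bound on lengths of nonzero loops to force $e_i=c_{2,s}$ and hence $b_i=c_{2,s}(t_i)$ for large $i$. Your final step (invoking the bound $\sigma_1(t^{(2)})>0$ directly) is a harmless, if anything slightly cleaner, variant of the paper's appeal to $|P_1(Q_1(e_\infty^{-1}*e_i))|<\sigma_2$.
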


\begin{proof}
It is obvious that $t^{(2)} \in Z_2$ and $Z_2$ is nonempty. To prove that $Z_2$ is open, we only need to prove that $Z_2$ is open at $t^{(2)}$. Assuming the contrary, there exists a sequence $t_i \to (t^{(2)})^+$ such that at $\alpha(t_i)$ we can find a $b_i \in \tilde T_1(t_i) \cup T_{1,0} (t_i)$ which is not contained in $\{c_{2,1}(t_i),c_{2,2}(t_i),\cdots,c_{2,k_2}(t_i)\}$. If we denote the sliding of $b_i$ to $\alpha(t^{(2)})$ along $\alpha(t)$ by $e_i$, then by taking a subsequence if necessary, we assume that $e_i$ converges to a geodesic loop $e_{\infty}$. On the one hand, if $b_i \in T_{1,0} (t_i)$ for infinitely many $i$, then by taking the limit we have $e_{\infty} \in T_{1,0} (t^{(2)})$. On the other hand, if $b_i \in \tilde T_1 (t_i)$ for all large $i$, then by the definition of $\tilde T_1$ we have
\begin{align} \label{eq:B3}
\la c_1(t_i), b_i\ra >0\quad \text{and} \quad \la c_1(t_i), c_1^{-1}(t_i)b_i\ra \le 0.
\end{align}

By taking the limit of \eqref{eq:B3}, we conclude that
\begin{align*}
\la c_1(t^{(2)}), e_{\infty} \ra \ge 0\quad \text{and} \quad \la c_1(t^{(2)}), c_1^{-1}(t^{(2)}) e_{\infty}\ra \le 0.
\end{align*}
Therefore, $e_{\infty} \subset \tilde T_1(t^{(2)}) \cup T_{1,0} (t^{(2)})$. By continuity, $|P_1(e_{\infty})|=\sigma_2(t^{(2)})$ and hence $e_{\infty}$ is identical with $c_{2,u}$ for some $1\le u\le k_2$. It follows from Lemma \ref{L:esta} that
\begin{align} \label{eq:B5}
|e^{-1}_{\infty}*e_i-(e_i-e_{\infty})| \le \frac{\theta(1+\theta)}{\rho_1} |e_{\infty}|\,|e_i-e_{\infty}|.
\end{align}

If $i$ is sufficiently large, it is easy to see from \eqref{eq:B5} that the image $P_1(Q_1(e^{-1}_{\infty}*e_i))$ has the length smaller than $\sigma_2(t_1)$. Therefore, for sufficiently large $i$ we must have $e_i=e_{\infty}$ and $b_i=c_{2,u}(t_i)$, which is a contradiction.
\end{proof}

Now we assume $t^{(2)}$ is sufficiently large such that from Theorem \eqref{T:esray} (ii) and Proposition \ref{p:angle} that
\begin{align} \label{LB5}
 \lc 1-\frac{\theta_1}{10}\rc^{\frac{1}{C_0}} \le \frac{|P_1(c_{2,i}(t))|}{|P_1(c_{2,j}(t))|} \le  \lc 1+\frac{\theta_1}{10}\rc^{\frac{1}{C_0}}
\end{align}
for any $1 \le i,j \le k_{2}$ and $t \ge t^{(2)}$.

By the same argument as in Lemma \ref{LB1a}, we can redefine $t^{(2)}$ and $k_2$ so that the following lemma holds.

\begin{lem}\label{LB4a}
For any $t \ge t^{(2)}$, any $a \in \tilde T_1(t) \cup T_{1,0} (t)$ such that $|P_1(a)|=\sigma_2(t)$ is contained in $\{c_{2,1}(t),c_{2,2}(t),\cdots,c_{2,k_2}(t)\}$.
\end{lem}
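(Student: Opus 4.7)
The plan is to proceed in direct analogy with the proof of Lemma \ref{LB1a}, iterating the extension procedure at the second level of the basis construction rather than the first. By Lemma \ref{LB4} the set $Z_2$ is open and nonempty in $[t^{(2)},\infty)$, so if $Z_2 = [t^{(2)},\infty)$ the conclusion is immediate. Otherwise, set
\[
t_{2,2} \coloneqq \inf\{t \in [t^{(2)},\infty) \setminus Z_2\}.
\]
Openness of $Z_2$ forces $t_{2,2} \notin Z_2$, so at $\alpha(t_{2,2})$ there must exist at least one element of $\tilde T_1(t_{2,2}) \cup T_{1,0}(t_{2,2})$ realizing $\sigma_2(t_{2,2})$ which is \emph{not} among the slidings $\{c_{2,1}(t_{2,2}),\ldots,c_{2,k_2}(t_{2,2})\}$. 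I will enlarge the collection to $\{c_{2,1}(t_{2,2}),\ldots,c_{2,k_{2,2}}(t_{2,2})\}$ with $k_{2,2} > k_2$ so as to capture all such minimizers, then slide these additional elements forward along the ray and define $Z_{2,2} \subset [t_{2,2},\infty)$ analogously.

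The next step is to verify that the analog of Lemma \ref{LB4} applies to the enlarged collection, which requires only a repetition of that argument: the key input is estimate \eqref{LB5} (which uses Theorem \ref{T:esray} (ii) together with Proposition \ref{p:angle} to keep relative lengths of projections tight), the compactness extraction producing a limit $e_\infty$, and Lemma \ref{L:esta} to conclude $e_i = e_\infty$ for large $i$. In the compactness step, the dichotomy $b_i \in T_{1,0}(t_i)$ vs.\ $b_i \in \tilde T_1(t_i)$ is handled by passing the defining inequalities $\langle c_1(t_i), b_i \rangle > 0$ and $\langle c_1(t_i), c_1^{-1}(t_i) b_i\rangle \le 0$ to the limit, exactly as in Lemma \ref{LB4}. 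Thus $Z_{2,2}$ is again open and nonempty. If $Z_{2,2} = [t_{2,2},\infty)$ we stop; otherwise we iterate, producing $k_{2,3} > k_{2,2}$, then $k_{2,4}$, and so on.

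The termination of this iteration is the main conceptual point. Each newly adjoined generator $c_{2,j}$ (at the relevant base point along $\alpha$) together with $c_1$ can be completed to a generalized standard short $\theta_1$-basis of $\Gamma(\alpha(t),\rho_0)$, because the conditions \eqref{eq:cho9a}--\eqref{eq:cho10} are satisfied by construction (since $|P_1(c_{2,j})|$ equals $\sigma_2$, which trivially lies below $(1+\theta_1)\sigma_2$). Distinct $c_{2,j}$ therefore yield distinct generalized standard short $\theta_1$-bases beginning with $c_1$. By Proposition \ref{P:basis1} the total number of such bases is at most $C_0$, so the sequence $k_2 < k_{2,2} < k_{2,3} < \cdots$ must terminate after at most $C_0$ steps. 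At termination we obtain some $t^{(2)*}\ge t^{(2)}$ and an integer $k_2^* \le C_0$ for which $[t^{(2)*},\infty)$ is covered by the appropriate $Z$-set; redefining $t^{(2)}$ as $t^{(2)*}$ and $k_2$ as $k_2^*$ yields the lemma. The main subtlety to handle carefully is the first part of the iteration: checking that the compactness and Lemma \ref{L:esta} argument of Lemma \ref{LB4} go through uniformly at each stage, so that each $Z_{2,j}$ is genuinely open at its left endpoint and the inductive extraction of $t_{2,j+1}$ is legitimate.
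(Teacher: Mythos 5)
Your proposal is correct and takes essentially the same approach as the paper: the paper's proof of this lemma is the single remark that one repeats the argument of Lemma \ref{LB1a} and redefines $t^{(2)}$ and $k_2$, and your iteration of the $Z$-set extension --- openness from Lemma \ref{LB4}, the ratio control \eqref{LB5} keeping the slid elements within the $(1+\theta_1)$-window, and termination via the counting bound of Proposition \ref{P:basis1} --- is exactly that argument carried out at the second level of the basis construction.
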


Now we set $c_2(t)=c_{2,1}(t)$, then it follows from Theorem \ref{T:esray} that
\begin{align*}
\|\mathbf r(c_2(t))\| \le CK_1(t/2)
\end{align*}
and there exists a constant $L_2>0$ such that
\begin{align*}
|L(c_2(t))-L_2| \le CK_0(t/2).
\end{align*}

Moreover, it follows from Proposition \ref{p:angle} that
\begin{align*}
|\angle (c_1(t),c_2(t))-\Theta_{12} | \le CK_0(t/4)
\end{align*}
for some constant $\Theta_{12} \ge 0$.

It is clear from \eqref{LB5} and Lemma \ref{LB4a} that for any $t \ge t^{(2)}$,
\begin{align}\label{LB7}
|P_1(c_2(t))| \le (1+\theta_1)\sigma_2(t).
\end{align}

In addition, for any $t \ge t^{(2)}$,
\begin{align}\label{LB8}
c_1^{k}(t)*c_2(t) \in \tilde T_1(t).
\end{align}
for some $k \in \{-1,0,1\}$.

It follows from \eqref{eq:cho10d} that for any $t \ge t^{(2)}$,
\begin{align}\label{LB9}
-\theta \le \cos \angle(c_1(t),c_2(t)) &\le \lc \frac{1}{2}+9\theta \rc^{\frac{1}{2}}.
\end{align}

In particular, \eqref{LB9} implies that $\Theta_{12}$ must be positive. Therefore, it follows from \eqref{LB7}, \eqref{LB8} and \eqref{LB9} that $\{c_1(t),c_2(t)\}$ are the first two members of a generalized standard short $\theta_1$-basis.

By iteration, we can consecutively define $\{c_1(t),c_2(t),\cdots, c_m(t)\}$ and the sets $\bar T_1(t)=T_1(t) \supset \bar T_2(t) \supset \cdots \supset \bar T_m(t)$ along $\alpha(t)$ as before such that the following properties are satisfied.

\begin{enumerate}[label=(\roman*)]

\item For any $1 \le i \le m$,
\begin{align} \label{eq:B13}
|\mathbf r(c_i(t))-I| \le CK_1(t/2).
\end{align}

\item There exist some constants $L_i>0$ such that
\begin{align} \label{eq:B13a}
|L(c_i(t))-L_i| \le CK_0(t/2).
\end{align}

\item There exists some constants $\Theta_{ij}>0$ such that
\begin{align} \label{eq:B13b}
|\angle (c_i(t),c_j(t))-\Theta_{ij}| \le CK_0(t/4).
\end{align}

\item For any $1 \le i \le m$,
\begin{align*}
|c^{(i)}_i(t)| \le (1+\theta_1)\bar \sigma_i(t),
\end{align*}
where $c^{(i)}_i(t)$ is the projection of $c_i(t)$ to $\bar T_i(t)$ and $\bar \sigma_i(t)$ is the smallest length in $\bar T_i(t)$.

\item For any $1 \le i \le m-1$,
\begin{align*}
(c^{(i)}_i(t))^k*c_{i+1}^{(i)}(t) \in \widetilde{\bar T_i}(t)
\end{align*}
for some $k \in \{-1,0,1\}$.

\item For any $1 \le i <j\le m$,
\begin{align} \label{eq:B13e}
-\theta \le \cos \angle(c^{(i)}_i(t),c^{(i)}_{j}(t)) &\le \lc \frac{1}{2}+9\theta \rc^{\frac{1}{2}}.
\end{align}
\end{enumerate}

To summarize, for sufficiently large $t$, we have constructed a generalized standard short $\theta_1$-basis at $\alpha(t)$. In particular, Theorem \ref{T:basis2} is proved along the geodesic ray $\alpha(t)$.

\textbf{Step 2: Extension of the short bases on the end}

To construct a short basis on the entire end of $M^n$, we consider $q \in \partial B(p,t)$ for $t \ge A$, where $A$ is a large constant. From \eqref{eq:diam}, we can choose an arc-length parametrized curve $\{\beta_q(s): s\in [0,a]\}$ on $B(p,11t/10)\backslash \bar B(p,9t/10)$ such that $\beta_q(0)=\alpha(t)$ and $\beta_q(a)=q$. Moreover, there exists a constant $C >0$ independent of $t$ and $q$ such that
\begin{align} \label{eq:B15}
L(\beta_q)=a \le Ct.
\end{align}

If we assume that $\{c^q_1(s),c^q_2(s),\cdots, c^q_m(s)\}$ are the slidings of $\{c_1(t),c_2(t),\cdots, c_m(t)\}$ along $\beta_q(s)$, then we have

\begin{prop} \label{LBx8}
 There exist constants $A_{2,1}>0$ and $\kappa_1>0$ such that if $t \ge A_{2,1}$, $$\{c^q_1(a),c^q_2(a),\cdots, c^q_m(a)\}$$ is a short basis with radius $\kappa_1t$ satisfying \eqref{eq:basis1} and \eqref{eq:basis2}.
\end{prop}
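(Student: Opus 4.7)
The plan is to transport the estimates already established on the geodesic ray $\alpha$ over the short connecting curve $\beta_q$, using the curvature smallness on the annulus $B(p,11t/10)\setminus \bar B(p,9t/10)$ to ensure errors remain controlled.  Throughout, the curvature in this annulus satisfies $|Rm|\le 4K(t/2)/t^2$, so with $\Lambda^2\coloneqq 4K(t/2)/t^2$ we have $\Lambda^2 t^2=4K(t/2)\to 0$.  Since $L(\beta_q)=a\le Ct$ by \eqref{eq:B15}, the total ``sliding distance'' is comparable to $t$, which is exactly the scale at which the curvature is small enough for the estimates of Section~4.2 to apply.

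First I would verify by a bootstrap that the slidings $c^q_i(s)$ stay bounded in length throughout $s\in[0,a]$.  Starting from $|L(c_i(t))-L_i|\le CK_0(t/2)$ by \eqref{eq:B13a}, assume provisionally $l(s)\le 2L_i$ on a maximal subinterval.  Using \eqref{eq:B13} as initial rotational data and Theorem \ref{T:hol}(ii) along $\beta_q$ for a parallel unit vector field, I get
\begin{align*}
\|\mathbf r(c^q_i(s))\|\le CK_1(t/2)+2L_i\Lambda^2 s\le C\tfrac{K(t/2)}{t},
\end{align*}
and then by Theorem \ref{T:hol}(i),
\begin{align*}
|L(c^q_i(a))-L(c_i(t))|\le \int_0^a\|\mathbf r(c^q_i(s))-I\|\,ds\le C\tfrac{K(t/2)}{t}\cdot a\le CK(t/2).
\end{align*}
Combining with \eqref{eq:B13a} yields $|L(c^q_i(a))-L_i|\le C\tilde K(t/2)$, closing the bootstrap for $t$ large and establishing \eqref{eq:basis1}.

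Next I would handle the angle estimate \eqref{eq:basis2}.  Cover $\beta_q$ by finitely many (an $O(1)$ number) points $0=s_0<s_1<\cdots<s_N=a$ spaced $\rho\sim t/100$ apart.  On each subinterval the loops $c^q_i(s_k),c^q_j(s_k)$ can be compared via Proposition \ref{P:es4}: in $\hat B(0,\rho)\subset T_{\beta_q(s_k)}M$ the geodesic representing the sliding has initial tangent within $C\Lambda^2\rho^2\le CK(t/2)$ of its translation vector.  Summing over the $N=O(1)$ segments gives a total angle deviation $\le CK(t/2)$, which added to \eqref{eq:B13b} produces $|\angle(c^q_i(a),c^q_j(a))-\Theta_{ij}|\le C\tilde K(t/4)$.

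Finally I would verify the three short basis axioms of Definition \ref{def:basis}.  The $\lambda$-normal condition is a statement purely about lengths, angles, and projections: since the lengths $L_i$ and angles $\Theta_{ij}$ inherited at $q$ differ from those at $\alpha(t)$ by $O(\tilde K(t/4))$, and $\{c_1(t),\dots,c_m(t)\}$ is $\lambda$-normal for $\lambda\le(\tfrac12-2\theta)^{-1/2}$ by Proposition \ref{P:basis}(i), choosing $t\ge A_{2,1}$ large enough makes the basis at $q$ a $\lambda$-normal basis with $\lambda\le 2$.  The commutator identities (iii) transport directly by Proposition \ref{P:group}, provided the slidings of $[c_i(t),c_j(t)]$ and the right-hand side remain short along $\beta_q$, which is guaranteed by the uniform length control of Part 1 applied to all generators and their bounded words.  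For the unique representation property (ii), the representation and its uniqueness hold at $\alpha(t)$ at radius $\bar\rho\sim t$ by Proposition \ref{P:basis}(iii); by Proposition \ref{P:homo} and the transitivity Lemma \ref{L:trans}, sliding along $\beta_q$ yields a well-defined bijection of pseudogroups respecting the product, so choosing $\kappa_1\ll 1$ (depending only on $L_i$, $\Theta_{ij}$ and $n$) the representation and its uniqueness persist at $q$ on radius $\kappa_1 t$.

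The main obstacle I anticipate is not any single estimate but the combinatorial bookkeeping needed to ensure that during the sliding of every element one wishes to represent, the length stays small enough for the sliding (and its uniqueness) to be defined via Proposition \ref{P:homo} on all of $\beta_q$; this is what forces $\kappa_1$ to be chosen well below the natural constants $L_i$ and is what ultimately dictates how large $A_{2,1}$ must be in terms of the function $\tilde K$.
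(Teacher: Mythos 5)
Your proposal is correct and follows essentially the same route as the paper: a Gronwall/bootstrap argument along $\beta_q$ via Theorem \ref{T:hol} to control lengths and rotational parts, Proposition \ref{P:es4} for the angles, the length/angle data for $\lambda$-normality, Proposition \ref{P:group} for the structure constants, and sliding back to $\alpha(t)$ (with the length-distortion constant dictating the choice of $a_0$, i.e.\ of $\kappa_1$) for existence and uniqueness of representations. The obstacle you flag at the end is exactly the point the paper handles by choosing $a_0<C^{-1}$ so that loops of length at most $a_0\bar\rho$ at $q$ remain representable after sliding to $\alpha(t)$.
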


\begin{proof}
For any $s \le a$, we choose a unit parallel vector field $X_i$ along $\beta_q$ such that
\begin{align*} 
|\mathbf r(c_i^q(s)) X_i(s)-X_i(s)|=|\mathbf r(c_i^q(s))-I|.
\end{align*}

Then it follows from Theorem \ref{T:hol} and \eqref{eq:B13} that
\begin{align} 
&\frac{d}{ds}L(c^q_i(s)) \le |\mathbf r(c_i^q(s))-I|  \notag \\
\le& |\mathbf r(c_i^q(0))-I|+Ct^{-2}K(t/2)\int_0^s  L(c^q_i(s)) \,ds \notag \\
\le & CK_1(t/2)+Ct^{-2}K(t/2) \int_0^s L(c^q_i(s)) \,ds  \label{eq:B19a}.
\end{align}

Now we claim that $L(c^q_i(s))$ must be uniformly bounded. Indeed, it follows from \eqref{eq:B19a} that
\begin{align*} 
L(c^q_i(s)) \le& L(c^q_i(0))+CsK_1(t/2)+Cst^{-2}K(t/2) \int_0^s L(c^q_i(s)) \,ds \\
\le& C+CtK_1(t/2)+Ct^{-1}K(t/2)\int_0^s L(c^q_i(s)) \,ds
\end{align*}
where we have used \eqref{eq:B13a} and \eqref{eq:B15}. Then it follows from the Gronwall's inequality that
\begin{align} \label{eq:B19bb}
L(c^q_i(s)) \le C(1+tK_1(t/2))\exp(CK(t/2)) \le C.
\end{align}

From \eqref{eq:B19a} we conclude that
\begin{align} \label{eq:B19b}
|\mathbf r(c_i^q(s))-I| \le C K_1(t/2)+Ct^{-1}K(t/2) \le Ct^{-1}K(t/2).
\end{align}

From \eqref{eq:B19a} and \eqref{eq:B19bb} that for any $s \in [0,a]$,
\begin{align*}
|L(c^q_i(s))-L(c^q_i(0))| \le C sK_1(t/2)+Cs^2t^{-2}K(t/2) \le CK(t/2).
\end{align*}

Now it follows from \eqref{eq:B13a} that
\begin{align} \label{eq:B19d}
|L(c^q_i(s))-L_i| \le C(K(t/2)+K_0(t/2)) \le C\tilde K(t/2).
\end{align}

By \eqref{eq:B19b} and \eqref{eq:B19d}, it follows from Proposition \ref{P:es4} and \eqref{eq:B13b} that for any $1 \le i,j \le m$ and $s \in [0,a]$,
\begin{align*}
|\angle(c^q_i(s),c^q_j(s))-\Theta_{ij}| \le C(K(t/4)+K_0(t/4))\le C\tilde K(t/4).
\end{align*}

Therefore, we have proved that for any $1 \le i<j \le m$,
\begin{align} \label{eq:B20c}
\begin{cases}
&|\mathbf r(c_i^q(a))-I| =O(t^{-1}K(t/2)), \\
&|L(c^q_i(a))-L_i|=O(\tilde K(t/2)),\\
&|\angle(c^q_i(a),c^q_j(a))-\Theta_{ij}|  =O(\tilde K(t/4)).
\end{cases}
\end{align}

Now we claim that $\{c^q_1(a),c^q_2(a),\cdots, c^q_m(a)\}$ is a $\lambda$-normal basis with $\lambda \le 2$. For any two vectors $  u,  v \in \R^m$ and there their projections $  u',  v'$ onto some hyperplane $H$ with normal vector $  \vec n$, the angle $\angle (  u',  v')$ is completely determined by $\angle (u, v),\angle (  u,  \vec n)$ and $\angle (  v,  \vec n)$. Indeed, it is easy to show
\begin{align}\label{eq:B20d}
\cos \angle (u',v')=\frac{\cos \angle (u,v)-\cos \angle (u,\vec n) \cos \angle (v,\vec n)}{\sin \angle (u,\vec n) \sin \angle (v,\vec n)}.
\end{align}

Therefore, the fact that $\{c^q_1(a),c^q_2(a),\cdots, c^q_m(a)\}$ is a $\lambda$-normal basis with $\lambda \le 2$ follows from \eqref{eq:B13e}, \eqref{eq:B20c} and the formula \eqref{eq:B20d}. Hence, condition (i) in Definition \ref{def:basis} is satisfied.

Since $\{c_1(t),c_2(t),\cdots, c_m(t)\}$ is a generalized standard short $\theta_1$-basis, it follows Proposition \ref{P:basis} that there exist structure constants $k^{ij}_u \in \mathbb Z$ for $1\le i<j\le m$ and $1 \le u \le i-1$ such that
\begin{align*}
[c_i(t),c_j(t)]=c_1^{k^{ij}_1}(t)*c_2^{k^{ij}_2}(t)*\cdots * c_{i-1}^{k^{ij}_{i-1}}(t).
\end{align*}
Notice that all those structure constants $k_u^{ij}$ are independent of $t$ since by Proposition \ref{P:group} the sliding preserves the group structure. As $c^q_i(a)$ is the sliding of $c_i(t)$ along $\beta_q$, by the same reason we have
\begin{align*}
[c^q_i(a),c^q_j(a)]=(c^q_1(a))^{k^{ij}_1}*(c^q_2(a))^{k^{ij}_2}*\cdots * (c^q_{i-1}(a))^{k^{ij}_{i-1}}.
\end{align*}

Therefore, condition (iii) in Definition \ref{def:basis} is satisfied.

Recall the definitions \eqref{eq:k0} and \eqref{eq:chorhoab}, if we set $\bar \rho=\bar \rho(t)=\kappa_0 t$ for some constant $\kappa_0>0$, then $\{c_1(t),c_2(t),\cdots, c_m(t)\}$ is a short basis with radius $\bar \rho$ by Proposition \ref{P:basis}. For any $\gamma \in \Gamma(q,\rho_0)$ with $|\gamma| \le a_0 \bar \rho$, where $a_0$ is a constant to be determined later, we denote the sliding of $\gamma$ along $\beta_q(a-s)$ by $\gamma(s)$. 

It follows from Theorem \ref{T:hol}(i) and \eqref{cond:SHC} that
\begin{align*}
\frac{d}{ds}L(\gamma(s)) \le |\mathbf r(\gamma(s))-I| \le \frac{C}{t} L(\gamma(s)).
\end{align*}

Therefore, we conclude that for some constant $C>1$,
\begin{align*} 
L(\gamma(a)) \le C L(\gamma) \le Ca_0\bar \rho.
\end{align*}

If we choose $a_0 <C^{-1}$, then at $\alpha(t)$, we have
\begin{align} \label{eq:B24}
\gamma(a)=c^{l_1}_1(t)*c^{l_2}_2(t) *\cdots * c^{l_m}_m(t).
\end{align}

Moreover, it follows from Lemma \ref{L:esta1}
\begin{align*}
\sum_{i=1}^m |l_ic_i(t)| \le 2^{\frac{1}{2}m^2}\left|\sum_{i=1}^m l_ic_i(t)\right |\le \frac{2^{\frac{1}{2}m^2}}{1-\theta} |\gamma(a)| \le \frac{2^{\frac{1}{2}m^2}}{1-\theta} \bar \rho.
\end{align*}
From \eqref{eq:B19d}, we easily conclude that for sufficiently large $t$,
\begin{align*}
\sum_{i=1}^m |l_ic_i^q(a)| \le 2\sum_{i=1}^m |l_ic_i(t)| \le \frac{2^{\frac{1}{2}m^2+1}}{1-\theta} \bar \rho.
\end{align*}
Therefore, $(c^q_1(a))^{l_1}*(c^q_2(a))^{l_2}*\cdots * (c^q_m(a))^{l_m}$ is well defined in $\Gamma(q,\rho_0)$ and by Proposition \ref{P:group},
\begin{align*} 
\gamma=(c^q_1(a))^{l_1}*(c^q_2(a))^{l_2}*\cdots * (c^q_m(a))^{l_m}.
\end{align*}

Assume that there exists another representation
\begin{align*}
\gamma=(c^q_1(a))^{l'_1}*(c^q_2(a))^{l'_2}*\cdots * (c^q_m(a))^{l'_m},
\end{align*}
then by the same reason $c^{l'_1}_1(t)*c^{l'_2}_2(t) *\cdots * c^{l'_m}_m(t)$ is well defined in $\Gamma (\alpha(t),\rho_0)$ and
\begin{align} \label{eq:B31}
\gamma(a)=c^{l'_1}_1(t)*c^{l'_2}_2(t) *\cdots * c^{l'_m}_m(t).
\end{align}

By Proposition \ref{P:basis}, two representations \eqref{eq:B24} and \eqref{eq:B31} must be identical. Therefore, we conclude that $l_i=l'_i$ and the representation of $\gamma$ at $q$ is unique. In other words, condition (ii) in Definition \ref{def:basis} is satisfied for the radius $r_0=a_0 \bar \rho$.
\end{proof}

Finally we show that $\Gamma(q,r_0)$ is abelian if $r=r(q)$ is sufficiently large. To prove this, we only need to prove $[c^q_i,c^q_j]=0$ for any $1\le i<j \le m$.

\begin{prop} \label{P:abelian}
There exists a constant $A_{2,2}>0$ such that if $r=r(q)>A_{2,2}$, then the short basis $\{c_1^q,c_1^q,\cdots,c_m^q\}$ is abelian.
\end{prop}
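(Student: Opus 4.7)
The plan is to prove that the structure constants $k_u^{ij}$ in the commutator relation
\begin{align*}
[c_i^q, c_j^q] = (c_1^q)^{k_1^{ij}} * \cdots * (c_{i-1}^q)^{k_{i-1}^{ij}}
\end{align*}
from Theorem \ref{T:basis2} must all vanish when $r = r(q)$ is sufficiently large. Since these $k_u^{ij}$ are integers and are preserved by sliding (Proposition \ref{P:group}), once we show they vanish at a single $q$ with large $r$, they vanish at every $q$ in the end, and the fundamental pseudo-group is abelian because the short basis generates it (Theorem \ref{T:basis2}(ii)).

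First I would bound the commutator length from above using \eqref{cond:HC'}. Applying Proposition \ref{P:norm}(iii) at $q$ with $\rho_0 = \hat\kappa r$ and $\theta = \hat\kappa C_H$, and using $|c_i^q|, |c_j^q| \le L_i + L_j + o(1)$ from Theorem \ref{T:basis2}(i),
\begin{align*}
|\mathbf t([c_i^q, c_j^q])| \le \tfrac{3\theta}{\rho_0} |c_i^q||c_j^q| = O(r^{-1}).
\end{align*}
Next I would bound the same quantity from below, assuming some $k_u^{ij} \ne 0$. For $r$ large, $|[c_i^q, c_j^q]| \le \bar\rho$, so by Proposition \ref{P:basis}(iv) the vector sum satisfies
\begin{align*}
\Bigl|\sum_{u=1}^{i-1} k_u^{ij} c_u^q\Bigr| \le \tfrac{1}{1-\theta} |\mathbf t([c_i^q, c_j^q])| = O(r^{-1}).
\end{align*}
Now iterating the orthogonal projections defining the $\lambda$-normal basis (with $\lambda \le 2$), the norm on the left is bounded below by $C(n)\max_u |k_u^{ij}|\,|c_u^q|$; combined with $|c_u^q| \ge L_u/2 > 0$ for $r$ large, any nonzero $k_u^{ij}$ would force the left side to be bounded below by a positive constant, contradicting the $O(r^{-1})$ upper bound once $r$ exceeds some threshold $A_{2,2}$.

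The main technical point is passing from the vector-sum norm $|\sum k_u c_u^q|$ to a bound on $\max_u |k_u^{ij}|$. This can either be done by direct iterated projection (using that $\lambda$-normality gives a uniform lower bound on the sine of the angle between $c_s^q$ and $\mathrm{span}\{c_1^q, \ldots, c_{s-1}^q\}$), or by a preliminary step showing $\sum_u |k_u^{ij} c_u^q|$ remains below the threshold $2^{-m^2/2}\rho_1$ so that Lemma \ref{L:esta1} applies and yields $|\sum k_u^{ij} c_u^q| \ge 2^{-m^2/2} \sum |k_u^{ij} c_u^q|$. Either route converts the smallness of the commutator into smallness of each $|k_u^{ij}|\,|c_u^q|$, forcing all integer structure constants to be zero.
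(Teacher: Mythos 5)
Your proposal is correct and follows essentially the same route as the paper: bound $|[c_i^q,c_j^q]|$ above by $O(r^{-1})$ via Proposition \ref{P:norm}(iii), bound $\sum_u|k_u^{ij}c_u^q|$ by a constant multiple of $|[c_i^q,c_j^q]|$ via Lemma \ref{L:esta1} (the paper uses this lemma directly rather than your alternative iterated-projection route), and conclude that the integer structure constants vanish for $r$ large since $|c_u^q|\ge L_u/2>0$.
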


\begin{proof}
We omit the superscript $q$ and assume for $i<j$,
\begin{align*} 
[c_i,c_j]=c_1^{k_1}*c_2^{k_2}*\cdots*c_{i-1}^{k_{i-1}}.
\end{align*}
Then it follows from Lemma \ref{L:esta1} we have
\begin{align} \label{eq:B33}
\sum_{u=1}^{i-1} |k_uc_u| \le 2^{\frac{1}{2}m^2}\left|\sum_{u=1}^{i-1} k_uc_u\right |\le \frac{2^{\frac{1}{2}m^2}}{1-\theta} |[c_i,c_j]|.
\end{align}
In addition, it follows from Proposition \ref{P:norm}(iii) that
\begin{align} \label{eq:B34}
|[c_i,c_j]| \le 3\frac{\theta}{\rho_0}|c_i||c_j|.
\end{align}
Combining \eqref{eq:B33}, \eqref{eq:B34} and \eqref{eq:basis1}, we conclude that for large $r$
\begin{align*}
\sum_{u=1}^{i-1} |k_u|L_u \le  \frac{C}{r}L_iL_j.
\end{align*}
Since all $k_u$ are integers, we must have $k_u=0$, provided that $r$ is sufficiently large.
\end{proof}

Combining \textbf{Step 1} and \textbf{Step 2}, the proof of Theorem \ref{T:basis2} is complete. 

\begin{rem}\label{rem:weak}
In the proof of Theorem \ref{T:basis2}, we only need \eqref{cond:SHC} on a geodesic ray and \eqref{cond:HC'}.
\end{rem}

From Proposition \ref{P:abelian}, the sliding of short geodesic loops at $x$ to nearby points is independent of the path.

\begin{cor}\label{c:slideind}
Let $x,y$ be points with $r=r(x),r(y) \ge A_{2,2}$ such that there are two curves $\beta_i,i=1,2$ with $L(\beta_i) \le \frac{\kappa_1r}{4}$ starting from $x$ to $y$.
Then for any $c \in \Gamma(x,\frac{\kappa_1r}{2})$, the sliding of $c$ to $y$ along $\beta_1$ is identical with that along $\beta_2$.
\end{cor}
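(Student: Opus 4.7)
The plan is to reduce the claim to showing that sliding along a closed loop at $x$ is trivial, and then to identify that the sliding-along-a-loop operation corresponds to conjugation in the fundamental pseudo-group $\Gamma(x,\,\cdot\,)$, which is trivial by Proposition \ref{P:abelian}.

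First I form the concatenated loop $\beta = \beta_1 \cdot \bar\beta_2$ at $x$, where $\bar\beta_2$ denotes $\beta_2$ traversed in reverse; then $L(\beta) \le \kappa_1 r/2$. Let $c^{(i)}$ denote the sliding of $c$ along $\beta_i$ to $y$. Applying Lemma \ref{L:trans} to the concatenated curve $\beta_1\cdot\bar\beta_2$, one has $c^{(1)} = c^{(2)}$ if and only if the sliding of $c$ along the closed loop $\beta$ back to $x$ returns $c$. It therefore suffices to prove: for every closed curve $\beta$ at $x$ with $L(\beta) \le \kappa_1 r/2$ and every $c \in \Gamma(x,\kappa_1 r/2)$, the sliding of $c$ along $\beta$ returns $c$.

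For this I would lift $\beta$ to a curve $\tilde\beta \colon [0,1] \to T_x M$ with $\tilde\beta(0) = 0$, and set $v = \tilde\beta(1)$, noting that $|v| \le L(\beta) \le \kappa_1 r/2$. Since $\exp_x(v) = x$, Lemma \ref{L:group1} supplies a unique element $\tau_\beta \in \Gamma(x,\kappa_1 r/2)$ with $\tau_\beta(0) = v$. By the definition of sliding, the slid loop at $t = 1$ is the geodesic loop at $x$ whose $\hat g$-geodesic lift starting from $v$ runs from $v$ to $\tau_c(v)$, where $\tau_c$ is the pseudo-group element corresponding to $c$. The map-theoretic inverse $\tau_\beta^{-1}$ is a local $\hat g$-isometry satisfying $\exp_x \circ \tau_\beta^{-1} = \exp_x$ (on its domain) and $\tau_\beta^{-1}(v) = 0$. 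Applying $\tau_\beta^{-1}$ to the $\hat g$-geodesic from $v$ to $\tau_c(v)$ produces a $\hat g$-geodesic from $0$ to $\tau_\beta^{-1}(\tau_c(v)) = (\tau_\beta^{-1}*\tau_c*\tau_\beta)(0)$ whose image under $\exp_x$ is the same slid loop. Hence, by the uniqueness in Lemma \ref{L:group1}, the pseudo-group element corresponding to the slid loop is precisely the conjugate $\tau_\beta^{-1}*\tau_c*\tau_\beta$.

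Finally, Proposition \ref{P:abelian} together with Theorem \ref{T:basis2}(iii) ensures that for $r \ge A_{2,2}$ the pseudo-group $\Gamma(x,\kappa_1 r)$ is abelian; since both $\tau_\beta$ and $\tau_c$ lie in $\Gamma(x,\kappa_1 r/2) \subset \Gamma(x,\kappa_1 r)$, the conjugate equals $\tau_c$, so the slid loop is $c$ itself, as desired. The main obstacle is the bookkeeping for the conjugation identity, which requires carefully tracking two different lifts through the local $\hat g$-isometries of $T_x M$ and using the fact that $\tau_\beta^{-1}$ commutes with $\exp_x$; the verification that all intermediate products stay within a pseudo-group on which Proposition \ref{P:abelian} applies is routine given the length bound $L(\beta) \le \kappa_1 r/2$.
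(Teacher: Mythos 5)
Your proof is correct and is essentially the paper's argument: the paper likewise observes that the two lifts of $y$ differ by a pseudo-group element $\tau_a$ and that the corresponding geodesics from the lift to its image under $\tau_c$ have the same image under $\exp_x$ because $\tau_a\circ\tau_c=\tau_c\circ\tau_a$ by Proposition \ref{P:abelian}. Your reformulation via the closed loop $\beta_1\cdot\bar\beta_2$ and the conjugation $\tau_\beta^{-1}*\tau_c*\tau_\beta$ is just a repackaging of the same two ingredients (local isometries commuting with $\exp_x$, plus abelianness), so no further comment is needed.
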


\begin{proof}
We only need to prove that for any two lifts $u,v \in \hat B(0,\frac{\kappa_1r}{2}) \subset T_xM$ of $y$, the geodesic $\gamma_1$ connecting $u$ and $\tau_c(u)$ and the geodesic $\gamma_2$ connecting $v$ and $\tau_c(v)$ have the same image under $\ex_x$. Indeed, since $u$ and $v$ are the lifts of $y$, there exists an $a \in \Gamma(x,\frac{\kappa_1r}{2})$ such that $\tau_a(u)=v$. Then $\gamma_2$ is the image of $\gamma_1$ under $\tau_a$, since $\tau_a \circ \tau_c(u)=\tau_c \circ \tau_a(u)$. From this, the proof is complete.
\end{proof}

\subsection{Flat torus at infinity}

Now we have the following definition.

\begin{defn}\label{def:toin}
Given a complete Riemannian manifold $(M^n,g)$ with \eqref{cond:AF} and \eqref{cond:SHC}, the flat torus at infinity is defined by 
\begin{align*}
\T^m_{\infty} \coloneqq \R^m/ \la c^{\infty}_1,c^{\infty}_2,\cdots,c^{\infty}_m \ra,
\end{align*}
where $\{c^{\infty}_1,c^{\infty}_2,\cdots,c^{\infty}_m \}$ is a normal basis of $\R^m$ such that
\begin{align*}
|c^{\infty}_i|=L_i \quad \text{and} \quad \angle (c^{\infty}_i,c^{\infty}_j)=\Theta_{ij}.
\end{align*}
Here $L_i$ and $\Theta_{ij}$ are constants obtained in Theorem \ref{T:basis2}. 
\end{defn}

If we denote the standard basis of $\R^m$ by $(e_1,e_2,\cdots,e_m)$, then there exists a matrix $A^{\infty}=(a^{\infty}_{ij})_{1 \le i,j \le m}$ such that
\begin{align} \label{torus0a}
c_i^{\infty}=\sum a^{\infty}_{ji} e_j.
\end{align}

Notice that the matrix $A^{\infty}$ is well defined up to a left multiplication by an orthogonal matrix and determines the isometry class of $\T_{\infty}^m$ 

From \eqref{eq:basis1} and \eqref{eq:basis2}, for any $x$ with $r=r(x) \ge A_2$, there exists an orthonormal basis $(e_1^x,e_2^x,\cdots, e_m^x)$ of $\la c^x_1,c^x_2,\cdots,c^x_m \ra$ with respect to $g_x$ and a matrix $A^x=(a^x_{ij})_{1 \le i,j \le m}$ such that
\begin{align}  \label{torus2}
c_i^x=\sum a^x_{ji}e^x_j \quad \text{and} \quad|A^x-A^{\infty}| =O(\tilde K(r/4)).
\end{align}

In addition, we define the flat torus
\begin{align*}
\T^{m}_{x} \coloneqq \R^m/ \la c^x_1,c^x_2,\cdots,c^x_m \ra
\end{align*}
and a natural map $i_x$ from $\T_x^m$ to $\T_{\infty}^m$ defined by 
\begin{align*}
i_x(\overline{t_1c_1^x+t_2c_2^x+\cdots+t_mc_m^x}) \coloneqq \overline{t_1c_1^{\infty}+t_2c_2^{\infty}+\cdots+t_mc_m^{\infty}}
\end{align*}
for any $t_i \in \R$, where $\overline{\,\cdot\,}$ denotes the quotient map. It is easy to see by using \eqref{eq:basis1} and \eqref{eq:basis2} that the map $i_x$ is a $C \tilde K(r/4)$-almost isometry. In particular, we conclude that
\begin{align} \label{torus5x}
d_{GH}(\T_{\infty}^m,\T_x^m) =O( \tilde K(r/4)).
\end{align}

Let $x,y$ be points such that $r=r(x)\ge A_{2}$ with $d(x,y) \le \frac{\kappa_1r}{4}$. For any $1 \le i\le m$, we denote the sliding of $c_i^x$ to $y$ along the minimizing geodesic by $c_i^{x'}$, then by the same proof of Proposition \ref{LBx8} we have
\begin{align*} 
\begin{cases}
&|\mathbf r(c_i^{x'})-I| = O(r^{-1}K(r/2)), \\
&|L(c_i^{x'})-L_i| =O(\tilde K(r/2)),\\
&|\angle(c_i^{x'},c_i^{x'})-\Theta_{ij}| =O(\tilde K(r/4)).
\end{cases}
\end{align*}

In addition, $\la c^{x'}_1,c^{x'}_2,\cdots,c^{x'}_m \ra$ is a short basis of $\Gamma(y,\rho_0)$ with radius $\kappa_1'r(y)$ for some constant $\kappa_1'>0$ independent of $x$ and $y$. Since $\la c^{y}_1,c^{y}_2,\cdots,c^{y}_m \ra$ is a short basis, then for any $1 \le i \le m$,
\begin{align}
c_i^{y}=\sum k_{ji}c_j^{x'}, \label{eq:B36}
\end{align}
for some $K_{x,y}=(k_{ij}) \in \text{GL}(m,\Z)$. In addition, the norm of $K_{x,y}$ is uniformly bounded independent of the choice of $x$ and $y$. 

\begin{prop}\label{P:finite}
There exists a constant $A_{2,3}>0$ such that for any $x,y$ with $r=r(x) \ge A_{2,3}$ and $d(x,y) \le \kappa_1r/4$, 
\begin{align*} 
A^{\infty} K_{x,y} (A^{\infty})^{-1} \in O(m).
\end{align*}
\end{prop}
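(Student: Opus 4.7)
The plan is to reformulate the claim as: the change-of-basis matrix $K_{x,y}$ preserves the Gram form of $\{c_1^\infty,\ldots,c_m^\infty\}$, which is exactly the condition $A^\infty K_{x,y}(A^\infty)^{-1}\in O(m)$. First I would express both short bases in a common orthonormal frame at $y$: choose an orthonormal basis $(e_1^y,\ldots,e_m^y)$ of $\mathrm{span}(c_1^y,\ldots,c_m^y)\subset T_yM$ and write $c_i^{x'}=\sum_j b_{ji}^{x'}e_j^y$, $c_i^y=\sum_j a_{ji}^y e_j^y$, producing matrices $B^{x'}$ and $A^y$. Then the integrality relation \eqref{eq:B36} takes the clean form $A^y = B^{x'} K_{x,y}$, so $K_{x,y}=(B^{x'})^{-1}A^y$.

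Next I would show that both Gram matrices $(B^{x'})^T B^{x'}$ and $(A^y)^T A^y$ are approximately the Gram matrix of $\{c_i^\infty\}$, namely $(A^\infty)^T A^\infty$. For $A^y$ this is direct from \eqref{torus2}; for $B^{x'}$ it follows from the estimates on lengths and pairwise angles of the slid basis $\{c_i^{x'}\}$ proved in Proposition \ref{LBx8}, which give $|L(c_i^{x'})-L_i|=O(\tilde K(r/2))$ and $|\angle(c_i^{x'},c_j^{x'})-\Theta_{ij}|=O(\tilde K(r/4))$. Substituting $A^y = B^{x'}K_{x,y}$ into $(A^y)^T A^y$ and using that $|K_{x,y}|$ is uniformly bounded yields
\begin{equation*}
K_{x,y}^T(A^\infty)^T A^\infty K_{x,y} = (A^\infty)^T A^\infty + O(\tilde K(r/4)).
\end{equation*}
Equivalently, setting $Q_{x,y}:=A^\infty K_{x,y}(A^\infty)^{-1}$, we have $Q_{x,y}^T Q_{x,y}=I+O(\tilde K(r/4))$.

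The final step is the bridge from ``approximately orthogonal'' to ``exactly orthogonal,'' and this is where discreteness is essential. Because $K_{x,y}\in \mathrm{GL}(m,\Z)$ with entries bounded independently of $x,y$ (as noted after \eqref{eq:B36}), the collection $\mathcal{K}$ of all integer matrices that can occur as some $K_{x,y}$ is a finite set. Partition $\mathcal{K}=\mathcal{K}_{\mathrm{good}}\sqcup \mathcal{K}_{\mathrm{bad}}$ according to whether $A^\infty K(A^\infty)^{-1}\in O(m)$. For each $K\in\mathcal{K}_{\mathrm{bad}}$ the quantity $\|K^T(A^\infty)^T A^\infty K-(A^\infty)^T A^\infty\|$ is strictly positive, and by finiteness
\begin{equation*}
\delta_0:=\min_{K\in\mathcal{K}_{\mathrm{bad}}}\|K^T(A^\infty)^T A^\infty K-(A^\infty)^T A^\infty\|>0.
\end{equation*}
Choosing $A_{2,3}$ large enough that the constant in the $O(\tilde K(r/4))$ estimate is strictly less than $\delta_0$ for $r\ge A_{2,3}$ forces $K_{x,y}\in\mathcal{K}_{\mathrm{good}}$, i.e., $A^\infty K_{x,y}(A^\infty)^{-1}\in O(m)$.

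The main obstacle is precisely this passage from approximate to exact orthogonality: a priori the Gram comparison only gives an $o(1)$ error, so one must exploit the rigidity of $\mathrm{GL}(m,\Z)$. The finiteness of $\mathcal{K}$ hinges on the uniform bound on $|K_{x,y}|$, which itself relies on the uniform lower and upper bounds on the lengths of the slid basis vectors; these come from Theorem \ref{T:basis2} and the sliding estimates of Proposition \ref{LBx8}. Once finiteness is secured, the argument is essentially a pigeonhole on a discrete set.
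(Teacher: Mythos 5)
Your proof is correct and follows essentially the same strategy as the paper: establish that $A^{\infty} K_{x,y} (A^{\infty})^{-1}$ is approximately orthogonal using the length and angle estimates for the two short bases at $y$, then invoke the discreteness of $\mathrm{GL}(m,\Z)$ together with the uniform bound on $\|K_{x,y}\|$ to upgrade this to exact orthogonality. The only cosmetic difference is that you compare Gram matrices directly at $y$, whereas the paper parallel-transports the orthonormal frame from $x$ and derives $O A^{\infty} K_{x,y} = A^{\infty} + O(\tilde K(r/4))$ for some $O \in O(m)$ — two equivalent formulations of the same approximate-orthogonality statement.
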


\begin{proof}
We denote the parallel transport from $x$ to $y$ along the minimizing geodesic by $P$ and set $e_i^{x'}=P(e_i^x)$. It follows from \cite[Proposition $6.6.2$]{BK81} and Lemma \ref{L:es1} that
\begin{align*} 
|c_i^{x'}-P(c_i^x)|=O(\tilde K(r/4)).
\end{align*}

Therefore, by \eqref{torus2} we have
\begin{align}  \label{torus5}
c_i^{x'}=\sum a^x_{ji}e^{x'}_j+O(\tilde K(r/4))=\sum a^{\infty}_{ji} e^{x'}_j+O(\tilde K(r/4)).
\end{align}
Moreover, by Definition \eqref{eq:B36} we have
\begin{align} \label{torus6}
c_i^{x'}=\sum k^{ji}c_j^{y},
\end{align}
where $K_{x,y}^{-1}=(k^{ij})$ is the inverse matrix of $K_{x,y}$. From \eqref{torus5} and \eqref{torus6} we have
\begin{align*}
(c_1^y,c_2^y,\cdots,c_m^y)=(e_1^{x'},e_2^{x'},\cdots,e_m^{x'}) A^{\infty}K_{x,y}+O(\tilde K(r/4)).
\end{align*}

From \eqref{eq:basis1} and \eqref{eq:basis2}, there exists a matrix $O \in O(m)$ such that
\begin{align*} 
OA^{\infty}K_{x,y}=A^{\infty}+O(\tilde K(r/4)).
\end{align*}

Since $K_{x,y}=(k_{ij}) \in \text{GL}(m,\Z)$ and the norm is uniformly bounded, it is easy to see if $r$ is sufficiently large, $A^{\infty} K_{x,y} (A^{\infty})^{-1} \in O(m)$.
\end{proof}

We define the following subgroup of $\text{GL}(m,\Z)$ for any $A \in \text{GL}(m,\R)$,
\begin{align} \label{def:group}
G(A) \coloneqq \{W\in\text{GL}(m,\Z) \,\mid \,  A W A^{-1} \in O(m)\}.
\end{align}

\begin{lem}\label{L:finiteorder}
For any $A \in \text{GL}(m,\R)$, $G(A)$ is a finite group. Conversely, if $G$ is a finite subgroup of $\text{GL}(m,\Z)$, then $G\subset G(A)$ for some $A \in \text{GL}(m,\R)$.
\end{lem}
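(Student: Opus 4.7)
The plan is to handle the two directions separately, both by standard linear-algebraic arguments.

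For the first assertion, I would first check that $G(A)$ is actually a subgroup of $\text{GL}(m,\Z)$: if $W_1,W_2\in G(A)$, then $A(W_1W_2)A^{-1}=(AW_1A^{-1})(AW_2A^{-1})\in O(m)$, and $AW_1^{-1}A^{-1}=(AW_1A^{-1})^{-1}\in O(m)$, so closure and inverses are immediate. To see finiteness, observe that if $AWA^{-1}\in O(m)$ then its operator norm equals $1$, so
\begin{align*}
\|W\|\le \|A^{-1}\|\cdot\|AWA^{-1}\|\cdot\|A\|=\|A\|\,\|A^{-1}\|.
\end{align*}
Thus every $W\in G(A)$ has all entries bounded by a constant depending only on $A$. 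Since the entries are integers, only finitely many such matrices exist, so $G(A)$ is finite.

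For the converse, given a finite subgroup $G\subset\text{GL}(m,\Z)$, the plan is to use Weyl's unitarian trick: average the standard inner product on $\R^m$ over $G$. Define
\begin{align*}
\langle v,w\rangle_G \coloneqq \frac{1}{|G|}\sum_{W\in G}\langle Wv,Ww\rangle_0,
\end{align*}
where $\langle\cdot,\cdot\rangle_0$ is the standard inner product. This is a $G$-invariant inner product; let $Q$ be the positive-definite symmetric matrix representing it, so $\langle v,w\rangle_G=v^T Q w$. Invariance means $W^T Q W=Q$ for every $W\in G$. Setting $A=Q^{1/2}$ (the unique positive-definite square root), a direct computation gives
\begin{align*}
(AWA^{-1})^T(AWA^{-1})=Q^{-1/2}W^T Q W Q^{-1/2}=Q^{-1/2}Q Q^{-1/2}=I,
\end{align*}
so $AWA^{-1}\in O(m)$ for every $W\in G$, i.e.\ $G\subset G(A)$.

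Neither direction presents any real obstacle: the first is just the observation that integer matrices conjugate into $O(m)$ have bounded norm, and the second is the classical averaging construction of a $G$-invariant inner product. The only mild care needed is verifying that $G(A)$ is closed under the group operations before invoking finiteness.
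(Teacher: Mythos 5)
Your proof is correct and follows essentially the same route as the paper: for finiteness, the paper conjugates $G(A)$ into the compact group $O(m)$ and uses discreteness of integer matrices, which is exactly your bounded-norm argument made explicit; for the converse, the paper likewise averages to get a $G$-invariant inner product and conjugates by a square root. No issues.
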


\begin{proof}
It is clear that the map $\phi: G(A) \to O(m)$ by $\phi(W)=A W A^{-1}$ is a group monomorphism. Since $G(A)$ is discrete and $O(m)$ is compact, the image of $\phi$ and hence $G(A)$ are finite. Conversely, if $G$ is a finite subgroup of $\text{GL}(m,\Z)$, there exists an inner product which is invariant under the action of $G$. Then it is easy to show for some $A \in \text{GL}(m,\R)$, $AGA^{-1} \subset O(m)$.
\end{proof}

Now we have the following definition.

\begin{defn}\label{def:holin}
The finite group $G_{\infty} \coloneqq G(A^{\infty})$ is called the holonomy at infinity.
\end{defn}

Given $x$ and $y$, we can naturally define an automorphism of $\T_{\infty}^m$ such that for any $1 \le i \le m$,
\begin{align}\label{eq:auto}
L_{x,y}(c_i^{\infty})\coloneqq\sum_{j=1}^m k_{ji}c_j^{\infty}.
\end{align}

Next, we prove that all torus automorphisms are compatible.

\begin{lem}\label{L:trans5}
Given three points $x,y,z$ such that $r=r(x) \ge A_2$ and their respective distances are bounded by $\kappa_1r/8$, then we have
\begin{align*}
L_{x,z}=L_{x,y} \circ L_{y,z}.
\end{align*}
\end{lem}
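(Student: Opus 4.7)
My plan is to reduce the claimed identity of torus automorphisms to the matrix identity $K_{x,z}=K_{x,y}\,K_{y,z}$ via \eqref{eq:auto}, and then prove this matrix identity by exploiting the fact that sliding is both path-independent and a pseudo-group homomorphism.

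First I would introduce notation: write $c_i^{x,y}$ for the sliding of $c_i^x$ from $x$ to $y$ along $\mathrm{mingeod}(x,y)$, and similarly $c_i^{y,z}$ and $c_i^{x,z}$. By the proof of Proposition \ref{LBx8}, $\{c_j^{x,y}\}$ is again a short basis of $\Gamma(y,\rho_0)$, and likewise $\{c_j^{x,z}\}$ of $\Gamma(z,\rho_0)$. Unpacking the definitions \eqref{eq:B36} and \eqref{eq:auto}, we have the pseudo-group identities
\[
c_i^y=\prod_{j=1}^m (c_j^{x,y})^{(K_{x,y})_{ji}},\quad c_i^z=\prod_{j=1}^m (c_j^{y,z})^{(K_{y,z})_{ji}},\quad c_i^z=\prod_{j=1}^m (c_j^{x,z})^{(K_{x,z})_{ji}},
\]
and a direct computation with \eqref{eq:auto} shows $L_{x,y}\circ L_{y,z}$ corresponds to the matrix $K_{x,y}K_{y,z}$, so it suffices to prove $K_{x,z}=K_{x,y}K_{y,z}$.

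Next I would invoke three tools already available. By transitivity of sliding (Lemma \ref{L:trans}), sliding $c_j^x$ from $x$ along the concatenated curve $\mathrm{mingeod}(x,y)\cup\mathrm{mingeod}(y,z)$ produces the same loop at $z$ as first sliding to $y$ (giving $c_j^{x,y}$) and then sliding from $y$ to $z$ along $\mathrm{mingeod}(y,z)$. The distance hypothesis bounds the concatenated curve by $\kappa_1 r/4$ and $\mathrm{mingeod}(x,z)$ by $\kappa_1 r/8$, so path-independence (Corollary \ref{c:slideind}) identifies this result with the direct sliding $c_j^{x,z}$. Thus the sliding of $c_j^{x,y}$ from $y$ to $z$ along $\mathrm{mingeod}(y,z)$ is exactly $c_j^{x,z}$. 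Now applying Proposition \ref{P:group} to the first identity above and sliding both sides from $y$ to $z$ yields
\[
c_i^{y,z}=\prod_{j=1}^m (c_j^{x,z})^{(K_{x,y})_{ji}}.
\]
Substituting into the middle expression for $c_i^z$ and using the abelianness of the pseudo-group (Proposition \ref{P:abelian}) gives
\[
c_i^z=\prod_{k=1}^m (c_k^{x,z})^{(K_{x,y}K_{y,z})_{ki}}.
\]
Comparing with the third expression above and invoking uniqueness of representation in the short basis $\{c_j^{x,z}\}$ (Definition \ref{def:basis}(ii)), we conclude $K_{x,z}=K_{x,y}K_{y,z}$ and hence $L_{x,z}=L_{x,y}\circ L_{y,z}$.

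The only real obstacle is bookkeeping: one must verify that all slidings, all intermediate products, and all uniqueness-of-representation arguments occur inside the regime where the short basis machinery applies. This follows because (a) the $\kappa_1 r/8$ distance bound keeps every path within the length range $\kappa_1 r/4$ in which sliding is defined and path-independent, (b) the uniform length bound $|c_j^{x,\cdot}|\le C$ proved in Proposition \ref{LBx8} keeps the slid generators inside $\Gamma(\cdot,\rho_0)$, and (c) the short-basis representation radius $\kappa_1\,r(\cdot)$ in Theorem \ref{T:basis2} comfortably contains the elements $c_i^y$, $c_i^z$, so after possibly enlarging $A_2$ the uniqueness step in Definition \ref{def:basis}(ii) is legitimate.
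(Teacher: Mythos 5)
Your proof is correct and follows the same route as the paper: the paper's own argument is a two-line version that cites Corollary \ref{c:slideind} for the path-independence of sliding via $y$ and then asserts the conclusion, while you have simply filled in the implicit steps (sliding the representation identity with Proposition \ref{P:group}, using abelianness to collect exponents, and invoking uniqueness of the short-basis representation to identify $K_{x,z}$ with $K_{x,y}K_{y,z}$). No gaps.
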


\begin{proof}
From the definition, all constants $k_{ij}$ are determined by
\begin{align*}
c_i^y = \sum_{j=1}^m k_{ji}c_j^{x'},
\end{align*}
where $c_j^{x'}$ is the sliding of $c_j^x$ to $y$. Since by Corollary \ref{c:slideind}, the sliding of $c_i^x$ to $z$ along the geodesic from $x$ to $z$ is the same as that along the geodesic from $x$ to $y$ followed by the geodesic from $y$ to $z$, we easily conclude that
\begin{align*}
L_{x,z}=L_{x,y} \circ L_{y,z}.
\end{align*}
\end{proof}

\section{Proof of the main theorem}

In this section, we construct a torus fibration on the end of $(M^n,g)$. With the help of Theorem \ref{T:basis2}, we first construct a local fibration around each point $q$ if $r=r(q)$ is sufficiently large. Then we glue all local fibrations into a global fibration. Notice that this is a standard strategy from \cite{CFG92}. Here, we follow the argument of Minerbe~\cite[Section $3.4$-$3.6$]{Mi10} (where $K(r)=r^{-1}$, $n=4$ and $m=1$ are assumed).

\subsection{Construction of local fibrations}

For any $q$ with $r=r(q) \ge A_2$, it follows from Theorem \ref{T:basis2} that there exists an abelian short basis $\{c_1,c_2,\cdots,c_m\}$ of radius $\kappa_1 r$ such that \eqref{eq:basis1} and \eqref{eq:basis2} are satisfied. Here we omit the superscript $q$ of $c_i^q$ for simplicity. We also set for $i\ge 2$,
\begin{align*}
\kappa_i=\frac{\kappa_1}{10^{i-1}}.
\end{align*}

For any geodesic loop $a \in \Gamma(q,\kappa_1 r)$, we set $a=\mathbf t(a)$ and denote the corresponding local isometry by $\tau_a$ and the rotational map by $\mathbf r_a$. Now we show the local isometry $\tau_a$ is almost translational (this is similar to \cite[Lemma $3.9$]{Mi10}).

\begin{lem} \label{L:tr}
There exists a constant $C_{2,1}>0$ such that for any $q$ with $r \ge A_2$ and $a \in \Gamma(q,\kappa_1 r)$, we have
\begin{enumerate}[label=(\roman*)]
\item $|\mathbf r_a-I| \le C_{2,1}  C|a|r^{-1}K(r/2).$

\item $|\tau_a(w)-(w+a)| \le C_{2,1}  |a| K(r/2)$ for any $w \in \hat B(0,\kappa_1 r)$.
\end{enumerate}
\end{lem}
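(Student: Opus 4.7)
The plan is to leverage the abelian short basis $\{c_1,\ldots,c_m\}$ of $\Gamma(q,\kappa_1 r)$ produced by Theorem \ref{T:basis2} together with Proposition \ref{P:abelian}. By Definition \ref{def:basis}(ii), any $a\in\Gamma(q,\kappa_1 r)$ admits a unique representation $a=c_1^{l_1}*\cdots*c_m^{l_m}$ with $l_i\in\Z$, and Lemma \ref{L:esta1} bounds the word length by the Euclidean length: $\sum_i|l_ic_i|\le C|a|$. Combined with \eqref{eq:basis1}, which forces $|c_i|$ to stay close to the positive constant $L_i$ for $r$ large, this yields $\sum_i|l_i|\le C|a|$.

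For (i), I would use the fact that the assignment $c\mapsto\mathbf r_c$ is multiplicative, namely $\mathbf r_{c*c'}=\mathbf r_c\mathbf r_{c'}$, which follows from $\mathbf m(c*c')=\mathbf m(c)\circ\mathbf m(c')$. The commutativity of $\{c_1,\ldots,c_m\}$ in $\Gamma(q,\kappa_1 r)$ then forces the rotations $\mathbf r_{c_i}$ to commute pairwise, so $\mathbf r_a=\mathbf r_{c_1}^{l_1}\cdots\mathbf r_{c_m}^{l_m}$. A standard telescoping estimate for products of orthogonal matrices, together with the elementary bound $|R^l-I|\le|l|\,|R-I|$ valid for any rotation $R$ and integer $l$, gives
\[
|\mathbf r_a-I|\le\sum_{i=1}^m|\mathbf r_{c_i}^{l_i}-I|\le\sum_{i=1}^m|l_i|\,|\mathbf r_{c_i}-I|.
\]
Plugging in \eqref{eq:basis1} and the word-length bound $\sum_i|l_i|\le C|a|$ proves (i).

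For (ii), I would apply Lemma \ref{L:es1} with $\rho=\kappa_1 r$; on $B(q,100\kappa_1 r)\subset B(q,r/2)$ we have $|Rm|\le Cr^{-2}K(r/2)=:\lambda^2$ by \eqref{cond:AF}, and the smallness condition $\lambda\rho\le\ep_0$ holds for $r$ large. Lemma \ref{L:es1} then yields
\[
d_{\hat g}(\tau_a(w),\mathbf r_a^{-1}(w+a))\le\lambda^2|a||w|(|a|+|w|)\le C|a|K(r/2),
\]
using $|w|,|a|\le\kappa_1 r$. On the other hand, part (i) and the orthogonality of $\mathbf r_a$ give
\[
|\mathbf r_a^{-1}(w+a)-(w+a)|\le|\mathbf r_a-I|\,|w+a|\le Cr^{-1}K(r/2)|a|\cdot 2\kappa_1 r\le C|a|K(r/2).
\]
The triangle inequality in $T_qM$, combined with the Euclidean/$\hat g$ comparison \eqref{eq:e1a}, then yields (ii).

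The main obstacle here is essentially bookkeeping: one must verify that abelianness of the loops passes to commutativity of their rotations (via the homomorphism property of $\mathbf r$), and that the per-generator rotation estimate \eqref{eq:basis1} upgrades to the $O(|a|r^{-1}K(r/2))$ bound on arbitrary $a$ via the basis decomposition and Lemma \ref{L:esta1}. Part (ii) is then a direct consequence of Lemma \ref{L:es1} (which already does the analytic work of comparing $\tau_a$ to the conjugate translation) combined with (i) and the fact that $|w|,|a|\le\kappa_1 r$ absorbs one factor of $r$. No new analytic input beyond the machinery developed in Sections~3 and~4 is required.
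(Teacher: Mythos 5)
Your overall strategy coincides with the paper's: decompose $a$ in the abelian short basis, bound the word length by $|a|$ via Lemma \ref{L:esta1} and \eqref{eq:basis1}, telescope the rotation estimate over the generators, and then deduce (ii) from Lemma \ref{L:es1} together with (i). Part (ii) as you wrote it is essentially the paper's argument verbatim and is fine.

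There is, however, a genuine gap in part (i): the assignment $c \mapsto \mathbf r_c$ is \emph{not} exactly multiplicative, so the identities $\mathbf r_{c*c'}=\mathbf r_c\,\mathbf r_{c'}$ and $\mathbf r_{c_1^{l_1}*\cdots*c_m^{l_m}}=\mathbf r_{c_1}^{l_1}\cdots\mathbf r_{c_m}^{l_m}$ that your telescoping rests on are false in general. The product $c*c'$ is by \eqref{eq:product} the geodesic loop in the short homotopy class of ``$c'$ followed by $c$''; while the corresponding local isometries do compose exactly ($\tau_{c*c'}=\tau_c\circ\tau_{c'}$), the rotational part $\mathbf r(\gamma)$ is the parallel transport around the \emph{geodesic} representative, and parallel transport is not a homotopy invariant on a curved manifold. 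One sees the failure already in Proposition \ref{P:norm}(ii),(iii): if $\mathbf m$ were an exact homomorphism with small rotations, the commutator and additivity defects there would vanish. The correct input is the almost-homomorphism estimate of \cite[Proposition $2.3.1$(i)]{BK81}, $|\mathbf r(\alpha*\beta)-\mathbf r(\alpha)\circ\mathbf r(\beta)|\le C\Lambda^2|\mathbf t(\alpha)|\,|\mathbf t(\beta)|$ with $\Lambda^2=O(r^{-2}K(r/2))$, which is exactly what the paper telescopes (setting $e_j=\mathbf r_{jc_1}-I$ and then $b_i=\mathbf r_{k_1c_1+\cdots+k_ic_i}-I$). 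Your argument is repairable: each of the $O(\sum_i|l_i|)=O(|a|)$ telescoping steps picks up an extra error $O(r^{-2}K(r/2))$, contributing $O(|a|\,r^{-2}K(r/2))$ in total, which is dominated by the main term $O(|a|\,r^{-1}K(r/2))$ coming from \eqref{eq:basis1}. But as written, the step ``$|\mathbf r_a-I|\le\sum_i|l_i|\,|\mathbf r_{c_i}-I|$'' is not justified, and the appeal to exact commutation of the $\mathbf r_{c_i}$ (rather than approximate commutation up to curvature terms) has the same defect.
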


\begin{proof}
From Theorem \ref{T:basis2}, there exists a representation 
\begin{align*}
a=k_1c_1+c_2 k_2+\cdots+k_m c_m
\end{align*}
for $k_i \in \mathbb Z$. It follows from the definition of short basis and Lemma \ref{L:esta1} that 
\begin{align}
\sum_{i=1}^m |k_i| \le C|a|. \label{eq:501}
\end{align}

Now we claim that for any $1 \le i \le m$,
\begin{align*}
|\mathbf r_{k_ic_i}-I| \le C|a|r^{-1}K(r/2).
\end{align*}
We only need to prove the claim for $i=1$.  For any $1 \le j\le k_1$, we set 
\begin{align*}
e_j=\mathbf r_{jc_1}-I.
\end{align*}
Then we have
\begin{align}\label{eq:503}
|e_j-e_{j-1}|=|\mathbf r_{jc_1}-\mathbf r_{c_1}\circ \mathbf r_{(j-1)c_1}|+|\mathbf r_{c_1}\circ \mathbf r_{(j-1)c_1}-\mathbf r_{(j-1)c_1}|.
\end{align}

It follows from \cite[Proposition $2.3.1$(i)]{BK81} and \eqref{eq:501} that
\begin{align}\label{eq:504}
|\mathbf r_{jc_1}-\mathbf r_{c_1}\circ \mathbf r_{(j-1)c_1}| \le Cr^{-2}K(r/2)|jc_1|\,|c_1| \le Cr^{-2}K(r/2) \le Cr^{-1}K(r/2).
\end{align}

In addition, it is clear from \eqref{eq:basis1} that
\begin{align}\label{eq:505}
|\mathbf r_{c_1}\circ \mathbf r_{(j-1)c_1}-\mathbf r_{(j-1)c_1}| \le |\mathbf r_{c_1}-I| \le Cr^{-1}K(r/2).
\end{align}

Combining \eqref{eq:504} and \eqref{eq:505}, we conclude from \eqref{eq:503} that
\begin{align*}
|e_j-e_{j-1}| \le Cr^{-1}K(r/2).
\end{align*}

Therefore, the claim holds since
\begin{align*}
|\mathbf r_{k_1c_1}-I|=|e_{k_1}|=\sum_{j=1}^{k_1}|e_j-e_{j-1}| \le Ck_1r^{-1} K(r/2) \le C|a|r^{-1}K(r/2).
\end{align*}

If we set $b_i=\mathbf r_{k_1c_1+\cdots+k_ic_i}-I$, then it follows from the claim and \cite[Proposition $2.3.1$(i)]{BK81} again that
\begin{align*}
|b_i-b_{i-1}|=&|\mathbf r_{k_1c_1+\cdots+k_i c_i}-\mathbf r_{k_ic_i}\circ \mathbf r_{k_1c_1+\cdots+k_{i-1}c_{i-1}}|+|\mathbf r_{k_ic_i}\circ \mathbf r_{k_1c_1+\cdots+k_{i-1}c_{i-1}}-\mathbf r_{k_1c_1+\cdots+k_{i-1}c_{i-1}}| \\
\le& C|a|r^{-1}K(r/2).
\end{align*}

Therefore, (i) is proved since
\begin{align*}
|\mathbf r_a-I|=\sum_{i=1}^m |b_i-b_{i-1}| \le C|a|r^{-1}K(r/2).
\end{align*}

To prove (ii), it follows from Lemma \ref{L:es1} that
\begin{align*}
&|\tau_a(w)-(w+a)| \\
\le& |\tau_a(w)-\mathbf r_a^{-1}(w+a)|+|\mathbf r_a-I|(|w|+|a|) \\
\le & Cr^{-2}K(r/2)|a|\,|w|(|a|+|w|)+C|a|K(r/2) \le C|a|K(r/2).
\end{align*}

Therefore, the proof of (ii) is complete.
\end{proof}

Now we define an open set $\hat U_q \subset \hat B(0,\kappa_1 r) $ such that $x \in \hat U_q$ if and only if the projection of $x$, denoted by $x_c$, onto $\la c_1,c_2,\cdots,c_m \ra$ has the representation
\begin{align*}
x_c=s_1c_1+s_2c_2+\cdots+s_mc_m
\end{align*}
such that $\max_i|s_i|<2$. Moreover, we define the norm
\begin{align*}
\|x\|_c=\sum_{i=1}^m |s_i|.
\end{align*}

Notice that $\|x\|_c$ is uniformly comparable to the usual norm $|x|$. Next, we prove

\begin{lem}\label{L:neigh}
There exists a constant $A_{2,1}>0$ such that for any $x$ with $r=r(x) \ge A_{2,1}$ and $y \in B(x,\frac{\kappa_1}{2}r)$, there exists $\hat y \in \hat U_x$ such that $\emph{\ex}_x (\hat y)=y$.
\end{lem}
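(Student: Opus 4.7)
The plan is to start with an arbitrary lift of $y$ in $\hat B(0,\kappa_1 r/2)$, then act on it by an element of $\Gamma(x,\kappa_1 r)$ to ``reduce modulo the lattice'' generated by $c_1,\ldots,c_m$, so that its projection onto $\langle c_1,\ldots,c_m\rangle$ has all coefficients less than $2$ in absolute value. In the flat case, this is simply reduction to a fundamental parallelepiped; the analysis in the general case measures how far the curvature perturbs this picture.

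First, since $y\in B(x,\kappa_1 r/2)$, Lemma~\ref{L:group2} applied at scale $\kappa_1 r/2$ produces a lift $\hat y_0\in\hat B(0,\kappa_1 r/2)$ with $\ex_x(\hat y_0)=y$. Writing the orthogonal projection of $\hat y_0$ onto $\langle c_1,\ldots,c_m\rangle$ as $(\hat y_0)_c=\sum s_ic_i$, the $\lambda$-normal property ($\lambda\le 2$) of the basis together with the fact that each $|c_i|$ is close to the fixed constant $L_i$ (Theorem~\ref{T:basis2}) gives a uniform bound on the coordinate functionals, so $|s_i|\le C|\hat y_0|\le C\kappa_1 r$. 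Let $k_i\in\mathbb Z$ be the nearest integer to $s_i$, so $|s_i-k_i|\le 1/2$.

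Next, define $a\coloneqq c_1^{-k_1}*c_2^{-k_2}*\cdots*c_m^{-k_m}$. Lemma~\ref{L:esta1} combined with $\sum|k_ic_i|\le|\hat y_0|+O(1)\le\kappa_1 r/2+O(1)$ shows that this product is defined inside $\Gamma(x,\kappa_1 r)$ for $r$ large (choose $A_{2,1}$ accordingly), and gives
\begin{align*}
\bigl|\mathbf t(a)+\textstyle\sum k_ic_i\bigr|\le\theta\bigl|\textstyle\sum k_ic_i\bigr|.
\end{align*}
Set $\hat y\coloneqq\tau_a(\hat y_0)$, so $\ex_x(\hat y)=\ex_x(\hat y_0)=y$. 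By Lemma~\ref{L:tr}(ii),
\begin{align*}
\hat y=\hat y_0+\mathbf t(a)+E,\qquad |E|\le C_{2,1}|a|K(r/2),
\end{align*}
and therefore $\hat y=(\hat y_0)_c^\perp+\sum(s_i-k_i)c_i+\eta$, where $\eta\coloneqq\mathbf t(a)+\sum k_ic_i+E$ is a small correction. Reading off the coefficients of the projection $\hat y_c$ in the basis $\{c_i\}$ yields $s_i'=s_i-k_i+\epsilon_i$, where $|\epsilon_i|$ is bounded by a fixed multiple of $|\eta|$ (via the bounded-condition-number property of the $\lambda$-normal basis). Since $|s_i-k_i|\le 1/2$ and, for $r\ge A_{2,1}$ sufficiently large, the error $|\eta|$ falls below the tolerance needed to keep each $|\epsilon_i|<3/2$, one concludes $\max_i|s_i'|<2$. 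The bound $|\hat y|<\kappa_1 r$ follows separately: the perpendicular component of $\hat y$ equals $(\hat y_0)_c^\perp$ modulo the same small correction, and $|(\hat y_0)_c^\perp|\le|\hat y_0|\le\kappa_1 r/2$, while the parallel component has norm $O(1)$.

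The main technical obstacle is the bookkeeping in Step~5: one must check that the two sources of error, namely the BCH-type deviation $\mathbf t(a)-(-\sum k_ic_i)$ controlled by Lemma~\ref{L:esta1} and the curvature/holonomy error from Lemma~\ref{L:tr}(ii), project onto the $c_i$-coordinates with contributions that are small relative to the fixed tolerance ``2'' appearing in the definition of $\hat U_x$. This is precisely where the choice of $A_{2,1}$ (and, if necessary, a shrinking of $\kappa_1$) enters, exploiting that the rotational part of the short basis elements is $O(r^{-1}K(r/2))$ by Theorem~\ref{T:basis2}(i) and that the structural constant $\theta$ can be taken as small as we wish by reducing $\hat\kappa$ in the definitions of Section~4.1.
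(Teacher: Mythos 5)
Your overall strategy --- take an arbitrary lift $\hat y_0$ and reduce it modulo the lattice in one shot by applying $\tau_a$ with $a=c_1^{-k_1}*\cdots*c_m^{-k_m}$ --- differs from the paper's, which takes the lift minimizing $\|\cdot\|_c$ and derives a contradiction from a \emph{single} application of $\tau_{-c_1}$. The difference is not cosmetic: your version has a quantitative gap in Step~5 that I do not think can be closed with the estimates you invoke. The element $a$ you apply has $|\mathbf t(a)|$ comparable to $|\hat y_0|$, hence of order $r$. Lemma~\ref{L:esta1} then only gives $|\mathbf t(a)+\sum k_ic_i|\le\theta|\sum k_ic_i|=O(\theta r)$, and Lemma~\ref{L:tr}(ii) only gives $|E|\le C|a|K(r/2)=O(rK(r/2))$. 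Neither quantity tends to zero as $r\to\infty$: $\theta=\hat\kappa C_H$ is a fixed positive constant, so $\theta|\sum k_ic_i|$ grows linearly in $r$; and $rK(r/2)$ is unbounded for admissible $K$ (e.g.\ $K(s)=(\log s)^{-2}$, or even the model case $K(s)=s^{-\ep}$ with $\ep<1$). Since the tolerance you need is $|\eta|\le c$ for an absolute constant $c$ (so that each $|\epsilon_i|<3/2$), the assertion that ``for $r\ge A_{2,1}$ sufficiently large, the error $|\eta|$ falls below the tolerance'' is false as stated. Shrinking $\hat\kappa$ does not help, because $\theta$ must be a constant independent of $r$ while $|\sum k_ic_i|$ ranges up to $\kappa_1 r/2$ with $\kappa_1$ fixed.

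The reason the paper's argument survives is that it never composes $O(r)$ short loops: at the minimizer it applies exactly one $\tau_{-c_1}$, for which $|a|=|c_1|=O(1)$ and the error in Lemma~\ref{L:tr}(ii) is $O(K(r/2))=o(1)$, comfortably beaten by the decrease of $1$ in $\|\cdot\|_c$. To repair your proof you would have to replace the one-shot reduction by an iterative descent (reduce one coefficient by one unit at a time and argue by minimality or by a terminating induction), at which point you recover essentially the paper's argument. Your first two steps (existence of a lift in $\hat B(0,\kappa_1 r/2)$ and the uniform control of the coordinate functionals via the $\lambda$-normal property and Theorem~\ref{T:basis2}) are fine and would still be the right starting point.
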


\begin{proof}
We assume that $\hat y$ is the lift of $y$ in $\hat B(x,\frac{\kappa_1}{2}r)$ such that $\|y\|_c$ is minimal. We claim that $\hat y \in \hat U_x$. Otherwise, we can assume without loss of generality that $y_c=s_1c_1+s_2c_2+\cdots+s_mc_m$ and $s_1 \ge 2$. Then it follows from Lemma \ref{L:tr} (ii) that
\begin{align*}
|\tau_{-c_1}(\hat y)-(\hat y-c_1)| \le C  K(r/2).
\end{align*}
Then it is clear that 
\begin{align*}
\|\tau_{-c_1}(\hat y)\|_c \le \|\hat y\|_c-1+C  K(r/2)
\end{align*}
and hence
\begin{align*}
\|\tau_{-c_1}(\hat y)\|_c <\|\hat y\|_c
\end{align*}
if $r$ is sufficiently large. This gives us a contradiction.
\end{proof}

\begin{lem}\label{L:neigh2}
There exists constants $C_{2,2}>0$ and $A_{2,2}>0$ such that for $r \ge A_{2,2}$, if $w \in \hat U_q$ and $\tau_a(w) \in \hat U_q$ for $a \in \Gamma(q,\kappa_1r)$, then $|a| \le C_{2,2}$.
\end{lem}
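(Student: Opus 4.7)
\textbf{Proof proposal for Lemma \ref{L:neigh2}.}

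The strategy is to compare the coefficient vectors of the $\langle c_1,\dots,c_m\rangle$-projections of $w$ and $\tau_a(w)$, using that $\tau_a$ is almost translation by $a$ (Lemma \ref{L:tr}(ii)) together with the short-basis expansion $a=\sum k_ic_i$ with $k_i\in\mathbb Z$.

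First I would write the projections $w_c=\sum s_ic_i$ and $(\tau_a(w))_c=\sum t_ic_i$ onto $\langle c_1,\dots,c_m\rangle$. The hypotheses $w,\tau_a(w)\in\hat U_q$ give $|s_i|,|t_i|<2$ for every $i$. By Theorem \ref{T:basis2}, $a$ admits a unique decomposition $a=\sum_{i=1}^m k_ic_i$ with $k_i\in\mathbb Z$. Next I apply Lemma \ref{L:tr}(ii), which yields
\begin{align*}
\tau_a(w)=w+a+\eta, \qquad |\eta|\le C_{2,1}|a|K(r/2).
\end{align*}
Let $P$ denote orthogonal projection onto $\langle c_1,\dots,c_m\rangle$; since $a$ already lies in this subspace, projecting both sides gives
\begin{align*}
\sum_{i=1}^m (t_i-s_i-k_i)\,c_i \;=\; P(\eta), \qquad |P(\eta)|\le C_{2,1}|a|K(r/2).
\end{align*}

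Now I would invoke the $\lambda$-normality of the short basis. Since $|a|\le\kappa_1 r$, the right-hand side is $O(rK(r/2))$, which is smaller than $2^{-m^2/2}\rho_1$ once $r\ge A_{2,2}$ is taken large enough (using $K(r/2)\to 0$). Hence Lemma \ref{L:esta1} applies to the combination $\sum(t_i-s_i-k_i)c_i$ and yields
\begin{align*}
\sum_{i=1}^m |(t_i-s_i-k_i)\,c_i| \le 2^{m^2/2}\,C_{2,1}|a|K(r/2).
\end{align*}
Because $|c_i|\ge L_i/2$ for $r$ large (Theorem \ref{T:basis2}(i)) and $|t_i-s_i|<4$, this rearranges to
\begin{align*}
|k_i|\le 4 + C\,|a|\,K(r/2), \qquad 1\le i\le m,
\end{align*}
for a constant $C$ depending only on $n$ and the $L_i$.

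Finally, since $\{c_1,\dots,c_m\}$ is $\lambda$-normal and $|c_i|$ is uniformly bounded above by some $L$, we get
\begin{align*}
|a|=\Bigl|\sum_{i=1}^m k_ic_i\Bigr|\le \sum_{i=1}^m |k_i|\cdot|c_i|\le mL\bigl(4+C|a|K(r/2)\bigr).
\end{align*}
Choosing $A_{2,2}$ large enough that $mL\cdot C\cdot K(r/2)\le 1/2$ for $r\ge A_{2,2}$, the $|a|$-term can be absorbed into the left-hand side to give $|a|\le 8mL$, so the conclusion holds with $C_{2,2}:=8mL$. The only nontrivial input is balancing the error from Lemma \ref{L:tr}(ii) against the integrality of the $k_i$; this is the step where one must be careful that all approximations are summable into a single absolute constant independent of $q$, and it is precisely the decay $K(r/2)\to 0$ from \eqref{cond:AF} that makes this absorption possible.
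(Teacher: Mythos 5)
Your proof is correct and follows the same route as the paper: apply Lemma \ref{L:tr}(ii) to write $\tau_a(w)=w+a+\eta$ with $|\eta|\le C_{2,1}|a|K(r/2)$, and then use membership in $\hat U_q$ to bound the coefficients; the paper's one-line version ($|a|\le 2(|w|+|\tau_a(w)|)\le C$) only makes sense after projecting onto $\langle c_1,\dots,c_m\rangle$, since points of $\hat U_q$ can have norm as large as $\kappa_1 r$ in the transverse directions, and your write-up makes that projection step explicit. One small imprecision: the short-basis representation is a pseudo-group product $a=c_1^{k_1}*\cdots*c_m^{k_m}$, so $\mathbf t(a)$ equals $\sum k_ic_i$ only up to an error controlled by Proposition \ref{P:basis}(iv) (which, under \eqref{cond:SHC} and the improved estimates of Lemma \ref{L:tr}, is $O(K(r/2))|a|$ rather than exact); thus $a$ does not lie exactly in the span, but the resulting extra term in your identity $\sum(t_i-s_i-k_i)c_i=P(\eta)$ is of the same size as $P(\eta)$ and is absorbed in the same final step.
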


\begin{proof}
It follows from Lemma \ref{L:tr} (ii) that 
\begin{align*}
|\tau_a(w)-(w+a)| \le C_{2,1}|a|K(r/2).
\end{align*}

If $r$ is sufficiently large such that $C_{2,1}K(r/2)<\frac{1}{2}$, then
\begin{align*}
|a| \le 2(|w|+|\tau_a(w)| \le C
\end{align*}
since $w,\tau_a(w) \in \hat U_q$.
\end{proof}

Now we recall the following definition.

\begin{defn}\label{def:GH}
A map $h: X \to Y$ of metric spaces is called an $\delta$-Gromov-Hausdorff approximation if the following two conditions are satisfied.
\begin{enumerate}[label=(\roman*)]
\item $|d_X(x_1,x_2)-d_Y(h(x_1),h(x_2))| \le \delta$ for all $x_1,x_2 \in X$.

\item For any $y \in X$, there exists $x \in X$ such that $|h(x)-y| \le \delta$.
\end{enumerate}
\end{defn}

In particular, the existence of an $\delta$-Gromov-Hausdorff map implies that $d_{GH}(X,Y) \le 2\delta$, see \cite[Corollary $7.3.28$]{BBI01}.

Next, we prove that for any $q$ with $r(q)$ sufficiently large, there exists a neighborhood of $q$ which is close to a ball in $\R^{n-m}$, in the Gromov-Hausdorff sense. The following proposition corresponds to \cite[Proposition $3.10$]{Mi10}.

\begin{prop}\label{P:GH}
There exists a constant $C>0$ such that for any $q$ with $r=r(q) \ge A_2$, there exists an open neighborhood $U_q$ of $q$ and the Gromov-Hausdorff distance between $U_q$ and $B(0,\kappa_2 r) \subset \R^{n-m}$ is bounded by $C\max\{1,rK(r/2)\}$.
\end{prop}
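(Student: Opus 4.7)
The plan of proof.

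Set $V := \la c_1, \dots, c_m \ra \subset T_q M$ and let $V^{\perp}$ be its orthogonal complement with respect to $g_q$, identified with $\R^{n-m}$ via an orthonormal basis. Let $\pi\colon T_qM \to V^{\perp}$ be the orthogonal projection, and let $\Lambda \subset V$ denote the lattice generated by $c_1, \dots, c_m$. The plan is to build an explicit Gromov--Hausdorff approximation $h\colon U_q \to B(0,\kappa_2 r) \subset V^{\perp}$, where
\[
\hat V_q := \bigl\{\,w \in \hat U_q : |\pi(w)| < \kappa_2 r\,\bigr\}, \qquad U_q := \ex_q(\hat V_q).
\]
For each $x \in U_q$, choose a lift $\hat x \in \hat V_q$ (which exists by Lemma \ref{L:neigh} after shrinking $\kappa_2$ suitably); by Lemma \ref{L:neigh2}, any two such lifts differ by $\tau_a$ with $a \in \Lambda$ satisfying $|a| \le C_{2,2}$. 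Set $h(x) := \pi(\hat x)$. The ambiguity is $O(K(r/2))$: if $\hat x' = \tau_a(\hat x)$, then Lemma \ref{L:tr}(ii) gives $|\hat x' - \hat x - a| \le C_{2,1}|a|K(r/2) \le CK(r/2)$, while $\pi(a)=0$, so $|\pi(\hat x') - \pi(\hat x)| = O(K(r/2))$.

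The surjectivity property of the Gromov--Hausdorff approximation is trivial: for any $v \in B(0,\kappa_2 r) \subset V^{\perp}$ the point $x := \ex_q(v)$ lies in $U_q$ and $h(x) = v$. The nontrivial step is the distance-preservation property. For $x, y \in U_q$, use the identification from Lemma \ref{L:group2} to write
\[
d_M(x, y) \,=\, \inf_{a \in \Gamma(q,\,\kappa_1 r)} d_{\hat g}\bigl(\hat x,\, \tau_a(\hat y)\bigr),
\]
where the infimum is a priori attained for some $a$ with $|a| = O(r)$. Apply Lemma \ref{L:tr}(ii) to replace $\tau_a(\hat y)$ by $\hat y + a$ with an error $\le C_{2,1}|a|K(r/2) = O(rK(r/2))$, and use \eqref{eq:e1a} (together with $\Lambda^2 \rho^2 \le CK(r/2)$ on $B(q,\kappa_1 r)$) to replace $d_{\hat g}$ by the ambient Euclidean distance with an error $O(rK(r/2))$. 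This reduces the problem to estimating
\[
\min_{a \in \Lambda}\, \bigl|(\hat x - \hat y) - a\bigr|.
\]
Decomposing $\hat x - \hat y = v_{\parallel} + v_{\perp}$ with $v_{\parallel} \in V$ and $v_{\perp} \in V^{\perp}$, this minimum equals $\sqrt{d_{\Lambda}(v_{\parallel})^2 + |v_{\perp}|^2}$. Since $\{c_1,\dots,c_m\}$ is a $\lambda$-normal basis with $\lambda \le 2$ and bounded norms and angles (by \eqref{eq:basis1} and \eqref{eq:basis2}), the covering radius of $\Lambda$ is bounded by an absolute constant, so $d_{\Lambda}(v_{\parallel}) \le C$. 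Combining everything yields
\[
\bigl| d_M(x, y) - |v_{\perp}| \bigr| \,\le\, C + O(rK(r/2)) \,=\, O\bigl(\max\{1,\,rK(r/2)\}\bigr),
\]
and the right hand side equals $|h(x) - h(y)|$, which closes the argument.

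The main obstacle is a clean treatment of the minimization over $\Gamma(q,\kappa_1 r)$: one must argue that the minimizer $a$ satisfies $|a| = O(r)$ so that Lemma \ref{L:tr}(ii) applies, and must carefully track the additive $O(1)$ loss produced by the covering radius of $\Lambda$, which is precisely what forces the bound to take the form $\max\{1,\,rK(r/2)\}$ rather than a pure $O(rK(r/2))$ estimate.
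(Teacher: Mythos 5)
Your proposal is essentially the paper's proof: both project lifts onto the orthogonal complement $H=\la c_1,\dots,c_m\ra^\perp$ and use Lemma \ref{L:tr}(ii) together with the bounded fiber size (equivalently, the $O(1)$ covering radius of the lattice) to produce the Gromov--Hausdorff approximation, with the additive $O(1)$ giving the $\max\{1,rK(r/2)\}$. The only cosmetic difference is that the paper defines $h$ by averaging $v_H$ over \emph{all} lifts in $\hat U_q$ (center of mass) rather than choosing one lift, which removes the choice-dependence you had to estimate away; the distance-preservation step in the paper also picks a single $w'$ with $d_{\hat g}(v,w')=d(x,y)$ rather than phrasing it as an infimum over $\Gamma$, but the content is the same. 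The ``obstacles'' you flag at the end are indeed easy: $|a|=O(r)$ follows since both $\hat x$ and the minimizing $\tau_a(\hat y)$ lie in $\hat B(0,O(r))$ and $\tau_a$ is an almost-translation, while the $O(1)$ covering-radius loss is exactly the term that the paper absorbs via $|v_c-w_c|=O(1)$ for lifts in $\hat U_q$.
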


\begin{proof}
Given $q$ with $r\ge A_2$, we define $H=\la c^q_1,c^q_2,\cdots c^q_m\ra^{\perp}$ and use the subscript $v_H$ to denote the projection of $v \in \hat B(0,\kappa_1 r)$ onto $H$. Here, the orthogonal complement is defined by using the inner product $\hat g=\ex_q^*g$ on $T_q M$.

For any $x \in B(q,\frac{\kappa_1}{2}r)$, we define the center of mass
\begin{align*}
h(x) \coloneqq \frac{1}{N_x} \sum_{v\in A_x} v_H,
\end{align*}
where $A_x=\{v \in \hat U_q\,\mid \ex_q ( v)=x\}$ and $N_x=|A_x|$.

Now we set $B=B(0,\kappa_2 r) \subset H$ and define $U_q= h^{-1}(B)$. We claim that $h:U_q \to B $ is a $C\max\{1,rK(r/2)\}$-Gromov-Hausdorff approximation. 

For any $v \in B$ and $a \in \Gamma(q,\kappa_1 r)$ such that $\tau_a(v)\in \hat U_q$, it follows from Lemma \ref{L:tr}(ii) and Lemma \ref{L:neigh2} that 
\begin{align*}
|\tau_a(v)-(v+a)| \le C|a|K(r/2) \le CK(r/2).
\end{align*}
Since $a=k_1c_1+k_2c_2+\cdots+k_mc_m$, we have
\begin{align*}
|(\tau_a(v))_H-v| \le CK(r/2).
\end{align*}

Passing to the center of mass, we conclude that
\begin{align*}
|h(\ex_q(v))-v|  \le CK(r/2).
\end{align*}

In particular, if $d(v,H\backslash B) \ge CK(r/2)$, then $h(\ex_q(v)) \in B$. Therefore, it is easy to show that the range of $h$ is a $CK(r/2)$-net and condition (ii) in Definition \ref{def:GH} is satisfied.

For any $x,y \in U_q$, by Lemma \ref{L:neigh} we can find their lifts $v,w \in \hat U_q$. By the same arguments as before, we have
\begin{align}\label{eq:516}
|h(x)-v_H|+|h(y)-w_H| \le CK(r/2).
\end{align}

There exists a $w' \in \hat B(0,\frac{\kappa_1}{2}r)$ such that $d_{\hat g}(v,w')=d(x,y)$. Since $\ex_q(w)=\ex_q(w')$, we set $\tau_a(w)=w'$ for $a \in \Gamma(q,\frac{\kappa_1}{2}r)$. It follows from Lemma \ref{L:tr} (ii) that
\begin{align*}
|w'-(w+a)|=|\tau_a(w)-(w+a)| \le C|a|K(r/2)\le CrK(r/2).
\end{align*}

On the one hand, we have
\begin{align}
&|h(x)-h(y)| \notag \\
\le& |v_H-w_H|+CK(r/2) \le |v_H-(w+a)|+CK(r/2) \notag\\
\le& |v_H-w'|+CrK(r/2) \le |v-v_H|+|v-w'|+CrK(r/2) \notag\\
\le& (1+CK(r/2)) d_{\hat g}(v,w')+C(1+rK(r/2))= d(x,y)+C(1+rK(r/2)).\label{eq:518}
\end{align}

On the other hand,
\begin{align}
& d(x,y) \le d_{\hat g}(v,w) \notag \\
\le& |v-w|+CrK(r/2)  \notag\\
\le& |v_H-w_H|+C(1+rK(r/2)) \notag\\
\le& |h(x)-h(y)|+C(1+rK(r/2)). \label{eq:519}
\end{align}

Combining \eqref{eq:518} and \eqref{eq:519}, condition (i) in Definition \ref{def:GH} is satisfied and the proof is complete.
\end{proof}

Notice that the map $h$ constructed in Proposition \ref{P:GH} may not even be continuous. To construct a local fibration, we smooth $h$ by a convolution as \cite[Theorem $2.6$]{CFG92} and \cite[Proposition $3.11$]{Mi10}. In the following Theorem, the map defined in \eqref{eq:520} is slightly different from that in \cite[Proposition $3.11$]{Mi10}, so that $f_q$ has stronger higher-order estimates than \cite[Proposition $3.11$]{Mi10}.

\begin{thm}\label{T:localfiber}
There exist constants $C>1$ such that for any $q$ with $r=r(q) \ge A_2$, there exists an open neighborhood $\Omega_q$ of $q$ and a map $f_q: \Omega_q \to B(0,\kappa_3r) \subset \R^{n-m}$ satisfying the following properties.
\begin{enumerate}[label=(\roman*)]
\item $f_q$ is a $CK(r/2)$-almost-Riemannian submersion.

\item $|\na^2 f_q|=O(r^{-1}K(r/2))$.

\item If $(M^n,g)$ further satisfies \eqref{cond:HOAF}, then for any $i \ge 3$, $|\na^i f_q| =(r^{1-i}K(r/2))$.

\item The Gromov-Hausdorff distance between any fiber of $f_q$ and $\mathbb T^m_{\infty}$ is bounded by $O(\tilde K(r/4))$.

\end{enumerate}
\end{thm}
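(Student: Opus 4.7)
The plan is to smooth the (possibly discontinuous) rough map $h \colon U_q \to B(0,\kappa_2 r)$ from Proposition \ref{P:GH} into a smooth map $f_q \colon \Omega_q \to B(0,\kappa_3 r)$ by a convolution on the tangent space $T_qM$, following the strategy of \cite{CFG92} and \cite[Proposition $3.11$]{Mi10}. After choosing a smooth mollifier $\phi$ at scale $s \sim r$, one forms a convolution
\begin{align*}
\tilde f(v) = \int_{\hat B(0,\kappa_1 r)} \phi_s(v - w)\, \pi_H(w)\, dV_{\hat g}(w), \qquad v \in \hat U_q,
\end{align*}
where $H = \la c_1,\ldots,c_m\ra^{\perp}$ and $\pi_H$ is the orthogonal projection. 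I then define $f_q(x)$ by averaging $\tilde f$ over the lifts $v \in A_x = \{v \in \hat U_q : \ex_q(v) = v\}$, exactly as in the definition of $h$, on a slightly shrunk domain $\Omega_q \subset U_q$. Lemma \ref{L:tr}(ii) together with $\pi_H(a)=0$ for $a \in \la c_1,\ldots,c_m\ra$ shows $|f_q - h| = O(K(r/2))$, so the Gromov-Hausdorff estimate of Proposition \ref{P:GH} carries over to $f_q$.

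For (i) and (ii), I differentiate under the integral sign. Because $\phi_s$ is a mollifier at scale $s \sim r$, each spatial derivative costs a factor $r^{-1}$. Using the near-Euclidean estimate \eqref{eq:e1} for $\hat g$, a direct computation shows that $df_q$ equals the orthogonal projection $\pi_H$ up to an error of size $O(K(r/2))$; the dominant contribution to this error is the discrepancy $|\tau_a(w)-(w+a)| = O(|a|K(r/2))$ supplied by Lemma \ref{L:tr}(ii), which survives after integration against $\phi_s$. This yields the $O(K(r/2))$-almost-Riemannian submersion property of (i). For (ii), one additional differentiation produces either an extra $r^{-1}$ from $\phi_s$ or a derivative of the Euclidean-discrepancy of $\hat g$ controlled by Lemma \ref{lem:dis}, each absorbed into $O(r^{-1}K(r/2))$.

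For (iii), under \eqref{cond:HOAF} we have the bounds $|\na^k Rm| = O(r^{-2-k}K(r))$, which by Lemma \ref{lem:dis} translate into $|\na^{k+2}\eta|=O(r^{-k}K(r))$ and analogous control on higher derivatives of the metric data appearing in the convolution. Induction on $i$ then gives $|\na^i f_q| = O(r^{1-i}K(r/2))$. Finally, for (iv), a fiber $f_q^{-1}(y)$ is $O(\tilde K(r/4))$-Hausdorff-close inside $U_q$ to a level set of $h$, which is itself the orbit of a point under the fundamental pseudo-group $\Gamma(q,\kappa_1 r)$ acting via the $\tau_{c_i}$; by Lemma \ref{L:tr} this orbit is $O(K(r/2))$-close to a genuine translation orbit in $T_qM$, whose descent to $M$ is isometric to the flat torus $\T^m_q$. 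The estimate \eqref{torus5x} then yields $d_{GH}(f_q^{-1}(y), \T_\infty^m) = O(\tilde K(r/4))$.

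The main technical obstacle is tracking the two independent error sources — the near-translational defect of $\tau_a$ from Lemma \ref{L:tr} and the near-Euclidean defect of $\hat g$ from \eqref{eq:e1} and Lemma \ref{lem:dis} — as they propagate under differentiation of the convolution, making sure that each new derivative costs at most a factor $r^{-1}$ and never introduces a larger geometric error that would spoil the sharp decay $O(r^{1-i}K(r/2))$ in (iii).
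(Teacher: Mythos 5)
Your overall strategy (mollify the rough center-of-mass map on the tangent space, then read off the submersion and fiber estimates from Lemma \ref{L:tr} and the near-Euclidean estimates for $\hat g$) is the right one, but the construction of $f_q$ as you have written it does not produce a smooth map, and this is the crux of the whole theorem. You convolve $\pi_H$ first to get a smooth $\tilde f$ on $\hat U_q$, and only afterwards define $f_q(x)$ by averaging $\tilde f$ over the discrete lift set $A_x=\{v\in\hat U_q\mid \ex_q(v)=x\}$. The cardinality $N_x=|A_x|$ jumps as lifts enter or leave $\hat U_q$ (this happens throughout $B(q,\kappa_1 r/2)$, not only near its boundary, so shrinking to $\Omega_q$ does not help), and a discrete average over a set of varying cardinality is exactly why the map $h$ of Proposition \ref{P:GH} is discontinuous in the first place. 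Performing the averaging \emph{after} the smoothing reintroduces that discontinuity, so properties (ii) and (iii) are not even meaningful for your $f_q$. The correct order is the opposite: average first (that is the map $h$), lift to $\hat h=h\circ\ex_q$, which is a bounded measurable function on $\hat B(0,2\kappa_3 r)$ invariant under $\Gamma(q,2\kappa_3 r)$, and \emph{then} convolve.

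A second, independent obstruction is your choice of kernel $\phi_s(v-w)$, a function of the Euclidean coordinate difference. The elements $\tau_a$ of the fundamental pseudo-group are isometries of $\hat g$ but not Euclidean translations of the geodesic coordinates, so a translation-invariant kernel does not commute with the $\Gamma$-action and the convolution of a $\Gamma$-invariant function need not be $\Gamma$-invariant; the smoothed map then fails to descend to $M$ at all (which is presumably why you felt forced to add the fiberwise average at the end). The paper's kernel is $\phi_r(\rho_v(w))$ with $\rho_v=d_{\hat g}^2(v,\cdot)/2$: being built from the intrinsic distance, it is exactly equivariant, so $\hat f$ is smooth, exactly $\Gamma$-invariant, and descends. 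This choice is also what makes the higher-order estimates in (iii) work, since $\na^k\rho_v$ is controlled by Lemma \ref{lem:dis} and the Fa\`a di Bruno bookkeeping. Once the construction is repaired in this way, the remainder of your argument (the $O(K(r/2))$ closeness of $d\hat f$ to the projection onto $H$, the derivative count, and the comparison of fibers with $\T^m_q$ and then $\T^m_\infty$ via \eqref{torus5x}) matches the paper's proof.
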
 

\begin{proof}
We fix a smooth nonincreasing function $\phi(t)$ on $\R$ such that $\phi=1$ on $(-\infty,\frac{1}{3}]$ and $\phi=0$ on $[\frac{2}{3},\infty)$. In addition, we set $\phi_r=\phi(\frac{2t}{\kappa_3^2r^2})$.

On $\hat B(0,3\kappa_3 r) \subset T_q(M)$, we denote the lift of $h$ by $\hat h=h \circ \ex_q$ and set $\rho_v \coloneqq d^2_{\hat g}(v,\cdot)/2$. Then we define on $\hat B(0,2\kappa_3 r)$,
\begin{align}\label{eq:520}
\hat f(w)=\frac{1}{c_0r^{n}}\int_{T_qM}\hat h(v)\phi_r(\rho_v(w))\,dV(v),
\end{align}
where the volume form $dV$ is with respect to $\hat g$ and the constant $c_0>0$ is determined by
\begin{align*}
\int_{\R^n} \phi_r(|z|^2/2)\,dz=c_0 r^n.
\end{align*}

It is clear from the definition \eqref{eq:520} that $\hat f$ descends to a map $f$ on $B(q,\kappa_3 r)$ since $\Gamma(q,2\kappa_3 r)$ acts isometrically on $\hat B(0,\kappa_3 r)$. We claim that the map $f$ satisfies all the properties in the statement. Notice that we only need to prove the corresponding properties of $\hat f$ are satisfied.

It follows from \eqref{eq:e1aa} and \eqref{eq:e1a} that
\begin{align*}
|dV(v)-dv| \le& CK(r/2)dv
\end{align*}
and for any $v,w \in \hat B(0,2\kappa_3 r)$,
\begin{align}\label{eq:521}
|d_{\hat g}(v,w)-|v-w||\le CK(r/2)|v-w| \le CrK(r/2).
\end{align}

By \eqref{eq:521}, it is clear that
\begin{align}\label{eq:521a}
|\rho_v(w)-|v-w|^2/2|\le Cr^2K(r/2).
\end{align}

In addition, from the proof of Proposition \ref{P:GH} we have
\begin{align}\label{eq:522}
|\hat h(v)-v_H| \le CrK(r/2).
\end{align}

Now we write
\begin{align}
&\int\hat h(v)\phi_r(\rho_v(w))\,dV(v)  \notag \\
=& \int\hat h(v)\phi_r(\rho_v(w))\,(dV(v)-dv)+\int\hat h(v)\lc \phi_r(\rho_v(w))-\phi_r(|v-w|^2/2)\rc \,dv \notag\\
&+\int(\hat h(v)-v_H)\phi_r(|v-w|^2/2)\,dv+\int v_H \phi_r(|v-w|^2/2)\,dv.\label{eq:523}
\end{align}

From \eqref{eq:520}, \eqref{eq:521a} and \eqref{eq:522}, we conclude immediately from \eqref{eq:523} that
\begin{align}\label{eq:524}
\int\hat h(v)\phi_r(\rho_v(w))\,dV(v)=\int v_H \phi_r(|v-w|^2/2)\,dv+O(r^{n+1}K(r/2)).
\end{align}

A change of variables shows that
\begin{align}\label{eq:525}
\int v_H \phi_r(|v-w|^2/2)\,dv=w_H\int \phi_r(|z|^2/2)\,dz+\int z_H \phi_r(|z|^2/2)\,dz=c_0 w_H r^n,
\end{align}
where the last equality holds since by parity
\begin{align*}
\int z_H \phi_r(|z|^2/2)\,dz=0.
\end{align*}

Hence, it follows from \eqref{eq:524} and \eqref{eq:525} that
\begin{align}\label{eq:526}
\hat f(w)=w_H+O(rK(r/2)).
\end{align}

By taking the differential of $\hat f$, we have
\begin{align*}
d \hat f_w=\frac{1}{c_0r^{n}}\int \hat h(v)\phi'_r(\rho_v(w)) (d\rho_v)_w\,dV(v),
\end{align*}

It follows from Proposition \ref{P:es3} that
\begin{align}\label{eq:528}
|(d\rho_v)_w-\bar g^{w-v} |\le CrK(r/2).
\end{align}
where $\bar g^{w-v} \coloneqq \la w-v,\cdot\ra$.

Similar to \eqref{eq:523}, by using \eqref{eq:520}, \eqref{eq:521a}, \eqref{eq:522} and \eqref{eq:528} we have 
\begin{align}\label{eq:529}
d \hat f_w =- \frac{1}{c_0r^{n}} \int z_H \phi'_r(|z|^2/2) \bar g^z \,dz+O(K(r/2)).
\end{align}

We choose an orthonormal basis $(e_1\,e_2,\cdots,e_n)$ of $T_qM$ such that $e_i \perp H$ for $ n-m <i \le n$. For $1\le i \ne j \le n-m$, parity shows that
\begin{align*}
\int z_iz_j\phi'_r(|z|^2/2)\,dz=0.
\end{align*}
Moreover, an integration by parts shows that for any $1 \le i \le n-m$,
\begin{align*}
\int z_i^2\phi'_r(|z|^2/2)\,dz=-\int \phi_r(|z|^2/2)\,dz.
\end{align*}

Therefore, it follows from \eqref{eq:529} that
\begin{align}\label{eq:530}
d \hat f_w =\sum_{i=1}^{n-m} e_i \otimes \la e_i,\cdot \ra+O(K(r/2)).
\end{align}

From \eqref{eq:530} we conclude that $f$ is a $CK(r/2)$-almost-Riemannian submersion. This finishes the proof of (i).

To prove (ii) and (iii), we need to estimate all higher-order covariant derivatives of $\hat f$. Recall that from Lemma \ref{lem:dis} we have
\begin{align}
|\na \rho_v| \le Cr \quad \text{and} \quad |\na^2 \rho_v-g| &\le  C  K(r/2).\quad \label{eq:531}
\end{align}

If we further assume \eqref{cond:HOAF}, we have
\begin{align}
|\na^k \rho_v| &\le Cr^{2-k}K(r/2),\quad \forall k\ge 3.  \label{eq:533}
\end{align}

We also need the following formula for the change of orders of higher-order covariant derivatives. For any $k \ge 2$ and tensor $T$,
\begin{align}\label{eq:533a}
|\na_{X_1}\na_{X_2}\cdots \na_{X_k}T-\na_{X_{i_1}}\na_{X_{i_2}}\cdots \na_{X_{i_k}} T| \le C \sum_{a+b=k-2} |\na^a Rm| |\na^b T|,
\end{align}
where $X_i$ are unit vectors and $(i_1,i_2,\cdots,i_k)$ is any permutation of $(1,2,\cdots,k)$. We claim that for any $k \ge 2$, the following Fa\`a di Bruno's formula holds.

\begin{align}\label{eq:534}
\na^k \phi_r(\rho_v)=\sum \frac{k!}{l_1!l_2!\cdots l_k !} \phi^{(L)}_r(\rho_v) \lc \frac{\na \rho_v}{1!} \rc^{l_1}\lc \frac{\na^2 \rho_v}{2!} \rc^{l_2} \cdots \lc \frac{\na^k \rho_v}{k!} \rc^{l_k}+O(r^{-k}K(r/2)),
\end{align}
where the sum is over all nonnegative integer solutions of the Diophantine equation
\begin{align}\label{eq:535}
l_1+2l_2+\cdots+kl_k=k
\end{align}
and $L \coloneqq l_1+l_2+\cdots+l_k$. Here, we denote the $L$th derivative of $\phi_r$ by $\phi^{(L)}_r$ and $\otimes^{l_1}(\na \rho_v)\otimes^{l_2}(\na^2 \rho_v )\otimes \cdots \otimes^{l_k}(\na^k \rho_v)$ by $(\na \rho_v)^{l_1}(\na^2 \rho_v)^{l_2} \cdots (\na^k \rho_v)^{l_k}$.

Indeed, it follows from \eqref{eq:531} and \eqref{eq:533} that
\begin{align}
&|\phi^{(L)}_r(\rho_v) (\na \rho_v)^{l_1}(\na^2 \rho_v)^{l_2}\cdots (\na^k \rho_v)^{l_k}| \notag\\
 \le & Cr^{-2L}r^{l_1}r^{\sum_{i=3}^k (2-i)l_i} (K(r/2))^{\sum_{i=3}^k l_i}=Cr^{-k} (K(r/2))^{L-l_1-l_2} \le Cr^{-k}.\label{eq:533a}
\end{align}

Notice that \eqref{eq:533a} also holds if we change the orders of all $\na^i \rho_v$. Therefore, it follows from \eqref{eq:533a} and the proof of Fa\`a di Bruno's formula, see \cite{Bru57}, that \eqref{eq:534} holds.

We set $X$ be the subset of all solutions of \eqref{eq:535} such that $l_i=0$ for $i\ge 3$ and $X^c$ is the complement of $X$. Similar to \eqref{eq:533a}, if $(l_1,l_2,\cdots,l_k) \in X^c$, we have
\begin{align}
|\phi^{(L)}_r(\rho_v) (\na \rho_v)^{l_1}(\na^2 \rho_v)^{l_2}\cdots (\na^k \rho_v)^{l_k}| \le Cr^{-k} (K(r/2))^{L-l_1-l_2}\le Cr^{-k}K(r/2).\label{eq:536}
\end{align}

Therefore, it is clear from \eqref{eq:536} that
\begin{align*}
\na^k \hat f_w=\frac{1}{c_0r^n} \sum_{(l_1,l_2,\cdots,l_k) \in X} \int \frac{k!}{l_1!l_2!} \hat h(v) \phi^{(L)}_r(\rho_v) \lc \frac{\na \rho_v}{1!} \rc^{l_1}\lc \frac{\na^2 \rho_v}{2!} \rc^{l_2}  \,dV+O(r^{1-k}K(r/2)).
\end{align*}

Similar to \eqref{eq:523}, by using \eqref{eq:520}, \eqref{eq:521a}, \eqref{eq:522} \eqref{eq:528} and \eqref{eq:531} we have 
\begin{align}\label{eq:538}
\na^k \hat f_w=\frac{1}{c_0r^n} \sum_{(l_1,l_2,\cdots,l_k) \in X} \int \frac{k!}{l_1!l_2!} v_H \phi^{(L)}_r(|v-w|^2/2) \lc \frac{\bar g^{w-v}}{1!} \rc^{l_1}\lc \frac{\bar g}{2!} \rc^{l_2}  \,dv+O(r^{1-k}K(r/2)).
\end{align}
where $\bar g= \la \cdot,\cdot \ra$. If $l_1$ is even, then
\begin{align}\label{eq:539}
\int v_H \phi^{(L)}_r(|v-w|^2/2) (\bar g^{w-v})^{l_1}\bar g^{l_2}  \,dv=w_H \int  \phi^{(L)}_r(|z|^2/2) (\bar g^{z})^{l_1}\bar g^{l_2}  \,dz
\end{align}
since by parity
\begin{align*}
\int  z_H\phi^{(L)}_r(|z|^2/2)(\bar g^{z})^{l_1}\bar g^{l_2}  \,dz=0.
\end{align*}

If $l_1$ is odd, then
\begin{align}\label{eq:540}
\int v_H \phi^{(L)}_r(|v-w|^2/2) (\bar g^{w-v})^{l_1}\bar g^{l_2}  \,dv=- \int  z_{H}\phi^{(L)}_r(|z|^2/2) (\bar g^{z})^{l_1}\bar g^{l_2}  \,dz
\end{align}
since by parity
\begin{align*}
w_H\int  \phi^{(L)}_r(|z|^2/2)(\bar g^{z})^{l_1}\bar g^{l_2}  \,dz=0.
\end{align*}

If we denote the function $\phi_r(|z|^2/2)$ by $\psi(z)$, then it follows from \eqref{eq:538}, \eqref{eq:539}, \eqref{eq:540} and the Fa\`a di Bruno's formula that
\begin{align}\label{eq:541}
\na^k \hat f_w=\frac{w_H}{c_0r^n} \int \bar \na^k \psi \,dz-\frac{1}{c_0r^n} \int z_H \bar \na^k \psi \,dz+O(r^{1-k}K(r/2)),
\end{align}
where $\bar \na$ is the Levi-Civita connection of the flat metric on $\R^n$. Here we have used the fact that $\bar \na^i (|z|^2/2)=0$ for $i \ge 3$. It is clear that the first integral in \eqref{eq:541} vanishes. For the second integral, we notice that for $1 \le j \le n-m$,
\begin{align}\label{eq:542}
\int z_j \frac{\partial^k \psi}{\partial z_{i_1}\partial z_{i_2}\cdots \partial z_{i_k}} \,dz=0,
\end{align}
if one of $z_{i_s}$ is different from $z_j$.

In addition, from the integration by parts we have
\begin{align}\label{eq:543}
\int z_j \frac{\partial^k \psi}{\partial z^k_j} \,dz=-\int  \frac{\partial^{k-1} \psi}{\partial z^{k-1}_j} \,dz=0
\end{align}
since $k\ge 2$.

Combining \eqref{eq:542} and \eqref{eq:543}, we conclude that the second integral in \eqref{eq:541} also vanishes. Therefore it follows from \eqref{eq:541} that
\begin{align*}
\na^k \hat f_w=O(r^{1-k}K(r/2))
\end{align*}
and the proof of (ii) and (iii) is complete.

Now we show that (iv) holds for the fiber $F$ through the point $q$ and the general case is similar. We define a fundamental domain
\begin{align*}
P \coloneqq \{s_1c_1+s_2c_2+\cdots+s_mc_m\,\mid 0 < s_i < 1 \quad \text{for all} \quad 1\le i \le m\}.
\end{align*}

We choose a $K(r/2)$-net $\{w_i\}$ of $P$. Notice that $\{w_i\}$ can also be regarded as a $K(r/2)$-net of $\mathbb T^m_q$. For any $w_i$, we set $x_i=\ex_q w_i$ and there exists $y_i \in F$ such that $d(x_i,y_i)$ is the distance between $x_i$ and $F$.

Since $w_i \in \hat U_q$, similar to \eqref{eq:516} we have
\begin{align}\label{eq:542x}
|\hat h(w_i)|=|\hat h(w_i)-(w_i)_H|\le CK(r/2).
\end{align}

From \eqref{eq:542x} the same argument as \eqref{eq:526} shows that
\begin{align}\label{eq:543x}
|\hat f(w_i)|\le CK(r/2).
\end{align}

Since $\hat f$ is a $CK(r/2)$-almost-Riemannian submersion, we have from \eqref{eq:543x}
\begin{align}\label{eq:544}
d(x_i,y_i) \le CK(r/2).
\end{align}

For any $i,j$, we can assume that $|w_i-w_j|=d_{T}(w_i,w_j)=d(x_i,x_j)+O(K(r/2))$. Then it is clear from \eqref{eq:544} that
\begin{align}\label{eq:545}
|d_{T}(w_i,w_j)-d(y_i,y_j)| \le CK(r/2).
\end{align}

Now we compare the intrinsic and extrinsic distances of $F$. For any $a,b \in F$, we denote their lifts by $\hat a,\hat b \in \hat F$ such that $d_{\hat g}(\hat a,\hat b)=d(a,b)$, where $\hat F$ is the lift of $F$. Let $\gamma(t)$ be the geodesic with respect to $\hat g_{\hat F}$ with unit speed connecting $\hat a$ and $\hat b$. Then it follows from (ii) above that
\begin{align*}
|\na_{\dot \gamma(t)}\dot \gamma(t)| \le Cr^{-1}K(r/2).
\end{align*}
In other words, $\gamma(t)$ is almost a geodesic with respect to $\hat g$. By the same argument of Proposition \ref{P:es3} we have
\begin{align*}
d_{\hat F}(\hat a, \hat b)  \le (1+CK(r/2))|\hat a-\hat b| \le (1+CK(r/2))d_{\hat g}(\hat a,\hat b) = (1+CK(r/2))d(a,b).
\end{align*}

Therefore, we have
\begin{align*}
d(a,b) \le d_F(a,b) \le (1+CK(r/2))d(a,b).
\end{align*}
where $g_F$ is the induced metric on $F$. In particular, it follows from \eqref{eq:545} that
\begin{align}\label{eq:546}
|d_{T}(w_i,w_j)-d_{F}(y_i,y_j)| \le CK(r/2).
\end{align}
Now for any $y \in F$, by Lemma \ref{L:neigh} we can find a lift $w \in \hat U_q$. Similar to \eqref{eq:543} we have
\begin{align*}
|w_H|=|\hat f(w)-w_H| \le CK(r/2).
\end{align*}

Notice that there exist $\tau_a$ and $w_i$ such that if $w'=\tau_a(w)$,
\begin{align*}
|w'_c-w_i| \le CK(r/2),
\end{align*}
where $\cdot_c$ is the projection to $\la c_1,c_2,\cdots,c_m \ra$ and $|w'_H|\le CK(r/2)$. Therefore, we have
\begin{align}
&d_{F}(y,y_i) \le Cd_{g}(y,y_i) \notag \\
\le& C(d(y,x_i)+d(x_i,y_i))  \notag\\
\le& C|w_i-w'|+CK(r/2)  \notag\\
\le& C|w_i-w'_c|+C|w'-w'_c|+CK(r/2) \le CK(r/2).\label{eq:549}
\end{align}

Combining \eqref{eq:546} and \eqref{eq:549}, we conclude that
\begin{align*}
d_{GH}(F,\T_q^m) \le CK(r/2).
\end{align*}

Therefore, (iv) is proved by using \eqref{torus5x}.
\end{proof}

Theorem \ref{T:localfiber} endows $\Omega_q$ with the structure of a fiber bundle. Next, we construct a local parametrization.

\begin{prop}\label{P:para}
For any $q$ with $r=r(q)\ge A_2$, there exists an open neighborhood $\Omega_q$ of $q$ and a bundle isomorphism $T=T_q: \Omega_q \to B(0,\kappa_3r)\times \T^m_{\infty}$ satisfying 
\begin{enumerate}[label=(\roman*)]
\item $T$ is a $CK(r/2)$-almost-isometry.

\item $|\na^2 T| =O(r^{-1}K(r/2))$.

\item If $(M^n,g)$ further satisfies \eqref{cond:HOAF}, then for any $i \ge 3$, $|\na^i T|=O(r^{1-i}K(r/2))$.
\end{enumerate}
\end{prop}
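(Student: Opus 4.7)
My plan is to set $T_q(x) = (f_q(x), \phi_q(x))$, where $f_q$ is the fibration from Theorem \ref{T:localfiber} and $\phi_q: \Omega_q \to \T_\infty^m$ is a fiber coordinate yet to be constructed. Following the convolution strategy of Theorem \ref{T:localfiber} with the orthogonal projection $v_H$ onto $H = \la c_1^q,\ldots,c_m^q\ra^{\perp}$ replaced by the projection $v_c$ onto $\la c_1^q,\ldots,c_m^q\ra$, I define
\[
\hat\phi(w) = \frac{1}{c_0 r^n}\int v_c\,\phi_r(\rho_v(w))\, dV(v)
\]
on $\hat B(0, 2\kappa_3 r)$. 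The calculations producing \eqref{eq:526} and \eqref{eq:530} go through verbatim with $v_c$ in place of $v_H$ and summation now running over the complementary index set, giving $\hat\phi(w) = w_c + O(rK(r/2))$ and $d\hat\phi_w = \sum_{i=n-m+1}^{n} e_i\otimes\la e_i,\cdot\ra + O(K(r/2))$. Combined with the estimates for $d\hat f$, the differential of $(\hat f, \hat\phi)$ is the identity on $T_w \hat U_q$ up to an $O(K(r/2))$ error in the geodesic frame, which will give (i); and higher derivatives are controlled by the same Fa\`a di Bruno argument as in Theorem \ref{T:localfiber}, giving (ii) and (iii).

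The delicate step is ensuring that $\hat\phi$ descends to a map into $\T_q^m = \R^m/\Lambda$, where $\Lambda = \la c_1^q,\ldots,c_m^q\ra_{\Z}$. A change of variables in the convolution combined with Lemma \ref{L:tr}(ii) yields
\[
\hat\phi(\tau_a(w)) - \hat\phi(w) - a = e_a(w), \qquad e_a(w) = O(K(r/2)),
\]
for every $a \in \Gamma(q,\kappa_1 r)$, with $e_a$ depending smoothly on $w$ and satisfying higher-derivative bounds inherited from the smoothness of $\tau_a$ and $\hat\phi$. Because $\Gamma(q,\kappa_1 r)$ is abelian by Proposition \ref{P:abelian}, $a \mapsto e_a$ is a genuine $1$-cocycle on $\Lambda \cong \Z^m$ acting on the contractible domain $\hat U_q$, and I solve the coboundary equation $\psi(\tau_a(w)) - \psi(w) = -e_a(w)$ by averaging the finitely many generating values $e_{c_i^q}$ over a fundamental domain. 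The resulting correction satisfies $\psi = O(K(r/2))$ and $|\nabla^k\psi| = O(r^{1-k}K(r/2))$, so $\hat\phi + \psi$ is exactly $\Lambda$-equivariant and descends to a smooth map $\Omega_q \to \T_q^m$; composing with the $O(\tilde K(r/4))$-almost-isometry $i_q: \T_q^m \to \T_\infty^m$ from Section 4.4 yields $\phi_q$. The almost-isometry in (i) then forces $\phi_q$ to restrict to a diffeomorphism on each fiber of $f_q$, so $T_q = (f_q,\phi_q)$ is a bundle diffeomorphism.

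The main obstacle will be this equivariant correction: constructing $\psi$ so that it simultaneously solves the cocycle equation and satisfies the required $C^k$-bounds. The abelianness of the pseudo-group makes the cocycle equation consistent, the length bounds from Theorem \ref{T:basis2}(i) together with Lemma \ref{L:neigh2} keep the relevant set of generators finite and uniformly controlled, and Lemma \ref{L:tr} supplies the smallness of the cocycle itself. Everything else --- the integral identities, the Fa\`a di Bruno expansion, and the parity vanishing --- is a direct transcription of the proof of Theorem \ref{T:localfiber} with $v_c$ in the role of $v_H$.
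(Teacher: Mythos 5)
Your construction of the base component and the linear-algebra behind $d\hat\phi_w$ being an almost-projection onto $\la c_1^q,\dots,c_m^q\ra$ are fine and match the paper's computations \eqref{eq:526}--\eqref{eq:530}. The gap is in the descent to the torus. You convolve first (producing a smooth but only \emph{almost} $\Lambda$-equivariant map into $\R^m$) and then try to repair equivariance by solving the coboundary equation $\psi(\tau_a(w))-\psi(w)=-e_a(w)$. Two problems: first, ``averaging the finitely many generating values $e_{c_i^q}$ over a fundamental domain'' is not a construction of $\psi$ --- for a $\Z^m$-action one must produce a single smooth function consistent with \emph{all} lattice elements, and the standard way to do this (a partition of unity subordinate to translates of a fundamental domain) is exactly where the second problem appears. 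The fundamental domain of the fiber has diameter $O(1)$ (the lengths $L_i$ are bounded constants by Theorem \ref{T:basis2}), so the cutoff functions in any such construction have $|\nabla^k\chi|=O(1)$, and hence $\nabla^2\psi$ picks up terms of size $|\nabla^2\chi|\,|e_a|=O(K(r/2))$. This is larger by a factor of $r$ than the bound $O(r^{-1}K(r/2))$ you need for (ii), and the discrepancy worsens for (iii). The correction necessarily lives at the unit scale of the fiber, not at scale $r$, so it destroys precisely the derivative gains that the scale-$r$ convolution kernel was supposed to provide.

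The paper avoids the cocycle entirely by reversing the order of operations: it first defines, for each point $z$ of the manifold, the center of mass $l(z)\in\T_q^m$ of the projections $\bar v_c$ of \emph{all} lifts $v\in A_z$ (well-defined because by \eqref{eq:551} these points cluster within $CK(r/2)$ in the torus), so that the resulting map is exactly invariant under $\Gamma(q,\kappa_1 r)$ by construction; only then is it smoothed by the convolution $\hat L(w)=\tfrac{1}{c_1}\int \hat l(v)\phi(2\rho_v(w))\,dV(v)$. Invariance is preserved by the convolution since the pseudo-group acts by isometries, and every covariant derivative of $\hat L$ is controlled by derivatives of the scale-$r$ kernel, giving the bounds in (ii) and (iii) with no correction term. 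You should restructure your argument this way: average over the lifts into $\T_q^m$ before smoothing, rather than smoothing in $\R^m$ and correcting afterwards.
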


\begin{proof}
As in the proof of Theorem \ref{T:localfiber}, we first construct a map from $\hat B(0,\kappa_3 r)$ to $\T_q^m$. Recall that we denote the subspace generated by $\la c_1^q,c_2^q,\cdots,c_m^q\ra$ by $C$.

For any $z \in \Omega_q$, we define $l(z)$ to be the unique minimum point of the following function defined on $\T_q^m$, 
\begin{align*}
F(y)\coloneqq \frac{1}{N_z} \sum_{v\in A_z} d^2_T(y,\bar{v}_c)
\end{align*}
where $A_z=\{v \in \hat U_q\,\mid \ex_q ( v)=z\}$, $N_z=|A_z|$, $d_T$ is the distance function of $\T_q^m$ and $\bar{v}_c$ is the projection of $v$ onto the subspace $C$. From the same arguments as in the proof of Proposition \ref{P:GH}, we conclude that for any $v,w \in A_z$,
\begin{align}\label{eq:551}
d_T(\bar{v}_c,\bar{w}_c) \le CK(r/2).
\end{align}

If we set $\hat l \coloneqq l\circ\ex_q$, then by using the same function $\phi$ as in the proof of Theorem \ref{T:localfiber}, we define for any $w \in \hat B(0,\kappa_3r)$,
\begin{align*}
\hat L(w)=\frac{1}{c_1}\int_{T_qM}\hat l(v)\phi(2\rho_v(w))\,dV(v),
\end{align*}
where the constant $c_1>0$ is determined by
\begin{align*}
\int_{\R^n} \phi(|z|^2)\,dz=c_1.
\end{align*}

By using \eqref{eq:551} and the same proof of Theorem \ref{T:localfiber}, $\hat L$ is almost a projection onto $C$. More precisely, for any $w \in \hat U_q$,
\begin{align}\label{eq:553a}
\hat L(w)=\bar w_c+O(K(r/2)).
\end{align}

Moreover, the following properties hold for $\hat L$.

\begin{enumerate}[label=(\alph*)]
\item $\hat L$ is a $CK(r/2)$-almost-Riemannian submersion.

\item $|\na^2 \hat L|=O(r^{-1}K(r/2))$.

\item If $(M^n,g)$ further satisfies \eqref{cond:HOAF}, then for any $i \ge 3$, $|\na^i \hat L|=O(r^{1-i}K(r/2))$.
\end{enumerate}

Now we define for any $z \in \hat B(0,\kappa_3 r)$
\begin{align}\label{eq:554}
\hat T(z)=(\hat f_q(z),i_q \circ \hat L(z)).
\end{align}
Then it is clear that $\hat T$ descends to a bundle map $T$ from $\Omega_q$ to $B(0,\kappa_3r)\times \T^m_{\infty}$.

Combining Theorem \ref{T:localfiber} and (a),(b),(c) above, we easily conclude that
\begin{enumerate}[label=(\roman*)]
\item $\hat T$ is a $CK(r/2)$-almost-isometry.

\item $|\na^2 \hat T|=O(r^{-1}K(r/2))$.

\item If $(M^n,g)$ further satisfies \eqref{cond:HOAF}, then for any $i \ge 3$, $|\na^i \hat T|=O(r^{1-i}K(r/2))$.
\end{enumerate}

In particular, $\hat T$ restricted on each fiber is a covering map. In fact, from Theorem \ref{T:localfiber} (iv), we easily conclude that $\hat T$ is a bundle isomorphism if $r$ is sufficiently large.
\end{proof}

Now we consider the map 
\begin{align*}
\bar T_q \coloneqq T_q^{-1} \circ (\text{id}\times \pi) : B(0,\kappa_3 r) \times \R^m \overset{\text{id}\times \pi}{\longrightarrow} B(0,\kappa_3 r) \times \T^m_{\infty}\overset{T_q^{-1}}{\longrightarrow} \Omega_q,
\end{align*}
where $\pi$ is the quotient map. With the help of $\bar T_q$, we can do all computations in the local coordinate system.

\begin{cor}\label{C:para}
The pullback metric $(\bar T_q)^*g$ under the local coordinates satisfies
\begin{enumerate}[label=(\roman*)]
\item $g_{ij}-\delta_{ij}=O(K(r/2))$.

\item $\partial_kg_{ij}=O(r^{-1}K(r/2))$.

\item If $(M^n,g)$ further satisfies \eqref{cond:HOAF}, then for any multi-index $\alpha$ with $|\alpha| \ge 2$, $|\partial^{\alpha} g_{ij}|=O(r^{-|\alpha|}K(r/2))$.
\end{enumerate}
\end{cor}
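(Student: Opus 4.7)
\noindent\textbf{Proof proposal for Corollary \ref{C:para}.} The strategy is to unpack the definition $\bar T_q = T_q^{-1}\circ(\mathrm{id}\times \pi)$ and transfer the regularity bounds on $T_q$ from Proposition \ref{P:para} directly to coordinate estimates for the pullback metric. Write $\Phi := \mathrm{id}\times \pi$, where $\pi: \R^m \to \T_\infty^m$ is the universal covering map, which is a local isometry from the Euclidean $\R^m$ to the flat torus $\T_\infty^m$. Hence $\Phi$ is a local isometry from $(B(0,\kappa_3 r)\times \R^m, g_{\mathrm{Euc}})$ onto $(B(0,\kappa_3 r)\times \T_\infty^m, g_0)$, where $g_0$ denotes the product of Euclidean and flat torus metrics. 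In Euclidean-flat coordinates $\Phi$ is literally the identity, so all its higher-order derivatives vanish; in particular, any covariant tensor estimate transfers verbatim between $B\times \R^m$ and $B\times \T_\infty^m$.

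For (i), Proposition \ref{P:para}(i) asserts that $T_q$ is a $CK(r/2)$-almost-isometry, i.e.\ $|(T_q^{-1})^*g - g_0| = O(K(r/2))$ on $B(0,\kappa_3 r)\times \T_\infty^m$. Pulling back via the local isometry $\Phi$ yields $|(\bar T_q)^*g - g_{\mathrm{Euc}}| = O(K(r/2))$ on $B(0,\kappa_3 r)\times \R^m$, which in Euclidean coordinates reads $|g_{ij} - \delta_{ij}| = O(K(r/2))$, proving (i).

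For (ii) and (iii), differentiate the identity $T_q \circ \bar T_q = \Phi$ using the chain rule. Because $\Phi$ has zero second (and higher) derivatives in Euclidean coordinates, an inductive computation expresses $\nabla^k \bar T_q$ as a polynomial in $(dT_q)^{-1}$ and $\nabla^i T_q$ for $i\le k$; combining with Proposition \ref{P:para}(ii) and (iii) gives $|\nabla^k \bar T_q| = O(r^{1-k}K(r/2))$ for $k\ge 2$ (under \eqref{cond:HOAF} when $k\ge 3$). On the other hand, in Euclidean coordinates on $B\times \R^m$ we have $g_{ij}(x) = g_{\bar T_q(x)}\bigl(d\bar T_q(\partial_i), d\bar T_q(\partial_j)\bigr)$, so a Fa\`a di Bruno type expansion writes $\partial^\alpha g_{ij}$ as a sum of terms involving $\nabla^l g$ at $\bar T_q(x)$ (which satisfies $|\nabla^l g| = O(r^{-l}K(r))$ under \eqref{cond:AF} and \eqref{cond:HOAF}) contracted with derivatives of $\bar T_q$ up to order $|\alpha|+1$. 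Each such term contains at least one factor requiring the differentiation of a first-order object, and a direct count of the decay rates yields $|\partial^\alpha g_{ij}| = O(r^{-|\alpha|}K(r/2))$, giving (ii) for $|\alpha|=1$ and (iii) for $|\alpha|\ge 2$.

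The main obstacle is the bookkeeping: one must express partial derivatives of $g_{ij}$ in Euclidean coordinates in terms of covariant derivatives of $\bar T_q$ and the ambient metric $g$, and verify that inverting $T_q$ preserves the regularity hierarchy. Both steps are routine once (i) is established, since the $C^0$-closeness $|g_{ij} - \delta_{ij}| = O(K(r/2))$ implies smallness of the Christoffel symbols of $(\bar T_q)^*g$, so there is negligible difference between coordinate and covariant derivatives at the level of the claimed decay rates.
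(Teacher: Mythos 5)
Your proposal is correct and rests on the same engine as the paper's proof, namely Proposition \ref{P:para}, but it runs the chain rule in the opposite direction. The paper never inverts the map $T_q$: it observes that the coordinate functions $x_i$ are components of $T_q$, writes the \emph{inverse} metric as $g^{ij}=g(\na x_i,\na x_j)$, bounds $\partial_k g^{ij}$ directly by $|\na^2 T_q|=O(r^{-1}K(r/2))$, and then recovers $\partial_k g_{ij}=-g_{il}(\partial_k g^{ls})g_{sj}$ by inverting the \emph{matrix}; (iii) is then an induction. You instead differentiate $g_{ij}=g(d\bar T_q(\partial_i),d\bar T_q(\partial_j))$, which forces you to first transfer the bounds from $T_q$ to $\bar T_q=T_q^{-1}\circ(\mathrm{id}\times\pi)$ by differentiating $T_q\circ\bar T_q=\Phi$; this extra inversion step is legitimate (since $dT_q$ is $O(K(r/2))$-close to the identity, $\na^2\bar T_q=-(dT_q)^{-1}\na^2T_q(d\bar T_q,d\bar T_q)$, etc.), just slightly longer. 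One imprecision to fix: the terms ``$\na^l g$ at $\bar T_q(x)$'' in your Fa\`a di Bruno expansion should not appear --- the metric is parallel for its own Levi-Civita connection, so if you differentiate covariantly the only contributions are derivatives of $\bar T_q$ together with commutator terms involving $\na^j Rm$ (controlled by \eqref{cond:HOAF}); if you instead meant partial derivatives of $g$ in some ambient chart, the argument would be circular, since the only chart available is the one induced by $\bar T_q$ itself. With that correction the decay count closes and both routes yield the stated estimates.
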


\begin{proof}
(i) follows immediately from Proposition \ref{P:para} (i). Assume $(x_1,x_2,\cdots,x_n)$ are local coordinates induced by $\bar T_q$, then 
\begin{align*}
\partial_k g^{ij}=\partial_k(g(\na x_i,\na x_j))=g(\na_{\partial_k}\na x_i,\na x_j)+g(\na x_i,\na_{\partial_k}\na x_j)=O(r^{-1}K(r/2)),
\end{align*}
where we have used Proposition \ref{P:para} (ii) for the last equation.

Therefore, we have
\begin{align*}
\partial_k g_{ij}=-g_{il}(\partial_kg^{ls})g_{sj}=O(r^{-1}K(r/2)).
\end{align*}

Finally, (iii) follows from Proposition \ref{P:para} (iii) and induction.
\end{proof}

From the bundle isomorphism $T_q:\Omega_q \to B(0,\kappa_3r)\times \T^m_{\infty}$, we can define a torus action $\mu_q$ of $\T^m_{\infty}$ such that for any $x \in \Omega_q$ and $a \in \T^m_{\infty}$ we have 
\begin{align}\label{eq:555}
\mu_q(a)(x)=T_q^{-1}\circ (\overline{a+T_q(x)}),
\end{align}
where $\overline{\,\cdot\,}$ denotes the quotient map onto $\T^m_{\infty}$.

\begin{lem}\label{L:ave1}
For any $a \in \T^m_{\infty}$, $(\mu_q(a))^*g=g+O(r^{-1}K(r/2))$. If $(M^n,g)$ further satisfies \eqref{cond:HOAF}, then $|\na^i((\mu_q(a))^*g)|=O(r^{-1-i}K(r/2))$ for any $i\ge 1$.
\end{lem}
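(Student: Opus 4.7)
The plan is to work entirely in the local coordinate chart provided by $\bar T_q$. By the definition \eqref{eq:555} of $\mu_q(a)$ and the construction of $T_q = T_q$ in Proposition \ref{P:para}, when pulled back through $\bar T_q : B(0,\kappa_3 r) \times \R^m \to \Omega_q$, the action $\mu_q(a)$ lifts to the translation $x \mapsto x + (0, \tilde a)$, where $\tilde a \in \R^m$ is any lift of $a \in \T_{\infty}^m$. Thus if we write $g_{ij}$ for the components of $\bar T_q^* g$, the components of $\bar T_q^*\bigl((\mu_q(a))^* g\bigr)$ are simply $\tilde g_{ij}(x) = g_{ij}(x + (0,\tilde a))$.

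Since $\T_{\infty}^m$ has bounded diameter, we may choose $|\tilde a| \le C$ independent of $q$. The first-order estimate is then immediate from the mean value theorem and Corollary \ref{C:para}(ii):
\[
|\tilde g_{ij}(x) - g_{ij}(x)| \le |\tilde a| \sup_{s \in [0,1]} |\partial_k g_{ij}(x + s(0,\tilde a))| = O(r^{-1} K(r/2)).
\]
This proves the first assertion, once we observe that $\bar T_q$ is a near-isometry by Proposition \ref{P:para}(i), so coordinate and geometric pointwise norms agree up to multiplicative error $1 + O(K(r/2))$.

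For the higher-order estimates under the additional hypothesis \eqref{cond:HOAF}, Corollary \ref{C:para}(iii) gives $|\partial^{\alpha} g_{ij}| = O(r^{-|\alpha|} K(r/2))$ for every multi-index with $|\alpha| \ge 2$. Applying the mean value theorem once more,
\[
|\partial^{\alpha} \tilde g_{ij}(x) - \partial^{\alpha} g_{ij}(x)| \le |\tilde a| \sup |\partial^{\alpha + e_k} g_{ij}| = O(r^{-|\alpha|-1} K(r/2))
\]
for every $|\alpha| \ge 1$. It remains to pass from coordinate derivatives to covariant derivatives of the tensor $h := (\mu_q(a))^* g$ with respect to $\nabla^g$. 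Since $\nabla^g g = 0$, we have $\nabla^g h = \nabla^g(h - g)$, and the Christoffel symbols $\Gamma^k_{ij}$ of $g$ in the $\bar T_q$ coordinates satisfy $|\partial^{\beta}\Gamma^k_{ij}| = O(r^{-1-|\beta|} K(r/2))$ for all $|\beta| \ge 0$ (directly from Corollary \ref{C:para}(i)-(iii) combined with $\Gamma = \frac{1}{2} g^{-1}(\partial g)$). Expanding $\nabla^i h$ as an iterated sum of coordinate derivatives of $h - g$ contracted with products of $\Gamma$'s and their derivatives, each term is bounded by $O(r^{-1-i} K(r/2))$, giving the claimed covariant estimate.

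The main potential obstacle is bookkeeping rather than any deep analytic difficulty: one must verify that the translation interpretation of $\mu_q(a)$ in the $\bar T_q$-coordinates is correct, and that the passage from coordinate estimates to covariant estimates does not lose the gain of the extra factor $r^{-1}$ coming from the mean value theorem. Both are handled by the uniform coordinate estimates of Corollary \ref{C:para}.
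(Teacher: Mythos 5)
Your proof is correct and follows essentially the same route as the paper: both work in the $\bar T_q$-coordinates, where $\mu_q(a)$ becomes a translation in the torus factor, and both derive the estimates from Corollary \ref{C:para}. The only cosmetic difference is that the paper writes $g_{ij}(x+a)-g_{ij}(x)$ as an integral of $\partial_n g_{ij}$ and differentiates that identity covariantly, whereas you apply the mean value theorem to coordinate derivatives first and convert to covariant derivatives via the Christoffel symbols at the end; the two bookkeeping schemes yield the same bounds.
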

\begin{proof}
We do all the estimates under the coordinates given by $\bar T_q$. Without loss of generality, we assume that $a=(0,0,\cdots,1)$. Then for any $x \in B(0,\kappa_3 r) \times \R^m$,
\begin{align}\label{eq:555a}
((\mu_q(a))^*g)_{ij}(x)=g_{ij}(x+a)=g_{ij}(x)+\int_{0}^1 \partial_n g_{ij}(x+ta)\,dt.
\end{align}
Then it is clear from Corollary \ref{C:para} (ii) that
\begin{align*}
(\mu_q(a))^*g=g+O(r^{-1}K(r/2)).
\end{align*}
If $(M^n,g)$ further satisfies \eqref{cond:HOAF}, by taking the covariant derivative of \eqref{eq:555a} we have
\begin{align*}
\na_k((\mu_q(a))^*g)_{ij}(x)= \int_{0}^1 \na_k\partial_n g_{ij}(x+ta)\,dt.
\end{align*}

On the other hand, from Corollary \ref{C:para} (iii) we have
\begin{align*}
\na_k\partial_n g_{ij}=\partial_k\partial_ng_{ij}-\Gamma_{ki}^s \partial_n g_{sj}-\Gamma_{kj}^s \partial_ng_{is}=O(r^{-2}K(r/2)).
\end{align*}
Therefore,
\begin{align*}
\na_k((\mu_q(a))^*g)_{ij}(x)=O(r^{-2}K(r/2)).
\end{align*}

Finally, the estimates of all higher covariant derivatives follow by induction.
\end{proof}

Locally we can define an invariant metric $\bar g_q$ on $\Omega_q$ by
\begin{align}\label{eq:556}
\bar g_q \coloneqq \frac{1}{\mathfrak{m}(\T^m_{\infty})} \int_{\T^m_{\infty}} (\mu_q(a))^*g\,d\mathfrak{m}(a),
\end{align}
where $d\mathfrak{m}$ is the Lebesgue measure of $\R^m$ and we identify $\T^m_{\infty}$ with its fundamental domain in $\R^m$ . Under the local coordinates, if we denote the components of $\bar g_q$ by $\bar g_{ij}$, then
\begin{align}\label{eq:556a}
\bar g_{ij}(x)= \frac{1}{\mathfrak{m}(\T^m_{\infty})} \int_{\T^m_{\infty}} g_{ij}(x+a)\,d\mathfrak{m}(a).
\end{align}

\begin{prop}\label{P:ave}
For the metric $\bar g_q$ defined in \eqref{eq:556}, the following properties are satisfied on $\Omega_q$.
\begin{enumerate}[label=(\roman*)]
\item $g=\bar g_q+O(r^{-1}K(r/2))$.

\item $|\na \bar g_q|=O(r^{-1}K(r/2))$.

\item The curvature operator $\overline{Rm}_q$ of $\bar g_q$ satisfies $\overline{Rm}_q=O(r^{-2}K(r/2))$.

\item If $(M^n,g)$ further satisfies \eqref{cond:HOAF}, then $|\na^i \bar g_q|=O(r^{-1-i}K(r/2))$ for any $i\ge 1$.
\end{enumerate}
\end{prop}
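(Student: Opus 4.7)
My plan is to work throughout in the local coordinates $(x^1,\dots,x^n)$ provided by $\bar T_q$, where \eqref{eq:556a} gives the explicit formula $\bar g_{ij}(x) = \tfrac{1}{\mathfrak{m}(\T^m_\infty)} \int_{\T^m_\infty} g_{ij}(x+a)\,d\mathfrak{m}(a)$. The crucial observation is that $\bar T_q$ factors through the torus quotient, so $g_{ij}$ is periodic in the last $m$ coordinates (the torus directions), and hence $\bar g_q$ is translation-invariant along those directions.

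Statement (i) follows immediately: since $(\mu_q(a))^*g - g = O(r^{-1}K(r/2))$ uniformly in $a$ by Lemma \ref{L:ave1}, averaging preserves this bound. For (ii), the torus invariance yields $\partial_k \bar g_{ij} = 0$ whenever $k$ is a torus index, while for $k$ a base index, $\partial_k \bar g_{ij}(x) = \tfrac{1}{\mathfrak{m}} \int \partial_k g_{ij}(x+a)\,d\mathfrak{m}(a)$, which is $O(r^{-1}K(r/2))$ by Corollary \ref{C:para}(ii). Writing
\begin{align*}
\nabla_k \bar g_{ij} = \partial_k \bar g_{ij} - \Gamma^l_{ki}[g]\bar g_{lj} - \Gamma^l_{kj}[g]\bar g_{il}
\end{align*}
and combining with $|\Gamma[g]| = O(r^{-1}K(r/2))$ and $|\bar g_q| \le C$ closes this case.

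Part (iii) is the main subtlety because we do not have pointwise control on $\partial^2 g$ without the HOAF hypothesis. I circumvent this by splitting the coordinate curvature formula as $R_{ijkl}(g) = L_{ijkl}(g) + Q_{ijkl}(g,\partial g)$, where
\begin{align*}
L_{ijkl}(g) := \tfrac{1}{2}\bigl(\partial_i\partial_k g_{jl} + \partial_j\partial_l g_{ik} - \partial_i\partial_l g_{jk} - \partial_j\partial_k g_{il}\bigr)
\end{align*}
is linear in the second partials, while the remainder satisfies $Q = O(|\partial g|^2) = O(r^{-2}K(r/2)^2) = O(r^{-2}K(r/2))$, using that $K$ is bounded. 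The curvature bound $|R(g)| = O(r^{-2}K(r/2))$ then forces $|L(g)| = O(r^{-2}K(r/2))$. Since $L$ is a linear second-order operator in $x$ with constant coefficients and $\partial_x$ commutes with the $a$-integration, we obtain $L_{ijkl}(\bar g_q)(x) = \tfrac{1}{\mathfrak{m}}\int L_{ijkl}(g)(x+a)\,d\mathfrak{m}(a) = O(r^{-2}K(r/2))$. Combined with $Q(\bar g_q, \partial \bar g_q) = O(r^{-2}K(r/2))$ (a consequence of (ii)), we conclude $\overline{Rm}_q = O(r^{-2}K(r/2))$.

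Finally, (iv) is obtained by commuting the covariant derivative with averaging: since $\nabla^i g = 0$, we have $\nabla^i \bar g_q = \tfrac{1}{\mathfrak{m}}\int \nabla^i\bigl((\mu_q(a))^*g\bigr)\,d\mathfrak{m}(a)$, and the HOAF clause of Lemma \ref{L:ave1} gives $|\nabla^i((\mu_q(a))^*g)| = O(r^{-1-i}K(r/2))$ uniformly in $a$, which yields the claim. I do not expect any surprises in verifying these steps; the only genuinely conceptual point is the $L+Q$ decomposition that shifts the burden from an unavailable $\partial^2 g$ estimate to the given curvature bound.
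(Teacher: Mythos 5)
Your proposal follows essentially the same route as the paper's own proof. For (i) and (iv) you cite Lemma \ref{L:ave1} exactly as the paper does; for (ii) you compute $\partial_k\bar g_{ij}$ by differentiating under the integral and feed it into the same coordinate expression for $\nabla_k\bar g_{ij}$; and your $L+Q$ decomposition of the coordinate curvature formula for (iii) is precisely the device the paper uses, namely isolating the linear second-partials block that commutes with the $\T^m_{\infty}$-average and absorbing the $\Gamma*\Gamma$ term into $O(r^{-2}K(r/2))$ via (i) and (ii) together with the boundedness of $K$.
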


\begin{proof}
Notice that (i) and (iv) follow immediately from Lemma \ref{L:ave1}, so we only need to prove (ii) and (iii). From Corollary \ref{C:para} (ii) and \eqref{eq:556a} we have
\begin{align}\label{eq:556b}
\partial_k\bar g_{ij}(x)= \frac{1}{\mathfrak{m}(\T^m_{\infty})} \int_{\T^m_{\infty}} \partial_kg_{ij}(x+a)\,d\mathfrak{m}(a)=O(r^{-1}K(r/2)).
\end{align}

Therefore,
\begin{align*}
\na_k \bar g_{ij}=\partial_k \bar g_{ij}-\Gamma_{ki}^s \bar g_{sj}-\Gamma_{kj}^s \bar g_{is}=O(r^{-1}K(r/2)).
\end{align*}

Now we denote the Christoffel symbol and curvature of $\bar g_q$ by $\bar \Gamma_{ij}^k$ and $\bar R_{ijkl}$, respectively. It follows from (i) and \eqref{eq:556b} that
\begin{align}\label{eq:556c}
\bar \Gamma_{ij}^k=\frac{1}{2}\bar g^{kl}(\partial_i \bar g_{jl}+\partial_j \bar g_{il}-\partial_l\bar g_{ij})=O(r^{-1}K(r/2)).
\end{align}

For the curvature, we have
\begin{align}
R_{ijkl}=&\frac{1}{2}(\partial_j\partial_kg_{il}+\partial_i\partial_lg_{jk}-\partial_j\partial_lg_{ik}-\partial_i\partial_kg_{jl})+g_{st}(\Gamma_{jk}^s\Gamma_{il}^t-\Gamma_{jl}^s\Gamma_{ik}^t) \notag \\
=&\frac{1}{2}(\partial_j\partial_kg_{il}+\partial_i\partial_lg_{jk}-\partial_j\partial_lg_{ik}-\partial_i\partial_kg_{jl})+O(r^{-2}K(r/2)). \label{eq:556d}
\end{align}

Similarly, by using \eqref{eq:556c} we have
\begin{align}\label{eq:556e}
\bar R_{ijkl}=\frac{1}{2}(\partial_j\partial_k\bar g_{il}+\partial_i\partial_l\bar g_{jk}-\partial_j\partial_l\bar g_{ik}-\partial_i\partial_k\bar g_{jl})+O(r^{-2}K(r/2)). 
\end{align}

From \eqref{eq:556d}, \eqref{eq:556e} and the formula \eqref{eq:556a}, the proof is complete since
\begin{align*}
\bar R_{ijkl}(x)= \frac{1}{\mathfrak{m}(\T^m_{\infty})} \int_{\T^m_{\infty}} R_{ijkl}(x+a)\,d\mathfrak{m}(a)+O(r^{-2}K(r/2)).
\end{align*}

\end{proof}

 \subsection{Construction of a global fibration}

We first construct a transition map between two local torus fibrations constructed in Theorem \ref{T:localfiber}. The following Proposition is similar to \cite[Lemma $3.12$]{Mi10}, see also \cite[Proposition $5.6$]{CFG92}.

\begin{prop}\label{P:trans}
Given two points $x,y$ such that $r(x),r(y)\ge A_2$ and $d(x,y) \le \kappa_4r$ with $r=r(x)$, if $\Omega_{x,y}\coloneqq \Omega_x \cap \Omega_{y}$, then there exists a map $\phi_{x,y}$ between $f_{y}(\Omega_{x,y})$ and $f_x(\Omega_{x,y})$ satisfying
\begin{enumerate}[label=(\roman*)]
\item $\phi_{x,y}$ is a $CK(r/2)$-almost-isometry.

\item $|f_x-\phi_{x,y}\circ f_{y}| \le CrK(r/2)$.

\item $|d f_x-d\phi_{x,y}\circ d f_{y}| \le CK(r/2)$.

\item $|\na^2 \phi_{x,y}| \le Cr^{-1}K(r/2)$.

\item If $(M^n,g)$ further satisfies \eqref{cond:HOAF}, then $|\na^i \phi_{x,y}|=O(r^{1-i}K(r/2))$ for any $i \ge 3$.
\end{enumerate}
\end{prop}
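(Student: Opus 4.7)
The natural candidate for $\phi_{x,y}$ is constructed by averaging $f_x$ over the fibers of $f_y$, using the bundle parametrization $T_y$ of Proposition~\ref{P:para}. After shrinking $\kappa_4$ relative to $\kappa_3$ so that the entire $\mathbb{T}^m_\infty$-orbit through each point of $\Omega_{x,y}$ under $\mu_y$ stays inside $\Omega_x$, I would set
$$\phi_{x,y}(b):=\frac{1}{\mathfrak{m}(\mathbb{T}^m_\infty)}\int_{\mathbb{T}^m_\infty} f_x\bigl(T_y^{-1}(b,t)\bigr)\,d\mathfrak{m}(t)\qquad\text{for }b\in f_y(\Omega_{x,y}),$$
in the spirit of Minerbe~\cite[Lemma~3.12]{Mi10}. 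This produces a map that is independent of any choice of lift and depends smoothly on $b$.

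The conceptual heart of the argument---and what I expect to be the main obstacle---is establishing that the vertical distribution of $f_y$ almost coincides with that of $f_x$ on $\Omega_{x,y}$. This compatibility comes from the analysis of short bases: by \eqref{eq:B36} and Proposition~\ref{P:finite}, the sliding $\{c_1^{x'},\dots,c_m^{x'}\}$ of the short basis at $x$ to $y$ and the basis $\{c_1^y,\dots,c_m^y\}$ generate the same rank-$m$ sublattice of $T_yM$ modulo an element of $\mathrm{GL}(m,\mathbb{Z})$, so their linear spans differ by an angle of size $O(\tilde K(r/4))$. Combined with Lemma~\ref{L:tr} and the construction in Theorem~\ref{T:localfiber}, this forces the vertical spaces of $f_x$ and $f_y$ to be $O(\tilde K(r/4))$-close at every point of $\Omega_{x,y}$, so $df_x$ applied to any $f_y$-vertical unit vector has norm $O(\tilde K(r/4))$.

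Granted this key estimate, the remaining properties follow by routine calculation. For (ii), at $p\in\Omega_{x,y}$ with $T_y(p)=(b,t_0)$ one writes
$$\phi_{x,y}(f_y(p))-f_x(p)=\frac{1}{\mathfrak{m}(\mathbb{T}^m_\infty)}\int_{\mathbb{T}^m_\infty}\bigl[f_x(T_y^{-1}(b,t))-f_x(T_y^{-1}(b,t_0))\bigr]\,d\mathfrak{m}(t),$$
and each integrand is bounded by integrating $df_x$ along a curve of length $O(1)$ in the $f_y$-fiber; this yields $O(\tilde K(r/4))$, comfortably within $O(rK(r/2))$. For (iii), differentiating under the integral in a horizontal direction and using that the vertical components of $df_x\circ dT_y^{-1}$ average to zero by translation-invariance of $d\mathfrak{m}$ reduces matters to the horizontal part, which is an $O(K(r/2))$-almost-isometry; (i) then follows from (iii) together with the almost-Riemannian-submersion property of $f_y$. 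The higher-order estimates (iv) and (v) are obtained by differentiating the defining integral once or twice more and invoking Theorem~\ref{T:localfiber}(ii)--(iii) and Proposition~\ref{P:para}(ii)--(iii), noting that averaging over the compact torus $\mathbb{T}^m_\infty$ does not degrade the decay rate in $r$.
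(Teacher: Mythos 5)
Your averaging construction
\[
\phi_{x,y}(b)=\frac{1}{\mathfrak m(\mathbb T^m_\infty)}\int_{\mathbb T^m_\infty}f_x\bigl(T_y^{-1}(b,t)\bigr)\,d\mathfrak m(t)
\]
is a genuinely different route from the paper's. The paper instead sets $\phi_{x,y}\coloneqq f_x\circ\exp_y$, i.e., it composes $f_x$ with the approximate section of $f_y$ given by exponentiating the horizontal subspace $H'\subset T_yM$, and then conjugates with the isometry $\iota_u$ between $T_yM$ and a lift $u\in\hat U_x$ of $y$ to compare $\hat f_x$ and $\tilde f_y\coloneqq\iota_u\circ\hat f_y\circ\iota_u^{-1}$ on the common tangent space. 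Both constructions are legitimate; your averaging has the advantage that $\phi_{x,y}$ is manifestly well defined and independent of any lift, while the paper's section-based map makes the comparison with $\hat f_x$ more directly computable via the explicit estimates \eqref{eq:5b009}, \eqref{eq:5b014}. You have also correctly isolated the conceptual heart of the proof: the compatibility of the vertical distributions comes from the sliding identity \eqref{eq:B36} and Proposition~\ref{P:finite}, which say that $\{c_i^{x'}\}$ and $\{c_i^y\}$ differ by a bounded $\mathrm{GL}(m,\mathbb Z)$ matrix, so the two vertical subspaces are close. This is exactly the content of the paper's chain of estimates culminating in $|a_i-\tilde V_i|\le CK(r/2)$.

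Two points are worth tightening. First, the angle between the vertical distributions at a fixed point is $O(K(r/2))$, not $O(\tilde K(r/4))$: the $\tilde K(r/4)$ appears in the \emph{accumulated} sliding estimates \eqref{eq:basis1}--\eqref{eq:basis2} along the geodesic ray, whereas the local comparison between $c_i^{x'}$ and $c_i^y$ (both at $y$) follows from Lemma~\ref{L:tr} and Proposition~\ref{P:es3} with error $O(K(r/2))$, as the paper uses. This matters because $\tilde K(r/4)\le CrK(r/2)$ is not automatic for an arbitrary admissible $K$ (e.g.\ very rapidly decaying $K$), so to guarantee the stated bound in (ii) you should invoke the sharper $O(K(r/2))$ angle estimate, which is also what directly proves (iii). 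Second, you must restrict the domain of integration: for $p$ near $\partial\Omega_x$ the full $\mu_y$-orbit through $p$ need not lie in $\Omega_x$; shrinking $\kappa_4$ alone does not fix this, but retreating the boundary of $\Omega_{x,y}$ inward by the uniformly bounded fiber diameter (harmless at scale $\kappa_3 r$) does. With those adjustments your verification of (i)--(v) by differentiating under the integral and invoking Theorem~\ref{T:localfiber}(ii)--(iii) and Proposition~\ref{P:para}(ii)--(iii) is sound, since integration over the compact torus commutes with taking horizontal covariant derivatives and does not degrade the $r$-decay.
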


\begin{proof}
From Lemma \ref{L:neigh}, there exists a $u \in \hat U_x$ which is a lift of $y$. We can identify $T_{y}M$ and $T_xM$ through an isometry $\iota_u$ between the metrics $\hat{g}_{y}=\ex_{y}^*g$ and $\hat{g}_{x}=\ex_{x}^*g$. More precisely, $\iota_u$ is defined by $\iota_u \coloneqq \text{Exp}_u \circ ( d_u\ex_x)^{-1}$, where $\text{Exp}_u$ is the exponential map at $u$. It is clear that
\begin{align*}
\ex_x \circ \iota_u=\ex_{y}.
\end{align*}

Now we define on $f_{y}(\Omega_{x,y})$,
\begin{align}\label{eq:5b002}
\phi_{x,y} \coloneqq f_x \circ \ex_{y}.
\end{align}

Then it is clear from the definition that
\begin{align}\label{eq:5b003}
\phi_{x,y} \circ f_{y} \circ \ex_x =\hat f_x \circ \tilde{f}_{y},
\end{align}
where $\hat f_x=f_x\circ \ex_x$ and $\tilde{f}_{y}\coloneqq \iota_u \circ \hat f_{y}\circ \iota_u^{-1}=\iota_u \circ f_{y} \circ \ex_{y}\circ \iota_u^{-1}$. As before, we set $H \subset T_xM$ to be the subspace which is perpendicular to $\la c_1^x,c_2^x,\cdots,c_m^x\ra$ and $H'$ the corresponding subspace in $T_{y}M$. Moreover, we define $\tilde H \coloneqq \iota_u(H')$ to be the $(n-m)$-dimensional submanifold through $u$.

As before, we assume that $c_i^{x'}$ is the sliding of $c_i^x$ to $y$ along a minimizing geodesic from $x$ to $y$. Moreover as \eqref{eq:B36} we set
\begin{align}
c_i^{y}=\sum_{j=1}^m k_{ji}c_j^{x'}, \label{eq:5b003a}
\end{align}
for some $K_{x,y}=(k_{ij}) \in \text{GL}(m,\Z)$. From the definition of the sliding, the geodesic loop $c_i^{x'}$ can be obtained by the geodesic with respect to $\hat g_x$ connecting $u$ and $\tau_{c_i^x}(u)$.

We denote the initial tangent vectors of $c_i^y$ and $c_i^{x'}$ by $v_i$ and $v_i'$ respectively and set $V_i=(d\iota_u)_0(v_i)$ and $V'_i=(d\iota_u)_0(v'_i)$. Notice that $V_i,V_i' \in T_u(T_xM)$, which is identified with $T_xM$ by geodesic coordinates. It follows from \eqref{eq:5b003a} and Lemma \ref{L:tr} (i) that
\begin{align*}
v_i=\sum_{j=1}^m k_{ji}v_j'+O(r^{-1}K(r/2))
\end{align*}
and hence
\begin{align}
V_i=\sum_{j=1}^m k_{ji}V_j'+O(r^{-1}K(r/2)). \label{eq:5b003b}
\end{align}

On the one hand, by Proposition \ref{P:es3} we have
\begin{align}\label{eq:5b004}
|\tau_{c_i^x}(u)-(u+V'_i)| \le CK(r/2).
\end{align}

On the other hand, from Lemma \ref{L:tr}(ii) we have
\begin{align}\label{eq:5b005}
|\tau_{c_i^x}(u)-(u+c_i^x)| \le CK(r/2).
\end{align}
Combining \eqref{eq:5b004} and \eqref{eq:5b005} we have
\begin{align*}
|c_i^x-V'_i| \le CK(r/2).
\end{align*}

Therefore, it follows from \eqref{eq:5b003b} that
\begin{align}\label{eq:5b006a}
V_i=\sum_{j=1}^m k_{ji}c_j^x+O(K(r/2)).
\end{align}

Now we set $\hat U \coloneqq \hat B(0,\kappa_4r(x)) \cap \hat B(u,\kappa_4r(y))$. For any $z \in \hat U \cap \tilde H$, there exists a vector $W$ which is perpendicular to $\la V_1,V_2,\cdots,V_m \ra$ with respect to $g_{y}$ such that
\begin{align*}
z=\text{Exp}_u(W).
\end{align*}

From Proposition \ref{P:es3} we have
\begin{align}\label{eq:5b007}
|z-(u+W)| \le CrK(r/2).
\end{align}

It follows from \eqref{eq:5b006a}, \eqref{eq:5b007} and the fact that $\hat g_{y}$ is $K(r/2)$ close to $\hat g_x$ that
\begin{align*}
|z-(u+W_H)| \le CrK(r/2).
\end{align*}

Therefore, for any $w \in \hat U$,
\begin{align}\label{eq:5b009}
|w_{H'}-w_H-(u-u_H)|\le CrK(r/2),
\end{align}
where we have used the same notations for $w,w_{H'}$ and their images under $\iota_u$.

From Theorem \ref{T:localfiber}, we have
\begin{align*}
|\hat f_x(w)-w_H|+|\hat f_{y}(w)-w_{H'}| \le CrK(r/2)
\end{align*}
and hence from \eqref{eq:5b009},
\begin{align*}
|\tilde f_{y}(w)-\hat f_x(w)-(u-u_H)| \le CrK(r/2).
\end{align*}

Composing with $\hat f_x$, we conlude that
\begin{align*}
|\hat f_x \circ \tilde f_{y}(w)-\hat f_x(w)| \le CrK(r/2).
\end{align*}

Therefore, we have proved (ii) from the formula \eqref{eq:5b002} since $\ex_x$ is surjective.

For any $z \in \hat U$, we set $z'=\iota_u^{-1}(z)$. From Theorem \ref{T:localfiber} (i), we know that $(d\hat f_{y})_{z'}$ is $CK(r/2)$-close to the projection onto $H'$. If we set $\tilde V_i=(d\iota_u)_{z'}(v_i)$, then $(d\tilde f_{y})_{z}$ is $CK(r/2)$-close to the projection onto $\la\tilde V_1,\tilde V_2,\cdots,\tilde V_m  \ra^{\perp}$. Now we consider the geodesic with respect to $\hat g_{y}$ from $z'$ to $\tau_{c_i^{y}}(z')$ and denote the initial tangent vector at $z'$ by $w_i$. From Proposition \ref{P:es3} and Lemma \ref{L:tr} we conclude that
\begin{align}\label{eq:5b012}
|v_i-w_i| \le |v_i-(\tau_{c_i^{y}}(z')-z')|+|w_i-(\tau_{c_i^{y}}(z')-z')| \le CK(r/2).
\end{align}

If we set $a_i=\sum_{j=1}^m k_{ji}c_j^x$, then it is clear that the loop corresponding to the geodesic connecting $z'$ and $\tau_{c_i^y}(z')$ is the sliding of $a_i$ to $\ex_y(z')$ along $\beta*\gamma$ where $\gamma$ is image under $\ex_x$ of the geodesic connecting $u$ and $z$. In particular, $\iota_u(\tau_{c_i^y}(z'))=\tau_{a_i}(z)$.

Then it follows from Proposition \ref{P:es3}, Lemma \ref{L:tr}(ii) and \eqref{eq:5b012} that
\begin{align*}
|a_i-\tilde V_i| \le& |a_i-(\tau_{a_i}(z)-z)|+|(\tau_{a_i}(z)-z)-((\iota_u)_*)_{z'}(w_i)|+|((\iota_u)_*)_{z'}(w_i)-((\iota_u)_*)_{z'}(v_i)| 
\\
\le& CK(r/2)+|v_i'-v_i| \le CK(r/2). 
\end{align*}

Therefore, it is easy to conclude that $(d\hat f_{y})_{z'}$ is $CK(r/2)$-close to the projection onto $H$. By taking the differential of \eqref{eq:5b003} we have
\begin{align}\label{eq:5b014}
d(\phi_{x,y} \circ f_{y} \circ \ex_x) =d\hat f_x \circ d\tilde{f}_{y}.
\end{align}

Since $d\hat f_x$ is $CK(r/2)$-close to the projection onto $H$, we conclude from \eqref{eq:5b014} that
\begin{align}\label{eq:5b015}
|d f_x-d\phi_{x,y}\circ d f_{y}| \le CK(r/2)
\end{align}
and hence (iii) is proved.

For any vector $W$ tangent to $f_{y}(\Omega_{x,y})$, we denote a horizontal lift of $W$ with respect to $f_{y}$ by $W'$. Since $d\hat f_x$ and $d\tilde f_{y}$ are $CK(r/2)$-close, $W'$ is $CK(r/2)$-close to a horizontal lift $\hat W$ of $W$ with respect to $f_x$. As both $f_x$ and $f_{y}$ are $CK(r/2)$-almost-Riemannian submersions, we conclude from \eqref{eq:5b015} that
\begin{align*}
||d\phi_{x,y}(W)|-|W|| \le |d\phi_{x,y}(df_{y}(W'))-df_x(W')|+|df_x(W')-df_x(\hat W)| \le CK(r/2)|W|.
\end{align*}
Therefore, $\phi_{x,y}$ is a $CK(r/2)$-almost-isometry and the proof of (i) is complete.

Finally, the proof of (iv) and (v) follow from From Theorem \ref{T:localfiber} (ii) and (iii) by taking the higher covariant derivatives of the formula \eqref{eq:5b003}.
\end{proof}

Next, we prove the following lemma, which is similar to \cite[Lemma $3.13$]{Mi10}.

\begin{lem}\label{L:trans1}
Given points $x,y,z$ such that $r=r(x) \ge A_2$ and their respective distances are bounded by $\kappa_4r$, then we have
\begin{enumerate}[label=(\roman*)]
\item $|\phi_{x,z}-\phi_{x,y}\circ \phi_{y,z}| \le CrK(r/2)$.

\item $|d\phi_{x,z}-d(\phi_{x,y}\circ \phi_{y,z})| \le CK(r/2)$.

\item $|\na^2 (\phi_{x,y}\circ \phi_{y,z})| \le Cr^{-1}K(r/2)$.

\item If $(M^n,g)$ further satisfies \eqref{cond:HOAF}, then $|\na^i (\phi_{x,y}\circ \phi_{y,z})|=O(r^{1-i}K(r/2))$ for any $i \ge 3$.
\end{enumerate}
\end{lem}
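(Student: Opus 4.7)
The plan is to derive (i)--(iv) directly from the estimates of Proposition \ref{P:trans} by a cocycle-type comparison, using the identity $\phi_{a,b}\circ f_b = f_a + O(rK(r/2))$ and its differentiated versions to transfer estimates from the ``outside'' (the manifold) to the ``inside'' (the base of the fibration).

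For (i), apply Proposition \ref{P:trans}(ii) to the pairs $(x,y)$, $(y,z)$ and $(x,z)$ to obtain
\[
|f_x-\phi_{x,y}\circ f_y|\le CrK(r/2),\quad |f_y-\phi_{y,z}\circ f_z|\le CrK(r/2),\quad |f_x-\phi_{x,z}\circ f_z|\le CrK(r/2).
\]
Since $\phi_{x,y}$ is a $CK(r/2)$-almost isometry (hence $(1+CK(r/2))$-Lipschitz) by Proposition \ref{P:trans}(i), composing the second inequality with $\phi_{x,y}$ and combining with the first gives
\[
|f_x-\phi_{x,y}\circ \phi_{y,z}\circ f_z|\le CrK(r/2).
\]
Subtracting from the third inequality then yields
\[
|(\phi_{x,z}-\phi_{x,y}\circ \phi_{y,z})\circ f_z|\le CrK(r/2).
\]
Because $f_z$ is an almost-Riemannian submersion from $\Omega_z$ onto a full ball in $\mathbb R^{n-m}$ (Theorem \ref{T:localfiber}), its image contains the entire relevant domain and the estimate descends to
\[
|\phi_{x,z}-\phi_{x,y}\circ \phi_{y,z}|\le CrK(r/2),
\]
on $f_z(\Omega_x\cap\Omega_y\cap\Omega_z)$.

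For (ii), the same argument runs with Proposition \ref{P:trans}(iii) in place of (ii): from
\[
|df_x-d\phi_{x,y}\circ df_y|\le CK(r/2),\qquad |df_y-d\phi_{y,z}\circ df_z|\le CK(r/2),
\]
together with the uniform bound $|d\phi_{x,y}|\le 1+CK(r/2)$, we get by composition
\[
|df_x-d\phi_{x,y}\circ d\phi_{y,z}\circ df_z|\le CK(r/2).
\]
Comparing with $|df_x-d\phi_{x,z}\circ df_z|\le CK(r/2)$ and using that $df_z$ is horizontally surjective (onto the tangent space of the base, again by Theorem \ref{T:localfiber}) yields
\[
|d\phi_{x,z}-d(\phi_{x,y}\circ\phi_{y,z})|\le CK(r/2).
\]

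For (iii), the chain rule gives
\[
\nabla^2(\phi_{x,y}\circ\phi_{y,z})=(\nabla^2 \phi_{x,y})(d\phi_{y,z},d\phi_{y,z})+d\phi_{x,y}\cdot \nabla^2\phi_{y,z}.
\]
Using $|d\phi_{x,y}|,|d\phi_{y,z}|\le 1+CK(r/2)$ from Proposition \ref{P:trans}(i) and $|\nabla^2\phi_{x,y}|,|\nabla^2\phi_{y,z}|\le Cr^{-1}K(r/2)$ from Proposition \ref{P:trans}(iv), both terms are of order $r^{-1}K(r/2)$, which gives (iii). For (iv), one iterates by induction: a general Fa\`a di Bruno expansion expresses $\nabla^i(\phi_{x,y}\circ\phi_{y,z})$ as a sum of products of the form $(\nabla^a\phi_{x,y})\bigl(\nabla^{j_1}\phi_{y,z},\ldots,\nabla^{j_a}\phi_{y,z}\bigr)$ with $j_1+\cdots+j_a=i$; under \eqref{cond:HOAF}, each factor is bounded by Proposition \ref{P:trans}(v), and the numerology $\sum(j_s-1)+(a-1)=i-1$ (using $a\ge 1$) forces the product to be $O(r^{1-i}K(r/2))$.

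The only real subtlety I anticipate is the descent step ``$|F\circ f_z|\le \varepsilon\Rightarrow |F|\le \varepsilon$'' in (i) and (ii): since $\phi_{x,z}$ and $\phi_{x,y}\circ\phi_{y,z}$ are defined only on subsets of $\mathbb R^{n-m}$ determined by the overlap $\Omega_{x}\cap\Omega_y\cap\Omega_z$, one must check that $f_z$ surjects onto this overlap in the base; this follows from Theorem \ref{T:localfiber} together with the inclusion $d(x,y),d(y,z)\le\kappa_4 r$ once $\kappa_4$ is chosen small relative to $\kappa_3$. With that caveat handled, (i)--(iv) follow in short order from Proposition \ref{P:trans}.
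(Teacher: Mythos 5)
Your proposal is correct and follows essentially the same route as the paper: a triangle-inequality comparison using Proposition \ref{P:trans} (i),(ii) for part (i) (with the Lipschitz bound on $\phi_{x,y}$ handling the middle term and surjectivity of $f_z$ giving the descent), the analogous argument with Proposition \ref{P:trans} (iii) for part (ii), and the chain rule with Proposition \ref{P:trans} (iv),(v) for parts (iii) and (iv). The surjectivity caveat you flag is exactly the point the paper disposes of with the remark that $f_z$ is surjective.
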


\begin{proof}
It is clear from Proposition \ref{P:trans} (i),(ii) that
\begin{align*}
&|(\phi_{x,z}-\phi_{x,y}\circ \phi_{y,z})\circ f_z| \\
\le & |\phi_{x,z}\circ f_z-f_x|+|\phi_{x,y}\circ \phi_{y,z}\circ f_z-\phi_{x,y}\circ f_y|+|\phi_{x,y} \circ f_y -f_x|\le CrK(r/2).
\end{align*}

Since $f_z$ is surjective, the proof of (i) is complete. (ii) is proved similarly by using Proposition \ref{P:trans} (i),(iii). Finally, (iii) and (iv) follow immediately from Proposition \ref{P:trans} (iv),(v).
\end{proof}

In the setting of Proposition \ref{P:trans}, there are two fibrations $f_x$ and $\phi_{x,y}\circ f_y$ on $\Omega_{x,y}$ over $f_x(\Omega_{x,y})$. It follows from Proposition \ref{P:para} that there exists a natural parametrization of $\phi_{x,y}\circ f_y$, defined by
\begin{align}\label{eq:5b016}
T'_y=(\phi_{x,y}\circ \pi_1 \circ T_y,\pi_2 \circ T_y)=(\phi_{x,y}\circ f_y,\pi_2 \circ T_y)
\end{align}
where $\pi_i$ are the projections of $B(0,\kappa_3r)\times \T_{\infty}^m$. Similarly, we define the torus action $\mu_y'$ which acts on the fibers of $\phi_{x,y} \circ f_y$.

By using the same notations as in the proof of Proposition \ref{P:trans}, for any $z\in f_x(\Omega_{x,y})$, we set $z'=\phi_{x,y}^{-1}(z)$ and $\bar z=\iota_u(z')$. Then, we define the translation $H_{x,y}^z$ of $\T_{\infty}^m$ such that for any $w \in \T_{\infty}^m$,
\begin{align*}
H^z_{x,y}(w)\coloneqq\overline{w+\pi_2 \circ \hat T_x(\bar z)}.
\end{align*}

In addition, we define the map $H_{x,y}$ on $f_x(\Omega_{x,y})\times \T_{\infty}^m \subset B(0,\kappa_3r)\times \T_{\infty}^m$ by 
\begin{align*}
H_{x,y}((z,w))\coloneqq (z,H^z_{x,y}(w)).
\end{align*}

Now we define a map of $\Omega_{x,y}$,
\begin{align}\label{eq:self}
\Phi_{x,y} \coloneqq T_x^{-1} \circ H_{x,y}\circ (\text{id} \times L_{x,y})\circ T'_y,
\end{align}
where $T_x$ and $T_y$ are from Proposition \ref{P:para} and $L_{x,y}$ is from \eqref{eq:auto}. The map $\Phi_{x,y}$ is illustrated by the following commutative diagram.

\[
  \begin{tikzcd}[column sep =huge] 
     B(0,\kappa_3r)\times \T_{\infty}^m \arrow{r}{\text{id} \times L_{x,y}} &B(0,\kappa_3r)\times \T_{\infty}^m  \arrow{r}{H_{x,y}} & B(0,\kappa_3r)\times \T_{\infty}^m \arrow{d}{T^{-1}_x} \\
      \Omega_{x,y} \arrow{rr}{\Phi_{x,y}}\arrow{u}{T'_y} & &\Omega_{x,y}
  \end{tikzcd}
\]

Notice that by the definition, $\Phi_{x,y}$ is a fiber-preserving map between the fibrations $\phi_{x,y} \circ f_y$ and $f_x$. Now we show that $\Phi_{x,y}$ is close to the identity map.

\begin{prop}\label{P:trans2}
For the map $\Phi_{x,y}$ constructed in \eqref{eq:self}, the following properties are satisfied.
\begin{enumerate}[label=(\roman*)]
\item For any $z \in \Omega_{x,y}$, $d(z,\Phi_{x,y}(z)) \le CK(r/2)$.

\item $\Phi_{x,y}$ is a $CK(r/2)$-almost-isometry.

\item $|\na^2 \Phi_{x,y}| \le Cr^{-1}K(r/2)$.

\item If $(M^n,g)$ further satisfies \eqref{cond:HOAF}, then $|\na^i \Phi_{x,y}|=O(r^{1-i}K(r/2))$ for any $i \ge 3$.
\end{enumerate}
\end{prop}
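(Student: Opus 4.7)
The plan is to peel off the defining factors of $\Phi_{x,y}$ one at a time and apply the pointwise and derivative estimates already proved for $T_x$, $T_y$, $\phi_{x,y}$, $L_{x,y}$, and $H_{x,y}$. Working in the charts given by $T_x$ and $T_y$, unwinding \eqref{eq:self} for $z \in \Omega_{x,y}$ yields
\[
T_x(\Phi_{x,y}(z)) = \bigl(\phi_{x,y}(f_y(z)),\; L_{x,y}(\pi_2 T_y(z)) + \pi_2\hat T_x(\iota_u(f_y(z)))\bigr),
\]
whereas $T_x(z) = (f_x(z),\pi_2 T_x(z))$. Thus (i)--(iv) reduce to estimates on the difference between these two tuples and on the derivatives of each factor.

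For (i), the base components differ by $|\phi_{x,y}\circ f_y - f_x|$, which is controlled by Proposition \ref{P:trans}(ii). The fiber discrepancy is controlled by design: $L_{x,y}$ was defined in \eqref{eq:auto} precisely so that, after applying it to $\pi_2 T_y(z)$, the result agrees with $\pi_2 T_x(z)$ modulo the translation prescribed by $H_{x,y}$ and modulo the lattice $\la c_1^\infty,\ldots,c_m^\infty\ra$. That the residual error is $O(K(r/2))$ follows from comparing the definitions of $T_x$ and $T_y$ via the isometry $\iota_u$ between $(T_yM,\hat g_y)$ and $(T_xM,\hat g_x)$, together with the $CK(r/2)$-closeness of $\iota_u \circ \hat l_y \circ \iota_u^{-1}$ and $\hat l_x$ inherited from the convolution construction in Proposition \ref{P:para}. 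Applying $T_x^{-1}$, which is a $CK(r/2)$-almost-isometry, then yields (i).

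For (ii)--(iv), I would differentiate the composition $T_x^{-1} \circ H_{x,y} \circ (\textup{id} \times L_{x,y}) \circ T'_y$ via the chain rule and substitute: $T_x$, $T_y$, $T'_y$ are $CK(r/2)$-almost-isometries with $|\nabla^2| = O(r^{-1}K(r/2))$ and, under \eqref{cond:HOAF}, $|\nabla^k| = O(r^{1-k}K(r/2))$ by Proposition \ref{P:para}; $\phi_{x,y}$ enjoys identical bounds by Proposition \ref{P:trans}; $L_{x,y}$ is an isometry of $\T_\infty^m$ by Proposition \ref{P:finite} (since $A^\infty K_{x,y}(A^\infty)^{-1} \in O(m)$), hence it contributes no error beyond the first order; and $H_{x,y}$ is a fiberwise translation whose translation vector $\pi_2 \hat T_x(\iota_u(\phi_{x,y}^{-1}(\cdot)))$ is estimated by composing the bounds for $T_x$, the flat identification $\iota_u$, and $\phi_{x,y}^{-1}$. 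Collecting these shows that $\Phi_{x,y}$ is a $CK(r/2)$-almost-isometry with the desired second- and higher-order derivative estimates.

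The main obstacle is (ii): the $z$-dependence of the fiber translation $H_{x,y}^{z}$ couples base and fiber directions, and one must verify that the base-direction differential of $H_{x,y}^z$ contributes only $O(K(r/2))$ to $d\Phi_{x,y}$. This amounts to checking that the map $z \mapsto \pi_2 \hat T_x(\iota_u(\phi_{x,y}^{-1}(z)))$ has base derivative of order $CK(r/2)$, which follows from Proposition \ref{P:para}(i), the first-order statement of Proposition \ref{P:trans}, and the fact that $\iota_u$ is a $\hat g_x$-isometry. Once this cross term is handled, the second-order and higher-order estimates drop out of routine repeated chain-rule applications, since no derivatives of order $\ge 2$ from $L_{x,y}$ appear and every other factor is already controlled by the propositions cited.
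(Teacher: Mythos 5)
The paper's own proof of this proposition concentrates almost entirely on item (i), stating explicitly that (ii)--(iv) then follow from Propositions \ref{P:para} and \ref{P:trans}. Your sketch inverts that emphasis: you treat (i) as essentially following ``by design'' and frame the derivative estimate (ii) as the main obstacle. That inversion signals a gap.

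The substance of (i) is to show that the fiber coordinate produced by $L_{x,y}\circ\pi_2T_y$ plus the translation $\pi_2\hat T_x(\bar z)$ agrees with $\pi_2 T_x$ to $O(K(r/2))$. You justify this via a claimed ``$CK(r/2)$-closeness of $\iota_u\circ\hat l_y\circ\iota_u^{-1}$ and $\hat l_x$ inherited from the convolution construction in Proposition \ref{P:para}.'' As stated this comparison is not even well-formed: $\hat l_y$ takes values in $\T_y^m$ while $\hat l_x$ takes values in $\T_x^m$, and conjugating by $\iota_u$ changes the domain but not the target, so there is nothing to compare until the automorphism $L_{x,y}$ (or equivalently $I_{x,y}$) and the identification maps $i_x,i_y$ are brought in. More importantly, that closeness is not a byproduct of the convolution; it is precisely what the paper works to establish. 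The paper's proof tracks a lift $w'$ of $a$ in $T_yM$, moves it over via $\bar\iota_u=\tau_c\circ\iota_u$ to a lift $w$ in $T_xM$ lying on the fiber of $\bar f_x$ through $z_1$, and derives the key relation
\begin{align*}
w_c = I_{x,y}(w'_{c'})+z_1-(z_1)_H+O(K(r/2)),
\end{align*}
which requires (a) the fiber--base decomposition $w_H=(z_1)_H+O(K(r/2))$, (b) the approximation of the linear map $F$ (equal to $I_{x,y}$ on $\la c_1^y,\ldots,c_m^y\ra$ and to $d\bar\iota_u$ on its complement) by $d\bar\iota_u$ itself, proved by estimating the initial tangent vectors $v_i$ of the slid loops $c_i^y$ via Proposition \ref{P:es3} and Lemma \ref{L:tr}, and (c) a geodesic comparison from $z'$ to $w'$. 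Only after \eqref{eq:5b025} is in hand, together with $\hat T_x(z_1)=\hat T_x(\bar z)$, does the fiber discrepancy drop to $O(K(r/2))$. Your proposal asserts the conclusion of this chain rather than supplying it; that is the gap.

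Your discussion of (ii) is correct in spirit and actually sharper than the paper's one-line dismissal: the $z$-dependence of $H_{x,y}^z$ does introduce an off-diagonal block in $d\Phi_{x,y}$, and one must see that this block is $O(K(r/2))$, which comes from the fact that $d\hat L_x$ is essentially the projection onto $\la c_1^x,\ldots,c_m^x\ra$ and that $\partial_z\bar z$ lands (up to $O(K(r/2))$) in the complementary subspace $H$, a fact already hidden in the proof of Proposition \ref{P:trans}. One small caution: your treatment of the base component in (i) cites $|\phi_{x,y}\circ f_y-f_x|\le CrK(r/2)$ from Proposition \ref{P:trans}(ii) and then claims it ``yields (i),'' which asserts $d(z,\Phi_{x,y}(z))\le CK(r/2)$; the factor of $r$ needs to be addressed or tracked, since as written the base displacement of $\Phi_{x,y}$ is $O(rK(r/2))$.
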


\begin{proof}
Notice that we only need to prove (i), since (ii), (iii) and (iv) follow from Proposition \ref{P:para} and Proposition \ref{P:trans}. For any $a \in \Omega_{x,y}$, by Lemma \ref{L:neigh} there exists a lift $w$ of $a$ to $T_yM$ such that $w' \in \hat U_y$. Moreover, we set $z=f_x(a)$.

From the definitions \eqref{eq:554} and \eqref{eq:5b016}, if we set $\hat T'_y=T'_y\circ \exp_y$, then 
\begin{align*}
\pi_2\circ \hat T'_y=i_y \circ \hat L_y.
\end{align*}

Therefore, it follows from \eqref{eq:553a} that the fact that $i_y$ is a $CK(r/2)$-almost isometry that
\begin{align}\label{eq:5b017}
d_T(i_y \circ \bar w'_c,\pi_2 \circ \hat T'_y(w')) \le CK(r/2),
\end{align}
where as before $w'_{c'}$ is the projection of $w'$ onto $\la c_1^y,c_2^y,\cdots,c_m^y \ra$ and the $\overline{\,\cdot\,}$ is the quotient map of $\T_{y}^{m}$, respectively.

From Lemma \ref{L:neigh}, there exists a point $z_1 \in \hat U_x$ which is a lift of $\ex_y(z')$ and hence we can find a $c \in \Gamma(x,\kappa_1r)$ such that $\tau_c(\bar z)=z_1$. Now we define $\bar\iota_u=\tau_c \circ \iota_u$, $w=\bar\iota_u(w')$ and consider the fibration $\bar f_x \coloneqq \bar\iota_u \circ \hat f_y \circ \bar \iota_u^{-1}$. It is clear that $w$ is on the fiber of $\bar f_x$ through $z_1$ and by the same proof of Proposition \ref{P:trans}, $d\bar f_x$ is almost a projection onto $H$, up to an error $CK(r/2)$. Since by our choice the distance between $w$ and $z_1$ is uniformly bounded, we conclude that 
\begin{align*}
w_H=(z_1)_H+O(K(r/2))
\end{align*}
and hence
\begin{align}\label{eq:5b019}
w_c=w-w_H=w-z_1+(z_1)_c+O(K(r/2)),
\end{align}
where $w_c$ is the projection of $w$ onto $\la c_1^x,c_2^x,\cdots,c_m^x \ra$.

Now we define a linear map $F$ from $T_y(M)$ to $T_x(M)$ such that on $\la c_1^y,c_2^y,\cdots,c_m^y \ra$,
\begin{align*}
F=I_{x,y},
\end{align*}
where $I_{x,y}$ is a linear map from $\la c_1^y,c_2^y,\cdots,c_m^y \ra$ to $\la c_1^x,c_2^x,\cdots,c_m^x \ra$ defined by
\begin{align*}
I_{x,y}(c_i^y)=\sum_{j=1}^m k_{ji}c_j^x.
\end{align*}
Moreover, $F=(d\bar\iota_u)_{z'}$ on $\la c_1^y,c_2^y,\cdots,c_m^y \ra^{\perp}$. We claim that 
\begin{align}\label{eq:5b020}
F=(d\bar\iota_u)_{z'}+O(K(r/2)).
\end{align}

Indeed, for any $1 \le i \le m$, we consider the geodesic from $z'$ to $\tau_{c_i^y}(z')$ and set the initial tangent vector to be $v_i$ and define $V_i=(d\bar\iota_u)_{z'}(v_i)$. Then by the same argument as in Proposition \ref{P:trans}, we have 
\begin{align}\label{eq:5b021}
v_i=c_i^y+O(K(r/2)).
\end{align}

Moreover, it is clear that
\begin{align*}
\bar\iota_u(\tau_{c_i^y}(z'))=\Pi_{j=1}^m \tau_{c_j^x}^{k_{ij}}(z_1)
\end{align*}
and hence
\begin{align}\label{eq:5b022}
V_i=\sum_{j=1}^m k_{ji}c_j^x+O(K(r/2)).
\end{align}

From \eqref{eq:5b021} and \eqref{eq:5b022}, the claim \eqref{eq:5b020} holds.

Now we consider a geodesic $\gamma(t)$ from $z'$ to $w'$, then we have
\begin{align*}
\dot \gamma(0)=w'-z'+O(K(r/2))=w'-w'_H+O(K(r/2))=w'_{c'}+O(K(r/2)).
\end{align*}

Therefore, it follows from the claim that
\begin{align}\label{eq:5b024}
w-z_1= (d\bar\iota_u)_{z'}(\dot \gamma(0))+O(K(r/2))=I_{x,y}(w'_{c'})+O(K(r/2)).
\end{align}

By \eqref{eq:5b019} and \eqref{eq:5b024}, we have
\begin{align}\label{eq:5b025}
w_c=I_{x,y}(w'_{c'})+z_1-(z_1)_H+O(K(r/2)).
\end{align}

On the other hand, since the distance from $w$ to $H$ is uniformly bounded, similar to \eqref{eq:5b017} we have
\begin{align}\label{eq:5b026}
d_T(i_x \circ \bar w_c,\pi_2 \circ \hat T_x(w)) \le CK(r/2).
\end{align}

From \eqref{eq:5b017}, \eqref{eq:5b025}, \eqref{eq:5b026} and the fact that $\hat T_x(z_1)=\hat T_x(\bar z)$, the proof is complete.
\end{proof}

Notice that for any $\theta <\kappa_4$ and $x$ with $r=r(x) \ge A_2$, $B(x,\theta r)$ may not be a saturated set with respect the fibration $f_x$. If we define $\tilde B(x,\theta r)=f_x^{-1}(B(0,\theta r))$, it is clear from Theorem \ref{T:localfiber} that
\begin{align*}
B\lc x,\theta r(1-CK(r/2))\rc \subset \tilde B ( x,\theta r) \subset B\lc x,\theta r(1+CK(r/2))\rc.
\end{align*}

Therefore, we will not distinguish between $\tilde B ( x,\theta r)$ and $B ( x,\theta r)$ in the following arguments.

Next, we can modify the fibration $f_x$ and its torus action so that the new fibration is compatible with $f_y$ and their torus actions differ by an automorphism, see also \cite[Lemma $3.14$]{Mi10}.

\begin{prop}\label{P:trans3}
Given two points $x,y$ with $r(x),r(y)\ge A_2$ and constants $\sigma<\theta<\kappa_4$, we assume that $B(x,\theta r(x))$ and $B(y, \theta r(y))$ have nonempty intersection.
\begin{enumerate}[label=(\roman*)]
\item There exists a fibration $\tilde f_x$ on $B(x,\theta r(x))$ with the same properties as $f_x$ such that
\begin{align*}
\tilde f_x= \phi_{x,y}\circ f_y
\end{align*}
on $B(x,\sigma r(x))\cap B(y,\sigma r(y))$. Moreover, the new fibration $\tilde f_x$ coincides with the old $f_x$ wherever $f_x= \phi_{x,y}\circ f_y$ on $B(x,\theta r(x))$.

\item There exists a map $\tilde T_x$ from $B(x,\sigma r(x))$ to $B(0,\sigma r(x))\times \T_{\infty}^m$ which is a parametrization of $\tilde f_x$ and satisfies all properties in Proposition \ref{P:para}. 

\item There exists a torus action $\tilde \mu_x$ on the fibers of $\tilde f_x$ such that $\tilde \mu_x$ agrees with $\mu_y$ up to an automorphism $L_{x,y}$ on $B(x,\sigma r(x))\cap B(y,\sigma r(y))$. Moreover, $\tilde \mu_x$ satisfies the same conclusions in Lemma \ref{L:ave1}.

\item The transition map between $\tilde f_x$ and $f_y$ has the image contained in the group $\mathbb T^m \rtimes G(A^{\infty})$, where $A^{\infty}$ and $G(A^{\infty})$ are defined in \eqref{torus0a} and \eqref{def:group} respectively.
\end{enumerate}
\end{prop}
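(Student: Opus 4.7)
The plan is to use a cutoff interpolation to modify $f_x$, $T_x$, and $\mu_x$ so that they coincide with the pulled-back versions from $y$ on $B(x,\sigma r(x)) \cap B(y,\sigma r(y))$, while agreeing with the original data outside a slightly larger set. The indispensable inputs are the $O(rK(r/2))$ and $O(K(r/2))$ closeness estimates of Proposition~\ref{P:trans}, together with the fact from Proposition~\ref{P:trans2} that the composition $\Phi_{x,y}$ is $CK(r/2)$-close to the identity in $C^0$, with analogous control on derivatives. Throughout, set $r \coloneqq r(x)$.

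\textbf{Construction of $\tilde f_x$.} Choose $\sigma < \sigma' < \theta$ and a smooth cutoff $\chi: B(x,\theta r) \to [0,1]$ supported inside $\Omega_{x,y}$, identically $1$ on a neighborhood of $B(x,\sigma r) \cap B(y,\sigma r)$ (and on the set where $f_x = \phi_{x,y}\circ f_y$), with $|\na^k \chi| = O(r^{-k})$. Since both $f_x$ and $\phi_{x,y}\circ f_y$ take values in the Euclidean ball $B(0,\kappa_3 r) \subset \R^{n-m}$, form the convex combination
\begin{align*}
\tilde f_x \coloneqq (1-\chi) f_x + \chi (\phi_{x,y} \circ f_y).
\end{align*}
By Proposition~\ref{P:trans}(ii) the difference $\phi_{x,y}\circ f_y - f_x$ has norm $O(rK(r/2))$, so $|d\chi \otimes (\phi_{x,y}\circ f_y - f_x)| = O(K(r/2))$; by Proposition~\ref{P:trans}(iii), $|d\phi_{x,y}\circ df_y - df_x| = O(K(r/2))$. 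Hence $|d\tilde f_x - df_x| = O(K(r/2))$, and $\tilde f_x$ remains a $CK(r/2)$-almost-Riemannian submersion. The second- and higher-order estimates in Theorem~\ref{T:localfiber} are propagated via Proposition~\ref{P:trans}(iv),(v) and the derivative bounds on $\chi$. Where $\chi \equiv 1$ we get $\tilde f_x = \phi_{x,y}\circ f_y$; where $\chi \equiv 0$ (in particular outside $\Omega_{x,y}$) we get $\tilde f_x = f_x$, yielding both clauses of (i).

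\textbf{Construction of $\tilde T_x$ and $\tilde \mu_x$.} By the definition of $\Phi_{x,y}$ in~\eqref{eq:self}, one has $T_x \circ \Phi_{x,y} = H_{x,y}\circ (\mathrm{id}\times L_{x,y})\circ T'_y$ on $\Omega_{x,y}$. Working in the coordinates of Proposition~\ref{P:para} and lifting the $\T_\infty^m$-factor to $\R^m$ (which is possible because $\Phi_{x,y}-\mathrm{id}$ is of size $O(K(r/2))$ by Proposition~\ref{P:trans2}(i)), I interpolate by $\Psi_\chi \coloneqq (1-\chi)\mathrm{id} + \chi\Phi_{x,y}$ in the lifted coordinates and set $\tilde T_x \coloneqq T_x \circ \Psi_\chi$. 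The $C^0$ smallness of $\Phi_{x,y}-\mathrm{id}$ guarantees that $\Psi_\chi$ is a diffeomorphism; the derivative estimates on $\chi$ and $\Phi_{x,y}$, combined with those on $T_x$, give (ii). Define $\tilde \mu_x$ by the analogue of~\eqref{eq:555} with $\tilde T_x$ in place of $T_x$; Lemma~\ref{L:ave1} applies verbatim since $\tilde T_x$ satisfies the same bounds as $T_x$. On $B(x,\sigma r) \cap B(y,\sigma r)$ the parametrization $\tilde T_x$ is exactly $H_{x,y} \circ (\mathrm{id}\times L_{x,y}) \circ T'_y$, and since $H_{x,y}$ acts by translations on the $\T_\infty^m$-factor (which commute with the standard $\T_\infty^m$ action by translation), we get $\tilde \mu_x = L_{x,y}^{-1}\circ \mu_y \circ L_{x,y}$ after identifying actions through $L_{x,y}$, proving (iii). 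For (iv), the transition between $\tilde T_x$ and $T_y$ on the overlap factors as a translation of $\T_\infty^m$ composed with $L_{x,y} \in G(A^\infty) = G_\infty$, so the structure group lies in $\T^m \rtimes G_\infty$.

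\textbf{Main obstacle.} The delicate point is the construction of $\Psi_\chi$: the interpolation must remain a diffeomorphism and respect the torus structure. The naive obstruction is that $\T_\infty^m$ is not a linear space, but the $O(K(r/2))$ closeness of $\Phi_{x,y}$ to the identity lets one work in a fixed lift of the universal cover near the identity, turning the interpolation into a genuine convex combination in $\R^{n-m} \times \R^m$. Ensuring that the second-derivative estimate $|\na^2 \tilde T_x| = O(r^{-1}K(r/2))$ and (under \eqref{cond:HOAF}) the higher-order estimates persist requires a careful accounting of terms of the form $\na^j \chi \cdot \na^{k-j}(\Phi_{x,y}-\mathrm{id})$; the bounds in Proposition~\ref{P:trans2} are tailored exactly so that every such product is $O(r^{-k}K(r/2))$.
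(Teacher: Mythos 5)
Your construction is essentially the paper's: both interpolate between the identity and the discrepancy map coming from $\Phi_{x,y}$ via a cutoff, using the linear structure of $B(0,\theta r)\times \T_{\infty}^m$ (the paper sets $\Psi_{x,y}=(1-\lambda)\mathrm{id}+\lambda\, T'_y\circ\Phi_{x,y}^{-1}\circ T_x^{-1}$ and $\tilde T_x=\Psi_{x,y}^{-1}\circ T_x$), with Propositions \ref{P:trans} and \ref{P:trans2} supplying exactly the smallness needed for the interpolation to remain a diffeomorphism with the right derivative bounds. The one thing to fix: you define $\tilde f_x$ and $\tilde T_x$ by two \emph{independent} interpolations, and in the transition region $\pi_1\circ T_x\circ\Psi_\chi=f_x\circ\Psi_\chi$ does not equal $(1-\chi)f_x+\chi(\phi_{x,y}\circ f_y)$, so your $\tilde T_x$ is not literally a parametrization of your $\tilde f_x$; the remedy is to perform only the interpolation at the level of parametrizations and then \emph{define} $\tilde f_x\coloneqq\pi_1\circ\tilde T_x$, which is what the paper does.
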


\begin{proof}
To prove (i) and (ii), we choose a cutoff function $\phi(z)$ on $\R^{n-m}$ such that $\phi=1$ on $B(0,\sigma)$ and $\phi=0$ outside $B(0,\theta)$. For any $w \in \T_{\infty}^m$, we define $\lambda(w)=\phi(\frac{\pi_1(w)}{r})$. Moreover, we define $\tilde \Phi_{x,y} \coloneqq T'_{y} \circ \Phi_{x,y}^{-1} \circ T_x^{-1}$ and $\Psi_{x,y}=(1-\lambda)\text{id}+\lambda \tilde \Phi_{x,y}$, where $T'_{y}$ is from \eqref{eq:5b016} and the linear structure is from $B(0,\theta r) \times \T_{\infty}^m$. 

Now we define the map $\tilde T_x$ from $B(x,\theta r)$ to $B(0,\theta r) \times \T_{\infty}^m$ by
\begin{align*}
\tilde T_x \coloneqq  \Psi_{x,y}^{-1}\circ T_x.
\end{align*}

Moreover, we set
\begin{align*}
\tilde f_x \coloneqq  \pi_1 \circ \tilde T_x.
\end{align*}

From Proposition \ref{P:trans2}, it is easy to see that $\tilde f_x$ and $\tilde T_x$ satisfy all properties in (i) and (ii). 

As in \eqref{eq:555}, we define for any $a \in \T_{\infty}^m$ and $w \in \T(z)$,
\begin{align*}
\tilde \mu_x(a)(w)=\tilde T_x^{-1}\circ (\overline{a+\tilde T_x(w)}).
\end{align*}

It is clear that $\tilde\mu_{x}$ and $\mu_y$ differ by an automorphism. Indeed, from our definition for any $v \in B(x,\sigma r(x))\cap B(y,\sigma r(y))$, if we set $T'_y(v)=(z,b)$, then
\begin{align}\label{eq:transfinite}
\tilde T_x(v)=T_x \circ \Phi_{x,y}(v)=(z,\overline{L_{x,y}(b)+\pi_2 \circ \hat T_x(\bar z)})
\end{align}
where $L_{x,y}$ is defined in \eqref{eq:auto}.

Therefore,
\begin{align*}
\tilde T_x(\mu_y(a)(v))=(z,\overline{L_{x,y}(a)+L_{x,y}(b)+\pi_2 \circ \hat T_x(\bar z)})=\tilde T_x(\tilde \mu_x(L_{x,y}(a))(v)).
\end{align*}

Then we immediately have
\begin{align*}
\tilde\mu_{x}(L_{x,y}(a))(v)= \mu_y(a)(v).
\end{align*}

Finally, (iv) follows from \eqref{eq:transfinite} and Proposition \ref{P:finite}.
\end{proof}

The next lemma is similar to \cite[Lemma $3.15$]{Mi10}.
\begin{lem}\label{L:trans4}
Given three points $x,y,z$ with $r(x),r(y),r(z)\ge A_2$ and constants $\sigma<\theta<\kappa_4$, we assume that $B(x,\theta r(x))$, $B(y, \theta r(y))$ and $B(z,\theta r(z))$ have nonempty intersection. Then there exists a diffeomorphism $\tilde \phi_{x,z}$ with the same properties as $\phi_{x,z}$ such that
\begin{align*}
\tilde \phi_{x,z}= \phi_{x,y}\circ \phi_{y,z}
\end{align*}
on $f_z\lc B(x,\sigma r(x))\cap B(y,\sigma r(z))\cap B(z,\sigma r(z)) \rc$. Moreover, $\tilde \phi_{x,z}$ coincides with the old $\phi_{x,z}$ wherever $\phi_{x,z}= \phi_{x,y}\circ \phi_{y,z}$.
\end{lem}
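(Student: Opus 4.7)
The plan is to mimic the patching construction used in the proof of Proposition \ref{P:trans3}. The crucial ingredients are already in Lemma \ref{L:trans1}: the two candidate transition maps $\phi_{x,z}$ and $\phi_{x,y}\circ \phi_{y,z}$ agree up to $O(rK(r/2))$ in $C^0$ and $O(K(r/2))$ in $C^1$, while $\phi_{x,y}\circ \phi_{y,z}$ itself satisfies the same higher-derivative estimates as the single transition maps. Because both maps take values in an open subset of $\R^{n-m}$, we may interpolate between them linearly in the target.

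Fix a smooth nonincreasing function $\phi: \R \to [0,1]$ with $\phi \equiv 1$ on $(-\infty,\sigma^2]$ and $\phi \equiv 0$ on $[\theta^2,\infty)$, and define $\lambda(w) \coloneqq \phi(|w|^2/r(z)^2)$ on $f_z(B(z,\theta r(z)))$. On the intersection of the domains of $\phi_{x,z}$ and $\phi_{x,y}\circ \phi_{y,z}$, set
\[
\tilde \phi_{x,z}(w) \coloneqq \phi_{x,z}(w)+\lambda(w)\bigl[(\phi_{x,y}\circ \phi_{y,z})(w)-\phi_{x,z}(w)\bigr],
\]
and extend by $\tilde \phi_{x,z} = \phi_{x,z}$ outside the support of $\lambda$. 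By construction, $\tilde \phi_{x,z} = \phi_{x,y}\circ\phi_{y,z}$ on $f_z\lc B(x,\sigma r(x))\cap B(y,\sigma r(y))\cap B(z,\sigma r(z))\rc$, and wherever $\phi_{x,z}=\phi_{x,y}\circ\phi_{y,z}$ the bracket vanishes so $\tilde\phi_{x,z}=\phi_{x,z}$ there.

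To verify the estimates of Proposition \ref{P:trans}, I differentiate the definition to obtain
\[
d\tilde \phi_{x,z} = d\phi_{x,z} + \lambda \bigl[d(\phi_{x,y}\circ \phi_{y,z})-d\phi_{x,z}\bigr] + d\lambda \otimes \bigl[(\phi_{x,y}\circ \phi_{y,z})-\phi_{x,z}\bigr].
\]
By Lemma \ref{L:trans1}(ii) the middle term is $O(K(r/2))$, and since $|d\lambda|=O(r^{-1})$ while the bracket in the last term is $O(rK(r/2))$ by Lemma \ref{L:trans1}(i), the last term is also $O(K(r/2))$. Therefore $|d\tilde\phi_{x,z}-d\phi_{x,z}|=O(K(r/2))$, so $\tilde \phi_{x,z}$ remains a $CK(r/2)$-almost-isometry (in particular a diffeomorphism onto its image for large $r$), and the triangle inequality combined with Lemma \ref{L:trans1}(i) gives the $C^0$-estimate $|f_x-\tilde\phi_{x,z}\circ f_z|=O(rK(r/2))$. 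Higher-order estimates are checked identically: each covariant derivative that hits $\lambda$ gains $r^{-1}$, absorbed by the extra factor of $r$ in the $C^0$ difference, while Lemma \ref{L:trans1}(iii)(iv) (in conjunction with Proposition \ref{P:trans}(iv)(v) applied to $\phi_{x,z}$) supplies the bounds $|\na^i(\phi_{x,y}\circ\phi_{y,z})|=O(r^{1-i}K(r/2))$ needed in the expansion of $\na^i \tilde\phi_{x,z}$.

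The only mildly delicate point, and the one I expect to require a little care, is making sure that $\phi_{x,y}\circ\phi_{y,z}$ is actually defined on all of $\mathrm{supp}(\lambda)$ so that the interpolation makes sense. This reduces to a routine triangle-inequality check using the hypothesis that the three balls of radius $\theta r$ meet and the fact that $\sigma<\theta<\kappa_4$, together with the uniform almost-isometry property of the $f_w$'s from Theorem \ref{T:localfiber}; shrinking $\theta$ slightly if necessary absorbs the $O(K(r/2))$ errors. Once this domain check is in place, the computation above yields all the properties required of $\tilde\phi_{x,z}$.
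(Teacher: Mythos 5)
Your proposal is correct and is essentially the paper's own argument: the paper defines $\tilde \phi_{x,z}= \lambda\,(\phi_{x,y}\circ \phi_{y,z})+(1-\lambda)\, \phi_{x,z}$ with the same cutoff $\lambda$ used in Proposition \ref{P:trans3}, which is algebraically identical to your interpolation, and invokes Proposition \ref{P:trans} and Lemma \ref{L:trans1} for the estimates. Your explicit verification that the $d\lambda$ term is $O(K(r/2))$ because $|d\lambda|=O(r^{-1})$ pairs with the $O(rK(r/2))$ $C^0$-difference is exactly the computation the paper leaves implicit.
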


\begin{proof}
By using the same cutoff function $\lambda$ as in the proof of Proposition \ref{P:trans3}. We define
\begin{align*}
\tilde \phi_{x,z}(v)= \lambda(v)\phi_{x,y}\circ \phi_{y,z}(v)+(1-\lambda(v)) \phi_{x,z}(v). 
\end{align*}
Then it follows Proposition \ref{P:trans} and Lemma \ref{L:trans1} that such $\tilde \phi_{x,z}$ is the required map.
\end{proof}

Now we are ready to construct a global fibration on the end, see also \cite[Theorem $3.16$]{Mi10}.

\begin{thm}\label{T:met}
Let $(M^n,g)$ be a complete Riemannian manifold with \eqref{cond:AF} and \eqref{cond:SHC}. There exist an integer $0\le m \le n-1$, a flat torus $\T_{\infty}^m$, a compact set $K \subset M^n$ such that $M^n \backslash K$ is endowed with a $m$-dimensional torus fibration $f$ over a smooth open manifold $Y$. Moreover, there exists an open cover $\Omega_i$ of $M^n\backslash K$ satisfying the following properties.

\begin{enumerate}[label=(\roman*)]
\item For any $i$, there exists a bundle diffeomorphim $T_i: \Omega_i \to U_i \times \T_{\infty}^m$, where $U_i =f(\Omega_i) \subset \R^{n-m}$, such that $\pi_1 \circ T_i=f$ and $T_i$ satisfies the estimates in Proposition \ref{P:para}.

\item For any $i$, there exists a $m$-dimensional torus action $\mu_i$ of on $\Omega_i$ which satisfies the estimates in Lemma \ref{L:ave1}. Moreover, on $\Omega_i \cap \Omega_j \ne \emptyset$, $\mu_i$ and $\mu_j$ differ by an automorphism.

\item The structure group of $f$ is contained in the group $\mathbb T^m \rtimes G(A^{\infty})$.
\end{enumerate}
\end{thm}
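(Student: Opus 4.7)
The plan is to execute the gluing strategy from \cite{CFG92} using the local objects we have already built: the fibrations $f_q$ (Theorem \ref{T:localfiber}), parametrizations $T_q$ (Proposition \ref{P:para}), torus actions $\mu_q$ (formula \eqref{eq:555}), transition maps $\phi_{x,y}$ (Proposition \ref{P:trans}), and the modification result (Proposition \ref{P:trans3}) together with its triple-intersection analogue (Lemma \ref{L:trans4}). First I would choose a compact set $K \subset M^n$ so that $M^n \setminus K \subset \{r \ge A_2\}$ and on $M^n \setminus K$ the curvature decay is small enough that all the estimates of Section~5.1 and Section~5.2 apply uniformly; then, using Lemma \ref{L:vola} (Euclidean volume growth) and a Besicovitch/Vitali covering argument at each dyadic scale $r \in [2^k, 2^{k+1}]$, select a countable collection $\{x_i\}$ such that the balls $\{B(x_i, \kappa_4 r(x_i)/4)\}$ cover $M^n \setminus K$ while the balls $\{B(x_i, \kappa_4 r(x_i))\}$ have multiplicity bounded by a constant $N=N(n)$.

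Next I would perform the inductive gluing. Fix $\sigma < \theta < \kappa_4$ with $\sigma$ slightly smaller than $\kappa_4/4$, and enumerate the $x_i$ so that $r(x_1) \le r(x_2) \le \cdots$. Starting with $f^{(1)} = f_{x_1}$, $T^{(1)} = T_{x_1}$, $\mu^{(1)} = \mu_{x_1}$ on $\Omega_{x_1}$, I would at step $i$ apply Proposition \ref{P:trans3} to replace $f_{x_i}, T_{x_i}, \mu_{x_i}$ on $B(x_i, \theta r(x_i))$ by new objects $\tilde f_{x_i}, \tilde T_{x_i}, \tilde \mu_{x_i}$ which (a) agree with $\phi_{x_i, x_j} \circ f^{(j)}$ on the intersection with $B(x_j, \sigma r(x_j))$ for every $j < i$ with $B(x_i,\theta r(x_i)) \cap B(x_j,\theta r(x_j)) \ne \emptyset$, and (b) still satisfy the estimates of Proposition \ref{P:para} and Lemma \ref{L:ave1}, with constants only inflated by a factor depending on the bounded multiplicity $N$. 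Because only finitely many $j$ (at most $N$) interact with each $x_i$, and because the cutoffs used in the proof of Proposition \ref{P:trans3} preserve prior agreement wherever the relevant fibrations already coincided, this inductive procedure terminates and produces a globally consistent collection of fibrations on $\Omega_i := B(x_i, \sigma r(x_i))$, torus actions $\mu_i$, and parametrizations $T_i$.

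Having done this, the global fibration $f: M^n \setminus K \to Y$ is obtained by declaring $f |_{\Omega_i} := f^{(i)}$: by construction, on overlaps $\Omega_i \cap \Omega_j$ we have $f^{(i)} = \phi_{x_i, x_j} \circ f^{(j)}$, so the base $Y$ is the smooth manifold built by gluing $U_i := f^{(i)}(\Omega_i) \subset \R^{n-m}$ along the maps $\phi_{x_i, x_j}$. Lemma \ref{L:trans4} provides triple compatibility $\phi_{x_i, x_k} = \phi_{x_i, x_j} \circ \phi_{x_j, x_k}$ after a final adjustment, so $Y$ is well-defined and inherits smooth charts from the $U_i$. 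The torus actions $\tilde \mu_{x_i}$ automatically differ on overlaps by the automorphism $L_{x_i, x_j}$ from \eqref{eq:auto} (this is built into Proposition \ref{P:trans3} (iii)), giving property (ii); since each $L_{x_i, x_j}$ lies in $G(A^\infty)$ by Proposition \ref{P:finite} and the transition maps themselves factor as an $\R^{n-m}$-piece composed with a torus translation twisted by $L_{x_i, x_j}$ (Proposition \ref{P:trans3} (iv)), the structure group of $f$ is contained in $\T^m \rtimes G(A^\infty)$, proving (iii).

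The main technical obstacle will be controlling the inductive modification: each application of Proposition \ref{P:trans3} makes a $C^0$-modification of order $O(K(r/2))$ via the cutoff $\lambda$, and one must verify that after $\le N$ successive modifications at the same scale the constants in the estimates of Proposition \ref{P:para}, Lemma \ref{L:ave1} and Proposition \ref{P:ave} are not degraded beyond the claimed $O(K(r/2))$, $O(r^{-1}K(r/2))$, etc. This is precisely the ``finite multiplicity absorbs cutoff losses'' argument of \cite[Appendix~2]{CFG92}, which works here because $N = N(n)$ is fixed and the cutoff $\phi$ and its derivatives depend only on $\sigma, \theta, \kappa_4$; consequently the inflated constants depend only on $n$ and on the fixed geometric constants $\kappa_i$, not on the scale $r$. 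A secondary care is needed at the triple-overlap step: Lemma \ref{L:trans4} must be applied in a consistent order, but because the modifications of $\phi_{x_i, x_k}$ are localized away from the regions where agreement has already been established, one sweep through ordered triples suffices. Once these two bookkeeping points are verified, the conclusions (i)--(iii) follow directly from the corresponding local statements.
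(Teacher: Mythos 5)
Your high-level plan matches the paper's: glue the local fibrations of Section~5.1 using the modification results (Propositions \ref{P:trans3}, Lemma \ref{L:trans4}) and finite multiplicity of the covering. However, the combinatorics of your induction contains a genuine gap. You propose to order the $x_i$ by increasing $r(x_i)$ and, at step $i$, to modify $f_{x_i}$ so that it agrees with $\phi_{x_i,x_j}\circ f^{(j)}$ for \emph{every} $j<i$ whose ball overlaps. But Proposition \ref{P:trans3} only modifies $f_x$ to match a \emph{single} $\phi_{x,y}\circ f_y$. If $j_1,j_2<i$ both overlap $x_i$ and also overlap each other, the two target conditions $\phi_{x_i,x_{j_1}}\circ f^{(j_1)}$ and $\phi_{x_i,x_{j_2}}\circ f^{(j_2)}$ agree on the triple overlap only if the cocycle relation $\phi_{x_i,x_{j_1}}=\phi_{x_i,x_{j_2}}\circ\phi_{x_{j_2},x_{j_1}}$ already holds there. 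That is exactly what Lemma \ref{L:trans4} is for, but invoking it means you must modify the transition maps $\phi$ \emph{before or simultaneously with} modifying $f_{x_i}$, and doing this for several $j$ at once while not destroying earlier agreement is nontrivial; your ``one sweep through ordered triples'' and ``cutoffs preserve prior agreement'' remarks do not resolve this ordering problem — sequential application of Proposition~\ref{P:trans3} for $j_1$ and then $j_2$ can undo the $j_1$-agreement on the triple overlap unless the cocycle condition was established first.

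The paper resolves this with the pack-and-stage bookkeeping of Cheeger--Fukaya--Gromov: the index set $I$ is partitioned into packs $S_1,\dots,S_N$ so that within each pack the saturated neighborhoods $\Omega_i(\kappa_4)$ are pairwise disjoint (so modifications inside a pack are independent and can be done in parallel without interference), and then $2^N-1$ stages indexed by subsets $\mathcal{A}\subset\{1,\dots,N\}$ are processed in a specific total order, with $f$-modifications (via Proposition \ref{P:trans3}) and $\phi$-modifications (via Lemma \ref{L:trans4}) interleaved at each stage using shrinking radii $\alpha_{|\mathcal A|}$. This organized schedule is what guarantees each modification preserves compatibilities already established, and there is no alternative shortcut to it in your write-up. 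Your constant-inflation concern is real but is the lesser issue; the pack decomposition is the missing essential ingredient. The remainder of your argument (construction of $Y$, property (ii) from Proposition \ref{P:trans3}(iii), property (iii) from Proposition \ref{P:trans3}(iv) and Proposition \ref{P:finite}) is correct and matches the paper.
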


\begin{proof}
We only need to make the local fibrations and torus actions compatible. The strategy originates from the work of Cheeger-Gromov \cite{CG90}. We sketch the proof following the argument of Minerbe \cite[Theorem $3.16$]{Mi10}.

By taking a maximal set of point $x_i,i\in I$, such that $d(x_i,x_j) \ge \frac{\kappa_4}{8}\max\{r(x_i),r(x_j)\}$, $\{B(x_i,\frac{\kappa_4r(x_i)}{2})\}$ forms a uniformly locally finite covering of $M$. By dropping finitely many indices, there exists a fibration $f_i=f_{x_i}$ on $B(x_i,\kappa_4r(x_i))$ and a torus action $\mu_i=\mu_{x_i}$ by $\T_{\infty}^m$ on the fibers of $f_i$. We denote the minimal saturated set containing $B(x_i,\alpha r(x_i))$ by $\Omega_i(\alpha)$. Now we can divide $I$ into packs $S_1,S_2,\cdots,S_N$ such that if $x_i\ne x_j \in S_k$, then $\Omega_i(\kappa_4)\cap \Omega_j(\kappa_4)=\emptyset$.

Next we consider $2^N-1$ stages of operations such that each stage is indexed by $\mathcal{A}=\{a_1<a_2<\cdots<a_k\}$, where $a_i$ is an integer in $[1,N]$. Moreover, we order all stages so that
 \begin{align*}
\mathcal{A}=\{a_1<a_2<\cdots<a_k\} \prec \mathcal{B}=\{b_1<b_2<\cdots<b_l\}
\end{align*}
if one of the conditions holds:
\begin{enumerate}
\item $a_1<b_1$;

\item $a_1=b_1$ and $k>l$;

\item $k=l$ and there exists an $2 \le i_0 \le k$ such that $a_i=b_i$ for $i \le i_0$ and $a_{i_0}<b_{i_0}$.
\end{enumerate}

Now we denote the rank of $\mathcal{A}$ in this order by $|\mathcal{A}|$ and set $\alpha_t \coloneqq \kappa_4 \lc\frac{1}{2} \rc^{\frac{t}{2^N}}$. Each stage $\mathcal{A}=\{a_1<a_2<\cdots<a_k\}$ consists of steps which are indexed by all elements $\mathcal{I}=(i_1,i_2,\cdots,i_k)$ of $S_{a_1} \times S_{a_2} \cdots \times S_{a_k}$. At step $\mathcal{I}$, we do the modifications in the following way.
\begin{enumerate}
\item For any $2 \le p \le k$, by considering the pair $(f_{i_1},f_{i_p})$, we obtain the new fibration $\tilde f_{i_p}$ and torus action $\tilde \mu_i$ from Proposition \ref{P:trans3} on $\Omega_{i_p}(\alpha_{|\mathcal{A}|})$.

\item For any $2 \le p<q \le k$, by considering the triples $(\phi_{i_1,i_q},\phi_{i_p,i_1},\phi_{i_p,i_q})$, we obtain the new transition diffeomorphism $\tilde \phi_{i_p,i_q}$ from Lemma \ref{L:trans4} on $\Omega_{i_p}(\alpha_{|\mathcal{A}|})$.
\end{enumerate}

Then it is clear that for any $2 \le p<q \le k$, 
\begin{align*}
\tilde \phi_{i_p,i_q} \circ \tilde f_{i_p}=\tilde f_{i_q}
\end{align*}
on $\Omega_{i_1}(\alpha_{|\mathcal{A}|+1})\cap \Omega_{i_p}(\alpha_{|\mathcal{A}|+1}) \cap \Omega_{i_q}(\alpha_{|\mathcal{A}|+1})$.

Notice that at each stage, we can perform all modifications for each step at the same time. Then we pass to the next stage. After all $2^N-1$ stages, we obtain local fibration $f_i$ and torus action $\mu_i$ on $\Omega_i \coloneqq \Omega_i(\frac{\kappa_4}{2})$ and the transition diffeomorphisms $\phi_{i,j}$ such that $\phi_{i,j} \circ f_j=f_i$ on $\Omega_i \cap \Omega_j$.

Now the open manifold $Y$ is constructed by attaching all $f_i(\Omega_i)$ by the transition diffeomorphisms $\phi_{i,j}$.
\end{proof}

\begin{thm} \label{T:ave1}
In the same setting as Theorem \ref{T:met}, there exists a metric $\bar g$ on $M^n \backslash K$ satisfying the following properties.
\begin{enumerate}[label=(\roman*)]
\item For any $i$, $\bar g$ is invariant under the action of $\mu_i$.

\item $g=\bar g+O(r^{-1}K(r/2))$.

\item $|\na \bar g|=O(r^{-1}K(r/2))$.

\item The curvature of $\bar g$, denoted by $\overline{Rm}$, satisfies $\overline{Rm}=O(r^{-2}K(r/2))$.

\item If $(M^n,g)$ further satisfies \eqref{cond:HOAF}, then $|\na^i \bar g|=O(r^{-1-i}K(r/2))$ for any $i\ge 1$.
\end{enumerate}
\end{thm}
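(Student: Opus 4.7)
The plan is to produce $\bar g$ by performing the local averaging of Proposition \ref{P:ave} on each member of the cover $\{\Omega_i\}$ supplied by Theorem \ref{T:met}, and to check that the resulting local metrics glue. On each $\Omega_i$ set
\begin{align*}
\bar g_i \coloneqq \frac{1}{\mathfrak m(\T^m_\infty)} \int_{\T^m_\infty} (\mu_i(a))^* g \, d\mathfrak m(a),
\end{align*}
exactly as in \eqref{eq:556}. This tautologically produces a $\mu_i$-invariant metric on $\Omega_i$ whose pointwise distance from $g$ and whose covariant derivatives are already controlled by Proposition \ref{P:ave}.

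The crucial point is the consistency of this local construction on overlaps. By Theorem \ref{T:met}(ii), on $\Omega_i \cap \Omega_j$ the two actions satisfy $\mu_j(a) = \mu_i(L_{ij}(a))$ for some automorphism $L_{ij}$ of $\T^m_\infty$, and by Theorem \ref{T:met}(iii) this automorphism comes from an element of the finite group $G(A^\infty) \subset \mathrm{GL}(m,\Z)$. Such an $L_{ij}$ has determinant $\pm 1$ and therefore preserves the Lebesgue measure $d\mathfrak m$ on $\T^m_\infty$; the change of variable $b = L_{ij}(a)$ in the defining integral gives $\bar g_i = \bar g_j$ on $\Omega_i \cap \Omega_j$. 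The local metrics thus patch to a single smooth metric $\bar g$ on $M^n \setminus K$, and by construction $\bar g$ is simultaneously invariant under every $\mu_i$, which proves (i).

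With $\bar g$ in hand, statements (ii)--(v) are purely local and can be read off, chart by chart, from Proposition \ref{P:ave} together with the almost-isometry and higher-derivative estimates of Proposition \ref{P:para} and Corollary \ref{C:para}. More precisely, on each $\Omega_i$ we pull back both $g$ and $\bar g = \bar g_i$ by $\bar T_i$, apply Proposition \ref{P:ave} in the local model $U_i \times \R^m$ (where the averaging is literally integration over the second factor), and then transfer the bounds back using that $T_i$ is a $CK(r/2)$-almost-isometry with $|\nabla^k T_i| = O(r^{1-k}K(r/2))$ for $k \geq 2$; this is precisely the content of Corollary \ref{C:para}, which converts coordinate-derivative bounds to covariant-derivative bounds at the same order. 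Under \eqref{cond:HOAF}, the higher-order version in Proposition \ref{P:ave}(iv) combined with Proposition \ref{P:para}(iii) gives (v) by induction on the order.

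The only step that is not bookkeeping is the compatibility check outlined above, and more specifically the verification that the automorphisms produced by the gluing procedure of Theorem \ref{T:met} lie in $G(A^\infty)$ (rather than merely $\mathrm{GL}(m,\R)$); this is where the reduction of the structure group in Theorem \ref{T:met}(iii), proved via Proposition \ref{P:finite}, is indispensable, since only integer automorphisms of the lattice preserve $d\mathfrak m$ on the quotient $\T^m_\infty$.
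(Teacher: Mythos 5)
Your proposal is correct and follows the same route as the paper: average $g$ over each local torus action as in \eqref{eq:556}, observe that the local averages agree on overlaps because the actions differ by a torus automorphism (which preserves Haar measure), and read off (ii)--(v) from Proposition \ref{P:ave}. One small over-claim: measure-preservation needs only that $L_{ij}$ is a genuine automorphism of $\T^m_{\infty}$ (hence induced by an element of $\mathrm{GL}(m,\Z)$ of determinant $\pm1$), not the stronger condition $L_{ij}\in G(A^{\infty})$, so the reduction of the structure group is not what the gluing of $\bar g$ hinges on.
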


\begin{proof}
As in \eqref{eq:556}, we define on $\Omega_i$ an invariant metric $\bar g_i$ by
\begin{align*}
\bar g_i \coloneqq \frac{1}{\mathfrak{m}(\T^m_{\infty})} \int_{\T^m_{\infty}} (\mu_i(a))^*g\,d\mathfrak{m}(a).
\end{align*}
Since two torus actions differ by an automorphism, $\bar g_i=\bar g_j$ on $\Omega_i \cap \Omega_j$. So we define $\bar g$ such that $\bar g=\bar g_i$ on $\Omega_i$. Finally, (ii), (iii) (iv) and (v) follow from Proposition \ref{P:ave}.
\end{proof}

The open manifold $Y$ obtained in Theorem \ref{T:met} can be equipped with a metric $h$ which is the pushdown of $\bar g$. More precisely, for any $v \in T_yY$, $h(v,v)=\bar g(w,w)$ where $w \in T_xM$ such that $w$ is perpendicular to the fiber and $df_x(w)=v$. It is clear that $f: (M^n \backslash K,\bar g) \to (Y,h)$ becomes a Riemannian submersion. Now we fix a point $p_0 \in Y$ and set $\rho(y) \coloneqq d_h(p_0,y)$ for any $y \in Y$. It is clear that for $x \in M^n$,
\begin{align*}
\lim_{x \to \infty} \frac{r(x)}{\rho(f(x))}=1.
\end{align*}

\begin{prop} \label{P:ALE}

For the open manifold $Y$, the following properties are satisfied.
\begin{enumerate}[label=(\roman*)]
\item $Y$ has the Euclidean volume growth.

\item $|Rm_h|=O(\rho^{-2}K(\rho/4))$.

\item If $(M^n,g)$ further satisfies \eqref{cond:HOAF}, then $|\na^{h,i} Rm_h|=O(\rho^{-2-i}K(\rho/4))$ for any $i\ge 1$.
\end{enumerate}
\end{prop}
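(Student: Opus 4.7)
The plan is to work locally in the bundle charts $T_i: \Omega_i \to U_i \times \T^m_\infty$ provided by Theorem~\ref{T:met}. Using the pull-back coordinates $(x_1,\dots,x_n)$ coming from $\bar T_i$, in which the last $m$ coordinates parameterize the torus fiber and the action $\mu_i$ is translation in those coordinates, the averaged metric $\bar g$ has components $\bar g_{ij}$ depending only on $(x_1,\dots,x_{n-m})$. Proposition~\ref{P:ave} and Corollary~\ref{C:para} then give $\bar g_{ij} = \delta_{ij} + O(K(r/2))$ with $|\partial^k \bar g_{ij}| = O(r^{-k}K(r/2))$ for $k\ge 1$ (and all higher $k$ under \eqref{cond:HOAF}). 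The induced base metric $h$ is given by the standard Riemannian-submersion formula
\begin{equation*}
h_{\alpha\beta} \;=\; \bar g_{\alpha\beta} - \bar g_{\alpha a}\,\bar g^{ab}\,\bar g_{b\beta}, \qquad 1\le\alpha,\beta\le n-m,
\end{equation*}
where $a,b$ run over the torus indices; since $(\bar g^{ab})$ is $O(K(r/2))$-close to the identity, the same estimates transfer directly to $h$, yielding $h_{\alpha\beta} = \delta_{\alpha\beta} + O(K(r/2))$ and $|\partial^k h_{\alpha\beta}| = O(r^{-k}K(r/2))$.

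For (ii) and (iii), the coordinate formula for curvature then produces $|Rm_h| = O(r^{-2}K(r/2))$, and under \eqref{cond:HOAF} one obtains $|\nabla^{h,i}Rm_h| = O(r^{-2-i}K(r/2))$ by induction on $i$. Since $f$ is a Riemannian submersion with respect to $\bar g$ whose fibers have uniformly bounded diameter (close to $\mathrm{diam}\,\T^m_\infty$), one has $r(x) = \rho(f(x)) + O(1)$; in particular $K(r/2) \le K(\rho/4)$ once $\rho$ is large enough, so these estimates are equivalent to the form $O(\rho^{-2-i}K(\rho/4))$ asserted in the statement. The factor $1/4$ instead of $1/2$ is a conservative choice that absorbs the scale discrepancy between $r$ and $\rho$ on overlapping charts.

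For (i), the upper bound $\mathrm{Vol}_h(B_h(p_0,\rho)) \le C\rho^{n-m}$ follows from Bishop--Gromov comparison applied to the $(n-m)$-dimensional manifold $(Y,h)$: by (ii), $(Y,h)$ itself satisfies a condition of the form \eqref{cond:AF}, so the proof of Lemma~\ref{L:vola} carries over verbatim. For the matching lower bound, the local charts $\bar T_i$ realise each $y\in Y$ with $\rho(y)$ large inside a Euclidean ball $U_i \subset \R^{n-m}$ of radius $\asymp \rho(y)$ on which $h$ is $C^0$-close to the flat metric; this provides the uniform non-collapsing $\mathrm{Vol}_h(B_h(y,1)) \ge c > 0$. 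Combining with the fact that the tangent cone at infinity of $(Y,h)$ coincides with $C(S(\infty))$ (the $\T^m_\infty$-fibers collapse to points under blow-down, while the transversal geometry is preserved), which is an $(n-m)$-dimensional metric cone carrying a positive $(n-m)$-Hausdorff measure on unit balls, the volumes $\rho^{-(n-m)}\mathrm{Vol}_h(B_h(p_0,\rho))$ converge to a positive constant, giving $\mathrm{Vol}_h(B_h(p_0,\rho)) \ge c\rho^{n-m}$.

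The main obstacle is the non-collapsing step in (i), since curvature decay alone does not rule out collapsing at infinity; here it is supplied directly by the quantitative local charts of Proposition~\ref{P:para}, which produce flat coordinate patches of Euclidean diameter proportional to $\rho$. The rest of the argument is a routine transfer of estimates through the Riemannian-submersion formula for $h$, with the only care being the $r\leftrightarrow\rho$ scale conversion across adjacent charts.
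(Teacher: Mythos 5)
Your treatment of (i) and of (iii) is essentially sound and, for (iii), coincides with the paper's own argument: under \eqref{cond:HOAF} one has $|\partial^{\alpha}\bar g_{ij}|=O(r^{-|\alpha|}K(r/2))$ for all orders, the horizontal lift $\bar\partial_\alpha=\partial_\alpha-\bar g_{\alpha a}\bar g^{ab}\partial_b$ gives exactly your formula $h_{\alpha\beta}=\bar g_{\alpha\beta}-\bar g_{\alpha a}\bar g^{ab}\bar g_{b\beta}$, and the estimates transfer. For (i), the decisive non-collapsing input you identify (local charts that are $O(K(r/2))$-almost-isometric to Euclidean patches of diameter comparable to $\rho$) is the right one; the detour through Bishop--Gromov and the tangent cone is unnecessary but harmless.

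There is, however, a genuine gap in your proof of (ii). You deduce $|Rm_h|=O(\rho^{-2}K(\rho/4))$ from ``the coordinate formula for curvature,'' which requires pointwise bounds $|\partial^2 h_{\alpha\beta}|=O(r^{-2}K(r/2))$ and hence $|\partial^2\bar g_{ij}|=O(r^{-2}K(r/2))$. But without \eqref{cond:HOAF} these second-derivative bounds are not available: Corollary \ref{C:para} controls only $g_{ij}-\delta_{ij}$ and $\partial_k g_{ij}$, and Proposition \ref{P:ave} likewise gives only $|\na\bar g|=O(r^{-1}K(r/2))$; the curvature bound for $\bar g$ in Proposition \ref{P:ave}(iii) is obtained by averaging the curvature tensor of $g$ (which is controlled by \eqref{cond:AF}), not by bounding individual second derivatives of the metric coefficients. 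The paper closes exactly this gap by invoking O'Neill's formula, which expresses $\la R_h(X,Y)Z,W\ra_h$ as $\la R_{\bar g}(\bar X,\bar Y)\bar Z,\bar W\ra_{\bar g}$ plus quadratic terms in the integrability tensor $[\bar X,\bar Y]^v$; the first term is handled by Proposition \ref{P:ave}(iii), and the vertical brackets need only the first-order estimate $|\bar\na V_i|=O(r^{-1}K(r/2))$ on the invariant fields, contributing $O(r^{-2}K^2(r/2))$. To repair your argument for (ii) you should either route through O'Neill's formula in the same way, or justify second-derivative control of $\bar g_{ij}$ by some additional elliptic-regularity argument, which the stated hypotheses do not directly provide.
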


\begin{proof}
The fact that $Y$ has Euclidean volume growth follows immediately from Theorem \ref{T:met} and Theorem \ref{T:ave1}. We denote the Levi-Civita connection of $\bar g$ by $\bar \na$ and for any vector $X$ on $Y$ we define $\bar X$ to be its horizontal lift.

For any vector fields $X,Y,Z$ and $W$ on $Y$, we have by O'Neill's formula 
\begin{align}
&\la R_h(X,Y)Z,W \ra_h \notag \\
=&\la R_{\bar g}(\bar X,\bar Y)\bar Z,\bar W \ra_{\bar g}-\frac{1}{2} \la [\bar X, \bar Y]^v,[\bar Z,\bar W]^v \ra_{\bar g}-\frac{1}{4}\lc \la [\bar X, \bar Z]^v,[\bar Y,\bar W]^v \ra_{\bar g}-\la [\bar Y, \bar Z]^v,[\bar X,\bar W]^v \ra_{\bar g} \rc, \label{eq:5b026}
\end{align}
where the $v$ denotes the vertical part. Therefore, we only need to estimate the term like $[\bar X, \bar Y]^v$. Notice that from Theorem \ref{T:met} (i), locally we have invariant fields $V_i, 1\le i \le m $ which is almost an orthonormal basis. More precisely, 
\begin{align*}
\la V_i,V_j \ra_{\bar g}=\delta_{ij}+O(K(r/2)). 
\end{align*}
Then by direct computations,
\begin{align*}
\la [\bar X, \bar Y]^v,V_i \ra_{\bar g}=\la  \bar \na_{\bar X}\bar Y-\bar \na_{\bar Y}\bar X ,V_i \ra_{\bar g}=\la \bar X,\bar \na_{\bar Y}V_i  \ra_{\bar g}-\la \bar Y,\bar \na_{\bar X}V_i  \ra_{\bar g}.
\end{align*}

In the local coordinate chart, $V_i$ is given by $\partial_{n-m+i}$, therefore it is clear that
\begin{align*}
|\bar \na \bar V_i|=O(r^{-1}K(r/2))
\end{align*}
and hence
\begin{align*}
|[\bar X, \bar Y]^v|\le Cr^{-1}K(r/2)|\bar X|\,|\bar Y|,
\end{align*}
where for simplicity we use $|\cdot|$ to denote $|\cdot|_{\bar g}$. From \eqref{eq:5b026} and Theorem \ref{T:ave1} (iv) we immediately have
\begin{align*}
|Rm_h|=|Rm_{\bar g}|+O(r^{-2}K^2(r/2))=O(r^{-2}K(r/2))=O(\rho^{-2}K(\rho/4)).
\end{align*}

If $(M^n,g)$ further satisfies \eqref{cond:HOAF}, we do all computations in the local coordinate charts. In any chart $U \times \T^m_{\infty}$, it follows from Corollary \ref{C:para} that
\begin{align}\label{eq:5b029}
\bar g_{ij}-\delta_{ij}=O(K(r/2)) \quad \text{and} \quad |\partial^{\alpha} \bar g_{ij}|=O(r^{-|\alpha|}K(r/2)), \quad \forall |\alpha| \ge 1.
\end{align}

For any $1 \le i \le n-m$, the horizontal lift of $\partial_i$, denoted by $\bar \partial_i$, can be expressed as
\begin{align*}
\bar \partial_i=\partial_i+\sum_{j=n-m+1}^n a_{ij} \partial_j,
\end{align*}
where the coefficients $a_{ij}$ are determined by
\begin{align*}
\sum_{j=n-m+1}^n a_{ij} \bar g_{jk}=-\bar g_{ik}
\end{align*}
for any $n-m+1 \le k\le n$. Since $h_{ij}=\bar g(\bar \partial_i,\bar \partial_j)$, it is clear from \eqref{eq:5b029} that
\begin{align*}
h_{ij}=\delta_{ij}+O(K(r/2)) \quad \text{and} \quad |\partial^{\alpha} h_{ij}|=O(r^{-|\alpha|}K(r/2)), \quad \forall |\alpha| \ge 1.
\end{align*}

From this, we immediately have $|\na^{h,i} Rm_h|=O(\rho^{-2-i}K(\rho/4))$ for any $i \ge 1$.
\end{proof}

From Proposition \ref{P:ALE}, $Y$ is an ALE end such that there exists an ALE coordinate system at infinity, see Appendix \ref{app:A} for details. Now the following corollary is immediate.

\begin{cor} \label{cor:cone}
Let $(M^{n},g)$ be a TALE manifold. Then the tangent cone at infinity is isometric to a flat cone $C(S^{n-m-1}/\Gamma)$.
\end{cor}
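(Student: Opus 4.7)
The plan is to reduce the computation of the tangent cone of $M$ to the tangent cone of the base $Y$ of the torus fibration $f: M^n\setminus K\to Y$ provided by Theorem \ref{T:met}, and then invoke the ALE theory for $Y$.

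First I would observe that by Theorem \ref{T:localfiber} (iv) together with Theorem \ref{T:ave1} (ii), every fiber of $f$ is $O(\tilde K(r/4))$-close in the Gromov-Hausdorff sense to the fixed flat torus $\T^m_{\infty}$, so the fiber diameters are uniformly bounded by some constant $D$. Consequently, under the rescaling $g_i = r_i^{-2} g$ with $r_i\to\infty$, the fibers collapse to points, and the submersion $f:(M^n\setminus K,r_i^{-2}\bar g)\to(Y,r_i^{-2}h)$ becomes a Gromov-Hausdorff approximation whose error tends to zero. Combined with Theorem \ref{T:ave1} (ii), which gives $g=\bar g+O(r^{-1}K(r/2))$, this yields
\[
(M^n,r_i^{-2}g,p)\stackrel{GH}{\longrightarrow}(Y_\infty,d_\infty,y_\infty),
\]
where $(Y_\infty,d_\infty,y_\infty)$ is the tangent cone of $(Y,h)$ at infinity. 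By uniqueness of the tangent cone (which is ensured under \eqref{cond:AF} by Kasue's result \cite{Ka88} applied on $M$, and independently on $Y$ by Proposition \ref{P:ALE}), we conclude $C(S(\infty))$ is isometric to the tangent cone of $Y$ at infinity.

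Next I would apply the ALE theory to $Y$. Proposition \ref{P:ALE} shows that $Y$ has Euclidean volume growth and that its curvature decays as $|Rm_h|=O(\rho^{-2}K(\rho/4))$, so $Y$ satisfies the hypotheses of the asymptotically flat version of the Bando-Kasue-Nakajima theorem that is recorded in Appendix~\ref{app:A} of this paper. Therefore $Y$ is an ALE end, and its tangent cone at infinity is a flat cone $\R^{n-m}/\Gamma$, where $\Gamma\subset O(n-m)$ is a finite subgroup acting freely on $S^{n-m-1}$ (when $n-m\ge 3$; the low-dimensional cases $n-m=1,2$ are degenerate and handled directly).

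Combining the two steps gives $C(S(\infty))\cong \R^{n-m}/\Gamma = C(S^{n-m-1}/\Gamma)$, which is the desired conclusion. The only non-routine point is the first step, i.e.\ verifying that the rescaled spaces converge to the tangent cone of $Y$; this is really a matter of unwinding the almost-isometry estimate for the local parametrization $T_i$ (Proposition \ref{P:para}) and the curvature-controlled closeness $g=\bar g+O(r^{-1}K(r/2))$, so that distances in $(M,r_i^{-2}g)$ are uniformly $o(1)$-close to distances pulled back from $(Y,r_i^{-2}h)$ along $f$. Once that is in place, the rest is a direct application of previously established results.
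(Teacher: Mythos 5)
Your proposal is correct and follows essentially the same route the paper intends: the paper treats the corollary as immediate from Proposition \ref{P:ALE} (which makes $Y$ an ALE end, hence with flat-cone tangent cone by Appendix \ref{app:A}) together with the fact that the torus fibers of $f$ have uniformly bounded diameter and therefore collapse under the blow-down, so the tangent cone of $M$ coincides with that of $Y$. Your write-up simply makes explicit the standard collapsing/almost-Riemannian-submersion argument that the paper leaves implicit, including the correct caveats for $n-m=1,2$.
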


Here, by slight abuse of notation, $\Gamma \subset O(n-m)$ is a finite group acting freely on $S^{n-m-1}$ if $n-m \ge 3$, $S^{n-m-1}/\Gamma=S^1$ if $n-m=2$ and $S^{n-m-1}/\Gamma$ is a single point if $n-m=1$.

\subsection{TALE manifolds with circle fibration}

In this section, we prove the following theorem.

\begin{thm}\label{T:circle}
Let $(M^n,g)$ be a complete Riemannian manifold with \eqref{cond:AF}, then the following conditions are equivalent:
\begin{enumerate} [label=(\roman*)]
\item $(M^n,g)$ is a TALE manifold with circle fibration.

\item The tangent cone at infinity is a flat cone $C(S^{n-2}/\Gamma)$, where $\Gamma \subset \text{O}(n-1)$ is a finite subgroup acting freely on $S^{n-2}$.

\item The section $S(\infty)$ of $C(S(\infty))$ is a smooth $n-2$ dimensional Riemannian manifold.
\end{enumerate}
\end{thm}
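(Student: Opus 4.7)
Both easy implications are immediate. (i)~$\Rightarrow$~(ii) follows from Corollary \ref{cor:cone} specialized to $m=1$, while (ii)~$\Rightarrow$~(iii) holds because the free action of $\Gamma$ on $S^{n-2}$ realizes $S^{n-2}/\Gamma$ as a smooth closed $(n-2)$-manifold. All of the substance lies in (iii)~$\Rightarrow$~(i).

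Assume (iii). Since $S(\infty)$ is smooth of dimension $n-2$, its cone $C(S(\infty))$ is smooth of dimension $n-1$ away from $p_\infty$, and Theorem \ref{T:infra} produces a fibration $f_0 : M\setminus K \to C(S(\infty))\setminus\bar B(p_\infty,A)$ whose fiber is an infranilmanifold of dimension $n-(n-1)=1$, necessarily a circle. To finish, we verify (SHC); Theorem \ref{T:met} will then deliver the full TALE structure with $\T_\infty^1 = S^1$ and complete the proof.

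Fix a geodesic ray $\alpha(t)$ and let $\sigma_t$ denote the fiber circle at $\alpha(t)$, with length $l(t)$ and rotational holonomy norm $\theta(t) = \|\mathbf r(\sigma_t)\|$. Theorem~\ref{T:hol}(ii) combined with $|Rm|\le 4r^{-2}K(r/2)$ gives $|\theta'(t)|\le C\, l(t)\, t^{-2} K(t/2)$, so once we know $l(t)\le L_\infty$, the angle $\theta(t)$ converges to some $\theta_\infty$ at rate $O(K_1(t/2))$. Both the uniform bound $l(t)\le L_\infty$ and the vanishing $\theta_\infty=0$ are then established by a local-group argument dual to the proof of Theorem \ref{T:cone}. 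Rescaling any putative bad subsequence to unit scale and invoking Lemma~\ref{L:es1} together with \cite[Lemma~1.11]{Fu88}, the rescaled fundamental pseudo-groups would converge to a limit local group $\Gamma_\infty\subset\text{Iso}(\R^n)$ containing a nontrivial rotational isometry. By Lemma~\ref{L:group2} (and its GH-limit form as used in the proof of Theorem \ref{T:cone}), the local quotient of a ball in $\R^n$ by $\Gamma_\infty$ equals the local structure of $C(S(\infty))$ near the limit point, which by (iii) is a smooth $(n-1)$-ball; this forces $\Gamma_\infty$ to act by pure translations and contradicts the presence of the rotational element. This local-group rigidity is the principal obstacle.

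With $l(t)\le L_\infty$ and $\theta(t)\to 0$ in hand, choose $\kappa$ small enough that any geodesic loop $\gamma_x$ with $L(\gamma_x)\le \kappa r(x)$ projects under $f_0$ to a loop in $C(S(\infty))$ that is contained in a convex ball (hence contractible); this is possible because $f_0$ is an $o(1)$-almost-Riemannian submersion with convexity radius comparable to $r$ on the smooth base $C(S(\infty))$. Such a $\gamma_x$ is then homotopic to a fiber multiple $\sigma_x^k$ with $k\le \kappa r/L_\infty$, and the holonomy estimate $\|\mathbf r(\gamma_x)\|\le k\theta(x)+O(k\,L_\infty\, r^{-1}K(r/2))=o(1)$ verifies (SHC). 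Theorem \ref{T:met} applied to $(M,g)$ then yields the desired TALE structure with circle fibration, completing (iii)~$\Rightarrow$~(i).
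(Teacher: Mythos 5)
Your overall architecture coincides with the paper's: both easy implications are handled the same way, and for (iii)~$\Rightarrow$~(i) you, like the paper, take the circle fibration from Theorem \ref{T:infra}, run a blow-down/local-group argument to kill the rotational holonomy of the fiber generator, upgrade to quantitative decay via Theorem \ref{T:hol}, show every short loop is a power of the fiber loop, and conclude \eqref{cond:SHC}. However, there is a genuine gap at the step you yourself identify as the principal obstacle. You assert that smoothness of the $(n-1)$-dimensional quotient $B(0,\kappa)/\Gamma_\infty$ forces $\Gamma_\infty$ to act by pure translations. That is not true as stated. Since the rescaled translation part of $c(t)$ tends to $0$, the limit of $\tau_{c(t)}$ is the pure rotation $A^{-1}$, and smoothness of the quotient only forces $A^{-1}$ to fix an $(n-1)$-dimensional subspace pointwise; it leaves open the possibility that $A^{-1}$ is the reflection in the hyperplane $v^\perp$ orthogonal to the limiting translation direction $v$. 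In that case $\Gamma_\infty$ is generated by translations along $v$ together with this reflection, and the quotient $B(0,\kappa)/\Gamma_\infty$ is \emph{still} a smooth $(n-1)$-ball (the reflection acts trivially on $v^\perp$, which is exactly the orbit space), yet $\|\mathbf r(c(t))\|\to\pi$ and \eqref{cond:SHC} fails. The paper closes this loophole with an extra observation you omit: the saturated neighborhood $\Omega(B(\alpha(t),\kappa t))\cong D^{n-1}\times S^1$ is orientable, so $\mathbf r(c(t))\in \text{SO}(n)$, hence its limit $A$ lies in $\text{SO}(n)$; an element of $\text{SO}(n)$ fixing an $(n-1)$-dimensional subspace pointwise is the identity. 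Without this (or an equivalent argument ruling out the reflection), your deduction that $\theta_\infty=0$ does not go through.

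Two smaller points. First, your final verification of \eqref{cond:SHC} quietly uses $\theta(x)\to 0$ at arbitrary points $x$, whereas your local-group and ODE arguments only control $\theta$ along the fixed ray $\alpha$; you must either slide the fiber loop from the ray to general points through annuli (as in Step 2 of the proof of Theorem \ref{T:basis2}) or, as the paper does, establish \eqref{cond:SHC} on the ray together with \eqref{cond:HC'} and invoke Remark \ref{rem:weak}. Second, your argument that every short geodesic loop is a fiber multiple (projecting to a contractible loop in the base) is a reasonable variant of the paper's argument (which instead shows the shortest element of $\Gamma(\alpha(t),\kappa t)$ must generate, else $\dim C(S(\infty))<n-1$), but the homotopy you use must be a \emph{short} homotopy so that the holonomy comparison \cite[$6.2.1$]{BK81} applies; this should be said explicitly.
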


\begin{proof}
The fact that (i) implies (ii) follows immediately from Corollary \ref{cor:cone} and (ii) implies (iii) is obvious. Therefore, we only need to prove that (iii) implies (i).

It follows from Theorem \ref{T:infra} that there exists a compact set $K$ and a fibration $f:M\backslash K \longrightarrow C(S(\infty))/\bar B(p_{\infty},R_0)$. Since $C(S(\infty))$ is $n-1$ dimensional, the fiber of $f$ is a circle. For any $x\in M\backslash K$, there exists a loop $\gamma^x$ which represents the fiber of $f$ through $x$ and we denote its homotopic geodesic loop by $c^x$. In addition, there exists a small constant $\kappa >0$ such that minimal saturated set $\Omega(B(x,\kappa r))$ of $B(x,\kappa r)$ is homotopic to $S^1$.

We fix a geodesic ray $\alpha(t)$ starting from the point $p$ and consider the sliding $c(t)$ of $c^{\alpha(t_0)}$ along the $\alpha(t)$. Since the sliding preserves the generator of the local fundamental group, by switching the orientation of $c^x$ if necessary we conclude that $c(t)=c^{\alpha(t)}$.

Now we claim that $\|\mathbf r(c(t))\| \to 0$ as $t \to \infty$. Indeed, it follows from Lemma \ref{L:group2} and Lemma \ref{L:es1} that after rescaling, the group $\Gamma(\alpha(t),2\kappa t)$ as $t \to \infty$ converges to a local group $\Gamma_{\infty} \subset \text{Iso}(\R^n)$. Moreover, $B(0,\kappa)/\Gamma_{\infty}$ is isometric to an open set in $C(S(\infty))\backslash \{p_{\infty}\}$. In particular, if we set $\mathbf r(c(t)) \to A \in O(n)$, then $\tau_{\alpha(t)} \to A^{-1}$. Since $B(0,\kappa)/\Gamma_{\infty}$ is smooth with dimension $n-1$, it implies that $A^{-1}$ has a fixed $n-1$ dimensional subspace. Moreover, since locally  $\Omega(B(\alpha(t),\kappa t))$ is orientable, we can assume that $\mathbf r(c(t)) \in \text{SO}(n)$ and hence its limit $A$ must be the identity.

Therefore, it follows from the same argument of Theorem \ref{T:basis2} that for some constant $L>0$,
\begin{align} \label{eq:circ1}
\|\mathbf r(c(t))\| \le Ct^{-1}K(t/2) \quad \text{and} \quad |L(c(t))-L| \le \tilde K(t/2).
\end{align}

From \eqref{eq:circ1} and \cite[Proposisition $2.3.1$ (ii)]{BK81} that for any $c^m(t) \in \Gamma(\alpha(t),\kappa t)$,
\begin{align*} 
|\mathbf t(c^m(t))-m\mathbf t(c(t))| \le Cm^2t^{-1}K(t/2)(mt^{-1}+1) \le CmK(t/2).
\end{align*}

By the same proof of Lemma \ref{L:tr} (i) we have
\begin{align} \label{eq:circ3}
\|\mathbf r(c^m(t))\| \le Cmt^{-1}K(t/2) \le CK(t/2).
\end{align}

Next we claim that $\Gamma(\alpha(t),\kappa t)$ is generated by $c(t)$. Indeed, if we denote the shortest element of $\Gamma(\alpha(t),\kappa t)$ by $c'$, then $c'$ must generate $\Gamma(\alpha(t),\kappa t)$ since otherwise the dimension of $C(S(\infty))$ is smaller than $n-1$. If $c'\ne c(t), c^{-1}(t)$, then $c(t)=(c')^m$ for some integer $m \ne 1,-1$, which contradicts our choice of $c(t)$. Therefore, we have proved the claim.

From \eqref{eq:circ3}, \eqref{cond:SHC} holds on $\alpha$ and the conclusion follows from our main theorem and Remark \ref{rem:weak}.
\end{proof}

\begin{rem}
Minerbe has proved in \emph{\cite[Theorem $3.26$]{Mi10}} similar results under \eqref{cond:HC'} and a uniform volume assumption which guarantees the fundamental pseudo-group is generated by a single element, see \emph{\cite[Proposition $3.1$]{Mi10}}. Notice that by Proposition \ref{T:ave1}, the higher-order estimates of the averaged metric $\bar g$ are stronger than those obtained in \emph{\cite[Proposition $3.22$]{Mi10}}.
\end{rem}

\subsection{Topology of TALE $4$-manifolds}
\label{sub:topo}

Let $(M^n,g)$ be a TALE manifold, it follows immediately from Proposition \ref{P:ALE} that the end of $M$ is diffeomorphic to $X \times (0,+\infty)$ for some closed manifold $X$ which is the total space of a $\T^m$-fiber bundle over $S^{l-1}/\Gamma$ for an integer $0 \le m <n$, where we set $l=n-m$.

\begin{defn}
The asymptotic torus fibration of a TALE manifold $(M^n,g)$ is defined as the fiber bundle
\begin{align*}
X \overset{f}{\longrightarrow} S^{l-1}/\Gamma.
\end{align*}
with fiber $\T^m$ such that the structure group of $f$ is contained in $\mathbb T^m \rtimes G$ for some finite subgroup $G \subset \emph{GL}(m,\Z)$.
\end{defn}

If the dimension $n=4$, the asymptotic torus fibration and the topology of the end can be explicitly described. We only consider the case when $(M^4,g)$ is orientable, since its double cover is still a TALE manifold for which each end can be analyzed similarly. In particular, it implies that the total space $X$ of the asymptotic torus fibration is orientable. We classify $X$ and its corresponding fiber bundle as follows.

\noindent ($l=1$): In this case, the base is a point and $X$ is diffeomorphic to $\T_{\infty}^3$. \\

\noindent ($l=2$): In this case, the base is a circle and $X$ is diffeomorphic to a mapping torus of $\mathbb T_{\infty}^2$ defined by
\begin{align*} 
M_L= \frac{(I\times \mathbb T_{\infty}^2)}{(0,x)\sim (1,L(x))}
\end{align*} 
where $L$ is a finite-order element of $\text{GL}(2,\mathbb Z)$. It is well-known that the diffeomorphism type of $M_L$ is determined by the conjugacy class of $L$. Since $M_L$ is orientable, it follows from \cite[Proposition $1$]{Ta71} that there are $5$ finite-order elements in $\text{SL}(2,\mathbb Z)$ up to conjugacy in $\text{GL}(2,\mathbb Z)$. We list them as follows.
\begin{align*} 
L_1= \begin{pmatrix} 
1& 0 \\
   0 & 1
    \end{pmatrix}, \quad
L_2= \begin{pmatrix} 
-1& 0 \\
   0 & -1
    \end{pmatrix}, \quad
 L_3= \begin{pmatrix} 
0& -1 \\
   1 & -1
    \end{pmatrix}, \quad
L_4= \begin{pmatrix} 
0& -1 \\
   1 & 0
    \end{pmatrix}, \quad
L_5= \begin{pmatrix} 
1& -1 \\
   1 & 0
    \end{pmatrix}. 
\end{align*} 
Notice that $L_1,L_2,L_3,L_4,L_5$ are generators of the isometry groups $1,\Z_2,\Z_3,\Z_4,\Z_6$ of $\T_{\infty}^2$, respectively. Also, they generate the monodromy groups of the corresponding torus fibrations.\\

\noindent ($l=3$): In this case, the base of the asymptotic circle fibration is either $S^2$ (cyclic) or $\RP^2$ (dihedral). Since the total space $X$ is orientable, the fiber bundle is determined by its Euler class $e$ (see for example \cite{Mon87}) which we describe explicitly as follows.

(ALF-$A_k$): If the base is $S^2$, then $S^2 =B^{+}\cup B^{-}$ where
    \begin{align*}
B^+=\{(x_1,x_2,x_3) \in S^2 \,\mid x_3 \ge 0\} \quad \text{and} \quad B^-=\{(x_1,x_2,x_3) \in S^2 \,\mid x_3 \le 0\}.
    \end{align*}
On $B^+$ and $B^-$ the circle bundles are trivial. Then the integer $e$ is determined by identifying $(\cos t, \sin t,0, \theta)$ with $(\cos t,\sin t, 0,\theta+f(t))$, where $f(2\pi)-f(0)=-2\pi e$. When $e=0$, the fiber bundle is trivial. When $e=-1$, it is the well-known Hopf fibration $S^1  \rightarrow S^3 \rightarrow S^2$. In general, $X$ is diffeomorphic to $S^3/\Z_{|e|}$, where the action of the generator $\tau$ on $S^3=\{(z,w) \in \C^2 \,\mid |z|^2+|w|^2=1\}$ is defined by
\begin{align*}
\tau(z,w)=(e^{\frac{2\pi i}{|e|}}z,e^{\frac{2\pi i}{|e|}}w).
\end{align*}
For the cyclic case, we set $k=-e-1$ and call the TALE manifold the ALF-$A_k$ type.

(ALF-$D_k$): If the base is $\RP^2$, then $\RP^2 =\{(x_1,x_2,x_3) \in S^2 \,\mid x_3 \ge 0\}/ (x_1,x_2,0) \sim (-x_1,-x_2,0)$. The fiber bundle is trivial over the disc and we identify
$(\cos t, \sin t,0, \theta)$ with $(-\cos t,-\sin t, 0,f(t)-\theta)$, where  $f(\pi)-f(0)=-2\pi e$. When $e=0$, $X$ is diffeomorphic to $S^2 \times S^1/\pm$. When $e \ne 0$, $X$ is diffeomorphic to $S^3/D_{4|e|}$, where $D_{4|e|}$ is the binary dihedral group generated by $\tau$ and $\sigma$ defined by
\begin{align*}
\tau(z,w)=(e^{\frac{\pi i}{|e|}}z,e^{\frac{\pi i}{|e|}}w), \quad \sigma(z,w)=(\bar w,-\bar z).
\end{align*}
For the dihedral case, we set $k=-e+2$ and call the TALE manifold the ALF-$D_k$ type.

For later applications, we prove the following lemma about the topology of the end. Recall that two closed manifolds $N_1$ and $N_2$ are (topologically) $h$-cobordant if there exists a compact manifold $W$ such that $\partial W=N_1\coprod N_2$ and the inclusions $N_1 \to W$ and $N_2 \to W$ are homotopy equivalences.

\begin{lem} \label{L:topoend}
Let $M$ be an $n$-dimensional open manifold and suppose there are two compact sets $K_1$ and $K_2$ of $M$ such that for $i=1,2$, there exists a homeomorphism $\Phi_i: M\backslash \text{int}(K_i) \to N_i \times [0,\infty)$ and $\Phi_i(\partial K_i)=N_i \times \{0\}$, where $N_i$ is a closed manifold. Then $N_1$ and $N_2$ are $h$-cobordant.
\end{lem}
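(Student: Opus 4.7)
\textbf{Proof proposal for Lemma \ref{L:topoend}.} The plan is to reduce to a nested configuration, construct an explicit compact cobordism between $N_{1}$ and $N_{2}$ in $M$, and then verify that both boundary inclusions are homotopy equivalences.

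First I would arrange that $K_{1}\subset\operatorname{int}(K_{2})$ without changing the homeomorphism types of the $N_{i}$. By compactness of $K_{1}$, for $S$ large the set $\Phi_{2}^{-1}(N_{2}\times[0,S])$ engulfs $K_{1}\setminus K_{2}$; replacing $K_{2}$ by the enlarged compact set
\[
\widetilde{K}_{2}\ :=\ K_{2}\cup\Phi_{2}^{-1}(N_{2}\times[0,S])
\]
gives a new compact submanifold whose boundary is still homeomorphic to $N_{2}$, whose complement $M\setminus\operatorname{int}(\widetilde{K}_{2})=\Phi_{2}^{-1}(N_{2}\times[S,\infty))$ is still a product collar, and which now contains $K_{1}$ in its interior. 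After this reduction I set
\[
W\ :=\ K_{2}\setminus\operatorname{int}(K_{1}),
\]
a compact manifold with $\partial W=\partial K_{1}\sqcup\partial K_{2}\cong N_{1}\sqcup N_{2}$.

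Next I would show that $N_{1}\hookrightarrow W$ is a homotopy equivalence. The key identity is the decomposition
\[
M\setminus\operatorname{int}(K_{1})\ =\ W\ \cup\ \bigl(M\setminus\operatorname{int}(K_{2})\bigr),\qquad W\cap\bigl(M\setminus\operatorname{int}(K_{2})\bigr)=\partial K_{2}.
\]
Under $\Phi_{2}$, the second piece is $N_{2}\times[0,\infty)$, which deformation retracts onto $\partial K_{2}$; this deformation extends by the identity on $W$ to a strong deformation retract of $M\setminus\operatorname{int}(K_{1})$ onto $W$. Since via $\Phi_{1}$ the space $M\setminus\operatorname{int}(K_{1})\cong N_{1}\times[0,\infty)$ already deformation retracts onto $\partial K_{1}=N_{1}$, composing the two retracts shows the inclusion $N_{1}\hookrightarrow W$ is a homotopy equivalence.

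The hard step is the dual statement: $N_{2}\hookrightarrow W$ is a homotopy equivalence. Here the symmetric deformation-retract argument is not directly available, because $W$ lies on the wrong side of $\partial K_{2}$ to be parametrised by $\Phi_{2}$. I would proceed in two parts. First, the inclusion $M\setminus\operatorname{int}(K_{2})\hookrightarrow M\setminus\operatorname{int}(K_{1})$ is a \emph{proper} homotopy equivalence: under $\Phi_{1}$ it identifies with the inclusion of $(N_{1}\times[0,\infty))\setminus C$ into $N_{1}\times[0,\infty)$, where $C=\Phi_{1}(W)$ is compact, and such a codimension-zero inclusion of the complement of a compact set is a proper homotopy equivalence. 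Consequently the composition
\[
\pi_{1}(N_{2})\ \xrightarrow{\;\simeq\;}\ \pi_{1}\bigl(M\setminus\operatorname{int}(K_{2})\bigr)\ \longrightarrow\ \pi_{1}\bigl(M\setminus\operatorname{int}(K_{1})\bigr)\ \xleftarrow{\;\simeq\;}\ \pi_{1}(N_{1})
\]
is an isomorphism, and factoring through $W$ yields that $\pi_{1}(N_{2})\to\pi_{1}(W)$ is an isomorphism. Second, I invoke Poincar\'e--Lefschetz duality with $\mathbb{Z}[\pi_{1}(W)]$-local coefficients: since $N_{1}\hookrightarrow W$ is a homotopy equivalence we have $H_{\ast}(W,N_{1};\mathbb{Z}[\pi_{1}])=0$, hence by duality $H^{\ast}(W,N_{2};\mathbb{Z}[\pi_{1}])=0$, and the universal coefficient theorem over $\mathbb{Z}[\pi_{1}]$ gives $H_{\ast}(W,N_{2};\mathbb{Z}[\pi_{1}])=0$. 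Combined with the $\pi_{1}$-isomorphism, Whitehead's theorem implies $N_{2}\hookrightarrow W$ is a homotopy equivalence, completing the construction of the $h$-cobordism.

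The main obstacle is precisely this dual direction in the last paragraph: one cannot simply mimic the deformation-retract argument of step~three, and some duality or equivalent end-theoretic input is essential. A mild technical point is that the lemma is stated only for topological manifolds, so I would rely on Brown's bicollar theorem to produce the collars needed both for the enlargement of $K_{2}$ and for the applicability of topological Poincar\'e--Lefschetz duality; if $M$ is non-orientable one should either pass to the orientation double cover or work with twisted coefficients throughout.
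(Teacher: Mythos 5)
Your setup of $W$ and your argument that $N_1\hookrightarrow W$ is a homotopy equivalence agree with the paper, but the step you yourself flag as the hard one rests on a false general principle. It is not true that ``a codimension-zero inclusion of the complement of a compact set is a proper homotopy equivalence'': already $\R^n\setminus\operatorname{int}(B^n)\hookrightarrow\R^n$ fails to be a homotopy equivalence. The inclusion $M\setminus\operatorname{int}(K_2)\hookrightarrow M\setminus\operatorname{int}(K_1)$ \emph{is} an equivalence in the present situation, but only because the smaller set is itself a product end via $\Phi_2$ -- information your argument does not use at the point where it is invoked. So the $\pi_1$ input to your duality step is unjustified as written. (The duality step itself is also loosely stated: over the noncommutative ring $\Z[\pi_1(W)]$ one does not pass from $H^\ast=0$ to $H_\ast=0$ by a universal coefficient theorem, but by chain contractibility of a finite free complex. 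This is standard and repairable.)

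The paper avoids all of this by restoring the symmetry that your first reduction destroys: rather than keeping $K_1$ and enlarging $K_2$, it replaces \emph{both} cross-sections by level sets deep in the respective ends. Choose $a$ so large that $\widetilde N_1:=\Phi_1^{-1}(N_1\times\{a\})$ bounds a compact set $\widetilde K_1\supset K_2$, then $b$ so large that $\widetilde N_2:=\Phi_2^{-1}(N_2\times\{b\})$ bounds $\widetilde K_2\supset\widetilde K_1$, and set $W=\widetilde K_2\setminus\operatorname{int}(\widetilde K_1)$. One then has the nested chain $C:=M\setminus\operatorname{int}(\widetilde K_2)\subset A:=M\setminus\operatorname{int}(\widetilde K_1)\subset B:=M\setminus\operatorname{int}(K_2)\subset M\setminus\operatorname{int}(K_1)$, in which $B$ deformation retracts onto $C$ (push out to level $b$ via $\Phi_2$) and $M\setminus\operatorname{int}(K_1)$ deformation retracts onto $A$ (push out to level $a$ via $\Phi_1$); the two-out-of-six property of homotopy equivalences then forces every inclusion in the chain to be one. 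Since $A=W\cup_{\widetilde N_2}C$ deformation retracts onto $W$ (retract $C$ onto $\widetilde N_2=\partial C$) and also onto $\widetilde N_1$ (via $\Phi_1$), both $\widetilde N_1\hookrightarrow W$ and $\widetilde N_2\hookrightarrow C\hookrightarrow A\simeq W$ are homotopy equivalences, with $\widetilde N_i\cong N_i$. This is entirely elementary and needs neither Poincar\'e--Lefschetz duality nor Whitehead's theorem. Your duality route is the right tool when an end is merely tame rather than a topological product, but here it is unnecessary, and its $\pi_1$ prerequisite secretly requires exactly the product structure that makes the elementary argument work.
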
 

\begin{proof}
We choose a large $a$ so that $\Phi_2 \circ \Phi_1^{-1}(N_1 \times \{a\})$ is well defined and the image is disjoint from $N_2 \times \{0\}$. With $a$ fixed, we choose a large $b$ so that $\Phi_1 \circ \Phi_2^{-1}(N_2 \times \{b\})$ is well defined and the image is disjoint from $N_1 \times \{a\}$. Next we define $\tilde N_1=\Phi_1^{-1}(N_1 \times \{a\})$, which is a boundary of a compact manifold $\tilde K_1$. Similarly, we set $\tilde N_2=\Phi_2^{-1}(N_2 \times \{b\})$, which is a boundary of a compact manifold $\tilde K_2$.

Now we consider the compact manifold $W=\tilde K_2 \backslash \text{int} (\tilde K_1)$. It is clear by the definition that $\partial W=\tilde N_1 \coprod \tilde N_2$. Moreover, under the homeomorphism $\Phi_1$, it is easy to construct a strong deformation retract $F((x,s),t)=(x,(1-t)s+ta)$ for any $(x,s) \in \Phi_1(W)$ from $W$ to $\tilde N_1$. Similarly, there exists a strong deformation retract from $W$ to $\tilde N_2$. Therefore, $N_1$ and $N_2$ are $h$-cobordant.
\end{proof}

\begin{rem}\label{R:topoend}
It is not necessary that $N_1$ and $N_2$ are homeomorphic, see \emph{\cite[Theorem $3$]{Mil61}}. However, for any two oriented closed $3$-manifolds $N_1$ and $N_2$, if they are $h$-cobordant, they must be diffeomorphic, see \emph{\cite[Theorem $1.4$]{Tu88}}. Notice that this result holds for any two closed $3$-manifolds due to Perelman's resolution of Thurston's geometrization conjecture \emph{\cite{Pe1,Pe2,Pe3}}.
\end{rem}

\section{Ricci-flat TALE manifolds}

\subsection{Improvement of the decay order}
\label{S:RF1}
Let $(M^n,g)$ be a Ricci-flat TALE manifold with $l$-dimensional tangent cone at infinity. It follows from Shi's local estimates \cite{Shi89}, see also \cite[Proposition $A.2$]{Mi10}, that
 \begin{align*}
|\na^k Rm| \le C(k,n)\frac{K(r/2)}{r^{2+k}},\quad \forall k\ge 1.
\end{align*}
Therefore, $(M^n,g)$ satisfies \eqref{cond:HOAF} by redefining $K$.  

Now we have the following theorem for the curvature decay.

\begin{thm}\label{T:met2}
If $l\ge 4$ or $l=3$ and $n=4$, then
\begin{align*} 
|Rm|=O(r^{-\frac{(l-2)(n-1)}{n-3}}).
\end{align*} 
\end{thm}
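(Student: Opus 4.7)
The strategy is to reduce the statement directly to Minerbe's Theorem $4.12$ of \cite{Mi092}, which provides sharp curvature decay for a Ricci-flat manifold from two ingredients: (a) polynomial volume growth of order $l$, and (b) a weighted Sobolev inequality of Euclidean type in dimension $l$. Combining these with Moser iteration on the Bochner-type inequality for $|Rm|$, and invoking the refined Kato inequality (of constant $\tfrac{n-1}{n+1}$ valid under Ricci-flatness), Minerbe's machinery outputs precisely the exponent $\tfrac{(l-2)(n-1)}{n-3}$. My task therefore reduces to verifying these two hypotheses in our TALE setting.

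The volume growth is immediate: by Proposition~\ref{P:ALE} the base $Y$ of the torus fibration is an ALE end of dimension $l=n-m$, so $|B_Y(p_0,\rho)|\asymp \rho^l$, and by Theorem~\ref{T:met} the fibers of $f$ have uniformly bounded volume (they converge in the $C^1$ sense to a fixed flat torus $\T^m_\infty$). Fubini along $f$, together with the fact that $f$ is a $CK(r/2)$-almost-Riemannian submersion, then gives $|B(p,r)| \asymp r^l$ on $M$.

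For the Sobolev inequality I would use the averaged metric $\bar g$ from Theorem~\ref{T:ave1} to push integrals down along $f$. Since $\bar g$ is $\mu_i$-invariant on each chart and $g=\bar g+O(r^{-1}K(r/2))$, for any compactly supported $u$ the fiber-average $\bar u(y):= \mathrm{Vol}(f^{-1}(y))^{-1}\int_{f^{-1}(y)} u$ satisfies $\|\bar u\|_{L^p}\lesssim \|u\|_{L^p}$ and $\|\nabla\bar u\|_{L^2}\lesssim \|\nabla u\|_{L^2}$. On the ALE base $Y$ one has the classical Euclidean-type Sobolev inequality $\|v\|_{L^{2l/(l-2)}}\le C\|\nabla v\|_{L^2}$, which requires $l\ge 3$; lifting this back to $M$ supplies the missing hypothesis for Minerbe's iteration.

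The main obstacle is ensuring that the Sobolev constants and fiber geometry are controlled uniformly on the end, which is exactly where the quantitative TALE estimates (in particular the $C^1$-convergence of fibers to $\T^m_\infty$ in Theorem~\ref{T:met}) are essential; without them the constant relating $\|u\|_{L^p}$ to $\|\bar u\|_{L^p}$ could degenerate as $r\to\infty$. The case restriction ``$l\ge 4$ or $l=3$, $n=4$'' reflects where the resulting exponent improves on the a priori decay $|Rm|=O(r^{-2})$ coming from \eqref{cond:AF}: one needs $\tfrac{(l-2)(n-1)}{n-3}>2$, and a numerical check shows this holds precisely under the stated hypothesis (the ALF case $l=3$, $n=4$ giving $|Rm|=O(r^{-3})$ and the ALE case $l=n=4$ giving $|Rm|=O(r^{-6})$). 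Once the Sobolev inequality is in hand, the proof is then a direct invocation of \cite[Theorem~$4.12$]{Mi092}.
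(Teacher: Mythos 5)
Your overall strategy coincides with the paper's: both proofs reduce the statement to \cite[Theorem $4.12$]{Mi092}, and your verification of the $l$-dimensional volume growth via the torus fibration over the ALE base is essentially the paper's first step. However, you have misidentified the hypotheses of Minerbe's theorem, and this leaves a genuine gap. The inputs to \cite[Theorem $4.12$]{Mi092} are not ``volume growth plus a Sobolev inequality'': they are (a) the volume ratio bound $|B(p,t)|/|B(p,s)| \ge C(t/s)^{l}$, (b) the restriction $l > 4\tfrac{n-2}{n-1}$, and (c) the weighted curvature integrability
\begin{align*}
\int_M |Rm|^{\frac{n}{2}}\, r^{\,n-l}\, dV < \infty .
\end{align*}
The weighted Sobolev inequality is not something you need to supply; Minerbe derives it internally from (a) together with nonnegative Ricci curvature, so your third paragraph is doing unnecessary work (and on the wrong inequality --- what his iteration uses is a weighted inequality on $M$ itself in dimension $n$, not an unweighted dimension-$l$ inequality on the base $Y$). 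Meanwhile condition (c), which is a genuine hypothesis, is never mentioned in your argument. It is exactly here that the integral condition \eqref{eq:003} enters: using \eqref{cond:AF} and the volume growth $|B(p,r)| \asymp r^{l}$, one has on dyadic annuli $|Rm|^{n/2} r^{n-l}\,dV \lesssim K(r)^{n/2} r^{-n}\cdot r^{n-l}\cdot r^{l-1}\,dr = K(r)^{n/2} r^{-1}\,dr$, and since $K$ is bounded, $K^{n/2}\lesssim K$, so the integral converges precisely because $\int_1^\infty K(s)s^{-1}\,ds <\infty$. Without this check the citation of Minerbe's theorem is incomplete.

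A smaller point of logic: you present the case restriction ``$l\ge 4$ or $l=3,\,n=4$'' as merely the regime where the conclusion improves on $|Rm|=O(r^{-2})$. Numerically your condition $\tfrac{(l-2)(n-1)}{n-3}>2$ is indeed equivalent to $l>4\tfrac{n-2}{n-1}$, which is what the paper checks; but the latter is an \emph{input} to Minerbe's theorem (needed for his Moser iteration to close), not an a posteriori observation about when the output is nontrivial. As stated, your argument would appear to apply the theorem even outside its range of validity and only discard the conclusion afterwards.
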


\begin{proof}
It is clear from Theorem \ref{T:met} that there exists a constant $C>0$ such that for any $t \ge s >0$,
\begin{align*} 
\frac{|B(p,t)|}{|B(p,s)|} \ge C \lc \frac{t}{s} \rc^l.
\end{align*} 
In addition,
\begin{align*} 
\int |Rm|^{\frac{n}{2}} r^{n-l} \,dV<\infty
\end{align*} 
since
\begin{align*} 
\int_1^{\infty} \frac{(K(t))^{\frac{n}{2}}}{t} \,dt \le C\int_1^{\infty} \frac{K(t)}{t} \,dt <\infty.
\end{align*}  
Notice that our assumption for $l$ is equivalent to the inequality $l>4\frac{n-2}{n-1}$. Therefore, the conclusion follows from \cite[Theorem $4.12$]{Mi092}.
\end{proof}

\begin{rem}
It is unclear whether any Ricci-flat TALE manifold has faster-than-quadratic curvature decay if $l=2,3$.
\end{rem}

To deal with the case $l=1$ (i.e. $C(S(\infty))=\R_+$), we consider the fibration $f: M^n\backslash K \longrightarrow (A,\infty)$ obtained in Theorem \ref{T:met}. From the fibration and Corollary \ref{C:para}, we have a coordinate chart $(A,\infty) \times \T_{\infty}^{n-1}$ such that
\begin{align} \label{eq:RF01a}
g_{ij}-\delta_{ij}=O(K(r/2)) \quad \text{and} \quad |\partial^{\alpha} g_{ij}|=O(r^{-|\alpha|}K(r/2)) , \quad \forall |\alpha|\ge 1.
\end{align}

We regard the flat metric $\tilde g=\delta_{ij}$ as the background metric and use $\tilde \na, \tilde \Delta$, etc. to denote its covariant derivative, Laplacian, etc. For any $x \in M \backslash K$,  we set $T_x=f^{-1}(f(x))$ and $dV_{T}$ to be the volume form induced by $\tilde g$ on $T_x$. Moreover, we denote the volume of $\T_{\infty}^{n-1}$ by $\mathfrak m_{\infty}$.

Following \cite{Mi09}, for any function $u$ on $M$ compactly supported in $M \backslash K$, we define
\begin{align} \label{eq:RF05b}
(\Pi_0 u)(x) \coloneqq \frac{1}{ \mathfrak m_{\infty} } \int_{T_x} u \,dV_{T} \quad \text{and} \quad \Pi_{\perp} u \coloneqq u-\Pi_0u.
\end{align}

\begin{lem} \label{lem:LRF01}
$\tilde  \Delta \circ\Pi_0=\Pi_0 \circ \tilde \Delta$ and $\tilde \Delta\circ \Pi_{\perp}=\Pi_{\perp} \circ \tilde \Delta$.
\end{lem}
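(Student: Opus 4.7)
The plan is to exploit the product structure of the flat background metric $\tilde g$ on the coordinate chart $(A,\infty)\times\T_\infty^{n-1}$ that is provided in the $l=1$ setup. Write the coordinates as $(r,\theta_1,\dots,\theta_{n-1})$, where $r$ is the base coordinate coming from the fibration $f: M^n\setminus K \to (A,\infty)$ and $\theta_i$ parametrize the torus fibers. Since $\tilde g=\delta_{ij}$ splits as $dr^2+g_\T$ with $g_\T$ the flat metric on $\T_\infty^{n-1}$, the Laplacian splits correspondingly as
\[
\tilde\Delta = \partial_r^2 + \Delta_\T,
\]
and the volume form $dV_T$ on each fiber $T_r=\{r\}\times\T_\infty^{n-1}$ is independent of $r$. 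These are the two facts that drive everything.

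For the first identity, note that by construction $\Pi_0 u$ is constant along fibers, hence a function of $r$ only, so $\Delta_\T(\Pi_0 u)=0$ and $\tilde\Delta(\Pi_0 u)=\partial_r^2(\Pi_0 u)$. Differentiating under the integral (permissible since $dV_T$ is $r$-independent) gives $\partial_r^2(\Pi_0 u)=\Pi_0(\partial_r^2 u)$. On the other hand, $\Delta_\T u$ is a divergence of a tangential vector field on the closed manifold $T_r$, so $\int_{T_r}\Delta_\T u\,dV_T=0$ by the divergence theorem, i.e.\ $\Pi_0(\Delta_\T u)=0$. Summing the two contributions yields
\[
\Pi_0\tilde\Delta u = \Pi_0(\partial_r^2 u)+\Pi_0(\Delta_\T u) = \partial_r^2(\Pi_0 u) = \tilde\Delta(\Pi_0 u).
\]

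For the second identity, the linearity of $\Pi_\perp=\mathrm{id}-\Pi_0$ together with the first identity gives
\[
\tilde\Delta\Pi_\perp u = \tilde\Delta u - \tilde\Delta\Pi_0 u = \tilde\Delta u - \Pi_0\tilde\Delta u = \Pi_\perp\tilde\Delta u.
\]
I do not anticipate a serious obstacle: the whole argument is a bookkeeping computation in the product chart, and the only ingredients are the splitting $\tilde\Delta=\partial_r^2+\Delta_\T$, differentiation under the integral, and the divergence theorem on a closed fiber.
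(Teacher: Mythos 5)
Your proof is correct and follows essentially the same route as the paper: the paper likewise works in the flat coordinate chart $(A,\infty)\times\T_\infty^{n-1}$, writes $\tilde\Delta=\sum_i\partial_i^2$, observes that the tangential second derivatives integrate to zero over the closed fiber, and pulls $\partial_1^2$ out of the fiber integral, deducing the $\Pi_\perp$ identity from $\Pi_\perp=\mathrm{id}-\Pi_0$. Your version just makes the divergence-theorem and differentiation-under-the-integral steps explicit.
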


\begin{proof}
Under the coordinate chart $(A,\infty) \times \T_{\infty}^{n-1}$, it follows from a direct calculation that
\begin{align*}
(\Pi_0 \circ \tilde \Delta u)(x)=&\frac{1}{\mathfrak m_{\infty}} \int_{T_x}\sum_{1 \le i \le n} \partial_i^2 u  \, dV_{T}  =  \partial_1^2  \lc \frac{1}{\mathfrak m_{\infty}} \int_{T_x} u   \, dV_{T} \rc= (\tilde  \Delta \circ \Pi_0 u)(x).
\end{align*}
Moreover, $\tilde \Delta\circ \Pi_{\perp}=\Pi_{\perp} \circ\bar \Delta$ follows immediately from \eqref{eq:RF05b}.
\end{proof}

Lemma \ref{lem:LRF01} indicates that to estimate the function $u$, we can estimate $\Pi_0u$ and $\Pi_{\perp}u$ separately. We denote the coordinate functon of $(A,\infty)$ by $t$. Notice that $t$ can be extended to a $\T_{\infty}^{n-1}$-invariant function on $M \backslash K$. To estimate the $\T^{n-1}_{\infty}$-invariant part, we need the following lemma.

\begin{lem}[Hardy's inequality] \label{lem:LRF02}
For any $R_0 \ge 0$ and smooth function $\phi(t)$ compactly supported on $(R_0,\infty)$ and $\delta \ne -1$,
\begin{align*}
\int_0^{\infty} \phi^2(t)(t-R_0)^{\delta}\,dt \le \frac{4}{(\delta+1)^2} \int_0^{\infty} (\phi')^2(t)(t-R_0)^{\delta+2}\,dt.
\end{align*}
\end{lem}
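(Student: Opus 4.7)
The plan is to reduce Hardy's inequality to its standard one-dimensional form by the affine change of variables $s = t - R_0$, and then prove the latter by a single integration by parts followed by Cauchy--Schwarz. Since $\phi$ is compactly supported in $(R_0,\infty)$, after substitution we may assume without loss of generality that $R_0 = 0$ and $\phi \in C_c^\infty((0,\infty))$, and we need to show
\begin{equation*}
I := \int_0^\infty \phi^2(s) \, s^\delta \, ds \le \frac{4}{(\delta+1)^2}\int_0^\infty (\phi'(s))^2 \, s^{\delta+2}\, ds =: J.
\end{equation*}

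First, using $\delta \ne -1$ I would rewrite the weight as $s^\delta = \tfrac{1}{\delta+1}(s^{\delta+1})'$ and integrate by parts:
\begin{equation*}
I = \frac{1}{\delta+1}\int_0^\infty \phi^2(s)\,(s^{\delta+1})'\, ds = -\frac{2}{\delta+1}\int_0^\infty \phi(s)\phi'(s)\, s^{\delta+1}\, ds,
\end{equation*}
where the boundary terms at $0$ and $\infty$ vanish because $\phi$ has compact support in $(0,\infty)$ (so the issue of whether $s^{\delta+1}$ blows up or vanishes at $0$ is moot). Next I would split $s^{\delta+1} = s^{\delta/2}\cdot s^{\delta/2+1}$ and apply Cauchy--Schwarz to obtain
\begin{equation*}
I \le \frac{2}{|\delta+1|}\,I^{1/2}\,J^{1/2}\cdot\frac{|\delta+1|}{2} = \frac{2}{|\delta+1|}\left(\int_0^\infty \phi^2 s^\delta\, ds\right)^{1/2}\left(\int_0^\infty (\phi')^2 s^{\delta+2}\, ds\right)^{1/2}.
\end{equation*}
Dividing by $I^{1/2}$ (which is harmless since if $I=0$ the inequality is trivial) and squaring yields exactly $I \le \tfrac{4}{(\delta+1)^2}\,J$, as required.

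There is no real obstacle here; the only points to watch are that $\delta \ne -1$ makes the anti-derivative $\tfrac{1}{\delta+1}s^{\delta+1}$ well-defined, and that the compact support of $\phi$ inside $(R_0,\infty)$ (rather than just $[R_0,\infty)$) is exactly what is needed to justify the integration by parts without any boundary contribution, regardless of the sign of $\delta+1$. Once the reduction to $R_0=0$ is made, the argument is the classical one-line proof of Hardy's inequality.
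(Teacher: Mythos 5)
Your proof is correct and is essentially the same as the paper's: a single integration by parts (using $\delta\ne -1$ and the compact support of $\phi$ to kill the boundary terms) followed by Cauchy--Schwarz, with the paper merely writing the two steps in the original variable $t-R_0$ and squaring both sides at the outset instead of dividing by $I^{1/2}$ at the end.
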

\begin{proof}
From the integration by parts and the H\"older inequality,
\begin{align*}
&\lc \int_0^{\infty} \phi^2(t)(t-R_0)^{\delta}\,dt \rc^2 =\frac{4}{(\delta+1)^2}\lc \int_0^{\infty} \phi(t)\phi'(t)(t-R_0)^{\delta+1}\,dt \rc^2 \\
\le & \frac{4}{(\delta+1)^2}\lc \int_0^{\infty} \phi^2(t)(t-R_0)^{\delta}\,dt \rc\lc \int_0^{\infty} (\phi')^2(t)(t-R_0)^{\delta+2}\,dt \rc.
\end{align*}
Therefore, the conclusion follows immediately.
\end{proof}

\begin{lem} \label{lem:LRF04}
There exist positive constants $A_0$ and $C$ satisfying the following property.

Suppose $u$ is a smooth function compactly supported in $t^{-1}((R_0,\infty))$ for $R_0 \ge A_0$. Then
\begin{align*}
\int u^2  \,d\tilde V \le C\int (\tilde \Delta u)^2 (t-R_0)^{4} \,d\tilde V.
\end{align*}
\end{lem}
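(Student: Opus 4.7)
The plan is to decompose $u=\Pi_0 u+\Pi_\perp u$ via the projection in \eqref{eq:RF05b}. Because the weight $(t-R_0)^4$ depends only on $t$ (hence is $\T_\infty^{n-1}$-invariant), this decomposition is orthogonal both in $L^2(d\tilde V)$ and in $L^2((t-R_0)^4\,d\tilde V)$, and Lemma~\ref{lem:LRF01} guarantees that $\tilde\Delta$ commutes with each projection. It therefore suffices to prove the inequality separately on the two pieces.

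For the $\T_\infty^{n-1}$-invariant piece $u_0:=\Pi_0 u$, which depends only on $t$, we have $\tilde\Delta u_0=\partial_t^2 u_0$. Applying Lemma~\ref{lem:LRF02} to $u_0$ with $\delta=0$ gives $\int u_0^2\,dt\le 4\int(\partial_t u_0)^2(t-R_0)^2\,dt$, and applying it to $\partial_t u_0$ with $\delta=2$ gives $\int(\partial_t u_0)^2(t-R_0)^2\,dt\le\tfrac{4}{9}\int(\partial_t^2 u_0)^2(t-R_0)^4\,dt$. Integrating over $\T_\infty^{n-1}$ yields $\int u_0^2\,d\tilde V\le\tfrac{16}{9}\int(\tilde\Delta u_0)^2(t-R_0)^4\,d\tilde V$.

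For the perpendicular piece, expand $u_\perp=\sum_{k\ge 1}a_k(t)\phi_k(\theta)$ in an $L^2(dV_\T)$-orthonormal basis of eigenfunctions of $-\Delta_\T$ on $\T_\infty^{n-1}$ with eigenvalues $\lambda_k\ge\lambda_1>0$. By orthogonality in both norms, the inequality reduces mode by mode to the one-dimensional estimate
\begin{align*}
\int_{R_0}^\infty b^2\,dt\le C\int_{R_0}^\infty(b''-\lambda b)^2(t-R_0)^4\,dt,\qquad b\in C_c^\infty((R_0,\infty)),\ \lambda\ge\lambda_1,
\end{align*}
with $C$ independent of $\lambda$. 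The substitution $s=\sqrt{\lambda}\,(t-R_0)$ rescales this to the universal inequality $\int_0^\infty B^2\,ds\le C\int_0^\infty(B''-B)^2 s^4\,ds$ for $B\in C_c^\infty((0,\infty))$, which is what must be proved.

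The normalized inequality is the main obstacle, since the naive spectral-gap bound $\int u_\perp^2\le\lambda_1^{-2}\int(\tilde\Delta u_\perp)^2$ is unweighted and the weight $(t-R_0)^4$ degenerates at the ``boundary'' $t=R_0$. To attack it, I would use integration by parts to expand
\begin{align*}
\int(B''-B)^2 s^4\,ds=\int(B'')^2 s^4\,ds+2\int(B')^2 s^4\,ds+\int B^2 s^4\,ds-12\int B^2 s^2\,ds,
\end{align*}
so the task is to absorb the negative term $-12\int B^2 s^2$. Two ingredients are available: (i) the iterated Hardy bound $\int(B'')^2 s^4\,ds\ge\tfrac{9}{16}\int B^2\,ds$ (small-$s$ regime, where $B''$ dominates $B$), and (ii) the weighted AM--GM splitting $B^2 s^2\le\tfrac{1}{2\eta}B^2+\tfrac{\eta}{2}B^2 s^4$ together with the identity above to control the cross term in the large-$s$ regime, where $B$ dominates $B''$ and $\int(B''-B)^2 s^4\sim\int B^2 s^4$. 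Taking a convex combination of the two resulting inequalities and tuning $\eta$ and the combination weights to the Hardy constants yields $\int B^2\,ds\le C\int(B''-B)^2 s^4\,ds$ with a universal constant. Summing the mode-wise estimate over $k\ge 1$ and combining with the bound for $u_0$ completes the proof, with final constant depending only on $\lambda_1$, which is itself determined by $\T_\infty^{n-1}$ (and hence by the geometry at infinity).
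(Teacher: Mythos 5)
Your decomposition into $\Pi_0u$ and $\Pi_\perp u$, the double application of Lemma \ref{lem:LRF02} to the invariant part, and the observation that the weight depends only on $t$ (so both norms split and $\tilde\Delta$ commutes with the projections) all match the paper and are correct. The gap is in the perpendicular part: the mode-wise inequality you reduce everything to, $\int_0^\infty B^2\,ds\le C\int_0^\infty(B''-B)^2s^4\,ds$ for $B\in C_c^\infty((0,\infty))$, is \emph{false}, so no tuning of $\eta$ and the convex combination can close the argument. The operator $\partial_s^2-1$ has the decaying solution $e^{-s}$, which is excluded only by the boundary behaviour at $s=0$, and the weight $s^4$ is too degenerate there to see it. Concretely, take $B_\ep(s)=\eta(s/\ep)\,\psi(s-L)\,e^{-s}$, where $\eta$ vanishes near $0$ and equals $1$ for arguments $\ge 1$, and $\psi$ is a cutoff at $s=L$. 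Then $B_\ep''-B_\ep$ vanishes on $[\ep,L]$, is $O(\ep^{-2})$ on $[0,\ep]$ where $s^4\le\ep^4$, and is $O(e^{-L})$ near $s=L$, so $\int(B_\ep''-B_\ep)^2s^4\,ds=O(\ep)+O(L^4e^{-2L})\to 0$, while $\int B_\ep^2\,ds\to\tfrac12$. This is why your absorption scheme cannot be made to work: the Hardy constant $\tfrac{9}{16}$ extracted from $\int(B'')^2s^4$ is nowhere near enough to beat the $-12\int B^2s^2$ term after the AM--GM splitting.

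The paper's proof takes a different and much shorter route for $u_\perp$: the Poincar\'e inequality on each torus fibre (the spectral gap of $\T_{\infty}^{n-1}$) together with one integration by parts gives the \emph{unweighted} estimate $\int u_\perp^2\,d\tilde V\le C\int(\tilde\Delta u_\perp)^2\,d\tilde V$, and only the invariant part ever meets the weight. Your counterexample, transplanted back (take $u=b(t)\phi_1(\theta)$ with $b$ a cut-off of $e^{-\sqrt{\lambda_1}(t-R_0)}$), shows the weighted estimate for the perpendicular component genuinely fails near $t=R_0$; the inequality one can actually prove, and the one used in Theorem \ref{T:ftq}, carries the weight $\max\{(t-R_0)^4,1\}$. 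If you replace your mode-wise ODE analysis by the spectral-gap bound $\int u_\perp^2\le\lambda_1^{-2}\int(\tilde\Delta u_\perp)^2$ and keep the weight only on the $\Pi_0$ piece, your argument becomes correct.
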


\begin{proof}
Let $u=\Pi_0u+\Pi_{\perp} u=u_0+u_{\perp}$. Since $u_0$ depends only on $t$, $\tilde \Delta u_0=u_0''$. By applying Lemma \ref{lem:LRF02} twice, we have
\begin{align}
\int u_0^2  \,d\tilde V= \mathfrak{m}_{\infty} \int u_0^2  \,dt \le C  \mathfrak{m}_{\infty} \int (u_0'')^2 (t-R_0)^4 \,dt = C\int (\tilde \Delta u_0)^2 (t-R_0)^{4} \,d\tilde V. \label{eq:RF07d}
\end{align}

On the other hand, for any $x \in t^{-1}((R_0,\infty))$, we have
\begin{align*}
\frac{1}{\mathfrak m_{\infty}} \int_{T_x} u_{\perp}\,dV_{T}=0.
\end{align*}
From the Poincar\'e inequality, we obtain
\begin{align*}
\int_{T_x} u_{\perp}^2   \,dV_{T} \le C\int_{T} |\na_{T} u_{\perp}|^2\,dV_{T} \le C\int_{T} |\tilde \na u_{\perp}|^2\,dV_{T}.
\end{align*}
By the coarea formula, we compute
\begin{align*}
\int u^2_{\perp}  \,d\tilde V \le C\int |\tilde \na u_{\perp}|^2\,d\tilde V=C\int (\tilde \Delta u_{\perp})u_{\perp}\, d\tilde V \le C\lc \int u_{\perp}^2\, d\tilde V \rc^{\frac{1}{2}}\lc \int (\tilde \Delta u_{\perp})^2\, d\tilde V \rc^{\frac{1}{2}}.
\end{align*}
Therefore,
\begin{align}
\int u^2_{\perp}  \,d\tilde V \le C\int (\tilde \Delta u_{\perp})^2\, d\tilde V.
\label{eq:RF07g}
\end{align}

Combining \eqref{eq:RF07d} and \eqref{eq:RF07g} we have
\begin{align*}
\int u^2  \,d\tilde V \le& 2\int u_0^2+u_{\perp}^2 \,d\tilde V\le C\int \lc (\tilde \Delta u_0)^2+(\tilde \Delta u_{\perp})^2 \rc (t-R_0)^4 \,d\tilde V=C\int (\tilde \Delta u)^2 (t-R_0)^4\,d\tilde V, 
\end{align*}
where we have used the following fact from Lemma \ref{lem:LRF01},
\begin{align*}
\int (\tilde \Delta u_0)( \tilde \Delta u_{\perp}) (t-R_0)^4 \,d\tilde V=0.
\end{align*}
\end{proof}

\begin{rem}\label{rem:x61}
Lemma \ref{lem:LRF04} also holds for any smooth tensor $u$ compactly supported in $t^{-1}((R_0,\infty))$. Indeed, one only needs to consider each component of $u$ under the coordinate chart $(A,\infty) \times \T_{\infty}^{n-1}$.
\end{rem}

We can now improve the decay order if $l=1$.
\begin{thm}\label{T:ftq}
If $(M^{n},g)$ is a Ricci-flat TALE manifold such that the tangent cone at infinity is $\R_+$, then there exists a constant $\delta>0$ which depends only on $\T^{n-1}_{\infty}$ such that
\begin{align*}
|Rm|=O(e^{-\delta r}).
\end{align*}
\end{thm}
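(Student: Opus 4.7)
The plan is to leverage Ricci-flatness together with the near-cylindrical structure of the end, exploiting the spectral gap on the asymptotic torus fiber. Since the tangent cone is $\R_+$, Theorem \ref{T:met} and Corollary \ref{C:para} furnish a coordinate chart on the end modeled on $(A,\infty) \times \T_\infty^{n-1}$ in which $g$ differs from the flat product $\tilde g$ by $O(K(r/2))$, with all higher derivatives controlled. Under \eqref{cond:HOAF} (which holds by Shi's estimates in the Ricci-flat setting) the curvature satisfies a Weitzenböck-type equation $\tilde \Delta Rm = Rm \ast Rm + E$, where $E$ absorbs the perturbation terms $(g - \tilde g) \ast \tilde \na^2 Rm$ and $\tilde \na (g - \tilde g) \ast \tilde \na Rm$ and is $O(K(r/2))$-small relative to its principal part.

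The key step is to upgrade Lemma \ref{lem:LRF04} to an exponentially weighted estimate: there exists $\delta > 0$, depending only on $\T_\infty^{n-1}$, such that for any tensor $u$ compactly supported in $\{t > R_0\}$,
\[\int |u|^2 e^{2\delta t}\, d\tilde V \le C \int |\tilde \Delta u|^2 e^{2\delta t}\, d\tilde V.\]
I would prove this by decomposing $u = \Pi_0 u + \Pi_\perp u$ as in \eqref{eq:RF05b} and treating each piece separately, using that $\tilde \Delta$ preserves the decomposition by Lemma \ref{lem:LRF01}. For $\Pi_\perp u$, Fourier expansion on each torus fiber reduces the problem to an ODE in $t$ mode by mode whose indicial roots are $\pm \sqrt{\mu_k}$ for the nonzero Laplace eigenvalues $\mu_k$ of $\T_\infty^{n-1}$; the spectral gap $\mu_1 = \lambda_1(\T_\infty^{n-1}) > 0$ then delivers the exponential weight for any $\delta < \sqrt{\lambda_1}$ via a direct integration-by-parts. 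For $\Pi_0 u$, which depends only on $t$, I would exploit the averaged metric $\bar g$ of Theorem \ref{T:ave1}: since $g = \bar g + O(r^{-1}K(r/2))$, Ricci-flatness of $g$ propagates to an approximately Ricci-flat condition on the $\T^{n-1}_\infty$-invariant metric $\bar g$, and the linearized Ricci-flat system on $(A,\infty) \times \T^{n-1}_\infty$ restricted to invariant perturbations is a constant-coefficient ODE whose nontrivial solutions decaying to the prescribed flat torus all decay exponentially, with rate controlled by the shortest lattice vector of $\T_\infty^{n-1}$.

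With the weighted $L^2$ estimate in hand, a Caccioppoli iteration applied to $\tilde \Delta Rm = Rm \ast Rm + E$, using the already-established polynomial decay of $|Rm|$ to absorb the quadratic and error terms on the right, yields
\[\int |Rm|^2 e^{2\delta t}\, dV < \infty.\]
A standard mean-value / Moser iteration argument applied to the subsolution $|Rm|^2$ of $\Delta |Rm|^2 \ge -C|Rm|^3$ then promotes this weighted $L^2$ decay to the desired pointwise estimate $|Rm|(x) = O(e^{-\delta r(x)})$.

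The main obstacle is the treatment of the zero mode $\Pi_0 Rm$. The spectral gap argument immediately gives exponential decay of $\Pi_\perp Rm$, but the torus-invariant part is only controlled polynomially by Lemma \ref{lem:LRF04} in its present form (the $(t-R_0)^4$ weight reflecting that the indicial equation for $\Pi_0$ is $u''(t)=0$). Reducing the zero-mode analysis to a one-dimensional ODE problem via the averaged metric $\bar g$, and showing that the linearized $\T^{n-1}_\infty$-invariant Ricci-flat system on the cylinder admits no slowly decaying nontrivial solutions compatible with the prescribed asymptotic flat torus, is the crux of the argument; it is precisely here that the rate $\delta$ acquires its dependence on the geometry of $\T_\infty^{n-1}$.
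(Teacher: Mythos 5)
Your high-level framework — cylindrical coordinate chart, $\Pi_0/\Pi_\perp$ decomposition, weighted $L^2$ estimate fed by the perturbed equation $\tilde\Delta Rm = Rm*Rm + E$, then Moser iteration — is in the right spirit, but you have misidentified the key mechanism and flagged a non-issue as the crux. The paper does \emph{not} attempt an exponentially weighted $L^2$ estimate at all. Instead, Lemma \ref{lem:LRF04} gives the \emph{polynomially} weighted bound $\int u^2\,d\tilde V \le C\int (\tilde\Delta u)^2(t-R_0)^4\,d\tilde V$, using two applications of Hardy's inequality (Lemma \ref{lem:LRF02}) for $\Pi_0 u$ and Poincar\'e on the fiber for $\Pi_\perp u$. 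Exponential decay is then extracted by an \emph{iteration}, not by a stronger weighted estimate: because the coefficients in $\tilde\Delta Rm = Q(Rm)$ carry a factor $K(t/2)\to 0$, one absorbs the RHS (using the scale-invariant elliptic estimate \eqref{eq:RF12} to control $\tilde\nabla^2 u$ in the weighted norm) and obtains the uniform annular inequality $\int_{t\ge R+1}|u|^2 \le \frac{C}{C+1}\int_{t\ge R}|u|^2$ for all $R\ge R_0$. Iterating gives geometric, hence exponential, decay of $\int_{t\ge R}|u|^2$, with $\delta$ determined by the constant $C$ — which contains the Poincar\'e constant of $\T^{n-1}_\infty$, whence the dependence on the torus. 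De Giorgi–Nash–Moser then converts this to the pointwise bound.

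The genuine gap in your plan is the treatment of the zero mode. Your proposed fix — "reducing the zero-mode analysis to a 1D ODE via $\bar g$ and showing the linearized $\T^{n-1}_\infty$-invariant Ricci-flat system admits no slowly decaying solutions" — is not what the paper does, is not obviously true (the linearized Einstein operator on invariant 2-tensors over a flat cylinder has a nontrivial kernel coming from deformations of the flat torus, so exhibiting a spectral gap at the level of metric perturbations is delicate), and is in any case unnecessary. If you insist on the exponential weight, note that conjugating $\partial_t^2$ by $e^{\delta t}$ gives $(\partial_t-\delta)^2$, whose symbol $(\xi+i\delta)^2$ satisfies $|(\xi+i\delta)^2| = \xi^2+\delta^2 \ge \delta^2$; so the zero mode \emph{does} admit $\int u_0^2 e^{2\delta t}\le \delta^{-4}\int (u_0'')^2 e^{2\delta t}$ for compactly supported $u_0$ — the exponential weight itself shifts the double indicial root off zero. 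But the simplest repair is to follow the paper: keep the polynomial-weight Hardy estimate for $\Pi_0$, control $\Pi_\perp$ by Poincar\'e, and let the iteration (powered by the smallness of $K$) produce the exponential rate. You also misattribute the torus-dependence of $\delta$: it enters through the Poincar\'e constant on $\T^{n-1}_\infty$ in the $\Pi_\perp$ estimate, not through the zero mode.
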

\begin{proof}
From the Ricci-flatness condition, if we set $u=Rm$, then $\Delta u=Q(u)$ for some $Q$ quadratic in $u$. Therefore, it is clear from \eqref{eq:RF01a} that
\begin{align}
|\tilde \Delta u |_{\tilde g}\le O(r^{-2}K(r))|u|_{\tilde g}+O(r^{-1}K(r/2))|\tilde \na u|_{\tilde g}+O(K(r/2))|\tilde \na^2 u|_{\tilde g}.
\label{eq:RF09b}
\end{align}
For any large $R_0 \ge A_0$ and $k$, we choose a cutoff function $\phi_k$ which is supported on $t^{-1}((R_0,R_0+k+1))$ and $\phi_k=1$ on $t^{-1}([R_0+1,R_0+k])$. Then it follows from Lemma \ref{lem:LRF04} and Remark \ref{rem:x61} that
\begin{align}\label{eq:RF10}
\int |u|_{\tilde g}^2\phi_k^2\,d\tilde V \le C\int |\tilde \Delta (u\phi_k)|_{\tilde g}^2 (t-R_0)^4 \,d\tilde V.
\end{align}
From \eqref{eq:RF09b} and \eqref{eq:RF10},
\begin{align}
\int |u|_{\tilde g}^2\phi_k^2\,d\tilde V \le& C\int \lc t^{-4}K^2(t/2)|u|_{\tilde g}^2+t^{-2}K^2(t/3))|\tilde \na u|_{\tilde g}^2+K^2(t/3))|\tilde \na^2 u|_{\tilde g}^2 \rc (t-R_0)^4 \,d\tilde V \notag \\
&+ C\int \lc |u|_{\tilde g}^2|\tilde \Delta \phi_k|+|\tilde \na u|_{\tilde g}^2|\tilde \na \phi_k|^2  \rc (t-R_0)^4 \,d\tilde V.
\label{eq:RF11}
\end{align}

Now, we claim
\begin{align}\label{eq:RF12}
\int_{t \ge R_0+1} |\tilde \na^2 u|_{\tilde g}^2 (t-R_0)^4+|\tilde \na u|_{\tilde g}^2 (t-R_0)^2 \,d\tilde V \le C \int_{t \ge R_0} |u|_{\tilde g}^2 \,d\tilde V.
\end{align}

From the elliptic equation of $u$ and \cite[Theorem $9.11$]{GT01} we have
\begin{align}\label{eq:RF12x1}
\int_{R_0+1 \le t \le 2R_0} |\tilde \na^2 u|_{\tilde g}^2 +|\tilde \na u|_{\tilde g}^2 \,d\tilde V \le C \int_{t \ge R_0} |u|_{\tilde g}^2 \,d\tilde V.
\end{align}

Moreover, if we set $t_i=2^iR_0$ for $i \ge 1$, then it follows from the scaling invariant version of \cite[Theorem $9.11$]{GT01} that
\begin{align}\label{eq:RF12x2a}
\int_{t_i \le t \le 2t_i} |\tilde \na^2 u|_{\tilde g}^2 t_i^4 +|\tilde \na u|_{\tilde g}^2 t_i^2 \,d\tilde V \le C \int_{2t_i/3 \le t \le 4t_i} |u|_{\tilde g}^2 \,d\tilde V.
\end{align}
Indeed, we can lift the coordinate chart $(A,\infty) \times \T_{\infty}^{n-1}$ to $(A,\infty) \times \R^{n-1}$ and apply \cite[Theorem $9.11$]{GT01} to the lift of $u$ on $(A,\infty) \times \R^{n-1}$. By taking the sum of \eqref{eq:RF12x2a} for $i \ge 1$, we have
\begin{align}\label{eq:RF12x2}
\int_{t \ge 2R_0} |\tilde \na^2 u|_{\tilde g}^2 (t-R_0)^4 +|\tilde \na u|_{\tilde g}^2 (t-R_0)^2 \,d\tilde V \le C \int_{t \ge R_0} |u|_{\tilde g}^2 \,d\tilde V.
\end{align}

Combining \eqref{eq:RF12x1} and \eqref{eq:RF12x2}, \eqref{eq:RF12} follows immediately. Since $K(t) \to 0$ as $t \to +\infty$, if $R_0$ is sufficiently large, we conclude from \eqref{eq:RF11} and \eqref{eq:RF12} that by taking $k \to \infty$,
\begin{align*}
\int_{t \ge R_0+1} |u|_{\tilde g}^2 \,d\tilde V \le C\int_{R_0 \le t \le R_0+1} |u|_{\tilde g}^2 \,d\tilde V
\end{align*}
and hence
\begin{align}\label{eq:RF13}
\int_{t \ge R_0+1} |u|_{\tilde g}^2 \,d\tilde V \le \frac{C}{C+1}\int_{t \ge R_0} |u|_{\tilde g}^2 \,d\tilde V.
\end{align}

From \eqref{eq:RF13}, a standard iteration argument implies that there exists some constant $\lam>0$ such that
\begin{align*}
\int_{t \ge R} |u|_{\tilde g}^2 \,d\tilde V \le Ce^{-\lam R}
\end{align*}
for any $R \ge R_0$. Therefore, it follows from the equation of $u$ and \cite[Theorem $9.20$]{GT01} that for some $\delta>0$ which depends only on $\T_{\infty}^{n-1}$,
\begin{align*}
|u|_{\tilde g}=O(e^{-\delta r}).
\end{align*}
From \eqref{eq:RF01a}, we conclude that
\begin{align*}
|Rm|_{g}=O(e^{-\delta r}).
\end{align*}
\end{proof}

\subsection{Hitchin-Thorpe inequality}
For any compact oriented Einstein $4$-manifold $(M^4,g)$, we have the following celebrated Hitchin-Thorpe inequality \cite{Tho69,Hit74}
\begin{align*}
2\chi(M) \ge 3|\tau(M)| 
\end{align*}
with equality if and only if $g$ is flat or finitely covered by a $K3$ surface. In this section, we consider an oriented Ricci-flat TALE $4$-manifold $(M^4,g)$ and prove a Hitchin-Thorpe type inequality. As before, we set $l$ to be the dimension of the tangent cone at infinity. 

(I) $l=4$.

In this case, Nakajima has proved the following theorem. Notice that for any finite subgroup $\Gamma \subset \text{SO}(4)$ acting freely on $S^3$, $\eta(S^3/\Gamma)$ is the eta invariant of standard $S^3/\Gamma$, which is calculated in \cite{APS75b}. If $\Gamma \subset \text{SU}(2)$, $\eta(S^3/\Gamma)$ is computed explicitly by Nakajima \cite{Na90}.

\begin{thm}[Theorem $4.2$ of \cite{Na90}] \label{T:HTALE}
Let $(M^4,g)$ be a Ricci-flat ALE manifold with end $S^3/\Gamma$. Then
\begin{align} \label{eq:HT1}
2( \chi(M)-\frac{1}{|\Gamma|} ) \ge 3|\tau(M)+\eta(S^3/\Gamma)| 
\end{align}
with equality if and only if $(M,g)$ or its opposite orientation space is a quotient of a \hy ALE $4$-manifold.
\end{thm}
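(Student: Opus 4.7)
The plan is to derive the inequality by applying the Chern--Gauss--Bonnet theorem and the Atiyah--Patodi--Singer signature theorem to a compact exhaustion of $M$, and then combining the resulting formulas using the special form of the curvature decomposition on a Ricci-flat Einstein $4$-manifold.

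First, I would fix a base point $p\in M$ and set $M_R = \overline{B(p,R)}$ for $R$ large. By Theorem \ref{T:met2}, the curvature decay is faster than $r^{-3}$ (in fact $r^{-(l-2)(n-1)/(n-3)}=r^{-9}$ when $l=n=4$), and all higher derivatives enjoy corresponding decay by Shi's estimates, so every bulk curvature integral appearing below converges absolutely on $M$. I would then apply the Gauss--Bonnet--Chern formula with boundary to $M_R$, in which the boundary contribution is the Chern--Simons transgression of the Pfaffian evaluated on $\partial M_R \cong S^3/\Gamma$. Since $M$ is ALE, the metric on $\partial M_R$, after rescaling by $R^{-2}$, converges in $C^\infty$ to the round metric on $S^3/\Gamma$, and the transgression integral converges to the Euler characteristic of the orbifold $\R^4/\Gamma$, namely $1/|\Gamma|$. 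Similarly I would apply the APS signature theorem to $M_R$; the local curvature term converges to $(12\pi^2)^{-1}\int_M(|W^+|^2-|W^-|^2)\,dV$ while the boundary $\eta$-invariant term converges to $\eta(S^3/\Gamma)$ by the same asymptotic flatness.

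Next, I would use the identity
\begin{align*}
|Rm|^2 = |W^+|^2 + |W^-|^2 + \tfrac{1}{2}|\operatorname{Ric}_0|^2 + \tfrac{1}{24}R^2,
\end{align*}
which, under the Ricci-flat assumption, collapses to $|Rm|^2 = |W^+|^2+|W^-|^2$. Substituting into the Gauss--Bonnet integrand and combining with the signature formula yields
\begin{align*}
\chi(M) - \tfrac{1}{|\Gamma|} &= \tfrac{1}{8\pi^2}\int_M \bigl(|W^+|^2+|W^-|^2\bigr)\,dV,\\
\tau(M) + \eta(S^3/\Gamma) &= \tfrac{1}{12\pi^2}\int_M \bigl(|W^+|^2-|W^-|^2\bigr)\,dV.
\end{align*}
Taking $2\chi - 2/|\Gamma|$ and combining with $\pm 3(\tau+\eta)$ gives
\begin{align*}
2\bigl(\chi(M)-\tfrac{1}{|\Gamma|}\bigr)\pm 3\bigl(\tau(M)+\eta(S^3/\Gamma)\bigr) = \tfrac{1}{2\pi^2}\int_M |W^{\pm}|^2\,dV \ge 0,
\end{align*}
which is the desired inequality \eqref{eq:HT1}.

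For the equality case, equality in either sign forces $W^{\mp}\equiv 0$ on $M$. A Ricci-flat, half conformally flat $4$-manifold has restricted holonomy contained in $\mathrm{SU}(2)=\mathrm{Sp}(1)$ (for the orientation making $W^-=0$, or the opposite orientation otherwise), so the universal cover $\widetilde M$ carries a hyperk\"ahler structure compatible with $g$; since the end is ALE with quotient group $\Gamma$, the covering is finite and $(M,g)$ (or $(M,g)$ with the opposite orientation) is a quotient of a hyperk\"ahler ALE $4$-manifold by a finite group of isometries, as claimed. The main obstacle I anticipate is a careful justification of the convergence of the boundary transgression to $1/|\Gamma|$ and of the $\eta$-defect to $\eta(S^3/\Gamma)$ along the exhaustion; this amounts to controlling the transgression forms using the sharper ALE coordinates produced by Theorem \ref{T:002}, combined with the quantitative decay of $Rm$ and its derivatives from Theorem \ref{T:met2}.
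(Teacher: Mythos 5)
Your proposal is correct and follows the standard Gauss--Bonnet--Chern plus APS-signature argument with the Weyl decomposition $|Rm|^2=|W^+|^2+|W^-|^2$ in the Ricci-flat case; this is exactly Nakajima's original proof, which the paper simply cites for this statement, and it is also the same template the paper uses to prove the analogous ALF, ALG and ALH inequalities (Theorems \ref{T:HTALE1}--\ref{T:HTALE3}). The only point to keep in mind is that the curvature decay here comes from the classical ALE asymptotics of \cite{BKN89} rather than Theorem \ref{T:met2}, but $|Rm|=O(r^{-3})$ and $I=O(r^{-1})$ already suffice to kill the $Rm*I$ boundary term and to identify the limit of the transgression with $1/|\Gamma|$.
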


From Theorem \ref{T:HTALE}, we have the following immediate corollary, see also \cite[Theorem $1.5$]{LV16}.

\begin{cor}\label{C:HTALE}
Let $(M^4,g)$ be a Ricci-flat ALE manifold such that $M$ is homeomorphic to $\mini$ for some finite subgroup $\Gamma \subset \emph{SU}(2)$. Then $(M,g)$ or its opposite orientation space is a \hy ALE $4$-manifold. In particular, $M$ is diffeomorphic to $\mini$.
\end{cor}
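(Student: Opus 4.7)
The plan is to apply Nakajima's Hitchin--Thorpe inequality (Theorem~\ref{T:HTALE}), show that equality forces the conclusion via its rigidity clause, and then invoke Kronheimer's classification to obtain the diffeomorphism statement.

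First, I would extract the topological input. Since $M$ is homeomorphic to $\mini$, the Euler characteristic $\chi(M)=\chi(\mini)$ and the signature $\tau(M)=\tau(\mini)$ are fixed. The end of $M$ is homeomorphic to $(S^3/\Gamma)\times \R_+$, so the asymptotic boundary of the ALE manifold is diffeomorphic to $S^3/\Gamma$ with $\Gamma\subset\text{SU}(2)$ (after possibly passing through Lemma~\ref{L:topoend} and Remark~\ref{R:topoend} to identify the cross--section up to diffeomorphism). In particular, the eta invariant $\eta(S^3/\Gamma)$ appearing in \eqref{eq:HT1} is fixed by the hypothesis.

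Next I would use Kronheimer's existence theorem \cite{Kro89a, Kro89b}, which produces a \hy ALE metric $g_0$ on $\mini$. The equality case of Theorem~\ref{T:HTALE} applied to $(\mini,g_0)$ gives
\begin{align*}
2\lc\chi(\mini)-\tfrac{1}{|\Gamma|}\rc \;=\; 3\bigl|\tau(\mini)+\eta(S^3/\Gamma)\bigr|.
\end{align*}
Since $\chi(M)=\chi(\mini)$, $\tau(M)=\tau(\mini)$, and the end data coincide, the same equality holds for $(M,g)$. By the rigidity part of Theorem~\ref{T:HTALE}, $(M,g)$ or its opposite orientation space is a quotient of a \hy ALE $4$-manifold. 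Since $\mini$ deformation retracts onto the exceptional divisor (a tree of $2$--spheres), it is simply connected; hence $M$ is simply connected and the quotient must be trivial. Thus $(M,g)$, up to orientation reversal, is itself a \hy ALE $4$-manifold.

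Finally, Kronheimer's classification \cite{Kro89a, Kro89b} asserts that every \hy ALE $4$-manifold is diffeomorphic to $\widetilde{\C^2/\Gamma'}$ for some $\Gamma'\subset\text{SU}(2)$. Matching the end $S^3/\Gamma$ forces $\Gamma'=\Gamma$ (up to conjugacy in $\text{SU}(2)$, which by the ADE classification is determined by the abstract isomorphism type), and since $\widetilde{\C^2/\Gamma}$ is diffeomorphic to its orientation reversal as a smooth manifold, we conclude $M$ is diffeomorphic to $\mini$. The only subtle point is ensuring that the homeomorphism hypothesis truly transports all four quantities $\chi$, $\tau$, $|\Gamma|$, and $\eta(S^3/\Gamma)$ between the two metrics, which works because each is either a topological invariant of $M$ or depends only on the topology of the cross--section at infinity.
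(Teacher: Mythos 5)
Your proposal is correct and follows essentially the same route as the paper: identify the cross-section at infinity as $S^3/\Gamma$ via Lemma \ref{L:topoend} and Remark \ref{R:topoend}, use Kronheimer's existence theorem to force equality in Nakajima's inequality \eqref{eq:HT1}, invoke the rigidity clause of Theorem \ref{T:HTALE}, and conclude the diffeomorphism statement from Kronheimer's classification. Your extra observation that simple connectivity of $\mini$ rules out a nontrivial quotient is a point the paper leaves implicit, and the paper additionally cites De Rham to upgrade the diffeomorphism of space forms to an isometry (hence conjugacy of the groups), but these are refinements of the same argument rather than a different one.
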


\begin{proof}
Since $(M,g)$ is an ALE manifold and $M$ is homeomorphic to $\mini$, $M$ has one end which is diffeomorphic to $S^3/\Gamma_1 \times \R_+$ for some finite group $\Gamma_1 \subset \text{SO}(4)$. Therefore, it follows immediately from Lemma \ref{L:topoend} and Remark \ref{R:topoend} that $S^3/\Gamma_1$ is diffeomorphic to $S^3/\Gamma$. From the work of De Rham \cite{DeRham51}, $S^3/\Gamma_1$ must be isometric to $S^3/\Gamma$. Then it is clear that $\Gamma_1$ is conjugate to $\Gamma$ in $\text{SO}(4)$. From Kronheimer's result \cite{Kro89a}, there exists a \hy metric on $\mini$ and hence we have equality in \eqref{eq:HT1}:
\begin{align*}
2( \chi(M)-\frac{1}{|\Gamma|} ) = 3|\tau(M)+\eta(S^3/\Gamma)|.
\end{align*}
Therefore, it follows from the equality case of Theorem \ref{T:HTALE} that $(M,g)$ or its opposite orientation space is a \hy ALE $4$-manifold. In particular, it follows from \cite[Theorem $1.2$]{Kro89b} that $M$ must be diffeomorphic to $\mini$.
\end{proof}

In particular, Corollary \ref{C:HTALE} states that for any smooth manifold $M$ homeomorphic to $\R^4$, if $M$ admits a Ricci-flat ALE metric, then $M$ must be isometric to $\R^4$ with flat metric. In particular, $M$ is diffeomorphic to $\R^4$ with standard differential structure. 

(II) $l=3$.

In this case, the tangent cone at infinity is $\R^3$ (cyclic type) or $\R^3/\Z_2$ (dihedral type). It follows from Theorem \ref{T:met2} that 
    \begin{align} \label{eq:RFa00}
|Rm|=O(r^{-3}).
    \end{align} 
Therefore, we have from Theorem \ref{T:ave1}
    \begin{align} \label{eq:RFa01}
g=\bar  g+O(r^{-2}) \quad \text{and} \quad |\na^i \bar g|=O(r^{-2-i}), \quad \forall i\ge 1.
    \end{align} 
Since $M\backslash K$ has a circle fibration over an ALE end $Y$, we define 
\begin{align*}
\rho(x)=\sqrt{x_1^2+x_2^2+x_3^2},
\end{align*}
where $(x_1,x_2,x_3)$ are coordinates constructed in Theorem \ref{T:ap01}. Notice that $\rho$ can be extended to an $S^1$-invariant function on $M\backslash K$. We define a large domain $D_t=\{x \in M \mid \rho(x) \le t\}$ and denote the second fundamental forms of $\partial D_t$ with respect to $g$ and $\bar g$ by $I(t)$ and $\bar I(t)$, respectively. Then it is clear that
    \begin{align} \label{eq:RFa03}
I(\rho) =O(\rho^{-1}) \quad \text{and} \quad \bar I(\rho)=O(\rho^{-1}).
    \end{align} 

Now we prove the following theorem, see also \cite[Corollary $1.2$]{DaiWei07}.

\begin{thm} \label{T:HTALE1}
Let $(M^4,g)$ be a Ricci-flat TALE manifold with circle fibration. Then
\begin{align} 
2\chi(M) &\ge 3\left| \tau(M)-\frac{e}{3}+\emph{sgn}\,e \right| \quad \emph{(cyclic type)} \label{eq:HT2a}\\
2\chi(M) &\ge 3\left|\tau(M)-\frac{e}{3}\right| \qquad \qquad \emph{(dihedral type)} \label{eq:HT2b}
\end{align}
with equality if and only if $(M,g)$ or its opposite orientation space is a quotient of a \hy $4$-manifold. Here $e$ is the Euler number of the asymptotic circle fibration.
\end{thm}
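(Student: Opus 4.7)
The strategy is to apply the Chern-Gauss-Bonnet formula and the Atiyah-Patodi-Singer signature theorem on the compact exhausting domain $D_t$ from the preceding discussion, then pass to the limit $t\to\infty$. Because $(M,g)$ is Ricci-flat, the Chern-Gauss-Bonnet integrand reduces to $|W|^2/(8\pi^2)$ and the signature integrand to $(|W^+|^2-|W^-|^2)/(12\pi^2)$, so that the linear combination
\[
2\chi(D_t)-3\tau(D_t)=\frac{1}{2\pi^2}\int_{D_t}|W^-|^2\,dV+\mathcal B_\chi(t)-\mathcal B_\tau(t)+\eta(\partial D_t),
\]
where $\mathcal B_\chi(t)$ denotes the transgression boundary term in Chern-Gauss-Bonnet and $\mathcal B_\tau(t)+\eta(\partial D_t)$ collects the APS boundary corrections for the signature operator.

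To place $\partial D_t$ in standard form I would work with the $S^1$-invariant metric $\bar g$ from Theorem~\ref{T:ave1}. With respect to $\bar g$ the boundary is a circle bundle over a round $S^2$ of radius $\sim t$ (cyclic case) or its quotient $\RP^2$ (dihedral case), with fiber length tending to a constant. The estimates \eqref{eq:RFa01} and \eqref{eq:RFa03} show that the boundary integrals computed with $g$ differ from those with $\bar g$ by $o(1)$, so it suffices to evaluate each limit for $\bar g$. The curvature integral $\int_{D_t}|W^-|^2$ converges by Theorem~\ref{T:met2}, since $|Rm|=O(\rho^{-3})$ and $\mathrm{vol}(D_t)=O(t^3)$; the topological numbers satisfy $\chi(D_t)\to\chi(M)$ and $\tau(D_t)\to\tau(M)$ because the ends retract onto $\partial D_t$. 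The combined boundary transgression $\mathcal B_\chi(t)-\mathcal B_\tau(t)$ can be computed from the explicit second fundamental form of a circle bundle over an Euclidean sphere: its limit contributes the Euler-number term $e$ in the cyclic case (and is halved in the dihedral case by the $\Z_2$-quotient). The $\eta(\partial D_t)$ term, evaluated on a boundary whose base metric expands to infinity relative to the fixed circle fiber, is an adiabatic limit in the sense of Bismut-Cheeger/Dai and equals $-e/3+\mathrm{sgn}\,e$ in the cyclic case and $-e/3$ in the dihedral case.

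Assembling the limits yields
\[
2\chi(M)-3\bigl(\tau(M)+\eta\bigr)=\frac{1}{2\pi^2}\int_M|W^-|^2\,dV\ge 0.
\]
Applying the same argument with reversed orientation of $M$ (which swaps $W^+\leftrightarrow W^-$ and negates both $\tau$ and $e$) gives the corresponding bound with $-(\tau(M)+\eta)$, producing the absolute value in \eqref{eq:HT2a}--\eqref{eq:HT2b}. Equality forces $W^-\equiv 0$ (or $W^+\equiv 0$ after reversing orientation); together with Ricci-flatness this gives a half-conformally-flat Einstein manifold, hence a quotient of a hyperk\"ahler manifold by the classical theorem of Hitchin.

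The main obstacle is the identification of the adiabatic limits of the two boundary contributions. The transgression term $\mathcal B_\chi(t)-\mathcal B_\tau(t)$ must be computed carefully in local coordinates coming from the ALE base $Y$ and the invariant metric $\bar g$, tracking how the Euler class of the asymptotic circle bundle manifests in the second fundamental form. The eta invariant $\eta(\partial D_t)$ is more delicate, particularly in the dihedral case where the base $\RP^2$ forces one to descend from the orientation double cover, and a direct argument on $\partial D_t$ must reconcile the $\mathrm{sgn}\,e$ defect appearing in the cyclic formula (arising from the APS spectral flow contribution of small eigenvalues of the circle direction) with its absence in the dihedral reduction.
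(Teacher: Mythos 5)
Your overall strategy --- Gauss--Bonnet--Chern plus the APS signature theorem on the exhaustion $D_t$, orientation reversal to produce the absolute value, and vanishing of $W_+$ or $W_-$ in the equality case --- is the same as the paper's, but two of your steps do not hold up as written. First, you attribute the Euler-number contribution to the limit of the transgression terms $\mathcal B_\chi(t)-\mathcal B_\tau(t)$. In fact these boundary curvature integrals vanish as $t\to\infty$: by \eqref{eq:RFa00} and \eqref{eq:RFa03} the integrands $Rm*I$ and $I*I*I$ are $O(t^{-4})$ and $O(t^{-3})$ while $\mathrm{vol}(\partial D_t)=O(t^2)$, and $D_t$ is diffeomorphic to $M$, so one gets $\chi(M)=\tfrac{1}{8\pi^2}\int(|W_+|^2+|W_-|^2)$ with no defect; all of the $e$-dependence sits in $\lim_t\eta(g|_{\partial D_t})$, and your bookkeeping is internally inconsistent on this point. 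Relatedly, the identification $\lim_t\eta(g|_{\partial D_t})=\lim_t\eta(\bar g|_{\partial D_t})$ cannot be read off from pointwise closeness of $g$ and $\bar g$, because $\eta$ is a nonlocal spectral invariant; the paper proves it by interpolating a metric $g_t$ equal to $\bar g$ near $\partial D_t$, writing the signature formula for both metrics on $D_t$, and subtracting, which bounds the difference of the eta invariants by $O(t^{-2})$.

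Second, and more seriously, the actual content of the theorem is the value of $\eta_{\mathrm{ad}}$, and you assert it ($-e/3+\mathrm{sgn}\,e$, resp.\ $-e/3$) while conceding that the adiabatic-limit computation --- in particular the $\mathrm{sgn}\,e$ defect and the dihedral descent --- is an unresolved obstacle. The paper sidesteps the spectral computation entirely: once $2\chi(M)+3(\tau(M)+\eta_{\mathrm{ad}})=\tfrac{1}{4\pi^2}\int|W_+|^2$ is established, it evaluates $\eta_{\mathrm{ad}}$ by substituting the standard ALF-$A_k$ and ALF-$D_k$ gravitational instantons, for which $W_+\equiv 0$ and $\chi=k+1$, $\tau=-k$ with $k=-e-1$ (resp.\ $k=-e+2$) are known, and extends to the opposite sign of $e$ by orientation reversal; since $\eta_{\mathrm{ad}}$ depends only on the topology and flat limit geometry of the boundary fibration, this pins it down for every manifold in the class. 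Without this device (or a completed Bismut--Cheeger--Dai computation) your argument does not establish the stated constants. A minor further point: in the equality case one should conclude via flatness of the induced connection on $\Lambda^{\pm}$ for a Ricci-flat metric with $W_{\pm}=0$, so the universal cover is hyperk\"ahler; Hitchin's theorem for the compact equality case is not directly applicable here.
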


\begin{proof}
For large $t$, $D_t$ is diffeomorphic to $M$. So it follows from Gauss-Bonnet-Chern theorem that
   \begin{align*}
\chi(M)=\frac{1}{8\pi^2}\int_{D_t}|W_+|^2+|W_-|^2 \,dV+\int_{\partial D_t} Rm*I+I*I*I \,d\sigma
    \end{align*} 
Here we use $I$ to denote the second fundamental form of $\partial D_t$, $Rm*I$ is a bilinear form and $I*I*I$ is a trilinear form. Since the volume of $\partial D_t=O(t^2)$, it follows from \eqref{eq:RFa00} and \eqref{eq:RFa03} that by taking $t \to \infty$,
    \begin{align}\label{eq:asym1}
\chi(M)=\frac{1}{8\pi^2}\int |W_+|^2+|W_-|^2 \,dV.
    \end{align} 

From the signature theorem (see \cite{APS75a} \cite[Page $348$]{EGH80}),
    \begin{align}\label{eq:asym1a}
\tau(M)=\frac{1}{12\pi^2}\int_{D_t}|W_+|^2-|W_-|^2 \,dV+\int_{\partial D_t} Rm*I \,d\sigma-\eta(\left.g\right|_{\partial D_t}).
    \end{align} 
By taking the limit $t \to \infty$,
    \begin{align}\label{eq:asym2}
\tau(M)=\frac{1}{12\pi^2}\int |W_+|^2-|W_-|^2 \,dV- \lim_{t \to \infty} \eta(\left.g\right|_{\partial D_t}).
    \end{align} 
In particular, $\lim_{t \to \infty} \eta(\left.g\right|_{\partial D_t})$ exists. Now we claim
        \begin{align*}
\lim_{t \to \infty} \eta(\left.g\right|_{\partial D_t})=\lim_{t \to \infty}\eta(\left.\bar g\right|_{\partial D_t}).
    \end{align*}
Indeed, we extend $\bar g$ to be a complete metric on $M$ and choose a cutoff function $\phi$ on $M$ such that $\phi=1$ on $D_{9t/10}$ and $\phi=0$ outside $D_{t}$. Now we define a metric $g_t=\phi \bar g+(1-\phi)g$. Similar to \eqref{eq:asym1a} we have
\begin{align}\label{eq:asym4a}
\tau(M)=\frac{1}{12\pi^2}\int_{D_t} |\bar W_{+}|^2-|\bar W_{-}|^2 \,d\bar V+\int_{\partial D_t} \bar Rm*\bar I \,d\bar \sigma-\eta(\left.\bar g\right|_{\partial D_t})
\end{align} 
and
\begin{align}\label{eq:asym4b}
\tau( M)=\frac{1}{12\pi^2}\int_{D_t} |W_{+,t}|^2-|W_{-.t}|^2 \,dV_t+\int_{\partial D_t} Rm_t*I_t \,d\sigma_t-\eta(\left.g\right|_{\partial D_t}).
\end{align} 
Here we use bar and subscript $t$ to denote the elements of $\bar g$ and $g_t$, respectively. From \eqref{eq:RFa01}, it is easy to see that $|Rm_t|=O(\rho^{-3})$ and $I_t=O(\rho^{-1})$ with uniform decay independent of $t$. Therefore, it follows from \eqref{eq:asym4a} and \eqref{eq:asym4b} that
\begin{align*}
\eta(\left.\bar g\right|_{\partial D_t})=\eta(\left.g\right|_{\partial D_t})+O(t^{-2})
\end{align*} 
and hence the claim is proved by taking $t \to \infty$.

Notice that $\eta_{\text{ad}} \coloneqq \lim_{t \to \infty}\eta(\left.\bar g\right|_{\partial D_t})$ is the adiabatic limit of the asymptotic circle fibration. Combining \eqref{eq:asym1} and \eqref{eq:asym2}, we immediately have
\begin{align}\label{eq:asym4d}
2\chi(M)+3(\tau(M)+\eta_{\text{ad}})=\frac{1}{4\pi^2}\int |W_+|^2\,dV \ge 0.
\end{align}   

$\eta_{\text{ad}}$ can be calculated by considering the standard models of ALF gravitational instantons, for which $W_+\equiv 0$. Indeed, for the standard ALF-$A_k$ gravitational instanton, $\chi=k+1,\tau=-k$ if $k \ge 0$ and $\chi=\tau=0$ if $k=0$. Since $k=-e-1$, it is clear from \eqref{eq:asym4d} that for any $e \le 0$,
\begin{align}\label{eq:asym4e}
\eta_{\text{ad}}=-\frac{e}{3}+\text{sgn}\,e.
\end{align}
Notice that \eqref{eq:asym4e} also holds for all $e \ge 0$ since the sign of $\eta$-invariant is changed if we change the orientation.

Similarly, for the ALF-$D_k$ type, if $k \ge 2$, then $\chi=k+1$, $\tau=-k$. Since $k=-e+2$,
\begin{align}\label{eq:asym4f}
\eta_{\text{ad}}=\frac{k-2}{3}=-\frac{e}{3}.
\end{align}
By the same reason, \eqref{eq:asym4f} can be extended for all $e \ge 0$.

Therefore, inequalities \eqref{eq:HT2a} and \eqref{eq:HT2b} hold with equality if and only if $W_+$ or $W_-$ vanishes. In either case, it implies that the universal covering is a \hy $4$-manifold.
\end{proof}

\begin{rem}
Biquard and Minerbe \emph{\cite[Section $4$]{BiMi11}} proved the same Hitchin-Thorpe inequality for ALF gravitational instantons by using which they showed that ALF-$D_k$ type gravitational instanton exits only if $k \ge 0$.
\end{rem}

Let $\mcm$ and $\mdm$ be the minimal resolution of $\C^2/\Z_m$ and $\C^2/D_{4m}$ respectively, where $\Z_m$ is the cyclic group of order $m$ and $D_{4m}$ is the binary dihedral group of order $4m$.

\begin{cor}\label{C:HTALE1}
Let $(M^4,g)$ be a Ricci-flat TALE manifold with circle fibration such that $M$ is homeomorphic to $\mcm$ for $m \ne 2$. Then $(M,g)$ is isometric to a Multi-Taub-NUT metric. In particular, $M$ is diffeomorphic to $\mcm$.
\end{cor}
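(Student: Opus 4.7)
The proof follows the template of Corollary \ref{C:HTALE}, with the ALF Hitchin--Thorpe inequality \eqref{eq:HT2a} of Theorem \ref{T:HTALE1} replacing its ALE analogue. First I would pin down the end: since $M$ is homeomorphic to $\mcm$, whose unique end is homeomorphic to $S^3/\Z_m\times\R_+$, Lemma \ref{L:topoend} combined with Remark \ref{R:topoend} identifies the boundary $X$ of the TALE structure with $S^3/\Z_m$ up to diffeomorphism. As an oriented lens space (rather than a dihedral space form), $X$ must be the total space of a cyclic asymptotic circle fibration, and from the classification recalled in Section \ref{sub:topo} its Euler number satisfies $|e|=m$; after possibly reversing the orientation of $M$ I take $e=-m$, so $k=-e-1=m-1\ge 0$.

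Next I would extract the topological invariants from $M\cong\mcm$. The exceptional divisor of the minimal resolution is the $A_{m-1}$ chain of $m-1$ smooth rational $(-2)$-curves, giving $b_0=1$, $b_2=m-1$ with the negative-definite Cartan intersection form of type $A_{m-1}$; hence $\chi(M)=m$ and $\tau(M)=-(m-1)$. Substituting into \eqref{eq:HT2a},
\begin{align*}
2m \;\ge\; 3\left|{-(m-1)}-\tfrac{-m}{3}+\text{sgn}(-m)\right| \;=\; 3\left|{-\tfrac{2m}{3}}\right| \;=\; 2m,
\end{align*}
so equality is forced. The equality clause of Theorem \ref{T:HTALE1} then shows that $(M,g)$ or its reversed orientation space is a quotient of a \hy $4$-manifold. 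Since $\mcm$ is simply connected, so is $M$, and therefore the quotient must be trivial; in the appropriate orientation $(M,g)$ itself is \hy.

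At this stage $(M,g)$ is a Ricci-flat ALF-$A_k$ \hy $4$-manifold with $k=m-1\ge 0$. By Minerbe's classification \cite{Mi11}, any such manifold is isometric to a Multi-Taub-NUT metric, whose underlying complex manifold is $\mcm$; in particular $M$ is diffeomorphic to $\mcm$. The step I expect to require the most care is the orientation bookkeeping: the choice of sign for $e$, the orientation needed to realize $(M,g)$ as \hy, and the complex-structure convention under which Minerbe's theorem identifies Multi-Taub-NUT with $\mcm$ must all be made consistently. The hypothesis $m\ne 2$ enters precisely at this classification step, where the small case $k=1$ would require additional argument to pin down the isometry class uniquely.
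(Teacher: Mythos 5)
Your overall template is the right one and matches the paper's: identify the boundary as $S^3/\Z_m$ via Lemma \ref{L:topoend} and Remark \ref{R:topoend}, compute $\chi(M)=m$, $\tau(M)=1-m$, feed these into \eqref{eq:HT2a}, invoke the equality case, and finish with Minerbe's classification. The gap is in your orientation normalization. You cannot ``after possibly reversing the orientation of $M$'' arrange $e=-m$ while keeping $\tau(M)=-(m-1)$: reversing the orientation flips $\tau$, $e$ and $\mathrm{sgn}\,e$ simultaneously, so the quantity $\bigl|\tau(M)-\tfrac{e}{3}+\mathrm{sgn}\,e\bigr|$ is orientation-invariant and the two sign choices of $e$ relative to a fixed $\tau$ are genuinely distinct cases. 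With $\tau(M)=1-m$ fixed, the case $e=-m$ gives equality in \eqref{eq:HT2a} exactly as you computed; but the case $e=+m$ gives $2m\ge 3\left|2-\tfrac{4m}{3}\right|=|6-4m|$, which is a contradiction for $m\ge 4$, an equality for $m=1$ and $m=3$ (so the opposite-orientation space is \hy and Minerbe still applies), and a \emph{strict} inequality for $m=2$. That strict inequality is precisely why the hypothesis $m\ne 2$ appears: for $m=2$ with $e=+2$ the equality case of Theorem \ref{T:HTALE1} is not triggered and one cannot conclude that $(M,g)$ is \hy.

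Consequently your closing guess that $m\ne 2$ is needed at the classification step (``the small case $k=1$'') is incorrect; Minerbe's theorem covers all $k\ge 0$ uniformly, and the restriction comes entirely from the sign analysis of the Euler number in the Hitchin--Thorpe inequality. To repair the proof, drop the normalization, keep $\tau(M)=1-m$, and run the inequality for both $e=-m$ and $e=+m$ as above.
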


\begin{proof}
It follows from Lemma \ref{L:topoend} and Remark \ref{R:topoend} that the boundary of $M$ is diffeomorphic to $S^3/\Z_m$. Therefore, the asymptotic circle fibration is of cyclic type and its Euler number $e=\pm m$. On the other hand, it is clear that $\chi(M)=\chi(\mcm)=m$ and $\tau(m)=\tau(\mcm)=1-m$. Then it follows from \eqref{eq:HT2a} that
\begin{align} \label{eq:HTALF1}
2m \ge 3\left|1-m-\frac{e}{3}+\text{sgn}\,e\right| 
\end{align}
If $e=-m$, then the equality of \eqref{eq:HTALF1} holds and $(M,g)$ is \hy. Therefore, it follows from \cite{Mi11} that $(M,g)$ is isometric to a Multi-Taub-NUT metric. If $e=m$, then it is easy to solve \eqref{eq:HTALF1} that $m=1,2$ or $3$. If $m=1$ or $3$, then the equality of \eqref{eq:HTALF1} holds and by the same reason, $(M,g)$ with opposite orientation is isometric a Multi-Taub-NUT metric.
\end{proof}

In particular, Corollary \ref{C:HTALE1} states that for any smooth manifold $M$ homeomorphic to $\R^4$, if $M$ admits a Ricci-flat ALF metric, then $M$ must be isometric to the Taub-NUT metric.

Similarly, since $\chi(\mdm)=3+m$ and $\tau(\mdm)=-2-m$, we have the following result by using \eqref{eq:HT2b}.
\begin{cor}\label{C:HTALE2a}
Let $(M^4,g)$ be a Ricci-flat TALE manifold with circle fibration such that $M$ is homeomorphic to $\mdm$. Then $(M,g)$ is isometric to a Cherkis-Hitchin-Ivanov-Kapustin-Lindstr\"om-Ro\v cek metric. In particular, $M$ is diffeomorphic to $\mdm$.
\end{cor}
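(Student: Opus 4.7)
The plan is to follow the strategy of Corollary \ref{C:HTALE1} essentially verbatim, substituting the dihedral Hitchin-Thorpe inequality \eqref{eq:HT2b} for its cyclic analogue \eqref{eq:HT2a}. First, since $M$ is homeomorphic to $\mdm$, Lemma \ref{L:topoend} together with Remark \ref{R:topoend} force the boundary of $M$ to be diffeomorphic to the link $S^3/D_{4m}$ of the singularity $\C^2/D_{4m}$. Consequently, the asymptotic circle fibration of $(M,g)$ is of dihedral type with $|e|=m$, and we are exactly in the regime governed by \eqref{eq:HT2b}.

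Next I would insert the topological data $\chi(M)=\chi(\mdm)=3+m$, $\tau(M)=\tau(\mdm)=-2-m$, and $|e|=m$ into \eqref{eq:HT2b}:
\[
2(3+m) \;\ge\; 3\left|{-2-m}-\tfrac{e}{3}\right|, \qquad \text{i.e.,}\qquad 6+2m \;\ge\; |{-6-3m-e}|.
\]
With $e=m$ the right-hand side becomes $6+4m$, giving the impossible estimate $6+2m \ge 6+4m$ for $m\ge 1$. Hence $e=-m$, in which case the inequality degenerates to the equality $6+2m=6+2m$.

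The equality clause of Theorem \ref{T:HTALE1} then implies, via \eqref{eq:asym4d}, that $\int |W_+|^2\,dV=0$; combined with Ricci-flatness this forces $(M,g)$ to be hyperk\"ahler in the orientation inherited from the resolution $\mdm$. Because $e=-m$, the convention $k=-e+2$ places $(M,g)$ in the ALF-$D_{m+2}$ class. Invoking the classification of hyperk\"ahler ALF-$D_k$ gravitational instantons due to Biquard-Minerbe \cite{BiMi11} and Chen-Chen \cite{CC17} then identifies $(M,g)$ with a Cherkis-Hitchin-Ivanov-Kapustin-Lindstr\"om-Ro\v cek metric; in particular, $M$ is diffeomorphic to $\mdm$.

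The main obstacle I foresee is pinning down the orientation: the equality clause of Theorem \ref{T:HTALE1} a priori only yields a hyperk\"ahler structure after possibly reversing orientation, and one must rule out the possibility that only $(M,g)$ with its opposite orientation is hyperk\"ahler. To settle this I would compute $\eta_{\text{ad}}=-e/3=m/3$ in the dihedral case and substitute into \eqref{eq:asym4d}; the identity $2\chi+3\tau+3\eta_{\text{ad}}=6+2m-6-3m+m=0$ gives $\int|W_+|^2\,dV=0$, so $W_+\equiv 0$, confirming that $(M,g)$ is hyperk\"ahler in its original orientation, and therefore a genuine CHIKLR metric rather than its orientation-reverse. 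Once this is done, the classification result delivers the isometric identification and hence the claimed diffeomorphism.
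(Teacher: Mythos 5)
Your proof is correct and takes essentially the same route as the paper: the paper's own proof of this corollary is just the remark that one argues ``similarly'' to Corollary \ref{C:HTALE1} using $\chi(\mdm)=3+m$, $\tau(\mdm)=-2-m$ and \eqref{eq:HT2b}, which is exactly the computation you carry out. Your observation that $e=+m$ is outright impossible for $m\ge 1$ (so that, unlike the cyclic case, no exceptional values of $m$ need be excluded), and your verification via \eqref{eq:asym4d} that $W_+\equiv 0$ in the given orientation, correctly fill in the details the paper leaves implicit.
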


(III) $l=2$

In this case, the asymptotic torus fibration is
\begin{align*}
X \longrightarrow S^1.
\end{align*}
with fiber $\T_{\infty}^2$ such that the monodromy group is given by $1,\Z_2,\Z_3,\Z_4$ or $\Z_6$. Notice that the total space $X$ is a closed flat manifold whose eta invariant is given by $0,0,-\frac{2}{3},-1$ or $-\frac{4}{3}$, see, e.g., \cite{SZ12}.

By the same argument of Theorem \ref{T:HTALE1}, we have
\begin{thm} \label{T:HTALE2}
Let $(M^4,g)$ be a Ricci-flat TALE manifold with $\T^2$-fibration. Then
\begin{align*} 
2\chi(M) \ge 3|\tau(M)+\eta|
\end{align*}
with equality if and only if $(M,g)$ or its opposite orientation space is a quotient of a \hy $4$-manifold. Here $\eta=0,0,-\frac{2}{3},-1$ or $-\frac{4}{3}$ if the monodromy of the asymptotic $\T^2$-fibration is $1,\Z_2,\Z_3,\Z_4$ or $\Z_6$, respectively.
\end{thm}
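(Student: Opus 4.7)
The plan is to mimic the argument used for Theorem~\ref{T:HTALE1}, treating the ALG case as a parallel but technically more delicate situation. First, using Theorem~\ref{T:met} and Theorem~\ref{T:ave1} together with Shi's local estimates and the Ricci-flat assumption, I record the uniform decay $|Rm|=O(r^{-2}K(r/2))$, $g=\bar g+O(r^{-2})$ and $|\nabla^i\bar g|=O(r^{-2-i})$ on the end, where the extra power of $r^{-1}$ in the comparison to $\bar g$ comes from the Ricci-flat improvement via $K$. Next, I choose a natural exhaustion $D_t=\{\rho\le t\}\subset M$ where $\rho$ is the $\mathbb{T}^2$-invariant lift of the radial distance on the ALE base $Y$, so that $\partial D_t$ is the total space $X$ of the asymptotic $\mathbb{T}^2$-fibration over a round circle of radius $\approx t$, with second fundamental form $I(t)=O(t^{-1})$ and $\operatorname{vol}(\partial D_t)=O(t)$.

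Applying the Gauss--Bonnet--Chern formula and the Atiyah--Patodi--Singer signature theorem on $D_t$ exactly as in the proof of Theorem~\ref{T:HTALE1}, the boundary terms of the form $\int_{\partial D_t} Rm*I+I*I*I\,d\sigma$ are bounded by $O(|Rm|\cdot t^{-1}\cdot t)+O(t^{-3}\cdot t)=O(t^{-2})$, hence vanish in the limit $t\to\infty$. This yields
\begin{align*}
\chi(M)&=\frac{1}{8\pi^2}\int_M(|W_+|^2+|W_-|^2)\,dV,\\
\tau(M)&=\frac{1}{12\pi^2}\int_M(|W_+|^2-|W_-|^2)\,dV-\eta_{\mathrm{ad}},
\end{align*}
where $\eta_{\mathrm{ad}}:=\lim_{t\to\infty}\eta(g|_{\partial D_t})$ exists a posteriori. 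Combining gives
\[
2\chi(M)+3(\tau(M)+\eta_{\mathrm{ad}})=\frac{1}{4\pi^2}\int_M|W_+|^2\,dV\ge 0,
\]
and the same identity for the opposite orientation yields the absolute value, so the Hitchin--Thorpe inequality follows once $\eta_{\mathrm{ad}}$ is identified with the claimed value.

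The technical heart of the argument is the identification of $\eta_{\mathrm{ad}}$. Using the interpolation metric $g_s=\phi_s\bar g+(1-\phi_s)g$ with a cutoff supported near $\partial D_t$, exactly as in the proof of Theorem~\ref{T:HTALE1}, I compare the signature-theorem identities for $g$ and $\bar g$ on $D_t$. The Ricci-flat decay $g-\bar g=O(r^{-2})$ makes the discrepancy integral $O(t^{-2})$, so $\lim_t \eta(g|_{\partial D_t})=\lim_t\eta(\bar g|_{\partial D_t})$. Under $\bar g$, the boundary $\partial D_t$ is, up to a scaling in the base direction, isometric to the closed flat mapping torus $M_L$ with monodromy $L\in\{L_1,\ldots,L_5\}$ from Section~\ref{sub:topo}; this uses Theorem~\ref{T:met}(i)--(iii) to see that $\bar g$ on $\partial D_t$ is an arbitrarily small perturbation of the flat product connection after rescaling the base circle. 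Scale-invariance of the $\eta$-invariant of the odd signature operator on a $3$-manifold then reduces the computation to $\eta(M_L)$, which by the results of Sadowski--Zaremba \cite{SZ12} equals $0,0,-\tfrac23,-1,-\tfrac43$ according to whether the monodromy is $1,\mathbb{Z}_2,\mathbb{Z}_3,\mathbb{Z}_4,\mathbb{Z}_6$.

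Finally, equality in the inequality forces $W_+\equiv 0$ (or $W_-\equiv 0$ after reversing orientation), which by the Ricci-flat anti-self-dual condition implies that $(M,g)$ (or its opposite orientation) carries a parallel self-dual $2$-form of constant norm, i.e., is locally hyperk\"ahler; passing to the universal cover produces a hyperk\"ahler $4$-manifold of which $M$ is a quotient. The main obstacle I anticipate is the rigorous justification of $\lim_t\eta(\bar g|_{\partial D_t})=\eta(M_L)$: although the boundary metrics converge in $C^\infty$ after rescaling, the $\eta$-invariant is notoriously non-local, and one must control its behaviour along a family of collapsing/rescaling metrics. The cleanest route is via the adiabatic-limit formulas of Bismut--Cheeger and Dai, noting that on the flat total space $M_L$ no higher Chern--Simons correction appears because the base is one-dimensional and the fibre $\mathbb{T}^2_\infty$ is flat, so the adiabatic limit coincides with the honest $\eta$-invariant of $M_L$.
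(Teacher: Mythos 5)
Your proposal follows essentially the same route as the paper: the paper's entire proof of Theorem~\ref{T:HTALE2} is ``by the same argument of Theorem~\ref{T:HTALE1}'', with $\eta$ read off as the eta invariant of the closed flat mapping torus $X$ from \cite{SZ12} (note the reference is to Szczepanski, not ``Sadowski--Zaremba''), exactly as you do; your discussion of why $\lim_t\eta(\bar g|_{\partial D_t})$ equals the eta invariant of the flat model is in fact more careful than anything in the paper. One correction is needed in your opening decay estimates: for $l=2$ there is no analogue of Theorem~\ref{T:met2} (see the remark following it — it is open whether Ricci-flat TALE manifolds with $l=2$ have faster-than-quadratic curvature decay), so the Ricci-flat hypothesis does not upgrade $K(r/2)$ to $O(r^{-1})$, and the correct rates from Theorem~\ref{T:ave1} are $g=\bar g+O(r^{-1}K(r/2))$ and $|\na^i\bar g|=O(r^{-1-i}K(r/2))$, not $O(r^{-2})$ and $O(r^{-2-i})$ as you assert. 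This does not damage the argument: since the boundary $\partial D_t$ has volume $O(t)$ and the annulus $D_t\setminus D_{9t/10}$ has volume $O(t^2)$, the Gauss--Bonnet--Chern and signature boundary terms and the interpolation discrepancy are still $o(1)$ with these weaker rates (e.g.\ $\int_{\partial D_t}Rm*I\,d\sigma=O(K(t/2)/t)$ and $\int_{\partial D_t}I*I*I\,d\sigma=O(t^{-2})$). But you should delete the claim that the ``Ricci-flat improvement via $K$'' yields the stronger rates, since that justification is simply not available in the ALG case.
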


Now we immediately have
\begin{cor} \label{C:HTALE2}
Let $(M^4,g)$ be a Ricci-flat TALE manifold with $\T^2$-fibration such that $M$ is homeomorphic to an ALG gravitational instanton. Then $(M,g)$ is an ALG gravitational instanton.
\end{cor}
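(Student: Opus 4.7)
The plan is to exploit the equality case of Theorem \ref{T:HTALE2}. Let $M_0$ denote an ALG gravitational instanton that is homeomorphic to $M$. The first step is to show that the eta-invariant contribution appearing in Theorem \ref{T:HTALE2} is the same for $M$ and $M_0$. Since $M_0$ is also a TALE $4$-manifold with $\T^2$-fibration (by the appendix, as referenced), both $M$ and $M_0$ have an end diffeomorphic to $X \times (0,\infty)$ for some closed $3$-manifold. Applying Lemma \ref{L:topoend} to a common compact set in the homeomorphism $M \cong M_0$, the boundary $3$-manifolds are $h$-cobordant, hence diffeomorphic by Remark \ref{R:topoend}. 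In particular, the monodromy of the asymptotic $\T^2$-fibration of $M$ must agree with that of $M_0$, so the value of $\eta \in \{0, 0, -\tfrac{2}{3}, -1, -\tfrac{4}{3}\}$ is the same.

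The second step is to observe that, since $M$ is homeomorphic to $M_0$, one has $\chi(M) = \chi(M_0)$ and $\tau(M) = \tau(M_0)$ (possibly after fixing the orientation of $M$ to match that of $M_0$). Because $M_0$ is hyperk\"ahler, one of $W_+$, $W_-$ vanishes identically on $M_0$, so that Theorem \ref{T:HTALE2} applied to $M_0$ yields equality: $2\chi(M_0) = 3|\tau(M_0) + \eta|$. Transferring these identities to $M$, we obtain
\begin{align*}
2\chi(M) = 3|\tau(M) + \eta|,
\end{align*}
and therefore by the equality clause of Theorem \ref{T:HTALE2}, $(M,g)$ or its opposite orientation is isometric to a quotient of a hyperk\"ahler $4$-manifold.

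The third step upgrades this to the statement that $(M,g)$ is itself an ALG gravitational instanton. The finite quotient cannot be nontrivial: by Theorem \ref{T:met}, Theorem \ref{T:ave1}, and Proposition \ref{P:ALE}, a nontrivial deck transformation would change the asymptotic geometry (the monodromy of the asymptotic torus fibration, or the base $S^1$), contradicting the topological match with $M_0$ established in the first step. Hence $(M,g)$ is genuinely a complete hyperk\"ahler $4$-manifold, and combined with \eqref{cond:AF} (upgraded by Theorem \ref{T:met2} to polynomial curvature decay, in fact faster-than-quadratic in the ALG regime since $l=2$ and $n=4$), $(M,g)$ satisfies the definition of an ALG gravitational instanton.

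The main obstacle is the third step: ensuring that the quotient is trivial and that the orientation choice is consistent. One must carefully rule out the possibility that $M$ could be a genuine quotient of a larger hyperk\"ahler manifold with a different asymptotic model; this is precisely where matching the monodromy data and using the topological constraint $M \cong M_0$ becomes essential. Once these pieces are in place, the classification results for ALG gravitational instantons from \cite{Hei12} and \cite{CC16} identify $(M,g)$ as one of the known examples.
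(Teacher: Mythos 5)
Your proposal is correct and follows essentially the route the paper intends: the corollary is stated there as an immediate consequence of Theorem \ref{T:HTALE2}, with the topological matching of the end via Lemma \ref{L:topoend} and Remark \ref{R:topoend} and the transfer of $\chi$, $\tau$ and $\eta$ exactly as in the proofs of Corollaries \ref{C:HTALE} and \ref{C:HTALE1}. The only place where your argument is weaker than necessary is the third step: rather than appealing to asymptotic geometry to rule out a nontrivial quotient, the shortest argument is that ALG gravitational instantons are simply connected (being rational elliptic surfaces minus a fiber by \cite{Hei12,CC16}), so $M$ is simply connected and cannot be a proper quotient of a hyperk\"ahler manifold.
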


(IV) $l=1$

In this case, $X$ is a flat torus $\T_{\infty}^3$ whose eta invariant is $0$. Similarly, we have

\begin{thm} \label{T:HTALE3}
Let $(M^4,g)$ be a Ricci-flat TALE manifold with $\T^3$-fibration. Then
\begin{align*} 
2\chi(M) \ge 3|\tau(M)|
\end{align*}
with equality if and only if $(M,g)$ or its opposite orientation space is a quotient of a \hy $4$-manifold. 
\end{thm}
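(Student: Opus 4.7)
The plan is to mimic the argument used for Theorem \ref{T:HTALE1} and Theorem \ref{T:HTALE2}, exploiting the fact that in the ALH case the situation is technically cleaner: Theorem \ref{T:ftq} provides exponential curvature decay $|Rm|=O(e^{-\delta r})$, and the cross-section of the end is a flat torus $\T^3_\infty$ whose eta invariant vanishes.

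Concretely, I would first fix an exhaustion $\{D_t\}$ of $M$ by sublevel sets of a smoothed distance function to a fixed basepoint (or, following the previous proofs, by a function $\rho$ built from the ALE coordinates on the base $Y$, extended to be $\T^3_\infty$-invariant on $M\setminus K$). The second fundamental form of $\partial D_t$ will be $O(r^{-1})$ and $|\partial D_t|$ grows at most like $r$ (since the end collapses to a ray), so the Gauss-Bonnet-Chern boundary term $\int_{\partial D_t}(Rm*I+I*I*I)\,d\sigma$ is $O(e^{-\delta t})\to 0$. Consequently
\begin{align*}
\chi(M)=\frac{1}{8\pi^2}\int_M(|W_+|^2+|W_-|^2)\,dV.
\end{align*}
For the signature, I would apply the Atiyah--Patodi--Singer theorem on $D_t$: by the same curvature decay, the boundary Chern--Simons term decays exponentially, so
\begin{align*}
\tau(M)=\frac{1}{12\pi^2}\int_M(|W_+|^2-|W_-|^2)\,dV-\lim_{t\to\infty}\eta(g|_{\partial D_t}).
\end{align*}

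Next I would identify the adiabatic eta limit. Using the averaged metric $\bar g$ of Theorem \ref{T:ave1} and interpolating between $g$ and $\bar g$ on an annulus as in the proof of Theorem \ref{T:HTALE1}, the difference $\eta(g|_{\partial D_t})-\eta(\bar g|_{\partial D_t})$ is controlled by the $O(e^{-\delta r})$ deviation, so both limits agree. For $\bar g$, the level set $\partial D_t$ is (up to error terms going to zero) a flat $\T^3_\infty$, and the eta invariant of any flat $3$-torus vanishes by the standard calculation (the Dirac-type operator has spectrum symmetric about $0$). Thus $\lim_{t\to\infty}\eta(g|_{\partial D_t})=0$.

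Combining the two formulas yields
\begin{align*}
2\chi(M)+3\tau(M)=\frac{1}{4\pi^2}\int_M |W_+|^2\,dV\ge 0,
\end{align*}
and, by reversing orientation,
\begin{align*}
2\chi(M)-3\tau(M)=\frac{1}{4\pi^2}\int_M |W_-|^2\,dV\ge 0,
\end{align*}
proving $2\chi(M)\ge 3|\tau(M)|$. Equality forces $W_+\equiv 0$ or $W_-\equiv 0$; together with Ricci-flatness this makes $(M,g)$ (in one of the two orientations) locally hyperkähler, and then the standard de Rham/Berger argument identifies the universal cover with a hyperkähler $4$-manifold, so $(M,g)$ is a quotient of such a manifold.

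The main technical obstacle I anticipate is justifying the adiabatic limit $\lim_{t\to\infty}\eta(g|_{\partial D_t})=0$ rigorously: one must check that the level sets $\partial D_t$ are actually smooth closed $3$-manifolds diffeomorphic to the boundary torus bundle $X$ for large $t$, that the induced metrics converge in $C^\infty$ after rescaling to the flat $\T^3_\infty$ (using Theorem \ref{T:ave1}(v) together with the exponential decay from Theorem \ref{T:ftq}), and that $\eta$ depends continuously on the metric in a way that lets one pass to the limit. This continuity issue was handled in the ALF and ALG cases by the cut-off/interpolation trick between $g$ and $\bar g$ above, and the same device should work here; the rest of the argument is then a direct transcription of the proof of Theorem \ref{T:HTALE1}.
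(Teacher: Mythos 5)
Your proposal is correct and follows essentially the same route as the paper, which simply invokes ``the same argument as Theorem \ref{T:HTALE1}'' with the adiabatic eta limit equal to $0$ because the cross-section is a flat $\T^3_{\infty}$; your use of the exponential decay from Theorem \ref{T:ftq} and the interpolation between $g$ and $\bar g$ matches the paper's treatment of the ALF/ALG cases. The only cosmetic slip is that $|\partial D_t|$ stays bounded (it converges to the volume of $\T^3_{\infty}$) rather than growing like $r$, which only makes the boundary-term estimates easier.
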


It follows from \cite[Theorem $3.4$]{CC16} that any ALH gravitational instantons is diffeomorphic to $\widetilde{\R \times \T^3/\pm}$, the minimal resolution of $\R \times \T^3/\pm$. Then by Theorem \ref{T:HTALE3} we immediately have

\begin{cor} \label{C:HTALE3}
Let $(M^4,g)$ be a Ricci-flat TALE manifold with $\T^3$-fibration such that $M$ is homeomorphic to $\widetilde{\R \times \T^3/\pm}$. Then $(M,g)$ is an ALH gravitational instanton.
\end{cor}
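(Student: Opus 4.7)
The plan is to mirror the proofs of Corollaries~\ref{C:HTALE}, \ref{C:HTALE1}, and \ref{C:HTALE2a}: use the homeomorphism type of $M$ to pin down the topological invariants $\chi(M)$ and $\tau(M)$, then verify that the Hitchin--Thorpe inequality of Theorem~\ref{T:HTALE3} is saturated, and finally read off hyperk\"ahlerness from the equality case.

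First, I would apply Lemma~\ref{L:topoend} together with Remark~\ref{R:topoend} to identify the boundary $X$ of $M$ (in the TALE sense) with the end $\T^{3}$ of $\widetilde{\R\times\T^{3}/\pm}$: the two closed $3$-manifolds are $h$-cobordant and hence diffeomorphic. In particular, the asymptotic torus fibration of $M$ is a trivial $\T^{3}$-bundle over a point (consistent with the $l=1$ case of Corollary~\ref{cor:cone}), so Theorem~\ref{T:HTALE3} applies with $\eta=0$.

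Second, I would compute $\chi(M)$ and $\tau(M)$ from the model $\widetilde{\R\times\T^{3}/\pm}$. The orientation-preserving involution $(t,x)\mapsto(-t,-x)$ on $\R\times\T^{3}$ has exactly eight fixed points (the $2$-torsion points of $\T^{3}$ sitting in $\{0\}\times\T^{3}$), each locally modelled on $\C^{2}/\Z_{2}$. The equivariant Euler characteristic formula gives $\chi(\R\times\T^{3}/\pm)=\tfrac{1}{2}(0+8)=4$, and the minimal resolution replaces each $A_{1}$ singularity by a single $(-2)$-sphere, raising $\chi$ by $1$ per exceptional divisor, so $\chi(M)=12$. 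The eight exceptional spheres are pairwise disjoint, each of self-intersection $-2$, forming a negative-definite rank-$8$ sublattice of the compactly supported intersection form; since $\R\times\T^{3}/\pm$ deformation retracts onto a $3$-dimensional set and contributes no compactly supported $2$-cycles in the resolution, one gets $\tau(M)=-8$ for one choice of orientation (and $+8$ for the reversed one).

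Third, plugging into Theorem~\ref{T:HTALE3},
\[
2\chi(M)=24\;\ge\;3|\tau(M)|=24,
\]
so equality holds. By the equality clause of Theorem~\ref{T:HTALE3}, $(M,g)$ or its opposite orientation space is a quotient of a hyperk\"ahler $4$-manifold. Since $\widetilde{\R\times\T^{3}/\pm}$ is already known to admit a hyperk\"ahler ALH metric (cf.\ \cite[Theorem~1.5]{CC16} cited in Section~$2$), the argument of Corollary~\ref{C:HTALE} applies verbatim to force the quotient to be trivial, so $(M,g)$ itself (possibly after reversing orientation) is hyperk\"ahler. Combined with completeness, Ricci-flatness, and the exponential curvature decay furnished by Theorem~\ref{T:003}(ii), this is exactly the definition of an ALH gravitational instanton.

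The main obstacle is the rigorous bookkeeping of $\tau(M)$: one must argue that the compactly supported intersection form is purely generated by the eight exceptional $(-2)$-spheres, with no additional contribution coming from $2$-cycles that persist in the singular base $\R\times\T^{3}/\pm$. Once this is in hand, the rest of the argument is a direct application of Theorem~\ref{T:HTALE3} in the same pattern as the earlier corollaries.
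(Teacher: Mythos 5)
Your proposal follows the same template the paper uses for Corollaries~\ref{C:HTALE}, \ref{C:HTALE1}, and \ref{C:HTALE2}: match the topological invariants via the homeomorphism to the model, obtain equality in the relevant Hitchin--Thorpe inequality, and invoke the equality clause of Theorem~\ref{T:HTALE3}. The paper's own (one-line) argument takes a slight shortcut that you also mention at the end: since \cite[Theorem~$1.5$]{CC16} guarantees a hyperk\"ahler ALH metric on $\widetilde{\R\times\T^3/\pm}$, equality in $2\chi=3|\tau|$ holds for that model metric, hence for the topological type $\widetilde{\R\times\T^3/\pm}$, hence for $M$; no computation of $\chi$ or $\tau$ is strictly required. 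Your explicit values $\chi=12$, $\tau=\pm 8$ are correct and provide a useful independent check, but the justification you give for $\tau$ is not quite right as stated: compactly supported cohomology is \emph{not} a homotopy invariant, and in fact $H^2_c(\R\times\T^3/\pm;\Q)\cong\Q^3$ is nonzero, so the base does contribute compactly supported $2$-cycles. What saves the computation is that these classes lie in the radical of the intersection pairing (they are represented by tori that can be displaced in the $t$-direction, hence have vanishing self- and mutual intersections, and are disjoint from the exceptional locus), so only the eight disjoint $(-2)$-spheres contribute to the signature, giving $\tau=-8$ for one orientation. Finally, the ``force the quotient to be trivial'' step should be attributed to simple connectedness of $\widetilde{\R\times\T^3/\pm}$ (and hence of $M$), which makes any nontrivial finite covering impossible, rather than to the existence of the model metric per se; this is implicit in the paper's proof of Corollary~\ref{C:HTALE} as well.
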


As an application of the Hitchin-Thorpe inequality, we prove the following theorem.

\begin{thm}\label{main1}
If $(M^4,g)$ is a complete Ricci-flat Riemannian manifold with \eqref{cond:AF} such that $M^4$ is homeomorphic to $\R^4$ and the tangent cone at infinity is not $\R \times \R_+$, then $g$ is isometric to either the flat or the Taub-NUT metric. In particular, $M^4$ is diffeomorphic to $\R^4$. 
\end{thm}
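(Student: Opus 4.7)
The plan is to reduce to the two cases permitted by the hypothesis on the tangent cone at infinity and then apply the rigidity corollaries of the Hitchin–Thorpe inequality already established in the paper.

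First, since $M$ is homeomorphic to $\R^4$ it is simply connected at infinity, so by the Petrunin–Tuschmann theorem quoted just before Theorem~\ref{T:005}, the tangent cone $C(S(\infty))$ is isometric to $\R^4$, $\R^3$, or $\R\times\R_+$. By hypothesis the last possibility is excluded, so $C(S(\infty))=\R^4$ or $\R^3$.

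Case 1: $C(S(\infty))=\R^4$. Then $(M,g)$ has maximal volume growth, so by the theorem of Bando–Kasue–Nakajima (extended as in Appendix~\ref{app:A}) it is an ALE manifold, and its end is diffeomorphic to $S^3/\Gamma\times\R_+$ for some finite $\Gamma\subset O(4)$ acting freely on $S^3$. Since $M\approx\R^4$ is simply connected at infinity, $\Gamma$ must be trivial, so $M$ is homeomorphic to the (trivial) minimal resolution $\widetilde{\C^2/\Gamma}=\R^4$. Corollary~\ref{C:HTALE} then forces $(M,g)$ to be a \hy ALE $4$-manifold diffeomorphic to $\R^4$. By Kronheimer's classification of ALE \hy $4$-manifolds, the only such metric is the flat Euclidean one.

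Case 2: $C(S(\infty))=\R^3$. Then by Theorem~\ref{T:circle}, $(M,g)$ is a TALE manifold with circle fibration, i.e.\ of ALF type. Its boundary $X$ is a circle bundle over $S^2$ (cyclic) or $\RP^2$ (dihedral). Since $M\approx\R^4$, $X$ must be homeomorphic to $S^3$, which rules out the dihedral case and forces the Euler number of the asymptotic circle fibration to satisfy $|e|=1$; i.e.\ $M$ is homeomorphic to $\widetilde{\C^2/\Z_1}=\R^4$. Applying Corollary~\ref{C:HTALE1} with $m=1$ yields that $(M,g)$ is isometric to a Multi-Taub-NUT metric on a single nut, which is precisely the Taub-NUT metric; in particular $M$ is diffeomorphic to $\R^4$.

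The only conceivable obstacle is the bookkeeping of Case~2, namely verifying that the hypothesis $M\approx\R^4$ really pins down the Euler class and the cyclic (as opposed to dihedral) structure of the asymptotic circle fibration; once this is in place, both cases are immediate invocations of the Hitchin–Thorpe rigidity corollaries already proved, and the two possibilities listed in the statement exhaust the outcomes.
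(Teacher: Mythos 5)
Your proof is correct and follows essentially the same route as the paper: invoke Petrunin--Tuschmann to reduce to the two cases $C(S(\infty))=\R^4$ or $\R^3$, then apply Corollary~\ref{C:HTALE} (via Bando--Kasue--Nakajima) in the first case and Theorem~\ref{T:circle} together with Corollary~\ref{C:HTALE1} in the second. The one small cosmetic difference is in Case~2: you rule out the dihedral type by noting $X$ must be $S^3$ (a $\pi_1$ argument on the boundary), whereas the paper gets the cyclic type for free because $C(S(\infty))=\R^3=C(S^2)$ already forces the base of the circle fibration to be $S^2$; both arguments are valid and lead to $|e|=1$.
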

\begin{proof}
Since $M$ is simply-connected at infinity, it follows from \cite[Theorem A (ii)]{PT01} and our assumption that $C(S(\infty))$ is isometric to $\R^4$ or $\R^3$. If $C(S(\infty))=\R^4$, then $(M,g)$ has Euclidean volume growth and must be an ALE manifold. Therefore, it follows from Corollary \ref{C:HTALE} that $(M,g)$ is isometric to $(\R^4,g_E)$.

If $C(S(\infty))=\R^3$, then it follows from Theorem \ref{T:circle} that $(M,g)$ is a TALE manifold with circle fibration. It is clear that the Euler number of the asymptotic circle fibration over $S^2$ must be $\pm 1$. Thus, it follows from Corollary \ref{C:HTALE1} that $(M,g)$ is isometric to the Taub-NUT metric.
\end{proof}

\begin{rem}
Notice that the assumption that $M$ is homeomorphic to $\R^4$ can be weaken to $M$ is simply-connected at infinity, $\chi(M)=1$ and $\tau(M)=0$.
\end{rem}

\section{Further discussion}
In this section, we propose some questions.

\begin{quest}\label{Q:1}
What is the optimal order of curvature decay for Ricci-flat TALE manifolds? 
\end{quest}

Notice that in the $4$-dimensional case, if the tangent cone at infinity is either $3$ or $4$ dimensional, the result in Theorem \ref{T:003} is optimal.

\begin{quest}\label{Q:2}
Is there any example of simply-connected Ricci-flat TALE $4$-manifolds of type ALG or ALH which is non-\hy?
\end{quest}

A celebrated conjecture in \cite{BKN89} states that there is no simply-connected non-\hy Ricci-flat ALE $4$-manifold. For the ALF case, we do have non-\hy examples like Euclidean Schwarzschild metric and Euclidean Kerr–Newman metric, see \cite[Page $384$]{EGH80}.

\appendixpage
\addappheadtotoc
\appendix
  
\section{ALE coordinates at infinity}  
\label{app:A}

Let $(M^n,g,p)\,,n\ge 3$ be a complete Riemannian manifold with \eqref{cond:AF} and Euclidean volume growth. We define a function 
    \begin{align*}
\bar K(t) \coloneqq \max\left\{t^{-1}, t^{-1} \int_{1/2}^t K_0(s)\,ds,K(t/2),K_0(t/2)\right \}
    \end{align*} 
for $t \ge 1$ and $\bar K(t)=\bar K(1)$ for $0 \le t \le 1$. Here $K_0$ is defined as
\begin{align*}
K_0(t) \coloneqq \int_t^\infty \frac{K(s)}{s}\,ds.
\end{align*}

In this appendix, we prove the following theorem by using the same arguments of \cite{BKN89}. For simplicity, we assume $M^n$ has only one end.

\begin{thm}\label{T:ap01}
For any $\alpha \in (0,1)$, there exist a compact subset $K \subset M$, two constants $A>0,C>1$, a finite subgroup $\Gamma \subset O(n)$ acting freely on $\R^n \backslash B(0,A)$ and a $C^{\infty}$-diffeomorphism $\Phi: M \backslash K \to (\mathbb{R}^n \backslash B(0,A))/\Gamma$ such that under this identification, for any $z,w \in \R^n \backslash B(0,A)$,
\begin{enumerate}[label=(\roman*)]
\item $|g_{ij}(z) -\delta_{ij}| \le C\bar K(C^{-1}|z|)$;

\item $|\partial_k g_{ij}(z)| \le C |z|^{-1}\bar K(C^{-1}|z|)$;

\item $\displaystyle \frac{|\partial_k g_{ij}(z)-\partial_k g_{ij}(w)|}{|z-w|^{\alpha}} \le C \max \left\{|z|^{-1-\alpha}\bar K(C^{-1}|z|),|w|^{-1-\alpha}\bar K(C^{-1}|w|) \right\}$;

\item If $(M^n,g)$ further satisfies \eqref{cond:HOAF}, then $|\partial^{|m|}g_{ij}(z)| \le C_m|z|^{-m} \bar K(C^{-1}|z|)$ for any partial derivatives of order $m \ge 2$.
\end{enumerate}
\end{thm}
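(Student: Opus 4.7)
The approach is to adapt the Bando-Kasue-Nakajima proof to a general decay function $K(r)$ in place of the power-law $r^{-\epsilon}$. The plan has three main stages: identify the cone at infinity, construct harmonic coordinates on dyadic annuli with quantitative estimates, and then patch these local charts into a global ALE diffeomorphism.

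First, I would use the Euclidean volume growth assumption together with \eqref{cond:AF} to pin down the tangent cone at infinity. Since the volume grows like $r^n$ and the curvature decays quadratically, results of Kasue (cf.\ \cite{Ka88}) together with the cone rigidity theory imply that $C(S(\infty))$ is isometric to a flat cone $\mathbb{R}^n/\Gamma$ with $\Gamma \subset O(n)$ acting freely on $S^{n-1}$. In particular, for each large dyadic scale $r_k = 2^k$, the rescaled annulus $(A(r_k/4, 4r_k),\, r_k^{-2} g)$ is Gromov-Hausdorff close to the corresponding flat annulus in $\mathbb{R}^n/\Gamma$, with closeness quantified by $K_0(r_k/4)$.

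Second, on each rescaled annulus I would invoke the existence theorem for harmonic coordinates (Lemma \ref{L:es2} applied at the appropriate scale, lifted to the universal cover of the annulus). This gives a harmonic chart in which the initial estimates $|g_{ij} - \delta_{ij}| = O(K(r/2))$ and $|\partial g_{ij}| = O(r^{-1}K(r/2))$ hold. The crucial improvement step is to bootstrap using the elliptic equation in harmonic coordinates,
\begin{align*}
-\tfrac{1}{2} g^{kl}\partial_k\partial_l g_{ij} + Q(g,\partial g) = \mathrm{Ric}_{ij},
\end{align*}
combined with the curvature bound. Two radial integrations of the $|\partial^2 g|$ estimate produce the $\bar K$ decay: the first integration converts $|\partial^2 g| = O(r^{-2}K(r))$ into a gradient estimate of order $r^{-1}K_0(r)$, and the second converts this into the $C^0$ bound $|g-\delta| = O(t^{-1}\int_{1/2}^{t} K_0(s)\,ds)$, explaining precisely why $\bar K$ is defined as the maximum of those four quantities. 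The Hölder estimate (iii) then follows from standard Schauder estimates applied to the elliptic system on a rescaled fixed annulus, and (iv) is obtained by differentiating the elliptic equation and iterating the bootstrap when \eqref{cond:HOAF} holds.

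Third, I would patch the local harmonic charts $y^{(k)}$ defined on overlapping annuli $A_{k-1} \cup A_k \cup A_{k+1}$. On each overlap $A_k \cap A_{k+1}$, two harmonic coordinate systems for the same metric are close in $C^{1,\alpha}$ (after rescaling) to an isometry of Euclidean space, so their transition map lies $O(\bar K(r_k))$-close to an element of $O(n) \ltimes \mathbb{R}^n$. Using a telescoping product of these near-orthogonal transformations (with a cutoff/interpolation procedure as in \cite{BKN89} or the patching argument in Section~5.2 of this paper), one obtains a single global diffeomorphism $\Phi$ whose pullback metric satisfies (i)–(iv). The deck transformation group $\Gamma$ emerges as the monodromy at infinity, consistent with the cone $\mathbb{R}^n/\Gamma$ obtained in step one.

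The main obstacle will be executing step two cleanly: the bootstrap from curvature decay to metric decay requires careful weighted elliptic estimates that track the general decay function $K$ (rather than a power), and controlling the nonlinear term $Q(g,\partial g)$ so that it does not spoil the decay. A secondary technical issue will be proving the orthogonal-transformation gluing in step three without generating additional errors that degrade the $\bar K$ rate — this is handled by choosing the gluing transformations to be genuine elements of $O(n)$ (fixed at each scale) and interpolating only through a translation and compactly supported perturbation, so that the $\bar K$-decay in each chart is preserved after patching.
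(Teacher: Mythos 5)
Your overall outline (identify the flat cone, build quantitative charts on dyadic annuli, patch) matches the paper's structure, and the heuristic for why $\bar K$ appears as a double radial integral of the curvature is correct. However, the mechanism you propose for step two — an elliptic bootstrap in harmonic coordinates using the equation $-\tfrac12 g^{kl}\partial_k\partial_l g_{ij} + Q(g,\partial g) = \mathrm{Ric}_{ij}$ — is not what the paper does, and I do not believe it works as stated for this theorem. There are two problems. First, the elliptic equation controls only the Laplacian combination $g^{kl}\partial_k\partial_l g_{ij}$; to recover the full $\partial^2 g$ you need Schauder estimates, which require an a priori $C^{1,\alpha}$ bound on $g$ at the appropriate scale — exactly the quantity whose decay you are trying to improve. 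Second, and more fundamentally, your ``two radial integrations'' presuppose a globally defined radial direction and known boundary conditions at infinity in which to integrate, but harmonic coordinates exist only on metrically small balls or annuli and carry no canonical radial structure; integrating in the radial coordinate of one chart does not propagate information to the next dyadic scale without controlling a constant of integration across scales. Without some additional input, the bootstrap has no mechanism to improve decay.

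The paper supplies exactly this missing global radial structure by a different route: it constructs, via normal geodesics from a smooth hypersurface $S_R$ with no focal points, a diffeomorphism $\Psi: S \times [1,\infty) \to M\backslash K$ so that $\Psi^*g = dt^2 + t^2 g_t$, and then estimates $g_t$ by solving the Jacobi equation along the rays $t \mapsto \Psi(x,t)$. Writing the Jacobi field in a parallel frame, a double integration of the Jacobi ODE gives \eqref{ap:02}, a rearrangement gives \eqref{ap:03}, and bounding the curvature term produces $\bigl|t^{-1}X^i(t) - c^i\bigr| \le C\bar K(t)$ in \eqref{ap:03a}. The term $t^{-1}\int_1^t K_0(s/2)\,ds$ in $\bar K$ arises exactly here, from $t^{-1}\int_1^t \int_s^\infty u^{-1}K(u/2)\,du\,ds$, with the constants of integration fixed by the initial data $X(1)$, $X'(1)$ at the hypersurface $S$. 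This is genuine geometric comparison, not elliptic bootstrapping, and it is where the decay improvement actually happens; the Ricci equation is never used, and indeed the theorem is stated without any assumption on $\mathrm{Ric}$. The harmonic coordinates of Lemma \ref{L:es2} enter only afterward, to upgrade the Jacobi-field (essentially $C^0$) control to the claimed $C^{1,\alpha}$ control, and the patching is done by a partition-of-unity average $\Phi = \sum \rho_{a,j}\mathbb H_{a,j}$, using that all local harmonic charts are already $\bar K$-close to the same limiting linear map because the parallel frame stabilizes along each geodesic ray. Your $\Gamma$-handling via lifting to a cover and averaging the chart over $\Gamma$ is on the right track and close to what the paper does in Case 2.

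To repair the proposal, replace the elliptic bootstrap with: (a) construct $\Psi$ via normal geodesics and prove the absence of focal points by comparing to the Jacobi ODE $J'' = 4t^{-2}K(t/2)J$; (b) estimate $\Psi^*g$ via Jacobi fields in a parallel frame, which is where the $\bar K$ rate is earned; (c) then bring in harmonic coordinates only to get the Hölder and higher-order parts (ii)--(iv) by elliptic regularity \emph{at a fixed rescaled annulus}, not to improve decay.
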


\begin{proof}
The theorem follows essentially from Step 1-4 of the main theorem of \cite{BKN89} and we sketch the proof for completeness. For simplicity, we use the notation $g-g_E \in C^{1,\alpha}_{\bar K}$ if (i),(ii) and (iii) above hold. Moreover, for any $k \ge 2$, $f \in C^{k,\alpha}_{r^{k-1}\bar K}$ if there exists a constant $C>1$ such that
\begin{align*}
& \sup_{z \in \R^n \backslash B(0,C)} \sigma^{-1}(z) \lc \sum_{j=0}^k |z|^j |D^j f(z)|   \rc \\
+&\sup_{\substack{z,w \in \R^n \backslash B(0,C) \\ z \ne w}} \lc \min \left\{ |z|^{k+\alpha} \sigma^{-1}(z),|w|^{k+\alpha} \sigma^{-1}(w) \right\} \frac{|D^kf(z)-D^k f(w)|}{|z-w|^{\alpha}} \rc <\infty,
\end{align*} 
where $\sigma(z):=|z|^{k-1} \bar K(C^{-1}z)$.

Since $(M^n,g)$ satisfies \eqref{cond:AF}, it follows from \cite{Ka89} that the tangent cone at infinity is a flat cone $C(S^{n-1}/ \Gamma)$ for some finite subgroup $\Gamma \subset O(n)$ acting freely on $S^{n-1}$. Moreover, for any large $R$, there exists a smooth hypersurface $S_R$ of $M$ such that
\begin{align*}
R^{-1}d(S_R,\partial B(p,R)) &\le \ep(R), \\
|R h_R-I| &\le \ep(R),
\end{align*} 
where $h_R$ is the second fundamental form of $S_R$ and $\epsilon(R) \to 0$ if $R \to \infty$.

We claim that if $R$ is sufficiently large, the map $\Psi_R:(x,t) \in S_R \times [R ,\infty) \to \ex((t-R)v_R(x))$ is a diffeomorphism, where $v_R$ is the unit outer normal vector of $S_R$. Indeed, we consider the following ordinary differential equation:
\begin{align*}
&J''(t)+t^{-2}K(t/2)J(t)=0,\quad \text{for} \quad t \in [R,\infty),\\
&J(R)=1,\quad J'(R)=R^{-1}(1-\ep(R)).
\end{align*}
Then it is clear that $J'(t) \le R^{-1}(1-\ep(R))$ and
\begin{align*}
J'(t)=&R^{-1}(1-\ep(R))-\int_R^t s^{-2}K(s/2)J(s)\,ds \\
\ge& R^{-1}(1-\ep(R))-\int_R^t s^{-2}K(s/2) \lc1+\frac{s}{R}(1-\ep(R)) \rc\,ds \\
\ge& R^{-1}(1-\ep(R))-2R^{-1}\int_R^t s^{-1}K(s/2) \,ds >0
\end{align*}
if $R$ is sufficiently large and hence $J(t)>0$. Then it follows from \cite{War66} that $S_R$ has no focal point along any geodesic $\sigma(t)=\ex((t-R)v_R(x))$ as long as $d(\sigma(t),S_R)=t-R$. In addition, it follows from \cite[2.2]{Ka89} that $S_R$ has no focal point. Therefore, the map $\Psi$ is a diffeomorphism for large $R$.

By rescaling, we assume that $R=1$ and set $S=S_1, h=h_1$ and $\Psi=\Psi_1$. Moreover, we write
\begin{align*}
\Psi^* g=dt^2+t^2g_t
\end{align*}
on $S \times [1,\infty)$. Fix a point $x \in S$ and $X \in T_xS$, we define for $t \ge 1$,
\begin{align*}
X(t) =d\Psi_{(x,t)} X.
\end{align*}
It is clear that $X(t)$ is the Jacobi field along $\sigma(t)=\Psi(x,t)$ such that $X(1)=X$ and $X'(1)=h(X)$. If we take a parallel orthonormal frame $\{E_1(t),\cdots, E_n(t)\}$ along $\sigma$, then it follows from the Jacobi equation that
 \begin{align}\label{ap:02}
X^i(t)=X^i(1)+(X^i)'(1)(t-1)-\int_1^t \int_1^s X^j(u)K_j^i(u)\,duds
 \end{align} 
where $X^i(t)=\la X(t),E_i(t)\ra$ and $K_j^i(t)=\la R(E_j(t),\sigma'(t))\sigma'(t),E_i(t)\ra$.

From \eqref{ap:02}, we have
 \begin{align}
&t^{-1}X^i(t)-(X^i)'(1)+\int_1^{\infty}X^j(u)K_j^i(u)\,du \notag \\
=&t^{-1}\lc X^i(1)-(X^i)'(1)+\int_1^{\infty}X^j(u)K_j^i(u)\,du \rc+t^{-1}\int_1^t \int_s^{\infty} X^j(u)K_j^i(u)\,duds\label{ap:03}.
 \end{align} 

From \cite{War66}, $|X(t)| \le Ct$ and hence 
 \begin{align*}
\left |t^{-1}\int_1^t \int_s^{\infty} X^j(u)K_j^i(u)\,duds \right| \le Ct^{-1}\int_1^t \int_s^{\infty}u^{-1}K(u/2)\,duds=Ct^{-1}\int_1^t K_0(s/2)ds.
 \end{align*} 

Therefore, it follows from \eqref{ap:03} that
 \begin{align}\label{ap:03a}
\left |t^{-1}X^i(t)-(X^i)'(1)+\int_1^{\infty}X^j(u)K_j^i(u)\,du \right| \le C\bar K(t).
 \end{align} 

Since $(S,g_t)$ converges smoothly to the space form $(S^{n-1}/\Gamma,g_{std})$, then by adding a diffeomorphism of $S$ we obtain that
 \begin{align}\label{ap:03x}
|g_t-g_{std}|=O(\bar K(t)).
 \end{align} 

Now we consider the following two cases.

\textbf{Case 1}: $\Gamma=1$.

We set $\Psi_t(x,r)=\Psi(x,tr)$ for $x\in S$ and $r \in (1/2,3/2)$ and $\sigma(t)=\Psi(x,t)$. It follows from Lemma \ref{L:es2} that there exists a small number $\delta>0$ and a harmonic coordinate system $\mathbb H_t$ on $B(\sigma(t),2\delta t)$ such that
 \begin{align*}
|d\mathbb H_t-\omega_t| \le CK(t/2) \quad \text{on} \quad B(\sigma(t),2\delta t),
 \end{align*} 
where $\omega_t$ is the dual frame of $\{E_1(t),\cdots, E_n(t)\}$. Therefore,
 \begin{align}\label{ap:04}
|d(t^{-1}\mathbb H_t\circ \Psi_t)-t^{-1}\Psi_t^* \omega_t| \le CK(t/2) \quad \text{on} \quad B((x,1),2\delta).
 \end{align} 

From \eqref{ap:03a}, we have for any $r \in (1/2,3/2)$ and $t<s$,
 \begin{align}\label{ap:05}
|t^{-1} \Psi_t^*\omega_t(x,r)-s^{-1} \Psi_s^*\omega_s(x,r)|=O(\bar K(t)).
 \end{align} 

For $y \in B(x,\delta) \cap S$, we denote the parallel transport of $\omega_t(\Psi(y,t))$ along $\Psi(y,\cdot)$ by $\bar \omega_t$, then it follows from \cite[6.2.1]{BK81} that
 \begin{align}\label{ap:06}
|\bar \omega_t(\Psi(y,s))-\omega_s(\Psi(y,s))| \le C \int_t^s \frac{K(u/2)}{u}\,du \le CK_0(t/2).
 \end{align} 

Similar to \eqref{ap:05} we have
 \begin{align}\label{ap:07}
|t^{-1} \Psi_t^*\omega_t(y,r)-s^{-1} \Psi_s^*\bar \omega_t(y,r)|=O(\bar K(t)).
 \end{align}

From \eqref{ap:05}, \eqref{ap:06} and \eqref{ap:07},
 \begin{align}\label{ap:08}
|t^{-1} \Psi_t^*\omega_t(y,r)-s^{-1} \Psi_s^*\omega_s(y,r)|=O(\bar K(t)).
 \end{align}

Combining \eqref{ap:04} and \eqref{ap:08} we have
 \begin{align*}
|d(t^{-1}\mathbb H_t\circ \Psi_t)-d(s^{-1}\mathbb H_s\circ \Psi_s)|= O(\bar K(t))
 \end{align*} 
and hence
\begin{align*}
|t^{-1}\mathbb H_t\circ \Psi_t-s^{-1}\mathbb H_s\circ \Psi_s | =O(\bar K(t)).
 \end{align*} 

Therefore, 
 \begin{align*}
|t^{-1}\mathbb H_t\circ \Psi_t-\mathbb H_{\infty}| =O(\bar K(t))
 \end{align*}
on $B((x,1),\delta)$. If we denote the standard Euclidean metric by $g_E$, then it follows from $\mathbb H_{\infty}^* g_E=g_E$ and $\mathbb H_{\infty}((x,1))=0$ that $\mathbb H_{\infty} \in \text{Iso}(\R^n)$. By redefining $\mathbb H_t$ by $\mathbb H^{-1}_{\infty} \circ \mathbb H_t$, we can assume that
 \begin{align}\label{ap:08a}
|t^{-1}\mathbb H_t\circ \Psi_t-I| =O(\bar K(t)).
 \end{align}

Now we take geodesics $\sigma_1,\cdots,\sigma_N$ with $\sigma_a(t)=\Psi(x_a,t)$ for $x_a \in S$ such that the collection $\{B(\sigma_a(\beta^j),\delta \beta^j)\}_{1\le a\le N,j\ge 1}$ cover $M \backslash K$ for some compact set $K$ and $\beta>1$. We denote $B(\sigma_a(\beta^j),\delta \beta^j)$ by $B_{a,j}$ and the corresponding harmonic coordinates by $\mathbb H_{a,j}$. Moreover, we can assume that $B_{a,j} \cap B_{b,k} \ne \emptyset$ if $k-j$ is uniformly bounded.

For $j \le k$ and $B_{a,j} \cap B_{b,k} \ne \emptyset$, it follows from \eqref{ap:08a} that $(\bar K(\beta^j))^{-1} \beta^{-j}| \mathbb H_{a,j}(\Psi(x,r\beta^j))-\mathbb H_{b,k}(\Psi(x,r\beta^k))|$ is uniformly bounded. Therefore, from the apriori estimates for harmonic functions, we have on $\mathbb H_{b,k}(B_{a,j} \cap B_{b,k})$,
 \begin{align*}
|\mathbb H_{a,j} \circ \mathbb H^{-1}_{b,k}-I|+\beta^j |\partial(\mathbb H_{a,j} \circ \mathbb H^{-1}_{b,k})|+\beta^{j(1+\alpha)} \|\partial(\mathbb H_{a,j} \circ \mathbb H^{-1}_{b,k})\|_{C^{\alpha}} \le C\bar K(\beta^j)\beta^j.
 \end{align*}

It is easy to check that by taking a partition of unity $\{\rho_{a,j}\}$ associated to the covering $\{B_{a,j}\}$ the map defined by
 \begin{align}\label{ap:08ax}
\Phi(x) \coloneqq \sum \rho_{a,j}(x) \mathbb H_{a,j}(x) 
 \end{align}
 satisfies
 \begin{align*}
\Phi(x)-I \in C^{2,\alpha}_{r\bar K}.
 \end{align*}
Therefore, the metric under this map
 \begin{align*}
(\Phi^{-1})^*g-g_E \in C^{1,\alpha}_{\bar K}.
 \end{align*}

\textbf{Case 2}: $\Gamma \ne 1$.

As above, there exists a diffeomorphism $\Psi$ from $(\R^n \backslash B(0,A))/\Gamma$ to the end $E \coloneqq M\backslash K$. Now we set the covering map from $\R^n \backslash B(0,A)$ to $(\R^n \backslash B(0,A))/\Gamma$ by $P$ and consider the metric $\tilde g \coloneqq (\Psi\circ P)^*g$. In particular, $\tilde g$ is invariant under the action of $\Gamma$. From Case 1, by enlarging $A$ if necessary, there exists a map $\Phi_1$ from $\R^n \backslash B(0,A)$ to $\R^n$ such that
 \begin{align}\label{ap:09}
\Phi_1-I \in C^{2,\alpha}_{r\bar K}.
 \end{align}

We define a new group action of $\Gamma$ on $\R^n \backslash B(0,A)$ by
 \begin{align*}
\tilde a=\Phi_1 \circ a \circ \Phi_1^{-1} \quad \text{for any } a \in \Gamma.
 \end{align*}

From \eqref{ap:09}, it is clear that for any $a \in \Gamma$,
 \begin{align*}
\tilde a-a \in C^{2,\alpha}_{r\bar K}.
 \end{align*}
 
Now we construct a map $\tilde \Psi$ from $\R^n \backslash B(0,A)$ to $\R^n \backslash B(0,A)$ defined as
 \begin{align*}
\tilde \Psi(x) \coloneqq \frac{1}{|\Gamma|}\sum_{a\in \Gamma} a^{-1} \circ \tilde a(x).
 \end{align*}

 From the definition of $\tilde \Psi$, it is easy to check that $\tilde \Psi -I \in C^{2,\alpha}_{r\bar K}$ and for any $a \in \Gamma$,
  \begin{align*}
a \circ \tilde \Psi=\tilde \Psi \circ \tilde a.
 \end{align*}
 
We define the map $\Phi \coloneqq \tilde \Psi\circ \Phi_1 $. Then it is clear that $\Phi-I \in C^{2,\alpha}_{r\bar K}$ and the group action of $a \in \Gamma$ is exactly the orthogonal transformation.

In sum, the proof of (i),(ii) and (iii) is complete and (iv) follows from the higher-order estimates of the harmonic coordinates. 
\end{proof}

\begin{rem}\label{rem:A01}
If the dimension $n=2$, then the tangent cone at infinity is a flat cone $C(S^1)$ where the length of the circle is $2\pi \beta$ for some constant $\beta \in (0,1]$. If $\beta=\frac{p}{q}$ for coprime integers $p$ and $q$, then the conclusions of Theorem \ref{T:ap01} also hold in this case.

Indeed, we consider the $p$-fold covering $\widetilde \C \coloneqq \{(r,\theta)\, \mid r>0, 0\le \theta \le 2p\pi\}/(r,0) \sim (r,2p\pi)$ of $\C \backslash \{0\}$, where the covering map $\pi$ is defined by
\begin{align*} 
\pi((r,\theta))=re^{i\theta}.
\end{align*}

By lifting the metric $g$ to a metric $g$ on $\widetilde \C\backslash B(0,A)$, it follows from the same arguments of Case $1$ of Theorem \ref{T:ap01} that there exists a map $\Phi: \widetilde \C \backslash B(0,A) \to \widetilde \C \backslash B(0,A)$ such that
\begin{align*} 
\Phi-I \in C^{2,\alpha}_{r\bar K}.
\end{align*}

Notice that in \eqref{ap:08ax}, we can take the coverings sufficiently refined so that the sum is taken in one fundamental domain. Since the Deck transformation group of the covering map is $\Z_q$, it follows from the same arguments of Case $2$ of Theorem \ref{T:ap01} that there exist a map $\Psi: C(S^1) \backslash B(0,A) \to M$ such that 
\begin{align*} 
g-(dt^2+t^2d\theta^2) \in C^{1,\alpha}_{\bar K}.
\end{align*}

On the other hand, if $\beta$ is irrational, then we only obtain the $C^0$-approximation
\begin{align*} 
g=dt^2+t^2d\theta^2+O(\bar K(t))
\end{align*}
for $t >A$ and $\theta \in [0,2\beta \pi)$, see \eqref{ap:03x}.
\end{rem}

\begin{rem}\label{rem:A02}
For the special case $K(t)=t^{-\ep}$, it follows from \emph{\cite{BKN89}} that $(M^n,g)$ has coordinates at infinity of order $\tau$ where $\tau=\ep$ if $n \ge 4$ or $\ep \ne 1$ and $\tau=\ep'$ for any $\ep'<1$ if $n=3$ and $\ep=1$.
\end{rem}

\section{Asymptotically flat gravitational instantons}
\label{app2}

In this appendix, we prove that any \hy $4$-manifold with \eqref{cond:AF} is a TALE manifold by slightly generalizing the work of Chen-Chen \cite{CC15}.

A key observation for \hy $4$-manifold is that for any geodesic loop $\gamma$, $\mathbf r(\gamma) \in \text{SU}(2)$ and hence for any vector $v$,
\begin{align} \label{eq:gra1}
|(\mathbf r(\gamma)-I)v|=|\mathbf r(\gamma)-I||v|.
\end{align}
It indicates that to estimate the rotational part, we only need to estimate its effect on one direction. By using this obeservation, Chen and Chen have proved in \cite{CC15} the following result.

\begin{thm}[Theorem $3.4$ of \cite{CC15}]\label{T:shol1}
For any complete \hy $4$-manifold $(M^4,g)$ with \eqref{cond:AF}, there exists a constant $C_H>1$ such that for any geodesic loop $\gamma$ based at $q$ with $r=r(q) \ge 2$ and $L(\gamma) \le C_H^{-1}r$,
\begin{align*}
|\mathbf r(\gamma) -I| \le \frac{C_H}{r}L(\gamma).
\end{align*}
\end{thm}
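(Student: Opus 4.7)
The single feature making the hyperk\"ahler hypothesis decisive is that $\mathbf r(\gamma)\in\mathrm{SU}(2)=\mathrm{Sp}(1)$ acts on $T_qM\cong\mathbb H$ as left multiplication by a unit quaternion; hence $\mathbf r(\gamma)-I$ is left multiplication by a (possibly zero) quaternion, a conformal similarity. This gives the key isotropy identity
\[
|(\mathbf r(\gamma)-I)v|=|\mathbf r(\gamma)-I|\cdot|v|\quad\text{for every }v\in T_qM,
\]
which reduces bounding $|\mathbf r(\gamma)-I|$ to bounding $(\mathbf r(\gamma)-I)\nu$ for one cleverly chosen unit vector $\nu$. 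The natural choice is the radial direction: I take $\nu=\dot\alpha(r(q))$ where $\alpha$ is a minimizing unit-speed geodesic from $p$ to $q$, and compare the parallel transport of $\nu$ around $\gamma$ with the ambient radial field $N=\nabla r$.

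\textbf{Main computation.} Let $V(t)$ denote parallel transport of $\nu$ along $\gamma$; then $V(0)=\nu=N(\gamma(0))$ and, since $\gamma(L(\gamma))=q$, also $N(\gamma(L(\gamma)))=\nu$. Writing $W=V-N\circ\gamma$, the relation $\nabla_{\dot\gamma}W=-\nabla_{\dot\gamma}N$ with initial condition $W(0)=0$ yields
\[
(\mathbf r(\gamma)-I)\nu=V(L(\gamma))-V(0)=V(L(\gamma))-N(\gamma(L(\gamma)))=-\int_0^{L(\gamma)}\nabla_{\dot\gamma(t)}N\,dt.
\]
The Hessian comparison theorem, applied exactly as in Lemma~\ref{lem:dis}, gives $\mathrm{Hess}(r)=\tfrac1r(g-dr\otimes dr)+O(K(r)/r)\cdot g_{|\nabla r^\perp}$ on $B(q,L(\gamma))$, using $|Rm|\le K(r)/r^2$. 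Since $\nabla_XN=\mathrm{Hess}(r)(X,\cdot)^\sharp$ vanishes in the radial direction, the integrand is bounded by $(1+CK(r(q)))/r(q)$. Assuming $L(\gamma)\le C_H^{-1}r(q)$ so that $r(\gamma(t))\asymp r(q)$ throughout, integration gives $|(\mathbf r(\gamma)-I)\nu|\le (1+CK(r(q)))L(\gamma)/r(q)$. The isotropy identity then yields $|\mathbf r(\gamma)-I|\le CL(\gamma)/r(q)$ for $r(q)$ larger than some threshold $R_0$; the range $r(q)\in[2,R_0]$ follows by compactness, since on that annulus $|\mathbf r-I|/L(\gamma)$ is uniformly bounded above.

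\textbf{Main obstacle.} The radial field $N=\nabla r$ fails to be smooth along $\mathrm{Cut}(p)$, and in principle $\gamma$ could meet this cut locus, making the identity $\nabla_XN=\mathrm{Hess}(r)(X,\cdot)^\sharp$ unavailable pointwise. I plan to bypass this by transferring the computation to the exponential lift $(\hat B(0,\kappa r(q))\subset T_qM,\hat g)$, where the pullback of the radial direction from $p$ can be replaced by the smooth unit vector field obtained by parallel transporting $\nu$ along radial $\hat g$-geodesics emanating from $0$; the corresponding Hessian comparison on $\hat g$ holds with the same error estimate derived in Lemma~\ref{lem:dis}, and since $\ex_q$ is a local isometry from $\hat B(0,\kappa r(q))$ to $B(q,\kappa r(q))$, the loop $\gamma$ lifts to a segment in this smooth setting and all prior estimates transport back verbatim. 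A secondary technicality is the compactness argument for bounded $r(q)$: very short loops have $|\mathbf r-I|=O(L^2)$ by the usual disk-filling holonomy estimate and so pose no difficulty, while loops of length bounded away from zero in a compact annulus form a compact family on which the ratio is continuous and finite.
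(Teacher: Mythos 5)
Your strategy is genuinely different from the paper's. The paper never touches the distance function to $p$: it runs the steepest-descent flow of the loop-length function $l$, uses the $\mathrm{SU}(2)$ isotropy to turn Theorem \ref{T:hol}(i) into the exact identity $l'=|\mathbf r-I|$ along that flow, bounds $l''\le l\max|Rm|$ by Theorem \ref{T:hol}(ii), and closes with a Sturm comparison against the Jacobi-type solution $J$, which forces a contradiction at the base where $J$ vanishes. You instead try to prove the one-directional estimate $|(\mathbf r(\gamma)-I)\nu|\le C L(\gamma)/r$ for the radial direction $\nu$ and then upgrade by isotropy. The isotropy reduction is exactly the right use of the hyperk\"ahler hypothesis, and the heuristic that the holonomy nearly fixes the radial direction is sound; but your proof of the one-directional estimate has a gap that your proposed fix does not close.

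The gap is the cut locus of $p$, and it is not a removable technicality: in the collapsed regime --- the only regime where loops with $L(\gamma)\le C_H^{-1}r$ exist in abundance --- the loop $\gamma$ typically must cross $\mathrm{Cut}(p)$. In $\R^3\times S^1$ with $p=(0,\theta_0)$, for example, the cut locus is the hypersurface $\R^3\times\{\theta_0+\pi\}$ and the fiber loop through any $q$ crosses it once; across each crossing $N=\na r$ jumps by an amount comparable to $\inj/r$, i.e.\ of the same order as the quantity you are trying to bound, so these jump terms cannot be discarded, and your identity $(\mathbf r(\gamma)-I)\nu=-\int_0^{L}\na_{\dot\gamma}N\,dt$ acquires uncontrolled boundary contributions. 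The proposed repair makes matters worse: replacing $N$ by the field $\tilde N$ obtained by parallel transporting $\nu$ along radial $\hat g$-geodesics from $0\in T_qM$ severs the link with $\he r$ for $r=d(p,\cdot)$, so the crucial factor $1/r$ (which encodes the cone structure based at $p$) is no longer available; and since the lift of $\gamma$ through $0$ is itself the radial segment $t\mapsto t\tau(0)$, the value of $\tilde N$ at the endpoint of the lift is by definition the parallel transport of $\nu$ along that lift, so the difference you would be estimating vanishes identically and the argument becomes a tautology carrying no information about $\mathbf r(\gamma)$. To salvage your route you would need to count the cut-locus crossings of $\gamma$ and bound each jump of $\na r$ (or replace $\na r$ by a genuinely smooth substitute with quantitative Toponogov-type control); none of this transports "verbatim." A secondary point: the two-sided bound $|\he r|\le C/r$ uses the upper-curvature Hessian comparison, which under \eqref{eq:003} alone is only guaranteed up to the first zero of the comparison solution, so the comparison should be restarted at a large radius. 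The compactness remark for $r\in[2,R_0]$ is fine (indeed vacuous, since geodesic loops there have length at least twice the injectivity radius). I would recommend adopting the paper's flow-plus-comparison argument, which avoids $\na r$ altogether.
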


\begin{proof}
We consider the following Jacobi equation
\begin{align*} 
J''(t)=4t^{-2}K(t/2)J(t), \quad J(1)=0\quad\text{and}\quad J'(1)=1.
\end{align*}
As in the proof of Lemma \ref{L:vola}, we have for any $t \ge 1$
\begin{align}\label{eq:gra2}
t-1 \le J(t) \le J'(\infty)(t-1) \le Ct.
\end{align}    

Now we claim that there exists a constant $C_H>1$ such that for any geodesic loop $\gamma$ based at $q$ with $r=r(q) \ge 2$ and $L(\gamma) \le C_H^{-1}r$,
\begin{align}\label{eq:gra3}
|\mathbf r(\gamma) -I| \le \frac{J'(r)}{J(r)}L(\gamma).
\end{align}
From \eqref{eq:gra2}, \eqref{eq:gra3} implies the conclusion.

We argue by contradiction and $C_H$ will be determined in the proof. We assume that $q$ is a point such that $r \ge 2$ and there exists a geodesic loop $\gamma$ based at $q$ with $L(\gamma) \le C_H^{-1}r$, but 
\begin{align*} 
|\mathbf r(\gamma) -I| >\frac{J'(r)}{J(r)}L(\gamma).
\end{align*}

With $q$ and $\gamma$ fixed, we consider a function $l(x)$ which is defined as the length of the sliding $\gamma_x$ of $\gamma$ to $x$. Notice that on $B(q,\text{inj}(q))$, $l(x)$ is well-defined and smooth. It follows from Theorem \ref{T:hol} (i) that on $B(q,\text{inj}(q))$
\begin{align*} 
\na l(x)= (\mathbf r(x)-I)W(x),
\end{align*}
where $\mathbf r(x)=\mathbf r(\gamma_x)$ and $W(x)$ is the initial tangent vector of $\gamma_x$.

Now we can define an arc-length parametrized curve $\{\alpha(s): s \in [0,s_0)\}$ which is the gradient flow of $l$ starting from $q$. More precisely,
\begin{align*}
\alpha'(s)=-\frac{\na l(\alpha(s))}{|\na l(\alpha(s))|}=-\frac{\mathbf r(s)W(s)-W(s)}{|\mathbf r(s)W(s)-W(s)|},
\end{align*}
where $\mathbf r(s)$ is the rotational part of the sliding of $\gamma$ along $\alpha$ to $\alpha(s)$ and $W(s)$ is the corresponding initial tangent vector. Notice that even though $l$ is only well defined locally, its gradient flow $\alpha(s)$ is well defined. Indeed, on $B(q,\inj(q))$, $\na l$ is well defined, so we can locally define the gradient flow $\{\alpha(s): s \in[0,\ep]\}$ starting from $q$. Then based at $\alpha(\ep)$, we can locally extend the gradient flow as long as the length of the geodesic loop is small. In particular, the sliding of $\gamma$ and $l$ are defined along $\alpha$.

Now we reverse $\alpha$ by defining $\tilde{\alpha}(t)=\alpha(r-t)$ for any $t \in (t_0,r]$ where $t_0=r-s_0$ is the infimum of all $t$ such that $\tilde{\alpha}(t)$ is well defined. Therefore, we have
\begin{align*}
\tilde{\alpha}'(t)=\frac{\mathbf r(t)W(t)-W(t)}{|\mathbf r(t)W(t)-W(t)|}
\end{align*}
and if we set $l(t)=l(\tilde \alpha(t))$, then it follows from Theorem \ref{T:hol} (i) and \eqref{eq:gra1} that
\begin{align}\label{eq:gra6}
l'(t)=|\mathbf r(t)W(t)-W(t)|=|\mathbf r(t)-I|.
\end{align}

Now we claim that for any $\max\{t_0,1\} <t \le r$, $l(t) < t/2$. Otherwise, we set $t_1$ to be the largest number in $(\max\{t_0,1\},r]$ such that $l(t) = t/2$. Then it follows from \eqref{eq:gra6} and Theorem \ref{T:hol} (ii) that for any $t \in[t_1,r]$,
\begin{align*}
l''(t) \le \max_{\gamma_t} |Rm|l(t) \le 4t^{-2}K(t/2)l(t).
\end{align*}
Therefore, $(l'J-J'l)' =l''J-J'' l \le 0$. By our assumption $l'(r)J(r)-J'(r)l(r)>0$, we conclude that for any $t \in [t_1,r]$,
\begin{align} \label{E306}
l'(t)J(t)-J'(t)l(t)>0.
\end{align}

By integration,
\begin{align*}
l(t_1) <\frac{l(r)}{J(r)}J(t_1) \le \frac{C_H^{-1}r}{J(r)}J(t_1)=C_H^{-1}\frac{r}{J(r)}\frac{J(t_1)}{t_1}t_1 \le t_1/2
\end{align*}
if $C_H$ is sufficiently large. Therefore, the claim holds.

Now we prove that $t_0 \ge 1$. Indeed, if $t_0<1$, then by the same reason we get from \eqref{E306} that
\begin{align*} 
l'(1)J(1)-J'(1)l(1)>0.
\end{align*}
which contradicts the definition of $J$. In sum, we have proved that for any $t \in (t_0,r]$, $l(t) < t/2$. However, since $r(\tilde \alpha(t)) \ge 1$, by a compactness argument, the curve $\tilde \alpha(t)$ can be extended backwards past $t_0$. This contradicts the definition of $t_0$. Therefore, the proof of \eqref{eq:gra3} is complete.
\end{proof}

Theorem \ref{T:shol1} shows that \eqref{cond:HC'} is satisfied for \hy $4$-manifold $(M^4,g)$ with \eqref{cond:AF}. 

\begin{thm}\label{T:gra1}
Any complete \hy $4$-manifold $(M^4,g)$ with \eqref{cond:AF} is a TALE manifold.
\end{thm}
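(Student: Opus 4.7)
The plan is to reduce the proof to upgrading (HC') to (SHC). By Theorem~\ref{T:shol1}, any complete hyperk\"ahler $4$-manifold with (AF) automatically satisfies (HC'), so it remains to produce some $\epsilon(r)\to 0$ with $\|\mathbf r(\gamma_x)\|\le \epsilon(r)$ for every geodesic loop $\gamma_x$ based at $x$ of length at most $\kappa r$. Having (HC') at our disposal places us in the setting of Section~3, so Theorems~\ref{T:cone}, \ref{T:infra} and \ref{T:nil} apply and the end is fibered over a smooth tangent cone by a nilmanifold fiber of diameter $o(r)$.

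The extra input is the hyperk\"ahler identity \eqref{eq:gra1}: every rotational part lies in $Sp(1)=SU(2)$, and its operator norm is realized on every vector simultaneously. I would fix a geodesic ray $\alpha(t)$ and, for each large $s_0$, slide an arbitrary geodesic loop $\gamma_0$ at $\alpha(s_0)$ with $L(\gamma_0)\le \kappa s_0$ outward along $\alpha$. Writing $l(t),\mathbf r(t)$ for the length and rotational part of the sliding, Theorem~\ref{T:hol} gives the coupled estimates $|l'(t)|\le \|\mathbf r(t)\|$ and $\bigl||\mathbf r(t)X-X|'\bigr|\le l(t)\,|Rm|\le 4l(t)\,K(t/2)/t^2$. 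Combined with (HC') and the integrability of $K(s)/s$, these force $\|\mathbf r(t)\|$ and $l(t)$ to converge to finite limits $\mathbf r_\infty, l_\infty$ as $t\to\infty$.

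The heart of the proof is to identify $\mathbf r_\infty$ with the holonomy of the corresponding element of the fundamental pseudo-group at infinity, and then show it equals the identity. Since $C(S(\infty))$ is a flat cone and $\kappa$ is small, any element of the fundamental pseudo-group represented by a loop of length $\le \kappa r$ must become a translation in the limit (its $\Gamma$-part vanishes for $\kappa$ small enough, because nontrivial orbifold rotations would give limiting loops of length comparable to $r$, not $\kappa r$). Because $\mathbf r(\cdot)$ takes values in the closed subgroup $SU(2)$, the rigidity $\|\mathbf r_\infty\|=0$ propagates back through the ODE to give
\[
\|\mathbf r(\gamma_0)\|\;\le\;C\int_{s_0}^\infty l(t)\,\frac{K(t/2)}{t^2}\,dt\;\le\;\epsilon(s_0)\,\frac{L(\gamma_0)}{s_0},\qquad \epsilon(r)\to 0,
\]
which is stronger than (SHC) and completes the proof.

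The main obstacle I anticipate is extending the sliding all the way to $t=\infty$: the naive Gr\"onwall estimate coming from $l'(t)\le C_H l(t)/t$ only ensures that $l(t)$ stays in the region of validity $l(t)\le C_H^{-1}t$ of (HC') on a finite interval when $C_H>1$. Overcoming this requires combining the gradient-flow trick used in the proof of Theorem~\ref{T:shol1}---which effectively replaces $C_H$ by a constant tending to $1$ at infinity, so that $l(t)$ grows at most linearly---with the hyperk\"ahler identity \eqref{eq:gra1}, in order to extend the slide to all $t\ge s_0$ and justify taking the limit $t\to\infty$.
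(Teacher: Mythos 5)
The reduction to upgrading \eqref{cond:HC'} to \eqref{cond:SHC} is the right starting point, and the sliding/ODE estimates in your second paragraph are sound, but the heart of your argument --- showing $\mathbf r_\infty=I$ --- has a genuine gap that is essentially circular. Under \eqref{cond:HC'} alone, $C(S(\infty))$ is only known to be \emph{smooth} away from the vertex (Theorem \ref{T:cone}); Remark \ref{rem:sec} explicitly notes it need not be flat, and flatness (Corollary \ref{cor:cone}) is a \emph{consequence} of \eqref{cond:SHC}, i.e.\ of what you are trying to prove. Moreover, even granting smoothness, the freeness of the limit pseudo-group action does not force limit elements to be translations: the obstruction is not ``orbifold rotations'' (elements with fixed points in the ball, which Theorem \ref{T:cone} does exclude) but screw-type motions $w\mapsto A^{-1}(w+c_\infty)$ with $A\ne I$. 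Even though every $A\in SU(2)\setminus\{I\}$ has an actual fixed point $(A-I)^{-1}c_\infty$ in $\R^4$, condition \eqref{cond:HC'} gives $|A-I|\lesssim C_H|c_\infty|$, so that fixed point lies at distance $\gtrsim C_H^{-1}$ from the origin, \emph{outside} the ball $B(0,\hat\kappa)$ on which freeness is established. So nothing in Section 3 rules out $\mathbf r_\infty\ne I$ for loops whose length is comparable to $\kappa r$ --- and those are exactly the loops for which \eqref{cond:SHC} has content, since for loops of length $o(r)$ it already follows from \eqref{cond:HC'}. (There is also a uniformity issue: a per-loop limit along one ray does not by itself produce a single $\epsilon(r)\to 0$ valid for all base points and all loops.)

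The paper closes this gap by a different mechanism which your proposal never touches. It splits into two cases according to the dimension of the nilmanifold fiber from Theorem \ref{T:nil}: if the fiber is a circle, Theorem \ref{T:circle} applies (using orientability to force the limit rotation in $\text{SO}(4)$ fixing a $3$-plane to be the identity); if the fiber has dimension $\ge 2$, then $\Gamma(x,\kappa r)$ has a standard short basis with $m\ge 2$ generators, the commutator relations $[c_1,c_j]=0$ give $|[A,B]-I|\le Cr^{-2}K(r/2)|a||b|$ and $|(A-I)b-A(B-I)A^{-1}a|\le Cr^{-1}K(r/2)|a||b|$, and the key algebraic input is that the Lie bracket on $\mathfrak{su}(2)$ is the cross product on $\R^3$, so for non-collinear $\tilde A,\tilde B$ one has $|\tilde A||\tilde B|\le C|[\tilde A,\tilde B]|$. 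The smallness of the commutator then forces $|A-I|=o(r^{-1})|a|$, and the bound propagates to all of $\Gamma(x,\kappa r)$ by the word-length argument of Lemma \ref{L:tr}. In short, the hyperk\"ahler structure is used not merely through the conformality identity \eqref{eq:gra1} but through the rigidity of commutators in $\mathfrak{su}(2)$ applied to \emph{two} independent short loops; your approach, which analyzes one loop at a time, has no access to this and therefore cannot see why the rotational part must vanish in the limit.
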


\begin{proof}
We only need to prove that \eqref{cond:SHC} holds.

From Theorem \ref{T:nil}, there exists a fibration from the end of $M$ to $C(S(\infty))$ and the fiber $F$ is a nilmanifold. If dim $F=1$, then we obtain a circle bundle and the conclusion follows from Theorem \ref{T:circle}. Therefore, we assume that dim $F=2$ or $3$. In particular, the fundamental group $\pi_1(F)$ contains more than one generator.

From \eqref{cond:HC'}, there exists a small constant $\kappa>0$ such that for any $x$ with $r=r(x)$ sufficiently large, $T_1$ is a $\theta$-translational subset, where $T_1$ is defined in \eqref{eq:Tr} and $\theta<\frac{1}{100}$. In particular, for any $x$ we obtain a standard short basis $\{c_1,\cdots,c_m\}$ of $\Gamma(x,\kappa r)$, where $m \le 4$. We claim that $m \ne 1$. Indeed, if $m=1$, it implies that $\Gamma(x,\kappa r)$ is generated by a single element. In particular, $\pi_1(F)$ is generated by one element, where $F$ is the fiber through $x$. This is a contradiction.

By the construction of the standard short basis, $[c_1,c_j]=0$ for any $2\le j\le m$. If we set $\mathbf m(c_1)=\mathbf r(c_1)x+\mathbf t(c_1)=Ax+a$ and $\mathbf m(c_j)=\mathbf r(c_j)x+\mathbf t(c_j)=Bx+b$, then it follows from \cite[Theorem $2.4.1$ (ii),(iii)]{BK81} that
\begin{align}\label{eq:gra01}
|[A,B]-I| \le Cr^{-2}K(r/2)|a||b|
\end{align}
and
\begin{align*}
|\mathbf t([\mathbf m(c_1),\mathbf m(c_i)])|=|(A-I)b-([A,B]-I)b+A(I-B)A^{-1}a| \le Cr^{-2}K(r/2)|a||b|(|a|+|b|).
\end{align*}

Therefore, we have
\begin{align}\label{eq:gra03}
|(A-I)b-A(B-I)A^{-1}a| \le Cr^{-2}K(r/2)|a||b|(|a|+|b|)\le Cr^{-1}K(r/2)|a||b|.
\end{align}

Now we fix a nonincreasing positive function $\ep(r)$ such that if $r \to \infty$, $\ep(r) \to 0$ and $K(r/2)/\ep(r) \to 0$. We claim that for $r$ sufficiently large,
\begin{align}\label{eq:gra04}
|A-I| \le r^{-1}\ep(r)|a|.
\end{align}

Otherwise, we assume that there exists a sequence $x_i$ with $r_i=r(x_i) \to \infty$ such that
\begin{align}\label{eq:gra05}
|A_i-I| > r_i^{-1}\ep(r_i)|a_i|
\end{align}
where we add the subscript $i$ to denote the corresponding elements at $x_i$. Therefore, it is clear from \eqref{eq:gra03} that
\begin{align}\label{eq:gra06}
\lim_{i \to \infty} \left| \frac{(A_i-I)b_i}{|A_i-I||b_i|}-\frac{(B_i-I)a_i}{|B_i-I||a_i|} \right|=0.
\end{align}

Since $A_i-I$ and $B_i-I$ are small, we set $A_i=\exp(\tilde A_i)$ and $B_i=\exp(\tilde B_i)$, where $\tilde A_i,\tilde B_i \in \mathfrak{su}(2)$. By taking a subsequence if necessary, we assume that
\begin{align*}
\frac{\tilde A_i}{|\tilde A_i|} \to \tilde A_{\infty}, \quad \frac{\tilde B_i}{|\tilde B_i|} \to \tilde B_{\infty}, \quad \frac{a_i}{|a_i|} \to a_{\infty} \quad \text{and} \quad \frac{b_i}{|b_i|} \to b_{\infty}.
\end{align*}
Therefore, it follows from \eqref{eq:gra06} that
\begin{align}\label{eq:gra07}
\tilde A_{\infty}b_{\infty}=\tilde B_{\infty}a_{\infty}.
\end{align}

From our choice of the standard short basis, it is clear that $a_{\infty}$ and $b_{\infty}$ are not collinear. Therefore, it follows from \eqref{eq:gra07} that $\tilde A_{\infty}$ and $\tilde B_{\infty}$ are not collinear.

Since the Lie bracket in $\mathfrak{su}(2)$ can be regarded as the cross-product in $\R^3$, it follows from \eqref{eq:gra01} that
\begin{align*}
|\tilde A_i| |\tilde B_i| \le C |[\tilde A_i,\tilde B_i]| \le Cr_i^{-2}K(r_i/2)|a_i||b_i|,
\end{align*}
which contradicts \eqref{eq:gra05} if $i$ is sufficiently large. Therefore, the claim \eqref{eq:gra04} holds and by \eqref{eq:gra03} we have
\begin{align*}
|B-I| \le 2r^{-1}\ep(r)|b|.
\end{align*}

In sum, we have proved that for any $1 \le j \le m$,
\begin{align*}
|\mathbf r_{c_j}-I| \le C|c_j|r^{-1}\ep(r).
\end{align*}
Since $\{c_1,c_2,\cdots,c_m\}$ is a standard short basis, it follows from the same argument of Lemma \ref{L:tr} that for any $a \in \Gamma(x,\kappa r)$
\begin{align*}
|\mathbf r_{a}-I| \le C|a|r^{-1}\ep(r).
\end{align*}

Therefore, \eqref{cond:SHC} holds.
\end{proof}

It follows from Theorem \ref{T:003} that any \hy $4$-manifold of ALE, ALF or ALH type has faster-than-quadratic curvature decay and hence can be completely classified. It is not clear if there are \hy $4$-manifolds of ALG type which do not have faster-than-quadratic curvature decay.

\vskip10pt

Xiuxiong Chen, Stony Brook University, Stony Brook, NY 11794, USA; School of Mathematics, University of Science and Technology of China, Hefei, Anhui, China; xiu@math.sunysb.edu\\

Yu Li, Simons Center for Geometry and Physics, Stony Brook University, Stony Brook, NY 11794, USA; yu.li.4@stonybrook.edu.\\

\end{document}